\tikzset{
    partial ellipse/.style args={#1:#2:#3}{
        insert path={+ (#1:#3) arc (#1:#2:#3)}
    }
}
\newtheorem{theorem}{Theorem}[section]
\newtheorem{lemma}[theorem]{Lemma}
\newtheorem{corollary}[theorem]{Corollary}
\newtheorem{proposition}[theorem]{Proposition}
\newtheorem{claim}[theorem]{Claim}
\theoremstyle{definition}
\newtheorem{definition}[theorem]{Definition}
\theoremstyle{remark}
\newcommand{\cyl}{\mathrm{cyl}}
\newcommand{\tip}{\mathrm{tip}}
\newcommand{\bw}{\mathbf{w}}
\newcommand{\bg}{\mathbf{g}}
\newcommand{\bW}{\mathbf{W}}
\newcommand{\bG}{\mathbf{G}}
\newcommand{\bbf}{\mathbf{f}}
\newcommand{\bF}{\mathbf{F}}
\newcommand{\cC}{\mathcal{C}}
\newcommand{\cD}{\mathcal{D}}
\newcommand{\cT}{\mathcal{T}}
\newcommand{\cN}{\mathcal{N}}
\newcommand{\fH}{\mathfrak{H}}
\newcommand{\fp}{\mathfrak{p}}
\newcommand{\eps}{\varepsilon}
\title[{The linearized translator equation and applications}]{The Linearized translator equation and applications}
\author{Kyeongsu Choi, Robert Haslhofer, Or Hershkovits}
\begin{document}

\begin{abstract}
In this paper, we consider the linearized translator equation $L_\phi u=f$, around entire convex translators $M=\textrm{graph}(\phi)\subset\mathbb{R}^4$, i.e. in the first dimension where the Bernstein property fails. Here, $L_\phi u=\mathrm{div} (a_\phi  D u)+ b_\phi\cdot Du$ is a mean curvature type elliptic operator, whose coefficients degenerate as the slope tends to infinity. We derive two fundamental barrier estimates, specifically an upper-lower estimate and an inner-outer estimate, which allow to propagate $L^\infty$-control between different regions. Packaging these and further estimates together we then develop a Fredholm theory for $L_\phi$ between carefully designed weighted function spaces. Combined with Lyapunov-Schmidt reduction we infer that the space $\mathcal{S}$ of noncollapsed translators in $\mathbb{R}^4$ is a finite dimensional analytic variety and that the tip-curvature map $\kappa:\mathcal{S}\to\mathbb{R}$ is analytic. Together with the main result from  \cite{CHH_translators} this allows us to complete the classification of noncollapsed translators in $\mathbb{R}^4$.
In particular, we conclude that the one-parameter family of translators constructed by Hoffman-Ilmanen-Martin-White is uniquely determined by the smallest principal curvature at the tip.
\end{abstract}

\maketitle

\tableofcontents

\section{Introduction}

In this paper, we are concerned with the graphical translator equation
\begin{equation}\label{trans_eq_intro}
\mathrm{div}\left(\frac{D \phi}{\sqrt{1+|D \phi|^2}}\right)-\frac{1}{\sqrt{1+|D \phi|^2}}=0,
\end{equation}
and in particular the corresponding (inhomogeneous) linearized graphical translator equation
\begin{equation}\label{lin_trans_eq_intro}
\mathrm{div}(a_\phi  Du)+ b_\phi\cdot Du=f,
\end{equation}
where
\begin{equation}
a_\phi=\frac{\delta}{\sqrt{1+|D \phi|^2}}  -\frac{D\phi\otimes D\phi}{{(1+|D \phi|^2)^{3/2}}} , \qquad b_\phi=\frac{D \phi}{(1+|D \phi|^2)^{3/2}}.
\end{equation}
We recall that translators model slowly forming singularities under mean curvature flow, see e.g. \cite{Hamilton_Harnack,HuiskenSinestrari_convexity,White_nature,HaslhoferKleiner_meanconvex}. In particular, it is known that all translators that arise as blowup limits of mean-convex mean curvature flow are given by convex entire graphical solutions of \eqref{trans_eq_intro}. More generally, as has been proved recently in $\mathbb{R}^3$ in \cite{CHH,CCS,BK_mult1}, this is expected for all blowup limits near generic singularities. We also recall from \cite{CHH_translators,BN_noncollapsing,BLL} that the analytic property of being a convex entire graph is equivalent to the geometric property that $M=\mathrm{graph}(\phi)\subset \mathbb{R}^N$ is noncollapsed, i.e. that every $p\in M$ admits interior and exterior tangent balls of radius at least $\alpha/H(p)$ for some constant $\alpha>0$.\\

The study of elliptic PDEs of mean curvature type, such as \eqref{trans_eq_intro} and \eqref{lin_trans_eq_intro}, of course has a long history dating back to the work from more than 100 years ago by Bernstein \cite{Bernstein1915}. In particular, thanks to the solution of the Bernstein problem by Simons \cite{Sim68} and Bombieri-DeGiorgi-Giusti \cite{BGG} it is know that the graphical minimal surface equation
\begin{equation}\label{theta_def_intro}
\mathrm{div}\left(\frac{D \phi}{\sqrt{1+|D \phi|^2}}\right)=0
\end{equation}
admits nontrivial (i.e. nonplanar) entire solutions $\mathrm{graph}(\phi)\subset\mathbb{R}^N$ if and only if $N\geq 9$. In stark contrast, for equation \eqref{trans_eq_intro} the Bernstein property already fails in $\mathbb{R}^4$. More precisely, in pioneering work \cite{Wang_convex}, Wang on the one hand proved that every noncollapsed translator in $\mathbb{R}^3$ is the unique (up to scaling and rigid motion) rotationally symmetric bowl from \cite{AltschulerWu}, and on the other hand for every $N\geq 4$    constructed nontrivial examples of noncollapsed translators in $\mathbb{R}^N$, i.e. examples that are neither rotationally symmetric nor split off a line. Later, a simpler proof of Wang's uniqueness result has been given by the second author in \cite{Haslhofer_bowl} and a more detailed construction of Wang's examples has been given by Hoffman-Ilmanen-Martin-White \cite{HIMW}. In particular, in $\mathbb{R}^4$ one obtains a one-parameter family of examples $\{M_{\kappa}\}_{\kappa\in (0,1/3)}$, called the oval-bowls, which are illustrated in Figure \ref{figure_oval_bowls}.\\

\begin{figure}
\scalebox{0.5}{
\begin{tikzpicture}[x=1cm,y=1cm] \clip(-4,12) rectangle (4,-1);
\draw [samples=100,rotate around={0:(0,0)},xshift=0cm,yshift=0cm,domain=-8:8)] plot (\x,{(\x)^2});
\draw [dashed] (0,2) [partial ellipse=0:180:2.75*0.5cm and 0.9*0.5cm];
\draw (0,2) [partial ellipse=180:360:2.75*0.5cm and 0.9*0.5cm];
\draw [dashed] (0,4.1) [partial ellipse=0:180:4*0.5cm and 0.95*0.5cm];
\draw (0,4.1) [partial ellipse=180:360:4*0.5cm and 0.95*0.5cm];
\draw [dashed] (0,6.4) [partial ellipse=0:180:5.05*0.5cm and 1*0.5cm];
\draw (0,6.4) [partial ellipse=180:360:5.05*0.5cm and 1*0.5cm];
\draw [dashed] (0,9.2) [partial ellipse=0:180:6.05*0.5cm and 1.05*0.5cm];
\draw (0,9.2) [partial ellipse=180:360:6.05*0.5cm and 1.05*0.5cm];
\draw [color=blue] (0,4.1) [partial ellipse=90:270:0.3*0.5cm and 0.95*0.5cm];
\draw [color=blue,dashed] (0,4.1) [partial ellipse=-90:90:0.3*0.5cm and 0.95*0.5cm];
\draw [color=blue] (-2*0.5,4.1) [partial ellipse=90:270:0.2*0.5cm and 0.77*0.5cm];
\draw [color=blue,dashed] (-2*0.5,4.1) [partial ellipse=-90:90:0.2*0.5cm and 0.77*0.5cm];
\draw [color=blue] (2*0.5,4.1) [partial ellipse=90:270:0.2*0.5cm and 0.77*0.5cm];
\draw [color=blue,dashed] (2*0.5,4.1) [partial ellipse=-90:90:0.2*0.5cm and 0.77*0.5cm];
\draw [color=blue] (0,6.4) [partial ellipse=90:270:0.35*0.5cm and 1*0.5cm];
\draw [color=blue,dashed] (0,6.4) [partial ellipse=-90:90:0.35*0.5cm and 1*0.5cm];
\draw [color=blue] (-2.7*0.5,6.4) [partial ellipse=90:270:0.25*0.5cm and 0.88*0.5cm];
\draw [color=blue,dashed] (-2.7*0.5,6.4) [partial ellipse=-90:90:0.25*0.5cm and 0.88*0.5cm];
\draw [color=blue] (2.7*0.5,6.4) [partial ellipse=90:270:0.25*0.5cm and 0.88*0.5cm];
\draw [color=blue,dashed] (2.7*0.5,6.4) [partial ellipse=-90:90:0.25*0.5cm and 0.88*0.5cm];
\draw [color=blue] (-1*0.5,2) [partial ellipse=90:270:0.28*0.5cm and 0.85*0.5cm];
\draw [color=blue,dashed] (-1*0.5,2) [partial ellipse=-90:90:0.28*0.5cm and 0.85*0.5cm];
\draw [color=blue] (1*0.5,2) [partial ellipse=90:270:0.28*0.5cm and 0.85*0.5cm];
\draw [color=blue,dashed] (1*0.5,2) [partial ellipse=-90:90:0.28*0.5cm and 0.85*0.5cm];
\draw [color=blue] (-1.2*0.5,9.2) [partial ellipse=90:270:0.28*0.5cm and 1*0.5cm];
\draw [color=blue,dashed] (-1.2*0.5,9.2) [partial ellipse=-90:90:0.28*0.5cm and 1*0.5cm];
\draw [color=blue] (1.2*0.5,9.2) [partial ellipse=90:270:0.28*0.5cm and 1*0.5cm];
\draw [color=blue,dashed] (1.2*0.5,9.2) [partial ellipse=-90:90:0.28*0.5cm and 1*0.5cm];
\draw [color=blue] (-3.3*0.5,9.2) [partial ellipse=90:270:0.25*0.5cm and 0.9*0.5cm];
\draw [color=blue,dashed] (-3.3*0.5,9.2) [partial ellipse=-90:90:0.25*0.5cm and 0.9*0.5cm];
\draw [color=blue] (3.3*0.5,9.2) [partial ellipse=90:270:0.25*0.5cm and 0.9*0.5cm];
\draw [color=blue,dashed] (3.3*0.5,9.2) [partial ellipse=-90:90:0.25*0.5cm and 0.9*0.5cm];
\end{tikzpicture}}
\caption{The oval-bowls $\{M_{\kappa}\}_{\kappa\in(0,1/3)}$ are noncollapsed translators in $\mathbb{R}^4$, whose level sets look like 2d-ovals in $\mathbb{R}^3$. The principal curvatures at the tip are $(\kappa,\tfrac{1-\kappa}{2},\tfrac{1-\kappa}{2})$.}\label{figure_oval_bowls}
\end{figure}
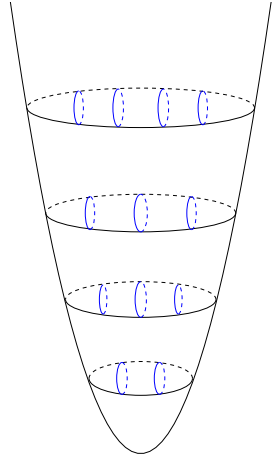

In this paper, we complete the classification of noncollapsed translators in $\mathbb{R}^4$, i.e. in the first dimension where the Bernstein property fails. A large portion of this classification has already been carried out in our prior paper \cite{CHH_translators}. However, in our prior paper a fundamental part, namely analyticity, has only been announced, but not proved. The purpose of the present paper is to establish analyticity.
We denote by $\mathcal{S}$ the space of all nontrivial suitably normalized noncollapsed translators in $\mathbb{R}^4$, namely the space of all strictly convex but not $\mathrm{SO}_3$-symmetric solutions $\phi:\mathbb{R}^3\to\mathbb{R}$ of the graphical translator equation \eqref{trans_eq_intro}, which are normalized such that $\phi(0)=0$ and $D\phi(0) = 0$ and such that their $\mathrm{SO}_2$-symmetry, c.f. \cite{Zhu}, is in the $x_2x_3$-variables. Equipping $\mathcal{S}$ with the smooth topology we consider the tip curvature map
\begin{equation}
\kappa:\mathcal{S}\to \mathbb{R},\quad \phi\mapsto \tfrac{1}{2} (\partial^2_{x_1}\phi) (0).
\end{equation}
The main result of the present paper is:

\begin{theorem}[analyticity]\label{theorem_analyticity_intro}
The space $\mathcal{S}$ is a finite-dimensional analytic variety over which the tip curvature map $\kappa:\mathcal{S}\to\mathbb{R}$ is analytic.
\end{theorem}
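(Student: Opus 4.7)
The plan is to combine the Fredholm theory for $L_\phi$ developed in this paper with a Lyapunov--Schmidt reduction, exploiting the fact that the nonlinearity in \eqref{trans_eq_intro} is real-analytic in $D\phi$. Fix $\phi_0\in\mathcal{S}$ and parametrize nearby solutions as $\phi_0+u$; then \eqref{trans_eq_intro} takes the form
\begin{equation}
\mathcal{F}(u) := L_{\phi_0} u + Q(u) = 0,
\end{equation}
where the remainder $Q(u)$ is real-analytic in $(u, Du)$ because the only non-algebraic ingredient in \eqref{trans_eq_intro} is $s\mapsto 1/\sqrt{1+s}$ evaluated on $|D\phi|^2$, which is analytic in a neighborhood of $|D\phi_0|^2$. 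By the Fredholm theory announced in the abstract, $L_{\phi_0}$ extends to a Fredholm map $X\to Y$ between the weighted function spaces built in the body of the paper, yielding a finite-dimensional kernel $K$ and a finite-dimensional complement $C$ of the range.

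I then carry out a standard Lyapunov--Schmidt reduction. Picking continuous projections $P_K:X\to K$ and $\Pi_C:Y\to C$ and writing $u=v+w$ with $v=P_K u$, the equation $\mathcal{F}(u)=0$ becomes the pair $(I-\Pi_C)\mathcal{F}(v+w)=0$ and $\Pi_C\mathcal{F}(v+w)=0$. Since $(I-\Pi_C)L_{\phi_0}:(I-P_K)X\to (I-\Pi_C)Y$ is an isomorphism and $Q$ is analytic, the analytic implicit function theorem in Banach spaces gives a unique analytic solution $w=w(v)$ of the first equation on a neighborhood $U\subset K$ of the origin. Substituting back, the nearby translators correspond to zeros of the real-analytic finite-dimensional map $\Phi:U\to C$, $v\mapsto \Pi_C\mathcal{F}(v+w(v))$. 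Restricting to the linear analytic slice of $K$ that enforces the normalizations $\phi(0)=0$, $D\phi(0)=0$, and the $x_2x_3$-axis placement of the $\mathrm{SO}_2$-symmetry then gives $\mathcal{S}$ the structure of a finite-dimensional analytic variety near $\phi_0$.

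For the analyticity of $\kappa$, note that under the chart $v\mapsto \phi_0+v+w(v)$ the tip-curvature map pulls back to $v\mapsto \tfrac{1}{2}\partial_{x_1}^2(\phi_0+v+w(v))(0)$, which is the composition of the analytic $X$-valued map $v\mapsto v+w(v)$ with the bounded linear evaluation functional $u\mapsto \tfrac{1}{2}\partial_{x_1}^2 u(0)$. The latter is continuous on $X$ since the weighted spaces built in this paper encode interior $C^{2,\alpha}$-control in the tip region, so $\kappa$ is real-analytic on $\mathcal{S}$.

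The heart of the argument---and the main work of this paper---is the Fredholm step, the obstacle being that $L_\phi$ is degenerate as $|D\phi|\to\infty$ (its coefficients $a_\phi$ and $b_\phi$ decay at infinity), so classical elliptic theory does not apply. Only a carefully engineered pair of weighted function spaces calibrated to the upper-lower and inner-outer barrier estimates, which propagate $L^\infty$-control between the tip and cylindrical regions, can yield simultaneously the Fredholm property of $L_{\phi_0}$, the continuity of the tip evaluation, and the analytic mapping property of $Q:X\to Y$. Once that input is available, the Lyapunov--Schmidt reduction and the analyticity of $\kappa$ proceed by standard functional-analytic arguments.
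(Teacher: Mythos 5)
Your outline captures the right overall strategy — Fredholm theory for $L_{\phi_0}$ plus Lyapunov--Schmidt reduction plus analyticity of the nonlinearity — and this is indeed the skeleton of the paper's proof (Theorem~\ref{theorem_analyticity}). However, there is a genuine gap in the central step, and a secondary one.

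The key obstruction you glide past is a \emph{loss of derivatives} in the nonlinearity, which makes ``the analytic implicit function theorem in Banach spaces'' inapplicable as stated. The quadratic remainder $Q_{\phi_0}$ is \emph{not} an analytic map from $\mathbb{X}^{k,\alpha}$ to $\mathbb{Y}^{k-2,\alpha}$ (the pair between which $L_{\phi_0}$ is Fredholm). What the paper proves, in Theorem~\ref{quad_err_put_together}, is that $u\mapsto \Theta[\phi_*+u]$ is analytic only as a map $B_{\mathbb{X}^{k+2,\alpha}}(0,\eps)\to \mathbb{Y}^{k-2,\alpha}$ — that is, with an extra loss of two derivatives beyond what the linear operator loses. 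The source is the barrier-type weight functions $\rho_\star,\rho_\bullet$: the quadratic-in-$u$ error must satisfy $\rho_\bullet\geq\rho_\star^2$, and bounding the product terms (e.g.\ those of the form $\tilde V_t^{-5}\tilde{\bw}_{;t}^3\tilde{\bw}_{;x}^2\tilde{\bw}_{;tt}$ that appear in Proposition~\ref{prop_quad_cyl}) in the weighted H\"older norms forces you to trade in two more derivatives of $\tilde{\bw}$. Consequently, on a \emph{single} pair of Banach spaces $(\mathbb{X}^{k,\alpha},\mathbb{Y}^{k-2,\alpha})$ the map $\mathcal F=L_{\phi_0}+Q$ is not even $C^1$, so the classical Banach-space IFT cannot be invoked to solve the auxiliary equation for $w(v)$. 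The paper circumvents this by passing to the graded Fr\'echet spaces $\mathbb{X}=\bigcap_k\mathbb{X}^{k,\alpha}$ and $\mathbb{Y}=\bigcap_k\mathbb{Y}^{k-2,\alpha}$ and applying Ekeland's implicit function theorem, which is designed precisely for this mild Nash--Moser-type situation (a single, fixed loss of derivatives). Your proof needs this replacement to be correct.

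Secondarily, the claim that ``restricting to the linear analytic slice \ldots\ gives $\mathcal{S}$ the structure of an analytic variety'' elides a nontrivial compatibility step. One must verify (i) that any small $u\in\mathbb{X}_0$ with $\Theta[\phi_*+u]=0$ actually lies in $\mathcal S$ — i.e.\ is strictly convex and not $\mathrm{SO}_3$-symmetric — which in the paper (Lemma~\ref{lemma_compatible}) is deduced by ruling out a multiplicity-two plane as tangent flow at $-\infty$ and invoking the convexity results of White and Haslhofer--Kleiner; and (ii) conversely that translators in $\mathcal S$ sufficiently close to $\phi_*$ in the smooth topology are mapped continuously into $\mathbb{X}_0$, which requires the second-order parabolic-region asymptotics of Theorem~\ref{thm_asympt_expansion_oval_bowls} together with the inner-outer estimate to show that the difference $\phi-\phi_*$ actually has finite $\mathbb{X}^{k,\alpha}$-norm. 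Without this two-way compatibility, the finite-dimensional zero set you produce is not identified with a neighborhood of $\phi_*$ in $\mathcal S$. Your treatment of the normalizations is also imprecise: the paper builds $u(0)=0$, $Du(0)=0$ into the domain space $\mathbb{X}_0$ and works equivariantly with $S^1$-symmetric functions throughout, rather than ``restricting to a slice of the kernel'' afterward. The rest of your argument, in particular the continuity of the tip-evaluation functional $u\mapsto\tfrac12\partial_{x_1}^2u(0)$ on the weighted spaces (which control the $C^{k,\alpha}$-norm on the compact cap region $\Omega_{2h_0}$), is fine.
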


Together with the main result from our prior paper \cite{CHH_translators} we thus obtain:

\begin{corollary}[classification]\label{thm_main}
Every noncollapsed translator in $\mathbb{R}^4$, up to rigid motion and scaling, is
\begin{itemize}
\item either  $\mathbb{R}\times \mathrm{Bowl}_2$,
\item or the 3d round bowl $\mathrm{Bowl}_3$,
\item or belongs to the one-parameter family of oval-bowls $\{M_{\kappa}\}_{\kappa\in (0,1/3)}$.
\end{itemize}
In particular, the oval-bowls are uniquely parametrized by the smallest principal curvature at the tip.
\end{corollary}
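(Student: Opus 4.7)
With Theorem~\ref{theorem_analyticity_intro} in hand, the corollary reduces to assembling results already established in \cite{CHH_translators}, where the classification was proved conditional on analyticity. The first step is to partition noncollapsed translators in $\mathbb{R}^4$ into three disjoint classes: those which are not strictly convex (which split off a line and, by the $\mathbb{R}^3$ classification of Wang \cite{Wang_convex, Haslhofer_bowl}, must be $\mathbb{R}\times\mathrm{Bowl}_2$); those which are strictly convex and $\mathrm{SO}_3$-symmetric (forced to be $\mathrm{Bowl}_3$); and, by the rotational symmetry result of \cite{Zhu}, those which are strictly convex with $\mathrm{SO}_2$-symmetry but not $\mathrm{SO}_3$-symmetry, which after normalization exactly populate $\mathcal{S}$.

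The second step is to analyze $\kappa:\mathcal{S}\to\mathbb{R}$. The HIMW construction \cite{HIMW} produces, for each $\kappa\in(0,1/3)$, an oval-bowl $M_\kappa\in\mathcal{S}$ with that tip curvature, so $\kappa(\mathcal{S})\supseteq(0,1/3)$; the complementary rigidity estimates from \cite{CHH_translators} pin the range down to exactly $(0,1/3)$. To upgrade this to a bijection I would run the continuity/degree argument of \cite{CHH_translators}: by Theorem~\ref{theorem_analyticity_intro}, $\mathcal{S}$ is a finite-dimensional analytic variety and $\kappa$ is analytic, so fibers of $\kappa$ are either all of an irreducible component or lower-dimensional analytic subvarieties, hence discrete. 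Combined with the properness estimates from \cite{CHH_translators} and the boundary degenerations $\kappa\to 0^+$ (to $\mathbb{R}\times\mathrm{Bowl}_2$) and $\kappa\to (1/3)^-$ (to $\mathrm{Bowl}_3$), which anchor the fiber count to $1$ near the endpoints of the parameter interval, analyticity propagates this count throughout $(0,1/3)$ and delivers injectivity.

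The main obstacle in this program is exactly what Theorem~\ref{theorem_analyticity_intro} resolves. Without an analytic structure on $\mathcal{S}$ and on $\kappa$, the continuity argument could not exclude bifurcation of new branches of translators or accumulation of distinct solutions inside a single fiber of $\kappa$; these failures could occur precisely because the coefficients $a_\phi$ and $b_\phi$ of the linearized operator $L_\phi$ degenerate as the slope tends to infinity, so standard elliptic compactness arguments break down at the asymptotic region. Once analyticity is granted, the remaining combinatorial bookkeeping is already carried out in \cite{CHH_translators}, and one concludes the uniqueness of the oval-bowl with prescribed tip curvature $\kappa\in(0,1/3)$.
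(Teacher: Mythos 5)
Your proposal correctly identifies the logical structure of the paper's (unspoken) proof: the corollary is obtained by supplying Theorem~\ref{theorem_analyticity_intro} as the missing analyticity input to the conditional classification argument already established in \cite{CHH_translators}, and your sketch of the trichotomy (non-strictly-convex, $\mathrm{SO}_3$-symmetric, $\mathcal{S}$) and of the continuity/fiber-counting argument anchored at the endpoints $\kappa\to 0^+$ and $\kappa\to(1/3)^-$ is consistent with the structure described in the introduction. One minor quibble: the inference ``lower-dimensional analytic subvarieties, hence discrete'' is not automatic in general (a proper analytic subvariety of a finite-dimensional variety can be positive-dimensional), but since you correctly defer the needed rigidity input to \cite{CHH_translators} this is not a gap in the logic, merely an imprecision of wording.
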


As a consequence of uniqueness we also obtain:

\begin{corollary}[continuity]
The oval-bowls $\{M_{\kappa}\}_{\kappa\in (0,1/3)}$ depend continuously on $\kappa$.
\end{corollary}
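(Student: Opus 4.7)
The plan is to deduce continuity from the uniqueness statement in Corollary~\ref{thm_main} via a compactness-and-limits argument. Fix $\kappa_\infty \in (0,1/3)$ and let $\kappa_n \to \kappa_\infty$ be an arbitrary sequence; it suffices to show that $M_{\kappa_n}\to M_{\kappa_\infty}$ smoothly on compact subsets of $\mathbb{R}^4$.

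First, I would extract a subsequential smooth limit. Each $M_{\kappa_n}$ is a noncollapsed translator in $\mathbb{R}^4$ with velocity $e_4$, so its mean curvature satisfies $H=\langle\nu,e_4\rangle\leq 1$, while the noncollapsing condition bounds the principal curvatures by $\alpha^{-1}H$. Hence the second fundamental forms are uniformly bounded. The normalization $\phi_n(0)=0$, $D\phi_n(0)=0$ pins each tip at the origin, and standard interior estimates for \eqref{trans_eq_intro} upgrade the $|A|$-bound to uniform $C^{k,\alpha}$ bounds on bounded neighborhoods of the origin. Arzel\`a--Ascoli then produces a subsequence $M_{\kappa_{n_k}}$ converging smoothly on compact sets to a noncollapsed convex translator $M_\infty$ passing through the origin with horizontal tangent plane and $\mathrm{SO}_2$-symmetry in $x_2x_3$.

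Next, I would identify $M_\infty$ using the classification. By Corollary~\ref{thm_main}, up to rigid motion and scaling $M_\infty$ is either $\mathbb{R}\times\mathrm{Bowl}_2$, $\mathrm{Bowl}_3$, or some oval-bowl $M_{\kappa'}$. The smallest principal curvature at the tip is preserved under smooth convergence, so it equals $\lim_k \kappa_{n_k}=\kappa_\infty\in(0,1/3)$. This value excludes $\mathbb{R}\times\mathrm{Bowl}_2$ (smallest tip curvature $0$) and $\mathrm{Bowl}_3$ (all tip curvatures $1/3$), so $M_\infty$ is an oval-bowl, and the uniqueness clause of Corollary~\ref{thm_main} forces $M_\infty=M_{\kappa_\infty}$. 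As every subsequence of $\{M_{\kappa_n}\}$ admits a further subsequence converging to the same limit $M_{\kappa_\infty}$, the full sequence converges.

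The only subtle point is ensuring that $M_\infty$ actually falls within the scope of the classification, i.e.\ that it does not degenerate into an object outside $\mathcal{S}$ enlarged by $\mathbb{R}\times\mathrm{Bowl}_2$ and $\mathrm{Bowl}_3$. A priori a smooth limit of strictly convex $\mathrm{SO}_2$-symmetric noncollapsed translators could be merely weakly convex or could split off a line. However, by the splitting theorem for weakly convex noncollapsed translators (cf.\ \cite{HaslhoferKleiner_meanconvex}), any such limit is either strictly convex, splits off a line (giving $\mathbb{R}\times\mathrm{Bowl}_2$), or is a hyperplane. A line split would force the smallest principal curvature at the tip to vanish, contradicting $\kappa_\infty>0$, and a hyperplane is ruled out by the normalization together with $H(0)=1$. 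With this observation, the continuity is a routine compactness-plus-classification conclusion, so I expect no genuine obstacle beyond invoking the already-established Corollary~\ref{thm_main}.
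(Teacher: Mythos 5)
Your proof is correct and is precisely the compactness-and-uniqueness argument the paper has in mind when it states this corollary ``as a consequence of uniqueness'' without supplying details. The ingredients you assemble are exactly the right ones: uniform $1$-noncollapsing together with $H\le 1$ gives uniform curvature bounds, the tip normalization pins the hypersurfaces at the origin so Arzel\`a--Ascoli yields a subsequential smooth limit, and the classification in Corollary~\ref{thm_main} plus continuity of the tip curvature under smooth convergence (which rules out the degenerate limits $\mathbb{R}\times\mathrm{Bowl}_2$ and $\mathrm{Bowl}_3$) identifies the limit as $M_{\kappa_\infty}$, whence the usual subsequence trick upgrades to full convergence.
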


To prove Theorem \ref{theorem_analyticity_intro} (analyticity) we develop a Fredholm theory for the linearized translator equation \eqref{lin_trans_eq_intro}. Since the ellipticity degenerates as the slope $|D\phi|$ tends to infinity, this requires sharp estimates in carefully designed weighted function spaces. This will be described in detail in the following subsections.\\

Let us conclude this overview with a comparison between our prior and present paper. Essentially, while our prior paper \cite{CHH_translators} was about the translator equation \eqref{trans_eq_intro}, the present one is about the linearized translator equation \eqref{lin_trans_eq_intro}. The linearized equation is less geometric. Hence, some key estimates that are obvious or well established in the context of \eqref{trans_eq_intro}, including in particular the avoidance principle \cite{Ilmanen_book} and the shrinker foliation from \cite{ADS1}, are not at all obvious in the context of \eqref{lin_trans_eq_intro}. To overcome this, we  have to establish novel PDE barrier estimates that act as replacements for these geometric estimates.

\bigskip

\subsection{The equation in different gauges} The linearized translator equation in graphical gauge is
\begin{equation}\label{eq_diff_gauges_graphical}
\mathrm{div}\left(\left(\frac{\delta}{\sqrt{1+|D \phi|^2}}  -\frac{D\phi\otimes D\phi}{{(1+|D \phi|^2)^{3/2}}}\right)  Du\right)+ \frac{D \phi}{(1+|D \phi|^2)^{3/2}}\cdot Du =f.
\end{equation}
The ellipticity degenerates as the slope $|D\phi|$ tends to $\infty$. To better capture this phenomenon it is useful to transform the equation into different gauges. To this end, first recall that a suitably normalized noncollapsed translator $M=\textrm{graph}(\phi)\subset \mathbb{R}^4$, thanks to the $\mathrm{SO}_2$-symmetry, can be described in terms of the cylindrical profile function $V=V(x,t)$ of the level sets $M\cap \{x_4=-t\}$, which is implicitly defined be the equation
\begin{equation}
\phi(x,V(x,t),0)=-t.
\end{equation}
We then consider the function
\begin{equation}
\bw(x,t):=-V_t(x,t)\, u(x,V(x,t),0),
\end{equation}
which we call the \emph{cylindrical variation} associated to the graphical variation $u$. This definition can be motivated geometrically by considering the one parameter family $M^\eps=\textrm{graph}(\phi-\eps u)$ and differentiating at $\eps=0$. The prefactor $-V_t>0$ geometrically comes from the horizontal projection. In a similar vein, regarding the right hand side of the equation we consider the function
\begin{equation}\label{bg_def_intro}
\bg(x,t) :=\sqrt{1+V_x^2(x,t)+V_t^2(x,t)}\, f(x,V(x,t),0),
\end{equation}
which we call the \emph{cylindrical inhomogeneity} associated to the inhomogeneity $f$. Most of the time we will actually work with the renormalized versions of these functions, specifically with
\begin{equation}
w(y,\tau):= e^{\frac{\tau}{2}}\bw\big(e^{-\frac{\tau}{2}}y,-e^{-\tau}\big)\quad\textrm{and}\quad g(y,\tau):= e^{-\frac{\tau}{2}}\bg\big(e^{-\frac{\tau}{2}}y,-e^{-\tau}\big).
\end{equation}
As we will see, in terms of $w$ and $g$ the linearized translator equation takes the form
\begin{equation}\label{eq_intro_w}
-w_\tau+\left(1-\frac{v_{y}^2}{1+ v_y^2}\right)w_{yy}-\left(\frac{y}{2}+\frac{2v_{y}v_{yy}}{(1+v_{y}^2)^2}\right) w_y+\left(1+\frac{2-v^2}{2v^2}\right)w+e^\tau \mathcal{F}w=g,
\end{equation}
where $v(y,\tau):=e^{\tau/2}V(e^{-\tau/2}y,-e^{-\tau})$ denotes the renormalized profile function. Here, one can decompose $\mathcal{F}=\alpha \partial^2_{\tau}+\beta \partial_{\tau y}+\widetilde{\mathcal{F}}$, where the linear operator $\widetilde{\mathcal{F}}$ only involves the derivatives $\partial_y$, $\partial_{y}^2$ and $\partial_\tau$, so in principle our equation of course still an elliptic PDE in $y$ and $\tau$. However, since the ellipticity, in particular the coefficient $e^{\tau}\alpha>0$, decays exponentially as $\tau\to -\infty$, the equation is more amenable to parabolic techniques, specifically techniques for the parabolic Ornstein-Uhlenbeck operator
\begin{equation}
-\partial_\tau +\mathfrak{L}=-\partial_\tau + \partial_y^2 -\tfrac{y}{2}\partial_y+1.
\end{equation}

In the tip region, namely for $v(\cdot,\tau)\leq \theta$, where $\theta>0$ is a small fixed constant, it is better to work with the inverse profile function. Specifically, denoting by $X(\cdot,t)$ the inverse function of $V(\cdot, t)$ we have
\begin{equation}
\phi(X(\upsilon,t),\upsilon,0)=-t.
\end{equation}
In a similar vein as above, we then consider the function
\begin{equation}
\bW(\upsilon,t):=-X_t(\upsilon,t)\, u(X(\upsilon,t),\upsilon,0),
\end{equation}
which we call the \emph{tip variation} associated to $u$, and the function
\begin{equation}
\bG(\upsilon,t):=\sqrt{1+X_{\upsilon}^2(\upsilon,t)+X_t^2(\upsilon,t)}\, f(X(\upsilon,t),\upsilon,0),
\end{equation}
which we call the \emph{tip inhomogeneity} associated to $f$. Most of the time we will again work with the renormalized versions of these functions, specifically with 
\begin{equation}
W(v,\tau):= e^{\frac{\tau}{2}}\bW\big(e^{-\frac{\tau}{2}}v,-e^{-\tau}\big)\quad\textrm{and}\quad G(v,\tau):= e^{-\frac{\tau}{2}}\bG\big(e^{-\frac{\tau}{2}}v,-e^{-\tau}\big).
\end{equation}
As we will see, in tip gauge the linearized translator equation takes the form
\begin{equation}\label{evolve_inhom_tip_intro}
-W_\tau+\frac{W_{vv}}{1+Y_{v}^2}+\left(\frac{1}{v}-\frac{v}{2}-2\frac{Y_{vv}Y_{v}}{(1+Y_{v}^2)^2}\right)W_v+\frac{1}{2}W+e^{\tau} {\mathcal{F}}W=G\, ,
\end{equation}
where $Y(\cdot,\tau)$ denotes the inverse function of $v(\cdot,\tau)$. In particular, due to the exponentially decaying ellipticity coefficient coming from $e^{\tau}{\mathcal{F}}W$, the equation is again more amenable to parabolic techniques.\\

The most basic solution of the (homogenous) linearized translator equation is the constant function $u=1$, which geometrically arises from shifting the translator in $x_4$-direction. For this trivial solution we have $w\sim e^{\tau}$ in the parabolic region, and $W\sim |\tau|^{1/2}e^{\tau}$ in the tip region. Another important solution is the function $u$ that arises from varying the parameter in the Hoffman-Ilmanen-Martin-White family. This is less explicit, but as a consequence of the results of the present paper one obtains $u=x_1^2-\tfrac{1}{2}x_2^2-\tfrac{1}{2}x_3^2+o(|x|^2)$ near the origin, $|w|\leq C|\tau|^{-2}$ in the parabolic region, and $|W|\leq C |\tau|^{-1/2}$ in the tip region.

\bigskip

\subsection{Estimates for the linearized translator equation}
Let $M=\mathrm{graph}(\phi)\subset \mathbb{R}^4$ be a noncollapsed translator as above. For any $h>0$, the hypersurface $M\cap \{x_4<h\}$ can be expressed as graph over a domain $\Omega_h\subset\mathbb{R}^3$. Denote by $C^{k-2,\alpha}(\Omega_h/S^1)$ the space of all $f\in C^{k-2,\alpha}(\Omega_h)$ that are $S^1$-symmetric in the $x_2x_3$-variables. Given any $h<\infty$ and $f\in C^{k-2,\alpha}(\Omega_h/S^1)$, we consider the Dirichlet problem
\begin{equation}\label{Dirichlet_intro}
    \begin{cases}
      L_{\phi}u=f & \text{on  $\Omega_h$}\\
      u=0 & \text{on $\partial \Omega_h$},\\
    \end{cases}       
\end{equation}
where $L_\phi$ denotes the linear operator from \eqref{eq_diff_gauges_graphical}.
By standard theory, for any $h<\infty$ this problem has a unique solution $u\in C^{k,\alpha}(\Omega_h/S^1)$. Our main aim, motivated by developing a Fredholm theory for $h\to \infty$, is to establish estimates that are uniform in $h$.\\

Most importantly, we have two barrier estimates that allow us to relate the size of the solution in different regions. In the following theorems, $u$ denotes a solution of the Dirichlet problem \eqref{Dirichlet_intro} with inhomogeneity $f$, and $w,W$  and $g,G$ denote the associated variations and inhomogeneities in cylindrical and tip gauge, respectively. All constants $C<\infty$ are uniform, i.e. independent of $h$.\\

Our first main barrier estimate is an upper-lower estimate, which allows us to propagate smallness of the solution at any given height $h'$ to smallness at lower heights:

\begin{theorem}[upper-lower estimate]\label{upper_lower_intro}
There exists $h_0<\infty$, such that for every $h'\in [h_0,h]$ we have
\begin{align}
\sup_{ \Omega_{h'}\setminus\Omega_{h_0}}|u| \leq \sup_{ \partial\Omega_{h'}}|u|+Ch'\sup_{ \Omega_{h'}}\frac{|f|}{H}.
\end{align}
\end{theorem}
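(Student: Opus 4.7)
The plan is to prove the estimate via a maximum-principle comparison against a carefully constructed nonnegative barrier $\psi\colon\Omega_{h'}\to\mathbb{R}$ with the three properties (i) $\psi=0$ on $\partial\Omega_{h'}$, (ii) $L_\phi\psi\leq -H$ on $\Omega_{h'}$, and (iii) $\sup_{\Omega_{h'}\setminus\Omega_{h_0}}\psi\leq Ch'$. Granted such a $\psi$, set $A:=\sup_{\partial\Omega_{h'}}|u|$ and $M:=\sup_{\Omega_{h'}}|f|/H$, and consider the comparison function $w:=u-A-M\psi$. Then $L_\phi w = f - M L_\phi\psi \geq f + MH \geq 0$ on $\Omega_{h'}$ and $w\leq 0$ on $\partial\Omega_{h'}$. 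Since $L_\phi$ has no zeroth-order term and is uniformly elliptic on the compact domain $\overline{\Omega_{h'}}$ (the ellipticity of $a_\phi$ degrades only as $|D\phi|\to\infty$, and $|D\phi|$ is bounded on any fixed $\overline{\Omega_{h'}}$), the weak maximum principle gives $w\leq 0$ on $\Omega_{h'}$, hence $u\leq A+M\psi\leq A+Ch'M$ on the annular region; the same argument applied to $-u$ yields the matching lower bound.

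The main piece of $\psi$ comes from the height function itself. Applying the divergence operator to $a_\phi D\phi = D\phi/(1+|D\phi|^2)^{3/2}$ and using the translator equation to replace $\mathrm{div}(D\phi/\sqrt{1+|D\phi|^2})$ by $1/\sqrt{1+|D\phi|^2}$, one obtains the identity
\[
L_\phi(\phi) \;=\; H \;-\; 2\,\frac{D\phi^{T} D^{2}\phi\, D\phi}{(1+|D\phi|^2)^{5/2}}.
\]
Writing $N$ for the nonnegative correction term on the right, the candidate $\psi_{\mathrm{cyl}}(x) := C_2(h'-\phi(x))$ satisfies $L_\phi\psi_{\mathrm{cyl}} = -C_2(H-N)$ together with (i) and (iii), and the requirement (ii) for $\psi_{\mathrm{cyl}}$ reduces to the pointwise bound $N/H\leq 1-1/C_2$. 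In the cylindrical region $\{\phi\geq h_0\}$, the $\mathrm{SO}_2$-symmetric noncollapsed asymptotics give $|D\phi|^2\sim 2\phi$ and $|D^{2}\phi|_{\mathrm{op}}=O(1)$, so that $N/H=O(1/\phi)$; thus for any fixed $C_2>1$ the inequality (ii) for $\psi_{\mathrm{cyl}}$ holds on $\Omega_{h'}\setminus\Omega_{h_0}$ once $h_0$ is chosen sufficiently large.

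To extend (ii) to all of $\Omega_{h'}$ we add a tip correction. Since $L_\phi$ has smooth coefficients and is uniformly elliptic on the fixed compact set $\overline{\Omega_{h_0+1}}$, standard Schauder theory solves the auxiliary Dirichlet problem $L_\phi\psi_{\mathrm{tip}}=-K$ with $\psi_{\mathrm{tip}}|_{\partial\Omega_{h_0+1}}=0$, producing a nonnegative smooth function with $C^{2}$-norm bounded independently of $h'$. After a smooth cutoff and extension by zero, the sum $\psi:=\psi_{\mathrm{cyl}}+\psi_{\mathrm{tip}}$ satisfies (i)--(iii) globally, with possibly enlarged constants. The principal obstacle is the Hessian/asymptotic input underlying $N/H\to 0$ in the cylindrical region; this is precisely where the structural results for noncollapsed $\mathrm{SO}_2$-symmetric translators in $\mathbb{R}^4$ from the earlier sections and from \cite{CHH_translators} are crucial, the quantitative closeness of each high-level set to a round $2$-sphere being the essential ingredient. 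Once that geometric input is secured, the barrier construction and the maximum-principle application are largely routine.
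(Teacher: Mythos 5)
Your overall strategy is the same as the paper's: build a global barrier and apply the comparison principle, with the cylindrical part coming from the height function $\phi$ via the identity $L_\phi\phi = H - 2A_\phi(e_4^\top,e_4^\top)$ (the paper's Lemma 4.1). The cylindrical estimate $N/H = O(\phi^{-1})$ is correct. However, there is a genuine gap in your treatment of the tip region.

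The issue is the cutoff step in ``the sum $\psi := \psi_{\mathrm{cyl}}+\psi_{\mathrm{tip}}$ satisfies (i)--(iii) globally, with possibly enlarged constants.'' Writing $\psi = \psi_{\mathrm{cyl}} + \chi\,\psi_{\mathrm{tip}}$ with a cutoff $\chi$ supported near the tip, one gets
\begin{equation*}
L_\phi(\chi\,\psi_{\mathrm{tip}}) = \chi\, L_\phi\psi_{\mathrm{tip}} + 2\, a_\phi\, D\chi\cdot D\psi_{\mathrm{tip}} + \psi_{\mathrm{tip}}\, L_\phi\chi,
\end{equation*}
and the first-order commutator term has the \emph{wrong} sign: both $D\chi$ and $D\psi_{\mathrm{tip}}$ point inward near $\partial\Omega_{h_0+1}$ (the latter by the Hopf lemma since $\psi_{\mathrm{tip}}\geq 0$ vanishes there), so $a_\phi D\chi\cdot D\psi_{\mathrm{tip}}>0$. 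Its size scales linearly with $K$, while $K$ must already be taken of size $\gtrsim C_2\,\sup_{\Omega_{h_0}} N$ to beat the worst case of $L_\phi\psi_{\mathrm{cyl}}$ at the tip (where $N$ can exceed $H$ since one only has $N\leq 2H$, not $N<H$). The spare room in the transition region is of order $C_2 H$, and since $H \sim h_0^{-1/2}$ there while the cutoff error scales with $C_2$, no choice of $C_2$, $K$, or cutoff width closes the inequality $L_\phi\psi\le -H$ across the transition annulus. Extending $\psi_{\mathrm{tip}}$ by zero instead of cutting off does not help either: because $\psi_{\mathrm{tip}}\geq 0$ interior with zero boundary values, the extension has a \emph{convex} kink, which is the wrong sense for a viscosity supersolution (there are $C^2$ test functions touching from below with positive Hessian, violating $L_\phi\psi\le -H$).

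The paper avoids gluing entirely by showing that the single globally-defined function $\phi + \log H_\phi$ is already a subsolution: after dividing by $H_\phi$, the operator identities for $\phi$ and $\log H_\phi$ combine, via a Cauchy--Schwarz completion of the square, into $H_\phi^2 - |A_\phi|^2_{g_\phi} + (1-H_\phi^2)\bigl(1 - A_\phi(e_4^\top,e_4^\top)/(H_\phi(1-H_\phi^2))\bigr)^2$. Strict convexity gives $H_\phi^2 - |A_\phi|^2_{g_\phi} > 0$ on the compact high-curvature region near the tip, and the estimate $|A_\phi(e_4^\top,e_4^\top)|\leq C H_\phi^2$ makes the square bounded below by $1/4$ where $H_\phi$ is small, yielding a uniform lower bound $\varepsilon H_\phi$ with no cutoff at all. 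If you want to salvage your construction, you should replace the ad hoc Dirichlet solution $\psi_{\mathrm{tip}}$ by this $\log H_\phi$ correction (or some other globally defined quantity with the right sign) rather than cutting off.
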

In particular, Theorem \ref{upper_lower_intro} (upper-lower estimate) serves as a substitute for the avoidance principle under mean curvature flow, which has been a key ingredient in all prior papers on translators, see e.g. \cite{Wang_convex,Haslhofer_bowl,Hershkovits_translators,BC,BC2,SpruckXiao,HIMW,Zhu,CHH_translators} among many others.
\\

Second, we have an inner-outer estimate, which allows us to propagate smallness in the parabolic region $|y|\leq \ell$ (here $\ell<\infty$ is a fixed large constant) to smallness in the intermediate and tip region. Fix an exponent $\mu\geq 0$ (we will later choose $\mu=1$, but the case $\mu=0$ is interesting as well).  Loosely speaking, our estimate says if $|w|\leq A/|\tau|^{1+\mu}$ in the parabolic region, then $|w|\leq CA(\sqrt{2}-v)/|\tau|^\mu$ in the intermediate region and $|W|\leq CA|\tau|^{1/2-\mu}$ in the tip region. More precisely, increasing $h_0$ we can assume that the asymptotics from \cite{CHH_translators} hold at all $\tau\leq\tau_0:=\log(h_0)$, and then the statement is at follows:

\begin{theorem}[inner-outer estimate]\label{inner_outer_intro}
Suppose that $A<\infty$ is a constant such that
\begin{equation}\label{lower_bd_basic_sup_intro}
\sup_{\tau\in [-\log(h)+1,\tau_0]}  |\tau|^{1+\mu}|w(\ell,\tau)|+ \sup_{\tau\in [-\log (h),-\log(h)+1]}  |\tau|^{1/2+\mu}|w(\ell,\tau)| +
 \sup_{x\in \partial \Omega_{h_0}}|u(x)|\leq A,
\end{equation}
and suppose that for all $\tau\in [-\log (h),\tau_0]$ we have
\begin{equation}\label{g_growth_basic_sup_intro}
\sup_{y \in \left[\ell,Y(\theta,\tau)\right]}    (\sqrt{2}-v)^{-2}|g(y,\tau)| +
\sup_{v\leq \theta} \left(|\tau|^{1/2}+\frac{1}{v^3}\min\left(1,v^2|\tau|/\ell^2\right)\right)^{-1} |G(v,\tau)| \leq A|\tau|^{-\mu}.
\end{equation}
Then, for all $\tau\in [-\log (h)^{1/2},\tau_0]$ we get
\begin{equation}
\sup_{y \in \left[\ell,Y(\theta,\tau)\right]}  (\sqrt{2}-v)^{-1}|w(y,\tau)| +
\sup_{v\leq \theta} |\tau|^{-1/2} |W(v,\tau)| \leq CA|\tau|^{-\mu}.
\end{equation}
\end{theorem}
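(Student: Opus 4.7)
The approach is a parabolic comparison argument: construct explicit positive supersolutions $\bar w$ and $\bar W$ for the linearized equations \eqref{eq_intro_w} and \eqref{evolve_inhom_tip_intro} in the intermediate and tip regions, respectively, glue them compatibly at the interface $v=\theta$, verify that they dominate $\pm w$ and $\pm W$ on the parabolic boundary of each region, and then conclude $|w|\leq \bar w$, $|W|\leq \bar W$ by the maximum principle. Since the coefficient of $e^\tau\mathcal{F}$ is exponentially small, the operators essentially behave like parabolic Ornstein--Uhlenbeck operators perturbed by geometrically meaningful lower-order terms, which makes them amenable to barrier analysis. The whole work lies in designing the right $(y,\tau)$-profile of the barriers so that the boundary data on $y=\ell$, the interface value at $v=\theta$, the initial strip at $\tau=-\log h$, and the inhomogeneities $g$, $G$ are all dominated with the asserted exponents in $|\tau|$.

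In the intermediate region $\{(y,\tau):\ell\leq y\leq Y(\theta,\tau)\}$, I would try the ansatz
\begin{equation*}
\bar w(y,\tau) := C_{1}A|\tau|^{-\mu}\bigl(\sqrt{2}-v(y,\tau)\bigr),
\end{equation*}
whose shape is dictated by the vanishing of the zeroth-order coefficient $1+(2-v^2)/(2v^2)$ of \eqref{eq_intro_w} at the ADS-value $v=\sqrt{2}$. Using the equation satisfied by $v$ and the asymptotic expansion $\sqrt{2}-v(y,\tau)\sim (y^2-2)/(2\sqrt{2}|\tau|)$ from \cite{CHH_translators}, one verifies that $\bar w$ is a supersolution modulo an error of order $A|\tau|^{-\mu}(\sqrt{2}-v)^2$, which absorbs the inhomogeneity $g$ by the first half of \eqref{g_growth_basic_sup_intro}. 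At $y=\ell$ the same expansion gives $\bar w(\ell,\tau)\geq cC_{1}A(\ell^2-2)|\tau|^{-1-\mu}\geq |w(\ell,\tau)|$ for $C_{1}$ large.

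In the tip region $\{v\leq \theta\}$ I would try
\begin{equation*}
\bar W(v,\tau) := C_{2}A|\tau|^{-\mu}\Bigl(|\tau|^{1/2}+\tfrac{1}{v^{3}}\min(1,v^{2}|\tau|/\ell^{2})\Bigr),
\end{equation*}
whose profile matches the hypothesis \eqref{g_growth_basic_sup_intro} on $G$ term-by-term. A direct computation in \eqref{evolve_inhom_tip_intro}, using the bowl-type asymptotics of $Y(\cdot,\tau)$ as $v\to 0$ from \cite{CHH_translators}, shows that $\bar W$ is a supersolution modulo $G$. At the interface $v=\theta$ the two barriers are compared via the gauge conversion $\bW=v_{y}\bw$, which is uniformly bounded above and below near $v=\theta$; the slack between the intermediate estimate $\bar w\sim C A|\tau|^{-\mu}$ and the tip estimate $\bar W\sim C A|\tau|^{1/2-\mu}$ comfortably accommodates this change of variable. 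The third hypothesis term $\sup_{\partial\Omega_{h_{0}}}|u|\leq A$ plays the role of the boundary condition at $\tau=\tau_{0}$.

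The main obstacle will be the treatment of the initial strip $\tau\in[-\log h,-\log h+1]$, where only the weaker control $|w(\ell,\tau)|\leq A|\tau|^{-1/2-\mu}$ is available. A naive application of the above barriers on the full interval $[-\log h,\tau_{0}]$ would force $C_{1}$ to depend on $h$. The idea is to start with an enlarged barrier carrying an extra $|\tau|^{1/2}$ factor on the initial strip, apply comparison on $[-\log h,\tau_{0}]$ with this enlargement, and then use the exponential decay of the Ornstein--Uhlenbeck semigroup generated by $\mathfrak{L}$ to argue that after time $\sim (\log h)^{1/2}$ the enlargement has dissipated. This is precisely what accounts for the stated range $\tau\in[-\log(h)^{1/2},\tau_{0}]$. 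Quantifying this smoothing in the presence of the degenerate, geometry-dependent coefficients, and tracking how it interacts with the weight $(\sqrt{2}-v)$ so that no $h$-dependence leaks into $C$, will be the most delicate part of the argument.
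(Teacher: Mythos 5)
Your overall strategy --- construct explicit positive supersolutions in the cylindrical and tip regions, glue them at $v=\theta$, apply a comparison principle, and use a time-dependent weight to absorb the weaker control on the initial strip $\tau\in[-\log h,-\log h+1]$ --- matches the paper's (Proposition~\ref{basic_super_sol}, Corollary~\ref{weighted_super_sol}, Theorem~\ref{linear_sup_sol_est}). However, your proposed barriers are wrong. In the intermediate region, $\bar w=C_1A|\tau|^{-\mu}(\sqrt2-v)$ is a \emph{sub}solution of $L_{\cyl}$, not a supersolution: from the identities $L_{\cyl}v=-\tfrac{2v_y^2v_{yy}}{(1+v_y^2)^2}+\tfrac2v+O(e^\tau)$ and $L_{\cyl}1=\tfrac12+\tfrac1{v^2}+O(e^\tau)$ one computes $L_{\cyl}(\sqrt2-v)=\tfrac{\sqrt2(v-\sqrt2)^2}{2v^2}+\tfrac{2v_y^2v_{yy}}{(1+v_y^2)^2}+O(e^\tau)$, and the leading term is \emph{positive}, so the comparison goes the wrong way. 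The supersolution that works is $b_1=v^{-1}-2^{-1/2}=(\sqrt2-v)/(\sqrt2\,v)$; the extra $v^{-1}$ factor leaves the magnitude comparable on the intermediate region but flips the sign, yielding $L_{\cyl}b_1=-2^{-1/2}b_1^2+E$ with a small error. Your stated motivation is also false: the zeroth-order coefficient $1+\tfrac{2-v^2}{2v^2}=\tfrac12+\tfrac1{v^2}$ equals $1$ (not $0$) at $v=\sqrt2$, this being precisely the unstable eigenvalue of $\mathfrak L$ on constants; the correct driving fact is the exact algebraic cancellation in $L_{\cyl}(v^{-1})$.

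Your tip ansatz $\bar W=C_2A|\tau|^{-\mu}\bigl(|\tau|^{1/2}+v^{-3}\min(1,v^2|\tau|/\ell^2)\bigr)$ conflates the supersolution's own profile with the required size of its $L_{\tip}$-image. For $v\lesssim\ell|\tau|^{-1/2}$ this $\bar W$ is $\sim A|\tau|^{1-\mu}/(v\ell^2)$, which blows up as $v\to0$, so a comparison against it could only yield a bound on $W$ far weaker than the uniform $|W|\leq CA|\tau|^{1/2-\mu}$ the theorem asserts. What is needed --- and what the paper builds in Proposition~\ref{basic_super_sol} from $b_2=|\tau|^{1/2}|v_y|-\Gamma v+100|v_y|\min\{\cdots\}$ --- is a barrier $B$ that is \emph{itself} uniformly $\leq C|\tau|^{1/2}$ while $L_{\tip}B\leq -\bigl(|\tau|^{1/2}+v^{-3}\min(1,v^2|\tau|/\ell^2)\bigr)$, so the $G$-bound is absorbed without inflating the conclusion. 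Designing such a $B$, and verifying it dominates the transition and boundary terms after multiplication by the weight $\kappa(\tau)|\tau|^{-\mu}$, is the real technical content of the proof, and your sketch passes over it.
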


In particular, Theorem \ref{inner_outer_intro} (inner-outer estimate) serves as a substitute for the shrinker foliation from \cite{ADS1}, which has been crucial for propagating information from the parabolic region to the intermediate and tip region, see e.g. \cite{ADS2,BC,BC2,CHH,CHHW,CHH_blowdown,CHH_translators,DH_shape,DuZhu,CDDHS,CH_R4,CDZ}.\\

In addition to these two barrier estimates, the other key estimates are the energy estimates and the Schauder estimates, which will be discussed in detail in Section \ref{energy_est_sec} and Section \ref{Holder_sec}, respectively.\\

Regarding the energy estimates, for now let us just mention that, under certain orthogonality conditions, combining the energy estimates in the cylindrical and tip region we obtain a decay estimate of the form
\begin{multline}
\||\tau|^{1+\mu}\fp_0w_{\cC}\|_{\fH,\infty}+\||\tau|^{2+\mu} (w_\cC-\fp_0(w_{\cC}))\|_{\cD,\infty} +\| |\tau|^{2+\mu} W_\cT \|_{2,\infty}\\
 \leq   C\left( \|w\|_{C^2_{\exp}(\cC)}+\|W\|_{C^2_{\exp}(\cT)}+\| |\tau|^{2+\mu}g_{\cC}\|_{\mathcal{D}^{\ast},\infty}+\| |\tau|^{1+\mu} G_\cT \|_{2,\infty}+\sup_{\Omega_h} \frac{|f|}{H_{\phi}}\right),
\end{multline}
where the various Gaussian norms are defined as usual. Except for the $\tau$-weights and the new terms coming from the inhomogeneity, this estimate is similar to the decay estimate for translators from \cite{CHH_translators}.\\

Regarding the Schauder estimates, for now let us just mention that because of the degenerating ellipticity one has to work, roughly speaking, in neighborhoods that are much longer in $x_4$-direction compared to the other directions. The trick to do this efficiently is to introduce the functions
\begin{equation}\label{unren_tilde_quant_intro}
\tilde{\bw}(x,s,t)={\bw}(x,s+t), \qquad {\tilde{\bg}}(x,s,t)={\bw}(x,s+t),
\end{equation}
This description is of course redundant. However, viewing $s$ as a spatial variable and $t$ as a time variable is the key to establish sharp Schauder estimates in our degenerating setting. In particular, this yields
\begin{equation}\label{Holder_W_away_intro}
\|\tilde{\bw}\|_{C^{k,\alpha}_H(P_{r/2}(x,t))}\leq C \left( \|\tilde{\bw}\|_{C^{0}_H(P_r(x,t))}+ \|\tilde{\bg}\|_{C^{k-2,\alpha,(2)}_H(P_r(x,t))}\right),
\end{equation}
where the norms are parabolic H\"older norms that use the mean curvature as a weight, and $r\sim H^{-1}$. Of course, since $\tilde{\bw}_s=\tilde{\bw}_t$ these parabolic bounds are actually some elliptic bounds (rather nonstandard, specifically anisotropic and with different behavior at small and large scales) in disguise. 
Furthermore, we have an $L^2\to L^\infty$ estimate,  and of course we have corresponding estimates for the tip variation as well.

\medskip

\subsection{Fredholm theory and nonlinear theory} We then package together our estimates to establish a Fredholm theory and nonlinear theory. To this end, we introduce suitable weighted H\"older norms, which capture the size in the cylindrical region and the soliton region defined by 
\begin{equation}
\mathcal{C}=\left\{ (x_1,t)\; \bigg{|}\; -t\geq h_0,\;V(x_1,t) \geq \ell\sqrt{\tfrac{|t|}{\log |t|}} \right\} ,\quad
\mathcal{S}=\left\{ (x_2,t)\; \bigg{|}\; -t\geq h_0,\; x_2 \leq \ell\sqrt{\tfrac{|t|}{\log|t|}} \right\},
\end{equation}
as well as the cap region $\{-t\leq 2h_0 \}$. Specifically, recalling that we denote by $\bw$ and $\bW$ the cylindrical and tip variation associated to $u$, and choosing $r\sim H^{-1}$, we introduce the domain H\"older norm
\begin{align}\label{Hold_def_intro}
\|u\|_{C^{k,\alpha}_{\star}(\mathbb{R}^3/S^1)}:=\|u\|_{C^{k,\alpha}(\Omega_{2h_0})}+\sup_{(x_1,t)\in\mathcal{C}}\frac{1}{\rho_\star(x_1,t)}\|\tilde{\bw}\|_{C_H^{k,\alpha}\left(P_{r}\left(x_1,t\right)\right)}+\sup_{(x_2,t)\in\mathcal{S}}\|\tilde{\bW}\|_{C_H^{k,\alpha}\left(Q_r\left(x_2,t\right)\right)}\, ,
\end{align}
where motivated by the inner-outer estimate we work with the weight function
\begin{equation}\label{domain_weight_intro}
\rho_\star(x,t):=\left\{\begin{array}{ll}
      \frac{1}{\log|t|}  \left( \sqrt{2}+\frac{10}{\log|t|}-\frac{V(x,t)}{\sqrt{|t|}}\right) & \text{if } V(x,t)\geq \theta \sqrt{|t|}\\
        \frac{1}{\log|t|} \frac{|t|}{V(x,t)^2} &  \text{if } V(x,t)< \theta \sqrt{|t|}.
        \end{array}\right.
\end{equation}
Similarly, recalling that we denote by $\bg$ and $\bG$ the cylindrical tip inhomogeneity associated to $f$, and choosing $r\sim H^{-1}$ as above, we introduce the target H\"older norm
\begin{align}\label{im_Hold_def_intro}
\|f\|_{C^{k-2,\alpha}_{\bullet}(\mathbb{R}^3/S^1)}:=\|f\|_{C^{k-2,\alpha}(\Omega_{2h_0})}+\sup_{(x_1,t)\in\mathcal{C}}\frac{1}{\rho_\bullet(x_1,t)}\|\tilde{\bg}\|_{C_H^{k-2,\alpha,(2)}\left(P_{r}\left(x_1,t\right)\right)}+\sup_{(x_2,t)\in\mathcal{S}}\|\tilde{\bG}\|_{C_H^{k-2,\alpha,(2)}\left(Q_r\left(x_2,t\right)\right)}\, ,
\end{align}
where again motivated by the inner-outer estimate we work with the weight function
\begin{equation}\label{target_weight_intro}
\rho_\bullet(x,t):=\left\{\begin{array}{ll}
         \frac{1}{\log|t|}\Big(\sqrt{2}+\frac{10}{\log|t|}-\frac{V(x,t)}{\sqrt{|t|}}\Big)^{2}  & \text{if } V(x,t)\geq \theta \sqrt{|t|}\\
        \frac{1}{\log|t|}+\frac{1}{(\log|t|)^{3/2}}\frac{|t|^{3/2}}{V(x,t)^3} &  \text{if } V(x,t)< \theta \sqrt{|t|}.
        \end{array}\right.
\end{equation}

Now, fixing a real number $\alpha\in (0,1)$, for any integer $k\geq 4$ we consider the Banach space norms
\begin{equation}
\|u\|_{\mathbb{X}^{k,\alpha}(\mathbb{R}^3/S^1)}:=\|u\|_{C^{k,\alpha}_{\star}(\mathbb{R}^3/S^1)}+ \|L_\phi u\|_{C^{k-2,\alpha}_{\bullet}(\mathbb{R}^3/S^1)},\qquad
\|f\|_{\mathbb{Y}^{k-2,\alpha}(\mathbb{R}^3/S^1)}:=\|f\|_{C^{k-2,\alpha}_{\bullet}(\mathbb{R}^3/S^1)},
\end{equation}
and show that the linearized translator operator $L_\phi$ is Fredholm:

\begin{theorem}[Fredholm property]\label{Fredholm_theorem_intro}
The map $L_\phi:\mathbb{X}^{k,\alpha}(\mathbb{R}^3/S^1)\rightarrow \mathbb{Y}^{k-2,\alpha}(\mathbb{R}^3/S^1)$ is Fredholm.
\end{theorem}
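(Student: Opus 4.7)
The plan is to derive the Fredholm property from a single \emph{semi-Fredholm estimate}
\begin{equation*}
\|u\|_{\mathbb{X}^{k,\alpha}(\mathbb{R}^3/S^1)} \;\leq\; C\,\|L_\phi u\|_{\mathbb{Y}^{k-2,\alpha}(\mathbb{R}^3/S^1)} \;+\; C\,\|u\|_{C^0(\Omega_{h_1})}
\end{equation*}
at some fixed height $h_1<\infty$, combined with Arzel\`a--Ascoli and approximation by the Dirichlet solutions of \eqref{Dirichlet_intro} on $\Omega_h$. Continuity of $L_\phi:\mathbb{X}^{k,\alpha}\to\mathbb{Y}^{k-2,\alpha}$ is built into the definition of the norms, so the real task is (i) the semi-Fredholm inequality above, and (ii) the deduction of finite-dimensional kernel, closed range, and finite-dimensional cokernel from it.

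I would prove (i) by packaging together the four estimates emphasized above in matching weights. The upper-lower estimate, Theorem \ref{upper_lower_intro}, controls $\sup|u|$ on $\Omega_{h'}\setminus\Omega_{h_0}$ by boundary data at $\partial\Omega_{h'}$ plus a weighted integral of $f$, the latter being absorbed by the $\rho_\bullet$-factor of the $\mathbb{Y}$-norm. The inner-outer estimate, Theorem \ref{inner_outer_intro} with $\mu=1$, then converts parabolic-region control on $w(\ell,\tau)$ into exactly the weighted decay $(\sqrt{2}-v)|\tau|^{-1}$ on the intermediate region and $|\tau|^{-1/2}$ on the tip, which matches the weight $\rho_\star$ baked into $\|\cdot\|_{C^{k,\alpha}_\star}$. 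The energy estimates in $\cC$ and $\cT$ provide the $|\tau|^{-(2+\mu)}$-decay of the high cylindrical modes $w_{\cC}-\fp_0(w_\cC)$ and of $W_{\cT}$ needed to drive the $1/\log|t|$ prefactors in $\rho_\star$. Finally, the anisotropic Schauder bounds of the form \eqref{Holder_W_away_intro}, at scale $r\sim H^{-1}$, upgrade sup and energy bounds to the full $C^{k,\alpha}_H$-control appearing in the definition of $\|\cdot\|_{C^{k,\alpha}_\star}$; together with standard interior Schauder in the cap region $\{-t\leq 2h_0\}$ this covers all three regions. The parabolic-region datum $w(\ell,\tau)$ that is not explicitly controlled by $L_\phi u$ is absorbed into $\|u\|_{C^0(\Omega_{h_1})}$, with $h_1$ depending on $\ell$ and the $h_0$ of Theorem \ref{upper_lower_intro}.

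From the semi-Fredholm estimate, the kernel is finite-dimensional: its unit ball in $\mathbb{X}^{k,\alpha}$ is bounded in $C^{k,\alpha}(\Omega_{h_1})$, precompact in $C^0(\Omega_{h_1})$ by Arzel\`a--Ascoli, and hence, feeding back into the estimate, precompact in $\mathbb{X}^{k,\alpha}$. Closed range follows by the usual compactness/contradiction argument on any closed complement of $\ker L_\phi$. For finite codimension I would use the Dirichlet solutions as an approximate right inverse: given $f\in\mathbb{Y}^{k-2,\alpha}$, let $u_h$ solve \eqref{Dirichlet_intro}. Since $u_h\equiv 0$ on $\partial\Omega_h$, the boundary input in the upper-lower estimate is trivial and the semi-Fredholm estimate gives $\|u_h\|_{\mathbb{X}}\leq C\|f\|_{\mathbb{Y}}+C\|u_h\|_{C^0(\Omega_{h_1})}$. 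If $\sup_h\|u_h\|_{C^0(\Omega_{h_1})}<\infty$, Arzel\`a--Ascoli yields a subsequential limit $u_\infty\in\mathbb{X}^{k,\alpha}$ with $L_\phi u_\infty=f$; otherwise the rescaled sequence $v_h:=u_h/\|u_h\|_{C^0(\Omega_{h_1})}$ subconverges to a nonzero $v_\infty\in\ker L_\phi$, and the pairing $\int u_h\,L_\phi^\ast\psi=\int f\,\psi$ against a compactly supported test function $\psi$ (non-orthogonal to $v_\infty$) identifies a linear obstruction on $f$ dual to $v_\infty$. Because $\ker L_\phi$ is finite-dimensional, the obstruction space is finite-dimensional, proving finite codimension.

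The principal obstacle will be the semi-Fredholm estimate itself. The norms $\mathbb{X}^{k,\alpha}$ and $\mathbb{Y}^{k-2,\alpha}$ are calibrated to three geometric regions (cap, cylindrical, tip) with distinct degenerations of the ellipticity, and the four input estimates must mesh with consistently matched weights $\rho_\star,\rho_\bullet$ and scales $r\sim H^{-1}$. In particular, wiring the inner-outer estimate (which operates at the level of $L^\infty$-information on $w(\ell,\tau)$) together with the anisotropic Schauder theory and the energy decay (controlling $w_{\cC}-\fp_0(w_\cC)$ and $W_{\cT}$ at rate $|\tau|^{-2-\mu}$ under orthogonality) requires careful bookkeeping so that nothing beyond $\|u\|_{C^0(\Omega_{h_1})}$ escapes the control by $\|L_\phi u\|_{\mathbb{Y}}$, and so that no powers of $|\tau|$ or $\log|t|$ are lost in the transitions between regions.
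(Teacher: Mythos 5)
Your plan rests on a semi-Fredholm estimate $\|u\|_{\mathbb{X}^{k,\alpha}} \leq C\|L_\phi u\|_{\mathbb{Y}^{k-2,\alpha}} + C\|u\|_{C^0(\Omega_{h_1})}$, but the step where you absorb the uncontrolled parabolic data into $\|u\|_{C^0(\Omega_{h_1})}$ does not work as stated. The datum $w(\ell,\tau)$ in the hypothesis of the inner--outer estimate samples $u$ at points $(e^{-\tau/2}\ell, e^{-\tau/2}v(\ell,\tau), 0)$ that leave every fixed $\Omega_{h_1}$ as $\tau\to-\infty$, so it is \emph{not} dominated by $\|u\|_{C^0(\Omega_{h_1})}$. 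What can actually be absorbed into a compact-domain quantity is the spectral data $\fp_+(w_\cC(\tau_0))$ and $\fp_0(w_\cC(\tau_0))$ at the single fixed time $\tau_0$; the paper's uniform estimate (Theorem \ref{un_est_Xh}) is proven precisely under the orthogonality conditions $\fp_+(w_\cC(\tau_0))=\fp_0(w_\cC(\tau_0))=0$, which are what let the energy estimate (Proposition \ref{energy_cyl_inhom}) drop the $p^+$ boundary term at $\tau_0$ and the decay estimate (Theorem \ref{main_decay_lin_inhom}) integrate the neutral coefficient from zero. To obtain your inequality you would have to re-derive those estimates tracking the extra boundary contributions $\|\fp_+(w_\cC(\tau_0))\|_{\fH}$ and $|\langle w_\cC(\tau_0),\psi_0\rangle|$, which you never isolate.

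The more serious gap is in the finite-codimension step. A semi-Fredholm estimate with a compact lower-order term yields finite-dimensional kernel and closed range, but not finite cokernel. Your blow-up argument produces a nonzero $v_\infty\in\ker L_\phi$, not an element of the kernel of the adjoint, and since $L_\phi$ is not self-adjoint between these weighted spaces there is no reason the cokernel should be identified with $\ker L_\phi$. The asserted pairing of $u_h$ against a compactly supported test function $\psi$ does not define a bounded linear functional on $\mathbb{Y}^{k-2,\alpha}$ annihilating the range: the boundary terms in the integration by parts on $\Omega_h$ are uncontrolled, and the limiting $v_\infty$ depends on the subsequence and is not linear in $f$. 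The paper sidesteps all duality issues via the obstruction map $\mathcal{O}(u):=(\fp_+(w_\cC(\tau_0)),\fp_0(w_\cC(\tau_0)))$ with values in a $3$-dimensional space. For each Dirichlet height $j$, the subspace $\mathbb{V}_j$ of $f$ with $\mathcal{O}(u_{f,j})=0$ has codimension $3$; given any $4$-dimensional $\mathbb{W}\subseteq\mathbb{Y}^{k-2,\alpha}$, one picks unit-norm $f_j\in\mathbb{W}\cap\mathbb{V}_j$, applies Theorem \ref{un_est_Xh} to $u_{f_j,j}$, and passes to a subsequential limit to exhibit a nonzero element of $\mathbb{W}\cap\mathrm{Range}(L_\phi)$, giving a cokernel of dimension at most $3$; the kernel bound comes from $\ker L_\phi\cap\ker\mathcal{O}=0$, proven via Corollary \ref{main_decay_cor}. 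You would need some analogue of this obstruction map to close your argument.
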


To prove this, we establish uniform Fredholm estimates for solutions of the Dirichlet problem \eqref{Dirichlet_intro}. Specifically, we show that for any sufficiently large $h<\infty$ and any solution $u$ of the Dirichlet problem that satisfies a suitable orthogonality condition with $3$ problematic eigenfunctions, we have
\begin{equation}
\|u\|_{\mathbb{X}^{k,\alpha}(\Omega_h/S^1)} \leq C\|f\|_{\mathbb{Y}^{k-2,\alpha}(\Omega_h/S^1)}.
\end{equation}
Using this estimate it is easy to conclude that $L_\phi$ is Fredholm with cokernel of dimension at most $3$.\\

Finally, we establish the nonlinear mapping properties of the translator operator
\begin{equation}
\Theta[\phi]=\mathrm{div}\left(\frac{D \phi}{\sqrt{1+|D \phi|^2}}\right)-\frac{1}{\sqrt{1+|D \phi|^2}}.
\end{equation}

\begin{theorem}[nonlinear mapping properties]\label{quad_err_put_together_intro}
There exists $\eps=\eps(\phi)>0$ such that the map\footnote{Note that we lose two derivatives in the nonlinear theory, but this can be easily dealt with using Frechet spaces.}
\begin{equation}
B_{\mathbb{X}^{k+2,\alpha}(\mathbb{R}^3/S^1)}(0,\eps)\to \mathbb{Y}^{k-2,\alpha}(\mathbb{R}^3/S^1),\qquad u\mapsto \Theta[\phi+u]
\end{equation}
is analytic, and its derivative is given by $L_{\phi+u}$.
\end{theorem}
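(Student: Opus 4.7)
The plan is to expand $\Theta[\phi+u]$ into a power series in $u$ centered at $0$. Since $\Theta[\phi]=0$, the expansion takes the form
\begin{equation}
\Theta[\phi+u]=L_\phi u+\sum_{n\geq 2} N_n(\underbrace{u,\ldots,u}_{n}),
\end{equation}
where each $N_n$ is a symmetric $n$-linear operator, obtained by Taylor expanding the vector field $F(p):=p/\sqrt{1+|p|^2}$ and the scalar $G(p):=1/\sqrt{1+|p|^2}$ about $p=D\phi$ and then applying a divergence. Because $F$ and $G$ are real analytic with radius of analyticity of order $\sqrt{1+|D\phi|^2}$ at every real point, the pointwise series converges as soon as $|Du|/\sqrt{1+|D\phi|^2}$ is uniformly small---a condition built into the $\mathbb{X}^{k+2,\alpha}$ norm once $\eps$ is taken sufficiently small. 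Schematically, each $N_n(u,\ldots,u)$ is a finite sum of terms of type $P_n(D\phi,D^2\phi)(Du)^{n-1}D^2u$, plus lower-order pieces, with $P_n$ a rational tensor bounded by a negative power of $(1+|D\phi|^2)$.

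The heart of the argument is the multilinear estimate
\begin{equation}
\|N_n[u]\|_{\mathbb{Y}^{k-2,\alpha}}\leq C^n\,\|u\|_{\mathbb{X}^{k+2,\alpha}}^{n}\,,
\end{equation}
for a constant $C$ independent of $n\geq 2$ and of $u$. I would establish this gauge by gauge. In the cap region $\{-t\leq 2h_0\}$ it is a consequence of the algebra property of $C^{k-2,\alpha}$ together with interior Schauder estimates; this is also where the loss of derivatives noted in the footnote appears. In the cylindrical region, passing to the variable $\tilde{\bw}$, the nonlinear contribution has the same schematic shape as the $e^\tau\mathcal{F}w$ term in \eqref{eq_intro_w} but with higher powers of the derivatives of $w$. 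Reading off the identity $\rho_\bullet=(\log|t|)\,\rho_\star^{2}$ from \eqref{domain_weight_intro}--\eqref{target_weight_intro}, one obtains pointwise
\begin{equation}
\rho_\star^{\,n}\lesssim (\log|t|)^{-1}\,\rho_\bullet\,\rho_\star^{\,n-2},
\end{equation}
which, upgraded to full $C^{k,\alpha}_H$ control via the Schauder estimate \eqref{Holder_W_away_intro}, is exactly what the multilinear bound requires. A parallel computation with $\tilde{\bW}$ handles the tip region using the second lines of \eqref{domain_weight_intro}--\eqref{target_weight_intro} together with the $e^\tau\mathcal{F}W$ term in \eqref{evolve_inhom_tip_intro}.

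Given the multilinear estimate, the series converges absolutely in $\mathbb{Y}^{k-2,\alpha}$ as soon as $\|u\|_{\mathbb{X}^{k+2,\alpha}}<\eps:=1/(2C)$, which is the definition of analyticity on $B_{\mathbb{X}^{k+2,\alpha}}(0,\eps)$. After possibly shrinking $\eps$, the graph of $\phi+u$ remains a noncollapsed convex entire graph with the same asymptotic structure as $\phi$, so that $L_{\phi+u}$ is defined by the same formula as $L_\phi$. Differentiating the series term by term and collecting the linear-in-$v$ contribution gives
\begin{equation}
D\Theta[\phi+u](v)=L_\phi v+\sum_{n\geq 2} n\,N_n(u,\ldots,u,v),
\end{equation}
which is precisely the first-order Taylor coefficient of $\eps\mapsto\Theta[\phi+u+\eps v]$ and therefore equals $L_{\phi+u}v$ by definition of the linearization at $\phi+u$.

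The main obstacle is the weighted multilinear estimate in the second paragraph: one has to track carefully how $\rho_\star$, $\rho_\bullet$, the mean-curvature weight appearing in the Schauder norms $C^{k,\alpha}_H$, and the exponential gauge-conversion factors $e^{\pm\tau/2}$ combine under multiplication, and verify that a single universal constant $C$ bounds all of them simultaneously. This is of the same flavor as the bookkeeping underlying the inner--outer estimate (Theorem \ref{inner_outer_intro}), since $\rho_\star$ and $\rho_\bullet$ were designed precisely so that the Ornstein--Uhlenbeck piece dominates the degenerate correction $e^\tau\mathcal{F}$ and, by the same token, all of its higher Taylor analogues.
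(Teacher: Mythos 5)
Your approach differs structurally from the paper's. The paper does not attempt a full power series $\sum_n N_n(u,\dots,u)$ with a uniform multilinear bound $\|N_n\|\leq C^n$; instead it computes the exact second-and-higher order remainder $Q_{\phi_0}[u]=\Theta[\phi_0+u]-\Theta[\phi_0]-L_{\phi_0}u$ and proves the single estimate $\|Q_{\phi_0}[u]\|_{\mathbb{Y}^{k-2,\alpha}}\leq C\|u\|_{\mathbb{X}^{k+2,\alpha}}^2$, uniformly in the complexified base point $\phi_0=\phi_\ast+u_0$ over a ball. In a complex Banach space, such a uniform quadratic remainder forces $u\mapsto\Theta[\phi+u]$ to be $\mathbb{C}$-Fr\'echet differentiable, hence holomorphic, hence analytic — so the $n$-th order estimates for all $n$ are never needed. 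Your power-series route is also in principle viable, but it asks for a stronger statement than the problem requires.

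There is a genuine gap in your argument as stated, located exactly where you say you would ``pass to the variable $\tilde{\bw}$.'' The norms $\mathbb{X}^{k,\alpha}$ and $\mathbb{Y}^{k-2,\alpha}$ away from the cap are defined through the cylindrical and tip variations $\tilde{\bw},\tilde{\bW}$, and the relation between these and the graphical perturbation $u$ is itself nonlinear: the profile function of $\mathrm{graph}(\phi_0+u)$ is not $V+\bw$ but is obtained from the implicit relation $V(x,t)=V^u\big(x,\,t-\tfrac{\bw^u}{V_t}(x,t)\big)$, i.e.\ an $u$-dependent shift of the time variable. Expanding the translator operator in cylindrical gauge therefore produces rational expressions with denominators $I[\bw]=1-\bw_{;t}/V_t$ and shifted evaluation points (this is the content of the paper's Lemma~\ref{lemma_der_cyl_prof} and Proposition~\ref{prop_quad_cyl}); these terms are \emph{not} captured by ``the same schematic shape as $e^\tau\mathcal{F}w$ with higher powers,'' because $e^\tau\mathcal{F}w$ accounts only for the degeneration of ellipticity, not for the nonlinear gauge change. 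Until the quadratic remainder is re-expressed explicitly in terms of $\tilde{\bw}$ and its $(x,s,t)$-derivatives, the weighted H\"older product estimates that form the heart of your second paragraph cannot actually be invoked. As a secondary point, the ``identity'' $\rho_\bullet=(\log|t|)\rho_\star^2$ is valid only when $V(x,t)\geq\theta\sqrt{|t|}$; in the tip/soliton part of the cylindrical region only the inequality $\rho_\bullet\geq\rho_\star^2$ holds, and that is what the paper uses.
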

To prove this, working with the complexification of the spaces and maps from above, given any base point $\phi_0=\phi+u_0$ with $ \| u_0 \|_{\mathbb{X}^{k+2,\alpha}(\mathbb{R}^3/S^1,\mathbb{C})}\leq \eps$,
 we consider the quadratic quantity
\begin{equation}
Q_{\phi_0}[u]:= \Theta[\phi_0+u]-\Theta[\phi_0]-L_{\phi_0} u,
\end{equation}
and show that 
\begin{align}
\| Q_{\phi_0}[ u ]\|_{\mathbb{Y}^{k-2,\alpha}(\mathbb{R}^3/S^1,\mathbb{C})}\leq C \| u \|_{\mathbb{X}^{k+2,\alpha}(\mathbb{R}^3/S^1,\mathbb{C})}^2,
\end{align}
provided that $\| u \|_{\mathbb{X}^{k+2,\alpha}(\mathbb{R}^3/S^1,\mathbb{C})}\leq \eps$.\\

Via Lyaponov-Schmidt reduction we can combine the above results to prove Theorem \ref{theorem_analyticity_intro} (analyticity).

\bigskip

\noindent\textbf{Acknowledgments.}
KC has been supported by the KIAS Individual Grant MG078902, an Asian Young Scientist Fellowship, and the National Research Foundation (NRF) grants  RS-2023-00219980 and RS-2024-00345403 funded by the Korea government (MSIT). RH has been supported by the NSERC Discovery Grant RGPIN-2023-04419. OH has been supported by ISF grant 437/20. This project has received funding from the European Research Council (ERC) under the European Union's Horizon 2020 research and innovation program, grant agreement No 101116390.

\bigskip

\section{Notation and preliminaries}

Recall that a mean-convex translator $M\subset\mathbb{R}^{4}$ is called noncollapsed if every $p\in M$ admits interior and exterior tangent balls of radius $\alpha/H(p)$ for some constant $\alpha>0$ (in fact one can take $\alpha=1$).  Hence, assuming without loss of generality that the translator moves with unit speed in positive $x_4$-direction, by \cite{HaslhoferKleiner_meanconvex} there exists a smooth convex function $\phi:\mathbb{R}^3\rightarrow \mathbb{R}$ such that $M=\mathrm{graph}(\phi)$ and
\begin{equation}\label{tran_eq_prelim}
\mathrm{div}\left(\frac{D \phi}{\sqrt{1+|D \phi|^2}}\right)=\frac{1}{\sqrt{1+|D \phi|^2}}.
\end{equation}

Conversely, given a convex entire function $\phi:\mathbb{R}^3\rightarrow \mathbb{R}$ satisfying \eqref{tran_eq_prelim}, the hypersurface $M=\mathrm{graph}(\phi)$ is a translator, which by \cite{BLL} is $\alpha$-noncollaped from some $\alpha>0$. 

If $M$ is $\mathrm{SO}_3$-symmetric, then up to rigid motion and scaling $M=\mathrm{Bowl_3}$. If $M$ is not strictly convex, then by \cite{Haslhofer_bowl} up to rigid motion and scaling $M=\mathbb{R}\times \mathrm{Bowl}_2$. We can thus assume from now on that  $\phi$ is nontrivial, namely strictly convex and not $\mathrm{SO}_3$-symmetric. Then, by \cite{CM_uniqueness} (see also \cite{White_nature,HaslhoferKleiner_meanconvex}) in suitable coordinates we have
\begin{equation}
\lim_{h\rightarrow \infty} \frac{M-he_4}{\sqrt{h}}=\mathbb{R}\times S^1(\sqrt{2})\times \mathbb{R}.
\end{equation}

By \cite{CHH_blowdown} the function $\phi$ has a unique minimum. After shifting coordinates, we can assume without loss of generality that $\phi(x)\geq 0$ with equality if and only if $x=0$.
By \cite{Zhu} (see also \cite[Theorem 2.5]{CHH_translators}) the hypersurface
$M$ is invariant under the action of $S^1=\{1\}\times \mathrm{SO}_2\times \{1\}$ on $\mathbb{R}^4$ by rotations.
Hence, we can write
\begin{equation}\label{V_def}
M\cap \{x_4=h\}= \big\{(x_1,V(x_1,-h)\cos\vartheta,V(x_1,-h)\sin \vartheta,h)\;|\;x_1 \in [-d^{-}(h),d^{+}(h)],\vartheta\in [0,2\pi]\big\}.
\end{equation}
for some function $V$, called the \emph{unrenormalized cylindrical profile function}. The function
\begin{equation}\label{v_def}
v(y,\tau):=e^{\tau/2}V(e^{-\tau/2}y,-e^{-\tau}),
\end{equation}
is called the \emph{renormalized cylindrical profile function}. Moreover, in the tip regions we define $Y(\cdot,\tau)$ as the inverse function of $v(\cdot,\tau)$, and let
\begin{equation}\label{def_Z}
Z(\rho,\tau)= \sqrt{|\tau|/2}\left(Y\big(\rho\sqrt{2/|\tau|},\tau\big)-Y(0,\tau)\right).
\end{equation}
The following theorem summarizes the precise qualitative behaviour of $M$ at infinity:
\begin{theorem}[sharp asymptotics {\cite[Theorem 3.11 and Corollary 5.8]{CHH_translators}}]\label{thm_unique_asympt_recall}
For every $\eps>0$ there exists $\tau_0=\tau_0(M,\eps)>-\infty$ such that for every $\tau\leq \tau_0$ the following precise asymptotic hold:
\begin{enumerate}
\item Parabolic region: The renormalized profile function satisfies
\begin{equation}
\left| v(y, \tau)-\sqrt{2}\left(1-\frac{y^2-2}{4 |\tau|}\right) \right| \leq\frac{\eps}{|\tau|} \qquad (|y |\leq \eps^{-1}).
\end{equation}
\item Intermediate region: The function $\bar{v}(z,\tau):=v(|\tau|^{1/2}z,\tau)$ satisfies
\begin{equation}\label{barzbd}
|\bar{v}(z,\tau)-\sqrt{2-z^2}|\leq \eps,
\end{equation}
on $[-\sqrt{2}+\eps,\sqrt{2}-\eps]$.
\item Soliton regions: We have the estimate
\begin{equation}
\| Z(\cdot,\tau)-Z_0(\cdot)\|_{C^{100}(B(0,\eps^{-1}))}\leq \eps,
\end{equation}
where $Z_0(\rho)$ is the profile function of the $2d$-bowl with speed $1$.
\end{enumerate}
Moreover, for $\tau\leq\tau_0$ in the collar region $\ell |\tau|^{-1/2}\leq  v(\cdot,\tau) \leq 2 \theta$ we have
\begin{equation}
\big|y (v^2)_y+4\big|\leq \eps\, ,
\end{equation}
provided $\theta=\theta(\eps)>0$ is sufficiently small and $\ell=\ell(\eps)< \infty$ is sufficiently large.
\end{theorem}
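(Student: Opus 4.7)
The plan is to establish the three regional asymptotics in order of increasing distance from the tip, starting with the parabolic region and matching outward. The starting point is that for a suitably normalized, strictly convex, noncollapsed translator $M\subset \mathbb{R}^4$ that is not $\mathrm{SO}_3$-symmetric, the rescaled flows $(M-he_4)/\sqrt{h}$ converge as $h\to\infty$ to the shrinking round cylinder $\mathbb{R}\times S^1(\sqrt{2})\times \mathbb{R}$, so $v(y,\tau)\to\sqrt{2}$ locally uniformly as $\tau\to-\infty$, and one may linearize around the constant cylinder.

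For the parabolic region (i), I would linearize the evolution equation satisfied by $v^2-2$ about the stationary solution. The linearization is of Ornstein-Uhlenbeck type, $\mathfrak{L}=\partial_y^2-\frac{y}{2}\partial_y+1$, whose unstable and neutral Hermite modes are spanned by $\{1,\,y,\,y^2-2\}$. The $\mathrm{SO}_2$-symmetry combined with the centering $D\phi(0)=0$ eliminates the odd $y$-mode, and a Merle-Zaag trichotomy on the spectral projections forces the neutral part to dominate. The sharp coefficient $-1/(4|\tau|)$ is then extracted by projecting the quadratic nonlinearity onto $y^2-2$ and integrating the resulting scalar ODE; the contribution of the constant mode is controlled by the translator normalization and turns out to be lower order in $|\tau|^{-1}$.

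For the intermediate region (ii), I would pass to the scale $z=y/|\tau|^{1/2}$. Extrapolation of the parabolic expansion to this scale gives $\bar v(z,\tau)^2\to 2-z^2$ on compact subsets of $(-\sqrt{2},\sqrt{2})$, which one recognizes as the profile equation of a shrinking round $S^2$; convexity of $v$ and noncollapsedness of $M$ provide outer and inner barriers upgrading the pointwise limit to the uniform bound on $[-\sqrt{2}+\eps,\sqrt{2}-\eps]$. The collar estimate $|y(v^2)_y+4|\leq\eps$ follows by differentiating the same expansion (evaluating $z(\bar v^2)_z$ near $z=\sqrt{2}$) and then using a compactness argument to match with the rescaled tip profile throughout $\ell|\tau|^{-1/2}\leq v\leq 2\theta$.

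For the soliton region (iii), I would rescale the flow around the tip by the bounded tip curvature. Any subsequential limit is an ancient, noncollapsed, strictly convex mean curvature flow in $\mathbb{R}^3$ that translates in the $x_4$-direction and whose tangent flow at $-\infty$ is again a shrinking cylinder; by the Wang-Haslhofer classification of noncollapsed translators in $\mathbb{R}^3$ this limit is the rotationally symmetric $2d$-bowl, yielding the asserted $C^{100}$ convergence $Z(\cdot,\tau)\to Z_0$ on $B(0,\eps^{-1})$. The main obstacle throughout is pinning down the sharp coefficient $1/(4|\tau|)$ in the neutral mode: the Merle-Zaag dichotomy alone only ensures neutral-mode dominance, and extracting the precise constant requires a careful evaluation of the quadratic self-interaction of $y^2-2$ under the nonlinear projection, together with enough uniqueness to bar contributions of other slowly-decaying modes at the same order.
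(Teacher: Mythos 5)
This statement is explicitly a recall from \cite[Theorem 3.11 and Corollary 5.8]{CHH_translators}; the present paper cites it rather than proving it, so there is no internal proof to compare against. Your outline does track the broad strategy of that cited paper (which follows the ADS/CHH line of work): spectral decomposition against Hermite modes and a Merle--Zaag dichotomy in the parabolic region, matching to a collapsing shape in the intermediate scale, and blow-up plus classification at the tip. Indeed, the Appendix of the present paper pushes the parabolic-region analysis further along precisely the lines you describe (projections onto $H_0$, $H_2$, $H_4$, with the quadratic self-interaction driving $a_2\sim -\tfrac{1}{2\sqrt{2}|\tau|}$).

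That said, a few steps in your sketch are imprecise in ways that matter. In the intermediate region, the tool that actually upgrades the pointwise $\bar v^2\to 2-z^2$ to a uniform bound and controls the derivative quantity $y(v^2)_y+4$ in the collar is the ADS-type shrinker-foliation barrier construction, not raw convexity/noncollapsedness; indeed the whole point of the present paper's inner-outer estimate is that the shrinker foliation has no direct analogue at the linearized level and needs a replacement. The collar estimate in particular requires a genuine gluing argument between scales: ``differentiating the expansion'' only controls the cylindrical end, and you need a separate compactness/classification statement (the $\lambda_1/\lambda_2\to 0$ dimensional-reduction argument together with properties of the bowl) at the other end. For the soliton region, the blow-up limit lives in $\mathbb{R}^4$ as a hypersurface splitting off a line; you should invoke dimensional reduction to pass to $\mathbb{R}\times\mathrm{Bowl}_2$, rather than directly to a 2d translator in $\mathbb{R}^3$. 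Finally, the statement ``the centering eliminates the odd $y$-mode'' hides a real step: the constant ($H_0$) unstable mode is not removable by normalization and must be shown to decay, which is one of the outputs of the Merle--Zaag trichotomy (and is confirmed only a posteriori, e.g.\ $a_0=O(|\tau|^{-2})$ in the Appendix). None of these points invalidate your overall strategy, but a rigorous write-up would need to be substantially more careful in the collar-matching and in the justification that the unstable modes are suppressed.
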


In particular, the sharp asymptotics imply that whenever $\phi(x)$ is sufficiently large then
\begin{equation}\label{H_two_sided}
\frac{1}{2\sqrt{\phi(x)}}\leq H_{\phi}(x) \leq 2\sqrt{\frac{\log{\phi(x)}}{\phi(x)}}.
\end{equation}
Moreover, let us point out that \cite[Proof of Theorem 3.11]{CHH_translators} for $\tau\leq\tau_0$ and $y\geq 2$ yields
\begin{equation}\label{profile_growth}
\frac{y^2}{C|\tau|}\leq \sqrt{2}-v\leq \frac{Cy^2}{|\tau|}.
\end{equation}
Consequently, for $\tau\leq\tau_0$ and $v(y,\tau)\geq \theta/2$ we obtain
\begin{equation}\label{profile_derivative_growth}
|v_y| \leq \frac{C}{|\tau|}(1+|y|),
\end{equation}
and thus in particular
\begin{equation}\label{impr_cyl_est}
|v_y| + |v_{yy}|+|v_{yyy}|\leq \frac{C}{\sqrt{|\tau|}}.
\end{equation}
Alternatively, \eqref{profile_growth} and \eqref{profile_derivative_growth} also follow directly from the global gradient estimate from \cite{CHH_profile}.
Also recall that by \cite[Lemma 5.6]{CHH_translators} for $\tau\leq\tau_0$ and $v(y,\tau)\geq \ell|\tau|^{-1/2}$ we have the cylindrical estimates
\begin{equation}\label{eq_cyl_est}
|v_y| + v|v_{yy}|+v^2|v_{yyy}|+|v(v_\tau+\tfrac{y}{2}v_y-\tfrac12 v)+1|\leq \eps,\qquad v^2|v_{y\tau}|+v^3|v_{\tau\tau}|\leq C,
\end{equation}
and by \cite[Lemma 5.18 and Proposition 5.20]{CHH_translators} we have the tip estimates
\begin{equation}\label{eq_tip_est}
\frac{1}{4}|\tau|^{1/2}\leq \left|\frac{Y_v}{v}\right|\leq |\tau|^{1/2}, \qquad |Y_\tau|\leq \eps \left|\frac{Y_v}{v}\right|,\qquad |Y_{vv}|+|Y_{v\tau}|+|Y_{\tau\tau}|\leq C|\tau|^{5/2}.
\end{equation}

Throughout this paper, we fix a small constant $\theta>0$, a large constant $\ell<\infty$, and a very negative constant $\tau_0>-\infty$. By convention, these constants can be adjusted at finitely many instances.

\bigskip

\section{Transformation to different gauges}

Recall that the graphical translator operator is given by the formula
\begin{equation}\label{theta_def}
\Theta[\phi]=\mathrm{div}\left(\frac{D \phi}{\sqrt{1+|D \phi|^2}}\right)-\frac{1}{\sqrt{1+|D \phi|^2}}.
\end{equation}
Fix a noncollapsed translator $M=\mathrm{graph}(\phi)\subset \mathbb{R}^4$ as above, and consider the linearization
\begin{equation}\label{L_def}
Lu:=\frac{d}{d\eps}|_{\eps=0} \Theta[\phi+\eps u].
\end{equation}
Explicitly, the $L$-operator in graphical gauge is given by
\begin{equation}\label{ell_equation}
Lu=\mathrm{div}(a_\phi  Du)+ b_\phi\cdot Du,
\end{equation}
where
\begin{equation}
a_\phi=\frac{\delta}{\sqrt{1+|D \phi|^2}}  -\frac{D\phi\otimes D\phi}{{(1+|D \phi|^2)^{3/2}}} , \qquad b_\phi=\frac{D \phi}{(1+|D \phi|^2)^{3/2}}.
\end{equation}

\medskip

To motivate the following computations, consider the one-parameter family of hypersurfaces $M^\eps=\mathrm{graph}(\phi^\eps)$, where $\phi^\eps=\phi-\eps u$. Denote by $v^\eps$ the renormalized cylindrical profile function of $M^\eps$, and set\footnote{The signs are compatible with the geometric fact that if the graph moves downwards then the level sets move outwards.}
\begin{equation}
w:=\frac{d}{d\eps}\Big|_{\eps=0}v^\eps.
\end{equation}
Recall that the unrenormalized cylindrical profile function $V^\eps$ of $M^\eps$ is defined by
\begin{equation}
M^\eps\cap \{x_4=-t\} = \big\{ (x_1,V^\eps(x_1,t)\cos\vartheta,V^\eps(x_1,t)\sin\vartheta,-t) \, |\, V^\eps(x_1,t)\geq 0,\vartheta\in[0,2\pi]\big\}.
\end{equation}
By symmetry, we can choose $\vartheta=0$. Since $M^\eps=\mathrm{graph}(\phi^\eps)$, we then have $x_4=\phi^\eps(x_1,x_2,0)$ and thus
\begin{equation}\label{V_def}
V^\eps(x_1,-\phi^\eps(x_1,x_2,0))=x_2.
\end{equation}
In terms of the renormalized cylindrical profile function, c.f. \eqref{v_def}, this becomes
\begin{equation}\label{v_u_rel}
v^\eps\left(\frac{x_1}{\sqrt{\phi^\eps(x_1,x_2,0)}},-\log\phi^\eps(x_1,x_2,0)\right)=\frac{x_2}{\sqrt{\phi^\eps(x_1,x_2,0)}}.
\end{equation}
Differentiating this identity with respect to $\eps$ an evaluating at $0$ yields
\begin{equation}
w+v_y \frac{x_1 u}{2 \phi^{3/2}} +v_\tau \frac{u}{\phi}=\frac{x_2u}{2 \phi^{3/2} },
\end{equation}
where $\phi\equiv\phi(x_1,x_2,0)$.
Observing also that in terms of the variables $y= {x_1}/{\sqrt{\phi}}$ and  $\tau=-\log \phi$, equation \eqref{v_u_rel} evaluated at $\eps=0$ simply takes the form
$v(y,\tau)=x_2/\sqrt{\phi}$,
we thus infer that
\begin{equation}\label{wfromu}
w(y,\tau)=e^{\tau} \left(\frac{v}{2}-\frac{y}{2}v_y -v_{\tau}\right)\!\!(y,\tau)\,u(e^{-\frac{\tau}{2}}y,e^{-\frac{\tau}{2}}v(y,\tau),0).
\end{equation} 

\bigskip

\subsection{Equation in cylindrical gauge}

We call $w$ defined by \eqref{wfromu} the \emph{renormalized cylindrical variation} associated to the graphical variation $u$. In a similar vain, to the graphical inhomogeneity $f$ we associate the \emph{renormalized cylindrical inhomogeneity} 
\begin{equation}\label{g_cyl_trans}
g(y,\tau)=e^{-\frac{\tau}{2}} \sqrt{1+v_y^2(y,\tau)+e^{\tau}\left(v_{\tau}+\frac{y}{2}v_y-\frac{v}{2}\right)^2\!\!(y,\tau)}\, f(e^{-\frac{\tau}{2}}y,e^{-\frac{\tau}{2}}v(y,\tau),0).
\end{equation}

\begin{proposition}[renormalized cylindrical variation]\label{lin_cyl_eq_prop}
Suppose $Lu=f$. Then the renormalized cylindrical variation $w$ defined by \eqref{wfromu} satisfies
\begin{equation}\label{eq_w.evolution_lin}
-w_\tau+\mathfrak{L}w+{\mathcal{E}}w+e^{\tau}\mathcal{F}w=g,
\end{equation}
where $g$ is the renormalized cylindrical inhomogeneity defined in \eqref{g_cyl_trans}. Here, 
\begin{equation}\label{eq_def.E[w]_lin}
\mathfrak{L}=\partial_y^2-\frac{y}{2}\partial_y+1,\qquad\qquad \mathcal{E}=-\frac{v_{y}^2}{1+ v_y^2}\partial^2_{y}-\frac{2v_{y}v_{yy}}{(1+v_{y}^2)^2}\partial_y+\frac{2-v^2}{2v^2},
\end{equation}
and $\mathcal{F}$ is a second order linear differential operator that will be specified in the proof below.
\end{proposition}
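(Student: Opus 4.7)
The strategy is to derive a nonlinear cylindrical equation for the renormalized profile function $v^\eps$ of $M^\eps:=\mathrm{graph}(\phi-\eps u)$, and then linearize it at $\eps=0$. Since $\phi$ satisfies $\Theta[\phi]=0$, the definition of $L$ gives $\Theta[\phi-\eps u]=-\eps f+O(\eps^2)$, so the nonlinear cylindrical equation will have an $O(\eps)$ inhomogeneity whose $\eps$-derivative is, up to the Jacobian relating the two gauges, the graphical inhomogeneity $f$. The motivational computation leading up to the proposition already identifies this Jacobian and the resulting formula \eqref{wfromu} for $w=\partial_\eps v^\eps|_{\eps=0}$.

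To derive the nonlinear equation, I would express the cylindrical region of $M^\eps$ as the zero set of $\Psi^\eps(x)=x_2^2+x_3^2-V^\eps(x_1,-x_4)^2$, compute $|D\Psi^\eps|=2V^\eps\sqrt{1+(V^\eps_{x_1})^2+(V^\eps_t)^2}$ on $M^\eps$, and write the translator condition $H_{M^\eps}-\vec{n}_{M^\eps}\cdot \vec{e}_4=-\Theta[\phi^\eps]$ in level-set form. Using the implicit identity $\phi^\eps(x_1,V^\eps(x_1,t),0)=-t$ one finds $\sqrt{1+|D\phi^\eps|^2}=\sqrt{1+(V^\eps_{x_1})^2+(V^\eps_t)^2}/|V^\eps_t|$, which is how the Jacobian prefactor in \eqref{g_cyl_trans} enters when one converts $f$ into its cylindrical counterpart. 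After the scaling \eqref{v_def}, the renormalized cylindrical equation takes the form
\begin{equation*}
-v^\eps_\tau+\Big(1-\tfrac{(v^\eps_y)^2}{1+(v^\eps_y)^2}\Big)v^\eps_{yy}-\tfrac{y}{2}v^\eps_y+\tfrac{v^\eps}{2}-\tfrac{1}{v^\eps}+e^\tau\mathcal{G}[v^\eps]=\eps\, g+O(\eps^2),
\end{equation*}
where $\mathcal{G}[v^\eps]$ is a nonlinear operator of $v^\eps_\tau$, $v^\eps_{\tau\tau}$, $v^\eps_{y\tau}$ whose explicit $e^\tau$ prefactor reflects the translator-shrinker comparison. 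At $\eps=0$ the left-hand side vanishes because $\phi$ itself is a translator.

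Differentiating at $\eps=0$, the shrinker-like part linearizes routinely: $v^\eps_{yy}/(1+(v^\eps_y)^2)$ contributes $w_{yy}/(1+v_y^2)-2v_y v_{yy}(1+v_y^2)^{-2}w_y$, while $-1/v^\eps+v^\eps/2-\tfrac{y}{2}v^\eps_y-v^\eps_\tau$ contributes $w/v^2+w/2-\tfrac{y}{2}w_y-w_\tau$. Writing $w_{yy}/(1+v_y^2)=w_{yy}-\tfrac{v_y^2}{1+v_y^2}w_{yy}$ and $w/v^2+w/2=w+\tfrac{2-v^2}{2v^2}w$, these regroup into exactly $-w_\tau+\mathfrak{L}w+\mathcal{E}w$ with $\mathfrak{L}$ and $\mathcal{E}$ as in \eqref{eq_def.E[w]_lin}. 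The linearization of $e^\tau\mathcal{G}[v^\eps]$ defines $e^\tau\mathcal{F}w$, and a direct inspection yields the decomposition $\mathcal{F}=\alpha\partial_\tau^2+\beta\partial_{y\tau}+\widetilde{\mathcal{F}}$ advertised in the introduction, with coefficients that are bounded smooth functions of $v$ and its derivatives.

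The main obstacle is the bookkeeping induced by the rescaling. At the unrenormalized level, differentiating the implicit relation $V^\eps(x_1,-\phi^\eps(x_1,x_2,0))=x_2$ at $\eps=0$ with $(x_1,x_2)$ fixed forces the prefactor $-V_t$ in $\bw=-V_t\,u(\cdot,V,0)$; in renormalized coordinates, the further twist that $y=x_1/\sqrt{\phi^\eps}$ and $\tau=-\log\phi^\eps$ themselves depend on $\eps$ produces the renormalized prefactor $(v/2-\tfrac{y}{2}v_y-v_\tau)$ in \eqref{wfromu}. Once this geometric identification is secure, the clean split of the linearized equation into $\mathfrak{L}+\mathcal{E}+e^\tau\mathcal{F}$ is forced by whether a term descends from the pure shrinker part of the equation or from the translator-specific part carrying the explicit $e^\tau$ scaling after renormalization.
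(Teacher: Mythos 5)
Your proposal is correct and follows essentially the same strategy as the paper: pass to a one-parameter family $\phi^\eps$, derive the nonlinear renormalized cylindrical PDE for $v^\eps$ with the transformed inhomogeneity $\psi^\eps=-\Theta[\phi^\eps]$ on the right-hand side, and differentiate at $\eps=0$. The only (inessential) differences are that the paper computes the cylindrical equation via a parametric description $X^\eps(x,t,\vartheta)=(x,V^\eps\cos\vartheta,V^\eps\sin\vartheta,-t)$ rather than your level-set function $\Psi^\eps$, goes through the intermediate difference $w^\eps=v^\eps-v$ before differentiating, and works with a compactly-supported perturbation family near the point of interest to sidestep global well-definedness of $\phi-\eps u$ — a technicality your sketch implicitly assumes but which should be spelled out since $u$ need not be controlled at infinity.
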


\begin{proof}
Given the translator $M=\mathrm{graph}(\phi)$, the functions $u$ and $f$, and a point $x$, choose a one-parameter family of $S^1$-symmetric convex functions $\phi^\eps$ with compact support and $\phi^0=\phi$ such that
\begin{equation}
\frac{d}{d\eps}\Big|_{\eps=0}\phi^\eps=-u
\end{equation}
in a neighborhood (of the orbit) of the point $x$ under consideration. Denote by $V^\eps$ and $v^\eps$ the unrenormalized and renormalized profile functions of $M^\eps=\mathrm{graph}(\phi^\eps)$, and set
\begin{equation}
{\psi^\eps}:=-\Theta[{\phi^\eps}].
\end{equation}
\begin{claim}[renormalized profile function]\label{claim_ren_prof}
The renormalized profile function ${v}^\eps$ satisfies
\begin{multline}\label{cyl_gauge_trans}
-{v}^\eps_\tau+\frac{{v}^\eps_{yy}}{1+\left(v^\eps_y\right)^2}-\frac{y}{2}{v}^\eps_y+\frac{{v^\eps}}{2}-\frac{1}{{v^\eps}}+ e^{\tau}\cN[{v}^\eps]\\
=e^{-\frac{\tau}{2}}\sqrt{1+(v^\eps_y)^2+e^{\tau}\left({v}^\eps_{\tau}+\frac{y}{2}{v}^\eps_y-\frac{{v}^\eps}{2}\right)^2}{\psi}^\eps(e^{-\frac{\tau}{2}}y,e^{-\frac{\tau}{2}}{v^\eps},0),
\end{multline}
where $\mathcal{N}$ is the operator defined in \cite[Proposition 5.3]{CHH_translators}.
\end{claim}
\begin{proof}[Proof of the claim]
Parametrizing the hypersurface  ${M}^\eps=\mathrm{graph}(\phi^\eps)$ by
\begin{equation}
X^\eps(x,t,\vartheta)=(x,{V}^\eps(x,t)\cos\vartheta,{V}^\eps(x,t)\sin\vartheta,-t),
\end{equation}
and setting $e_r=\cos\vartheta e_2+\sin\vartheta e_3$, similarly as in \cite[Proof of Proposition 5.3]{CHH_translators}, the upper pointing unit normal is
\begin{equation}
N^\eps=-\frac{V_t^\eps e_4 - V_x^\eps e_1+e_r}{\sqrt{1+({V}^\eps_x)^2+({V}^\eps_t)^2}},
\end{equation}
and the mean curvature is given by
\begin{equation}
H^\eps=\left(\frac{(1+(V^\eps_t)^2)V^\eps_{xx}+(1+(V^\eps_x)^2)V_{tt}-2V^\eps_xV^\eps_tV^\eps_{xt}}{1+(V^\eps_x)^2+(V^\eps_t)^2}-\frac{1}{V^\eps}\right)\langle e_r,N^\eps\rangle.
\end{equation}
Since ${\psi^\eps}=\langle e_4,N^\eps\rangle-H^\eps$ by definition of the translator operator, this yields
\begin{multline}\label{V_eq_der}
-V^\eps_t+\frac{(1+(V^\eps_t)^2)V^\eps_{xx}+(1+(V^\eps_x)^2)V^\eps_{tt}-2V^\eps_xV^\eps_tV^\eps_{xt}}{1+(V^\eps_x)^2+(V^\eps_t)^2}-\frac{1}{V^\eps}\\
=\sqrt{1+(V^\eps_x)^2+(V^\eps_t)^2}\psi^\eps(x,V^\eps(x,t),0).
\end{multline}
Finally, differentiating the defining equation of the renormalized profile function \eqref{v_def} we see that
\begin{equation}
V^\eps_x=v^\eps_y,\;\;\;\;\;\;V^\eps_t=e^{\frac{\tau}{2}}\left(v^\eps_\tau+\frac{y}{2}v^\eps_y-\frac{v^\eps}{2}\right).
\end{equation}
Hence, transforming to the renormalized variables, similarly as in \cite[Proof of Proposition 5.3]{CHH_translators}, the claim follows. 
\end{proof}

Continuing the proof of the proposition, we consider the difference
\begin{equation}
w^\eps:=v^\eps-v.
\end{equation}
Then, using the claim and arguing similarly as in \cite[Proposition 5.9]{CHH_translators} we see that
\begin{equation}\label{w_s_tau_eq}
-w^{\eps}_\tau+\mathfrak{L}w^\eps+\mathcal{E}^\eps w^\eps+e^{\tau}\mathcal{F}^\eps w^\eps=e^{-\frac{\tau}{2}}\sqrt{1+\left(v^\eps_y\right)^2+e^{\tau}\left(v^\eps_{\tau}+\frac{y}{2}v^\eps_y-\frac{{v^\eps}}{2}\right)^2}{\psi^\eps}(e^{-\frac{\tau}{2}}y,e^{-\frac{\tau}{2}}{v^\eps},0),
\end{equation}
where 
\begin{equation}\label{eq_def.E[w]}
\mathcal{E}^\eps w^\eps=-\frac{(v^\eps_{y})^2}{1+ (v^\eps_y)^2}w^\eps_{yy}-\frac{(v^\eps_{y}+v_{y})v_{yy}}{(1+(v^\eps_{y})^2)(1+v_{y}^2)}w^\eps_y+\frac{2-v^\eps v}{2v^\eps v}w^\eps,
\end{equation}
and 
\begin{align}
\mathcal{F}^\eps w^\eps=&\frac{ \mathcal{P}[v^\eps,v^\eps,w^\eps]}{\mathcal{Q}[v^\eps,v^\eps]}+\mathcal{R}[v^\eps,v]\left(w^\eps_\tau-\tfrac{w^\eps}{2}\right)+\mathcal{S}[v^\eps,v]w^\eps_y,
\end{align}
and where $\mathcal{P},\mathcal{Q},\mathcal{R},\mathcal{S}$ are the second order differential expressions specified in the cited proof.

Now, computing in a suitable neighborhood of the point under consideration, by the discussion at the beginning of this subsection the renormalized cylindrical variation $w$ defined by \eqref{wfromu} satisfies
\begin{equation}
\frac{d}{d\eps}\Big|_{\eps=0}w^\eps=w.
\end{equation}
Moreover, by definition of the linearized translator operator we have
\begin{equation}
\frac{d}{d\eps}\Big|_{\eps=0}\psi^\eps=f.
\end{equation}
Hence, differentiating \eqref{w_s_tau_eq} we conclude that
\begin{align}
-w_{\tau}+ \mathfrak{L}w+\mathcal{E} w+e^{\tau}\mathcal{F} w=g,
\end{align}
where $g$ is given by \eqref{g_cyl_trans}, where $\mathcal{L}$ and $\mathcal{E}$ are given by \eqref{eq_def.E[w]_lin}, and where\footnote{The coefficients of $w_{\tau \tau}$ and $w_{\tau y}$ in the expression for $\mathcal{F}$ are $\alpha= \frac{(1+v_y^2)}{1+v_y^2+e^{\tau}\left(\frac{y}{2}v_y+v_{\tau}-\frac{v}{2}\right)^2}$ and $\beta=  \frac{-2\left( v_{y}(v_{\tau} -\tfrac{v}{2})-\tfrac{y}{2}\right)}{1+v_y^2+e^{\tau}\left(\frac{y}{2}v_y+v_{\tau}-\frac{v}{2}\right)^2}$.}
\begin{align}\label{F_def}
\mathcal{F}w=&\frac{ \mathcal{P}[v,v,w]}{\mathcal{Q}[v,v]}+\mathcal{R}[v,v]\left(w_\tau-\tfrac{w}{2}\right)+\mathcal{S}[v,v]w_y.
\end{align}
This proves the proposition.
\end{proof}

\subsection{Equation in tip gauge}

Recall that the tip profile function $Y(\cdot,\tau)$ is defined as inverse of the function $v(\cdot,\tau)$, where we tacitly assume that we work with the right tip where $Y>0$ (the argument for the left tip is the same).
We call
\begin{equation}\label{Wfromu}
W(v,\tau):=e^{\tau}\left(\frac{Y}{2}-\frac{v}{2}Y_v  -Y_{\tau}\right)\!\!(v,\tau)\, u(e^{-\frac{\tau}{2}}Y(v,\tau),e^{-\frac{\tau}{2}}v,0)
\end{equation}
the \emph{tip variation} associated to $u$, and
\begin{equation}\label{g_tip_trans}
G(v,\tau):=e^{-\frac{\tau}{2}}\sqrt{1+Y_v^2(v,\tau)+e^{\tau}\left(Y_{\tau}+\frac{vY_v}{2}-\frac{Y}{2}\right)^2\!\!(v,\tau)}\, f(e^{-\frac{\tau}{2}}Y(v,\tau),e^{-\frac{\tau}{2}}v,0)
\end{equation}
the \emph{tip inhomogeneity} associated to $f$.

\begin{proposition}[tip variation]\label{lin_tip_eq_prop}
Suppose $Lu=f$. Then the tip variation $W$ defined by \eqref{Wfromu} satisfies
\begin{equation}\label{evolve_inhom_tip}
-W_\tau+\frac{W_{vv}}{1+Y_{v}^2}+\left(\frac{1}{v}-\frac{v}{2}-2\frac{Y_{vv}Y_{v}}{(1+Y_{v}^2)^2}\right)W_v+\frac{1}{2}W+e^{\tau}{\mathcal{F}}W=G\, ,
\end{equation}
where $G$ is the tip inhomogeneity defined by \eqref{g_tip_trans}, and where $\mathcal{F}$ is a second order linear differential operator that will be specified in the proof below.
\end{proposition}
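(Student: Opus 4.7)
The plan is to mirror the proof of Proposition \ref{lin_cyl_eq_prop}, working in the tip gauge throughout. I would begin by considering the same one-parameter family $\phi^\eps=\phi-\eps u$, with associated hypersurface $M^\eps=\mathrm{graph}(\phi^\eps)$ and inhomogeneity $\psi^\eps:=-\Theta[\phi^\eps]$, so that $\psi^0=0$ while $\tfrac{d}{d\eps}|_{\eps=0}\psi^\eps=Lu=f$. Denote by $v^\eps(\cdot,\tau)$ the renormalized cylindrical profile of $M^\eps$ and by $Y^\eps(\cdot,\tau)$ its inverse, the renormalized tip profile (working at the right tip). The first step is to derive a tip-gauge analog of Claim \ref{claim_ren_prof}. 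Rather than redo the normal/mean-curvature computation with the alternative parametrization $(v,t,\vartheta)\mapsto(X^\eps(v,t),v\cos\vartheta,v\sin\vartheta,-t)$, I would simply transform the cylindrical PDE from Claim \ref{claim_ren_prof} into a PDE for $Y^\eps$ using the inversion identities
\begin{equation*}
v^\eps_y=\frac{1}{Y^\eps_v},\qquad v^\eps_{yy}=-\frac{Y^\eps_{vv}}{(Y^\eps_v)^3},\qquad v^\eps_\tau=-\frac{Y^\eps_\tau}{Y^\eps_v},
\end{equation*}
evaluated at $y=Y^\eps(v,\tau)$, and multiplying through by $-Y^\eps_v$. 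The combination $-v^\eps_\tau+v^\eps_{yy}/(1+(v^\eps_y)^2)$ converts, after this multiplication, into $-Y^\eps_\tau+Y^\eps_{vv}/(1+(Y^\eps_v)^2)$; the term $-\tfrac{y}{2}v^\eps_y+\tfrac{v^\eps}{2}$ contributes $\tfrac{Y^\eps}{2}-\tfrac{v\,Y^\eps_v}{2}$; and $-1/v^\eps=-1/v$ contributes $Y^\eps_v/v$. The result is
\begin{equation*}
-Y^\eps_\tau+\frac{Y^\eps_{vv}}{1+(Y^\eps_v)^2}+\left(\frac{1}{v}-\frac{v}{2}\right)Y^\eps_v+\frac{Y^\eps}{2}+e^{\tau}\widetilde{\mathcal{N}}[Y^\eps]=\widetilde{G}^\eps,
\end{equation*}
where $\widetilde{\mathcal{N}}$ is a second-order operator inherited from $\mathcal{N}$ via the inversion and the $-Y^\eps_v$ factor, and $\widetilde{G}^\eps$ equals $-Y^\eps_v$ times the tip-gauge pullback of $\psi^\eps$.

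Next I would subtract this identity at $\eps=0$ (where $\widetilde{G}^0=0$ since $\phi$ is a translator), divide by $\eps$, and pass to the limit. Writing $W:=\tfrac{d}{d\eps}|_{\eps=0}Y^\eps$, the routine linearizations
\begin{align*}
\frac{d}{d\eps}\bigg|_{\eps=0}\frac{Y^\eps_{vv}}{1+(Y^\eps_v)^2}&=\frac{W_{vv}}{1+Y_v^2}-\frac{2Y_vY_{vv}}{(1+Y_v^2)^2}W_v,\\
\frac{d}{d\eps}\bigg|_{\eps=0}\left(\frac{1}{v}-\frac{v}{2}\right)Y^\eps_v&=\left(\frac{1}{v}-\frac{v}{2}\right)W_v,\qquad \frac{d}{d\eps}\bigg|_{\eps=0}\tfrac{Y^\eps}{2}=\tfrac{W}{2},
\end{align*}
produce precisely the coefficient structure on the left hand side of \eqref{evolve_inhom_tip}. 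The linearization of $e^{\tau}\widetilde{\mathcal{N}}[Y^\eps]$ defines the second-order operator $e^{\tau}\mathcal{F}$, with a form parallel to \eqref{F_def} obtained by expanding $\widetilde{\mathcal{N}}$. Finally, using $\dot\psi=f$ together with the identity $\sqrt{1+(v^\eps_y)^2+e^{\tau}(v^\eps_\tau+\tfrac{y}{2}v^\eps_y-\tfrac{v^\eps}{2})^2}=|Y^\eps_v|^{-1}\sqrt{1+(Y^\eps_v)^2+e^{\tau}(Y^\eps_\tau+\tfrac{v}{2}Y^\eps_v-\tfrac{Y^\eps}{2})^2}$, the factor $-Y^\eps_v$ cancels $-1/|Y^\eps_v|$ at the right tip (where $Y_v<0$), and the linearization of $\widetilde{G}^\eps$ is exactly the tip inhomogeneity $G$ of \eqref{g_tip_trans}.

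The remaining task is to identify $W=\dot Y|_{\eps=0}$ with the explicit formula \eqref{Wfromu}. Differentiating the defining relation $\phi^\eps(e^{-\tau/2}Y^\eps(v,\tau),e^{-\tau/2}v,0)=e^{-\tau}$ in $\eps$ at $\eps=0$ yields $\partial_1\phi\cdot e^{-\tau/2}W=u$, and the renormalization identity analogous to the one used between \eqref{v_u_rel} and \eqref{wfromu}, with the roles of the first two coordinates interchanged, gives $\partial_1\phi=e^{-\tau/2}\bigl(\tfrac{Y}{2}-\tfrac{v}{2}Y_v-Y_\tau\bigr)^{-1}$; substituting this yields exactly \eqref{Wfromu}. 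The main obstacle throughout is the careful sign and factor bookkeeping introduced by the multiplication by $-Y^\eps_v$, both in $\widetilde{\mathcal{N}}$ and in the inhomogeneity; performing the transformation in this way (rather than a direct parametric re-derivation) is what keeps the calculation short, since it allows us to import the operator $\mathcal{N}$ and the structure of $\mathcal P, \mathcal Q, \mathcal R, \mathcal S$ from \cite{CHH_translators} instead of recomputing them.
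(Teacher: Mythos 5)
Your proposal is correct. The key step is obtaining the inhomogeneous tip-gauge evolution equation for $Y^\eps$ (the paper's \eqref{eq_Y.evolution}), and here you take a somewhat different route from the paper: the paper invokes the parametric mean-curvature re-derivation of \cite[Proposition 5.4]{CHH_translators} (the tip-gauge analogue of how Claim \ref{claim_ren_prof} is proved), whereas you push the cylindrical identity of Claim \ref{claim_ren_prof} through the inverse-function formulas $v^\eps_y=1/Y^\eps_v$, $v^\eps_{yy}=-Y^\eps_{vv}/(Y^\eps_v)^3$, $v^\eps_\tau=-Y^\eps_\tau/Y^\eps_v$, multiply by $-Y^\eps_v$, and convert the square-root prefactor using the identity you state. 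Both routes yield the same equation, and your $\eps$-linearizations reproduce exactly the coefficient structure of \eqref{evolve_inhom_tip}, with $\mathcal F$ obtained from linearizing the transformed nonlinear remainder. The transformation route is more self-contained in that it avoids redoing the parametric computation, at the cost of the sign bookkeeping you flag. On that point, note a small typo: the factor to absorb is $+1/|Y^\eps_v|$, and since $Y_v<0$ at the right tip one has $-Y^\eps_v\cdot\tfrac{1}{|Y^\eps_v|}=1$; you wrote that $-Y^\eps_v$ ``cancels $-1/|Y^\eps_v|$'', but your conclusion is correct. Your verification that $W=\tfrac{d}{d\eps}|_{\eps=0}Y^\eps$ agrees with formula \eqref{Wfromu}, via $\partial_1\phi=e^{-\tau/2}\bigl(\tfrac{Y}{2}-\tfrac{v}{2}Y_v-Y_\tau\bigr)^{-1}$, is correct and makes explicit a step the paper leaves implicit. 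One technical caveat: the paper uses a compactly supported $S^1$-symmetric convex family $\phi^\eps$ agreeing with $\phi-\eps u$ to first order near the point under consideration, rather than literally $\phi^\eps=\phi-\eps u$, so that the profiles $V^\eps,Y^\eps$ are globally well defined; since only $\tfrac{d}{d\eps}|_{\eps=0}\phi^\eps=-u$ enters the computation, this does not affect your argument.
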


\begin{proof}
Working with a suitable one-parameter family $M^\eps=\mathrm{graph}(\phi^\eps)$ as above, we consider $Y^\eps$ defined as the inverse function of $v^\eps$. Then, dealing with the inhomogeneity as before we see that
\begin{multline}\label{eq_Y.evolution}
-Y^\eps_\tau+\frac{Y^\eps_{vv}}{1+(Y^\eps_v)^2}+\frac{1}{v}Y^\eps_v +\frac{1}{2}(Y^\eps-vY^\eps_v)+e^{\tau}\mathcal{M}[Y^\eps]\\
=e^{-\frac{\tau}{2}}\sqrt{1+(Y^\eps_v)^2+e^{\tau}\left(Y^\eps_{\tau}+\frac{vY^\eps_v}{2}-\frac{Y^\eps}{2}\right)^2}\psi^\eps(e^{-\frac{\tau}{2}}Y^\eps,e^{-\frac{\tau}{2}}v,0),
\end{multline}
where $\mathcal{M}$ is the expression from \cite[Proposition 5.4]{CHH_translators}.
Differentiating this, we conclude that
\begin{equation}
-W_\tau+\frac{W_{vv}}{1+Y_{v}^2}+\left(\frac{1}{v}-\frac{v}{2}-2\frac{Y_{vv}Y_{v}}{(1+Y_{v}^2)^2}\right)W_v+\frac{1}{2}W+e^\tau \mathcal{F}W=G,
\end{equation}
where $W$ and $G$ are given by \eqref{Wfromu} and \eqref{g_tip_trans}, and where
\begin{equation}\label{F_DEF}
\mathcal{F}W=\frac{\mathcal{P}[Y,Y,W]}{\mathcal{Q}[Y,Y]}+\mathcal{R}[Y,Y]\left(\frac{W}{2}-W_\tau\right)+\mathcal{S}[Y,Y]W_v,
\end{equation}
with $\mathcal{P},\mathcal{Q},\mathcal{R},\mathcal{S}$ denoting the quantities from \cite[Proof of Proposition 5.11]{CHH_translators}.
\end{proof}

Finally, let us record the following simple transformation rule:
\begin{corollary}[transformation rule]\label{cor_trans_rule} We have
\begin{equation}\label{W_w_transition}
W(v,\tau)=-Y_v(v,\tau) w(Y(v,\tau),\tau),
\end{equation}
and
\begin{equation}\label{W_w_transition}
G(v,\tau)=-Y_v(v,\tau) g(Y(v,\tau),\tau).
\end{equation}
\end{corollary}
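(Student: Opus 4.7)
The plan is to derive both identities directly from the definitions by using the inverse function relations. The essential input is the identity $v(Y(v,\tau),\tau)=v$, which defines $Y$ as the $y$-inverse of $v(\cdot,\tau)$.

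First I would differentiate $v(Y(v,\tau),\tau)=v$ once in $v$ and once in $\tau$ to obtain the two basic relations
\begin{equation}
v_y(Y(v,\tau),\tau)=\frac{1}{Y_v(v,\tau)},\qquad v_\tau(Y(v,\tau),\tau)=-\frac{Y_\tau(v,\tau)}{Y_v(v,\tau)}.
\end{equation}
I would also observe that the arguments of $u$ (respectively $f$) appearing in the definitions \eqref{wfromu} and \eqref{Wfromu} (respectively \eqref{g_cyl_trans} and \eqref{g_tip_trans}) coincide after the substitution $y=Y(v,\tau)$, since $e^{-\tau/2}v(Y(v,\tau),\tau)=e^{-\tau/2}v$. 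Hence the transformation rule reduces to matching prefactors.

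For the first identity, I would evaluate the prefactor of $u$ in the definition of $w$ at $y=Y(v,\tau)$ and multiply by $-Y_v$. Using the two relations above,
\begin{equation}
-Y_v\!\left(\tfrac{v}{2}-\tfrac{Y}{2}\,v_y(Y,\tau)-v_\tau(Y,\tau)\right)
=-\tfrac{v}{2}Y_v+\tfrac{Y}{2}-Y_\tau\cdot(-1)\cdot\!\frac{Y_v}{Y_v}\cdot(-1)=\tfrac{Y}{2}-\tfrac{v}{2}Y_v-Y_\tau,
\end{equation}
which is precisely the prefactor of $u$ in the definition of $W$. Multiplying by the common $e^{\tau}$ and the common value of $u$ yields the first claim.

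For the second identity, the task is to show
\begin{equation}
Y_v^{2}\!\left(1+v_y^{2}+e^{\tau}\!\left(v_\tau+\tfrac{Y}{2}v_y-\tfrac{v}{2}\right)^{\!2}\right)\!\bigg|_{y=Y}=1+Y_v^{2}+e^{\tau}\!\left(Y_\tau+\tfrac{v Y_v}{2}-\tfrac{Y}{2}\right)^{\!2},
\end{equation}
so that the positive square roots agree up to a factor of $|Y_v|=-Y_v$ (recall $Y_v<0$ near the right tip, so $-Y_v>0$, and the analogous sign convention at the left tip). The term $Y_v^2 v_y^2=1$ by the first relation. For the remaining term, substituting the two inverse-function relations gives
\begin{equation}
v_\tau+\tfrac{Y}{2}v_y-\tfrac{v}{2}=\frac{1}{Y_v}\!\left(-Y_\tau+\tfrac{Y}{2}-\tfrac{v Y_v}{2}\right)=-\frac{1}{Y_v}\!\left(Y_\tau+\tfrac{v Y_v}{2}-\tfrac{Y}{2}\right),
\end{equation}
so multiplying by $Y_v^2$ produces exactly $(Y_\tau+\tfrac{v Y_v}{2}-\tfrac{Y}{2})^2$, and taking square roots and multiplying by the common $e^{-\tau/2}f(\cdot)$ finishes the proof. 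There is no real obstacle; the only mild subtlety is keeping track of the sign of $Y_v$ when extracting the square root, which is handled by the standing convention that we work with the right tip where $Y>0$, $Y_v<0$ (and symmetrically at the left tip).
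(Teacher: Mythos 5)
Your proof is correct and takes essentially the same route as the paper: it rests on the inverse-function relations $v_y(Y,\tau)Y_v=1$ and $v_\tau(Y,\tau)=-Y_\tau/Y_v$, and reduces the claim to matching prefactors (including the sign bookkeeping via $-Y_v=|Y_v|>0$ at the right tip). The only minor difference is for the first identity: the paper obtains it by differentiating the relation $Y^\eps(v^\eps(y,\tau),\tau)=y$ in $\eps$ at $\eps=0$ (thereby using the variational characterization of $w$ and $W$), whereas you substitute directly into the explicit formulas \eqref{wfromu} and \eqref{Wfromu}. Both are valid and of comparable length; yours is slightly more self-contained since it avoids invoking that the explicit formulas coincide with the $\eps$-derivatives. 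The second identity is treated identically in both.
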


\begin{proof}
Using the same setting as above the first formula follows by differentiating the relation
\begin{equation}
 Y^\eps(v^\eps(y,\tau),\tau)=y.
\end{equation}
To proceed, observe that differentiating the identity $y=Y(v(y,\tau),\tau)$ gives
\begin{equation}
0=Y_\tau +Y_v v_\tau,  \qquad  1=Y_v v_y.
\end{equation}
Using this, the second formula follows by comparing the expressions in \eqref{g_cyl_trans} and \eqref{g_tip_trans}.
\end{proof}

\bigskip

\section{Barrier estimates}\label{sec_barrier_estimates}

Let $M=\mathrm{graph}(\phi)\subset \mathbb{R}^4$ be a noncollapsed translator as above. For any $h>0$, the hypersurface $M\cap \{x_4<h\}$ can be expressed as graph over a domain $\Omega_h\subset\mathbb{R}^3$. Denote by $C^{k-2,\alpha}(\Omega_h/S^1)$ the space of all $f\in C^{k-2,\alpha}(\Omega_h)$ that are $S^1$-symmetric in the $x_2x_3$-variables. Given $h<\infty$ and $f\in C^{k-2,\alpha}(\Omega_h/S^1)$, by standard elliptic theory, the Dirichlet problem
\begin{equation}\label{bdval_prob}
    \begin{cases}
      Lu=f & \text{on  $\Omega_h$}\\
      u=0 & \text{on $\partial \Omega_h$}.\\
    \end{cases}       
\end{equation}
has a unique solution $u\in C^{k,\alpha}(\Omega_h/S^1)$. Here, $L$ denotes the operator from equation \eqref{ell_equation}.

\subsection{The upper-lower estimate}

In this subsection, we construct a subsolution for the $L$-operator, which will allow us to relate the values of $u$ at different heights.

Note that since $\Theta[\phi]=0$, the mean curvature is given by the formula
\begin{equation}
H_\phi=\mathrm{div}\left(\frac{D \phi}{\sqrt{1+|D \phi|^2}}\right)=\frac{1}{\sqrt{1+|D \phi|^2}}.
\end{equation}
Moreover, recall that on a graph the metric and the second fundamental form are given by
\begin{equation}
g_\phi=\delta+D \phi\otimes D\phi,\qquad A_\phi = \frac{\mathrm{Hess}\phi}{\sqrt{1+|D\phi|^2}}.
\end{equation}
\begin{lemma}[$L$-operator]\label{lemma_Lopid}
If $M=\mathrm{graph}(\phi)\subset\mathbb{R}^4$ is a graphical translator, then 
\begin{align}\label{first_L_id}
L\phi = H_\phi -2A_\phi(e_4^\top,e_4^\top),
\end{align}
and
\begin{align}
L\log H_\phi\geq -|A_\phi|^2_{g_\phi}H_\phi+\frac{A_\phi(e_4^\top,e_4^\top)^2} {(1-H^2_\phi)H_\phi}.
\end{align}
\end{lemma}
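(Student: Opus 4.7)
The plan is to prove both parts by direct calculation in the graph parametrization, using only the translator equation $\Theta[\phi]=0$ (ambiently, $\Delta\phi - D^2\phi(D\phi,D\phi)/w^2 = 1$ where $w:=\sqrt{1+|D\phi|^2}$) and the explicit formulas for $a_\phi, b_\phi$. Throughout, I use the tangent frame $\partial_i X=(e_i,\phi_i)$, the induced metric $g_{ij}=\delta_{ij}+\phi_i\phi_j$, the second fundamental form $A_{ij}=\phi_{ij}/w$, and $H_\phi=1/w$. One key observation I will exploit repeatedly is the identity $a_\phi = g_\phi^{-1}/w$, which in particular gives $a_\phi(DF,DF) = |\nabla^M F|_{g_\phi}^2/w$ for any scalar $F$. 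The tangential projection $e_4^\top$ has graph-frame coefficients $\phi_i/w^2$, giving $|e_4^\top|^2 = 1-H_\phi^2$ and $A_\phi(e_4^\top,e_4^\top) = D^2\phi(D\phi,D\phi)/w^5$.

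For the first identity, I would expand $L\phi = \mathrm{div}(a_\phi D\phi)+b_\phi\cdot D\phi$. A short calculation yields $a_\phi D\phi = D\phi/w^3$ and $b_\phi\cdot D\phi = |D\phi|^2/w^3$, so $L\phi = \Delta\phi/w^3 - 3D^2\phi(D\phi,D\phi)/w^5 + |D\phi|^2/w^3$; substituting the translator equation to eliminate $\Delta\phi$ and using $1+|D\phi|^2 = w^2$ collapses this to $L\phi = 1/w - 2D^2\phi(D\phi,D\phi)/w^5 = H_\phi - 2A_\phi(e_4^\top,e_4^\top)$. For the second identity, I would apply the product rule $L(h(H_\phi)) = h'(H_\phi)LH_\phi + h''(H_\phi)a_\phi(DH_\phi,DH_\phi)$ with $h=\log$ to get
\begin{equation*}
L\log H_\phi = \frac{LH_\phi}{H_\phi} - \frac{|\nabla^M H_\phi|^2}{wH_\phi^2},
\end{equation*}
and then compute $LH_\phi$. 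Rewriting $\mathrm{div}(a_\phi DH_\phi)$ via $\Delta_M F = \tfrac{1}{w}\partial_i(wg^{ij}\partial_j F)$ produces $\mathrm{div}(a_\phi DH_\phi) = \Delta_M H_\phi/w - 2(\partial_i w)(g^{ij}\partial_j H_\phi)/w^2$. The decisive point is that $w=1/H_\phi$, so $\partial_i w = -\partial_i H_\phi/H_\phi^2$ makes the correction term equal $2|\nabla^M H_\phi|^2$. Combined with the translator Bochner identity $\Delta_M H_\phi + \langle e_4^\top,\nabla H_\phi\rangle = -|A_\phi|^2 H_\phi$ (immediate from the general formula $\Delta_M\langle\xi,N\rangle = -\langle\xi,\nabla H\rangle - |A|^2\langle\xi,N\rangle$ with $\xi=e_4$) and $b_\phi\cdot DH_\phi = H_\phi^3 D\phi\cdot DH_\phi$, the first-order terms cancel and I obtain $LH_\phi = -|A_\phi|^2 H_\phi^2 + 2|\nabla^M H_\phi|^2$, hence $L\log H_\phi = -|A_\phi|^2 H_\phi + |\nabla^M H_\phi|^2/H_\phi$.

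The inequality then follows from Cauchy--Schwarz. Differentiating $H_\phi = \langle e_4,N\rangle$ gives the Weingarten-type formula $\nabla^M H_\phi = -A_\phi(e_4^\top,\cdot)$, so $|\nabla^M H_\phi|^2 = |A_\phi(e_4^\top)|_{g_\phi}^2$. Applying Cauchy--Schwarz to the symmetric bilinear form $A_\phi$ and using $|e_4^\top|^2 = 1-H_\phi^2$ yields $|\nabla^M H_\phi|^2 \geq A_\phi(e_4^\top,e_4^\top)^2/(1-H_\phi^2)$, which inserted into the identity for $L\log H_\phi$ produces the claimed lower bound. The main obstacle I expect is the bookkeeping involved in converting between the graphical operator $L$ on $\mathbb{R}^3$ and intrinsic operators on $M$, in particular the conformal factor $1/w = H_\phi$; once the identity $a_\phi = g_\phi^{-1}/w$ is systematically exploited, every subsequent step is an algebraic manipulation using the translator equation, the Weingarten formula, and Cauchy--Schwarz.
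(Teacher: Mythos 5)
Your proof is correct, and it takes a genuinely different route from the paper's. The paper proves the inequality for $L\log H_\phi$ by brute-force calculus in graph coordinates: it writes $\log H_\phi = -\tfrac12\log(1+|D\phi|^2)$, expands $a_\phi D\log H_\phi$ and $b_\phi\cdot D\log H_\phi$ explicitly, computes the divergence, uses the translator equation to substitute $\textrm{Hess}\phi(D\phi,D\phi)/(1+|D\phi|^2)^2 = (\Delta\phi-1)/(1+|D\phi|^2)$, and arrives at the formula $L\log H_\phi = -|\mathrm{Hess}\phi|^2/w^3 + 3|\mathrm{Hess}\phi D\phi|^2/w^5 - 2(\mathrm{Hess}\phi(D\phi,D\phi))^2/w^7$ (with $w=\sqrt{1+|D\phi|^2}$); it then separately computes $|A_\phi|^2_{g_\phi}$ in coordinates, compares term by term, and applies Cauchy--Schwarz in the form $|\mathrm{Hess}\phi(D\phi,D\phi)|\leq |\mathrm{Hess}\phi D\phi||D\phi|$. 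Your approach instead exploits the identity $a_\phi = g_\phi^{-1}/w$ to convert $\mathrm{div}(a_\phi D\cdot)$ into the intrinsic Laplacian $\Delta_M$ plus a conformal correction, and then invokes the translator Jacobi identity $\Delta_M H + \langle e_4^\top,\nabla H\rangle + |A|^2 H = 0$ for $H=\langle e_4,N\rangle$; this yields the clean intermediate identity $L\log H_\phi = -|A_\phi|^2_{g_\phi}H_\phi + |\nabla^M H_\phi|^2/H_\phi$, from which the inequality follows by Weingarten ($\nabla^M H_\phi = -A_\phi(e_4^\top,\cdot)$) and Cauchy--Schwarz on the bilinear form $A_\phi$. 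Both routes are correct and arrive at the same lower bound, but yours is more geometrically transparent and isolates a sharper underlying identity, at the cost of quoting the Jacobi equation as a known fact rather than deriving everything from scratch; the paper's computation is self-contained but opaque. (For the first identity $L\phi = H_\phi - 2A_\phi(e_4^\top,e_4^\top)$, your argument is essentially the same as the paper's modulo cosmetics, both using the translator equation in the form $H_\phi = 1/w$.)
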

\begin{proof}
By the above formulas we have
\begin{align}
L\phi &= H_\phi -\mathrm{div}\left(\frac{|D\phi|^2D \phi}{{(1+|D \phi|^2)^{3/2}}}\right) +\frac{|D\phi|^2}{(1+|D \phi|^2)^{3/2}}.
\end{align}
Setting $f=|D\phi|^2/(1+|D\phi|^2)$ and using the product rule in the form
\begin{equation}
\mathrm{div}\left(f\frac{D \phi}{\sqrt{1+|D \phi|^2}}\right)= \frac{Df\cdot D \phi}{\sqrt{1+|D \phi|^2}}+fH_\phi,
\end{equation}
we infer that
\begin{align}
L\phi = H_\phi -2\frac{\mathrm{Hess}\phi(D\phi,D\phi)}{(1+|D \phi|^2)^{5/2}}.
\end{align}
Since
\begin{align}
A_\phi(e_4^\top,e_4^\top)=\frac{\mathrm{Hess}\phi(D\phi,D\phi)}{(1+|D \phi|^2)^{5/2}},
\end{align}
this proves \eqref{first_L_id}. Next, using $\log H_\phi=-\tfrac{1}{2}\log(1+|D\phi|^2)$ we observe that
\begin{equation}
a_\phi D\!\log H_\phi=-\frac{\mathrm{Hess}\phi D\phi} {(1+|D \phi|^2)^{3/2}}  +\frac{\textrm{Hess}\phi(D\phi,D\phi)D\phi}{(1+|D \phi|^2)^{5/2}}.
\end{equation}
and
\begin{equation}
b_\phi \cdot D\!\log H_\phi = -\frac{\textrm{Hess}\phi(D\phi,D\phi)}{(1+|D \phi|^2)^{5/2}}.
\end{equation}
We continue by computing
\begin{equation}
-\textrm{div}\left(\frac{\textrm{Hess}\phi D\phi} {(1+|D \phi|^2)^{3/2}}\right) =-\frac{|\textrm{Hess}\phi|^2+D\Delta\phi\cdot D\phi }{(1+|D \phi|^2)^{3/2}}+3\frac{|\textrm{Hess}\phi D\phi|^2} {(1+|D \phi|^2)^{5/2}},
\end{equation}
and
\begin{equation}
\textrm{div}\left(\frac{\textrm{Hess}\phi(D\phi,D\phi)D\phi}{(1+|D \phi|^2)^{5/2}}\right)+b \cdot D\!\log H_\phi = D\left(\frac{\textrm{Hess}\phi(D\phi,D\phi)}{(1+|D \phi|^2)^{2}}\right)\cdot \frac{D\phi}{\sqrt{1+|D\phi|^2}}.
\end{equation}
Moreover, by the translator equation we can substitute
\begin{equation}
\frac{\textrm{Hess}\phi(D\phi,D\phi)}{(1+|D \phi|^2)^{2}}=\frac{\Delta\phi-1}{1+|D \phi|^2}.
\end{equation}
This yields 
\begin{align}
L\log H_\phi=-\frac{|\mathrm{Hess}\phi|^2 }{(1+|D \phi|^2)^{3/2}}+3\frac{|\mathrm{Hess}\phi D\phi|^2} {(1+|D \phi|^2)^{5/2}}-2\frac{(\mathrm{Hess}\phi(D\phi,D\phi))^2 }{(1+|D \phi|^2)^{7/2}}.
\end{align}
Now, recalling that the shape operator is given by
\begin{equation}
g_\phi^{-1}A_\phi=\frac{\mathrm{Hess}\phi}{\sqrt{1+|D\phi|^2}}-\frac{D\phi\otimes\mathrm{Hess}\phi D\phi}{(1+|D\phi|^2)^{3/2}},
\end{equation}
we compute
\begin{equation}
|A_\phi|^2_{g_\phi}=\mathrm{tr}\left((g_\phi^{-1}A_\phi)^2\right)=\frac{|\mathrm{Hess}\phi|^2 }{1+|D \phi|^2}-2\frac{|\mathrm{Hess}\phi D\phi|^2}{(1+|D \phi|^2)^{2}}+\frac{(\mathrm{Hess}\phi(D\phi,D\phi))^2 }{(1+|D \phi|^2)^{3}}.
\end{equation}
Moreover, by the Cauchy-Schwarz inequality we have
\begin{equation}
|\mathrm{Hess}\phi(D\phi,D\phi)|\leq |\mathrm{Hess}\phi D\phi| |D\phi|,
\end{equation}
hence
\begin{equation}
\frac{|\mathrm{Hess}\phi D\phi|^2}{(1+|D \phi|^2)^{5/2}}-\frac{(\mathrm{Hess}\phi(D\phi,D\phi))^2 }{(1+|D \phi|^2)^{7/2}}\geq \frac{(\mathrm{Hess}\phi(D\phi,D\phi))^2 }{(1+|D \phi|^2)^{7/2}}\frac{1}{|D\phi|^2}.
\end{equation}
Finally, observe that
\begin{equation}
1-H^2_\phi=\frac{|D\phi|^2}{1+|D \phi|^2}.
\end{equation}
Combining the above facts yields the assertion of the lemma.
\end{proof}
\begin{proposition}[global subsolution]\label{sub_sol_prop}
For every  noncollapsed translator $M=\mathrm{graph}(\phi)\subset\mathbb{R}^4$ there exists a constant $\varepsilon=\varepsilon(M)>0$ such that 
\begin{equation}\label{B_sup}
L[\phi+\log H_{\phi}]\geq \varepsilon H_{\phi}.
\end{equation}
\end{proposition}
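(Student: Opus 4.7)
The plan is to combine the two inequalities from Lemma \ref{lemma_Lopid}. Writing $a := A_\phi(e_4^\top, e_4^\top)$ and using $|e_4^\top|^2 = 1 - H_\phi^2$, summing $L\phi$ and $L\log H_\phi$ yields
\begin{equation*}
L[\phi + \log H_\phi] \;\geq\; H_\phi(1 - |A_\phi|^2_{g_\phi}) - 2a + \frac{a^2}{(1-H_\phi^2)H_\phi}.
\end{equation*}
Completing the square in $a$, namely $-2a + \tfrac{a^2}{(1-H^2)H} = \tfrac{(a - (1-H^2)H)^2}{(1-H^2)H} - (1-H^2)H$, and absorbing $-(1-H^2)H$ into $H(1 - |A|^2)$ to form $H(H^2 - |A|^2)$, I obtain
\begin{equation*}
L[\phi + \log H_\phi] \;\geq\; H_\phi (H_\phi^2 - |A_\phi|^2) + \frac{(a - (1-H_\phi^2)H_\phi)^2}{(1-H_\phi^2)H_\phi}.
\end{equation*}
After dividing by $H_\phi > 0$, the proposition thus reduces to the uniform lower bound
\begin{equation*}
Q_\phi \;:=\; (H_\phi^2 - |A_\phi|^2) + \frac{(a - (1-H_\phi^2)H_\phi)^2}{(1-H_\phi^2) H_\phi^2} \;\geq\; \varepsilon > 0.
\end{equation*}
Both summands are nonnegative: the first equals $2\sigma_2(\lambda) > 0$ by strict convexity, while the second is manifestly a square.

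I then establish this lower bound on $Q_\phi$ by case analysis using Theorem \ref{thm_unique_asympt_recall} together with compactness. On any compact sublevel set $\{\phi \leq R\}$, $Q_\phi$ is continuous and pointwise positive: the only subtle point is the origin, where $H_\phi = 1$ and $|e_4^\top|^2 = 1 - H_\phi^2$ both vanish. Taylor expanding with $\mathrm{Hess}\phi(0)=\mathrm{diag}(\lambda_i)$ gives $a = \sum_i \lambda_i^3 x_i^2 + O(|x|^3)$ and $(1-H_\phi^2)H_\phi = \sum_i \lambda_i^2 x_i^2 + O(|x|^3)$, so the numerator of the second summand is $O(|x|^4)$ while its denominator is $O(|x|^2)$; the second summand thus extends continuously to $0$ at the tip, where $Q_\phi(0) = 1 - \sum_i \lambda_i(0)^2 > 0$ strictly because $\sum_i \lambda_i(0) = 1$ with all $\lambda_i(0) > 0$ forces $\sum_i \lambda_i^2 < 1$. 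Hence $Q_\phi \geq c_R > 0$ on $\{\phi \leq R\}$. In the cylindrical region $\{v \geq 2\theta\}$, Theorem \ref{thm_unique_asympt_recall} yields $C^2$-convergence to the shrinking cylinder $\mathbb{R}^2 \times S^1(\sqrt{2|t|})$, on which $e_4$ is tangent and is a translation direction, so $A(e_4, e_4) = 0$; consequently $a/H_\phi \to 0$ while $1 - H_\phi^2 \to 1$, so the second summand of $Q_\phi$ converges to $1$ and $Q_\phi \geq \tfrac12$ for $|\tau|$ sufficiently large. In the soliton region $\{v \leq \theta\}$, convergence to $\mathbb{R} \times \mathrm{Bowl}_2$ via \eqref{eq_tip_est} implies the principal curvatures approach those of the 2d bowl (which are $(0, \tfrac12, \tfrac12)$ at its tip), so $H^2 - |A|^2$, which is strictly positive on the bowl (attaining $\tfrac12$ at its tip), gives a uniform lower bound on the first summand of $Q_\phi$ there.

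The main obstacle will be the collar region $\ell|\tau|^{-1/2} \leq v \leq 2\theta$ bridging the cylindrical and soliton regimes, where each summand of $Q_\phi$ individually degenerates as $|\tau| \to \infty$. The resolution is that throughout this collar, the cylindrical estimate $|y(v^2)_y + 4| \leq \varepsilon$ from Theorem \ref{thm_unique_asympt_recall} together with the derivative bounds in \eqref{eq_cyl_est} force the $e_4^\top$-directional principal curvature to be much smaller than $H_\phi$, so $a \ll (1-H_\phi^2)H_\phi$ and the second summand of $Q_\phi$ behaves like $1 - H_\phi^2$. Since $H_\phi \leq \tfrac{1}{\sqrt{2}}$ for $\phi$ sufficiently large by \eqref{H_two_sided}, this yields $Q_\phi \geq \tfrac14$ on the collar. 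Choosing $R$ large enough so that the compact sublevel set overlaps with each asymptotic regime then assembles the three regional estimates into a uniform $\varepsilon = \varepsilon(M) > 0$.
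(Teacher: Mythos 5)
Your algebraic reduction is identical to the paper's: completing the square in $a := A_\phi(e_4^\top,e_4^\top)$ gives
\[
\frac{L[\phi+\log H_\phi]}{H_\phi}\ \geq\ (H_\phi^2-|A_\phi|^2)\ +\ (1-H_\phi^2)\Bigl(1-\frac{a}{H_\phi(1-H_\phi^2)}\Bigr)^2\ =:\ Q_\phi,
\]
and the task becomes bounding $Q_\phi$ below. The tip-expansion argument and the cylindrical-region argument are fine in spirit. But the soliton-region step is wrong, and this is where the paper and your proof part ways.

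You write that the principal curvatures in the soliton region ``approach those of the 2d bowl, which are $(0,\tfrac12,\tfrac12)$ at its tip,'' and conclude that $H_\phi^2-|A_\phi|^2$ is uniformly bounded below there. This confuses the \emph{rescaled} picture with the original one. The convergence in Theorem \ref{thm_unique_asympt_recall}(iii) is a statement about the renormalized profile $Z(\cdot,\tau)$; translating it back to $M$, the curvatures of $M$ in the soliton region are $\lambda(\tau)\cdot(\text{bowl curvatures})$ with a scaling factor $\lambda(\tau)\to 0$ (equivalently, $H_\phi\to 0$ along the soliton region as $\tau\to-\infty$, consistent with \eqref{H_two_sided}). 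Hence $H_\phi^2-|A_\phi|^2\sim c\,H_\phi^2\to 0$ — it is \emph{not} bounded below, and the first summand of $Q_\phi$ cannot carry the estimate there. What actually saves $Q_\phi$ in that regime is the second, dimensionless summand: as $H_\phi\to 0$ with $a/(H_\phi(1-H_\phi^2))$ bounded away from $1$, the square term tends to a positive limit. But establishing that $a/(H_\phi(1-H_\phi^2))$ stays away from $1$ requires a genuine input beyond ``convergence to the bowl,'' since on the bowl that ratio is $\tfrac12$ at the tip and one must rule out it creeping toward $1$ elsewhere.

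The paper sidesteps all of this with one clean global input: differentiating $\Theta[\phi]=0$ gives $\nabla H=-A(e_4^\top,\cdot)$, so $|a|\leq|\nabla H|$, and the local curvature estimate of \cite{HaslhoferKleiner_meanconvex} gives $|\nabla H|\leq C H_\phi^2$. Then $|a/(H_\phi(1-H_\phi^2))|\leq CH_\phi/(1-H_\phi^2)$, and once $H_\phi\leq H_0$ this is $\leq\tfrac12$, making the second summand $\geq(1-H_0^2)/4$. The remaining set $\{H_\phi\geq H_0\}$ is compact (by strict convexity and $H\to 0$ at infinity), where $H_\phi^2-|A_\phi|^2>0$ gives the bound. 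This single estimate handles the cylindrical, collar, and soliton regimes simultaneously, without any recourse to the sharp asymptotics. You should replace your regional case analysis with this gradient-estimate argument; alternatively, if you want to keep the asymptotic approach, you must rework the soliton-region step to argue via the second summand of $Q_\phi$ (and justify a uniform bound on $a/(H_\phi(1-H_\phi^2))$ there), and tighten the cylindrical/collar steps where you also rely informally on ``$a\ll H_\phi$'' without a quantitative source.
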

\begin{proof}
Applying Lemma \ref{lemma_Lopid} ($L$-operator) we see that
\begin{align}\label{B_sup2}
\frac{L[\phi + \log H_\phi]}{H_\phi} \geq H_\phi^2-|A_\phi|^2_{g_\phi} +(1-H_\phi^2)\left(1- \frac{A_\phi(e_4^\top,e_4^\top)}{H_\phi(1-H_\phi^2) }\right)^2.
\end{align}
By convexity, we have $|A_\phi|_{g_\phi}\leq H_\phi$. Now, recall that by differentiating the translator equation one obtains the identity $\nabla H=-A(e_4^\top,\cdot)$. Together with the  local curvature estimate from \cite{HaslhoferKleiner_meanconvex} this implies
\begin{equation}
|A_\phi(e_4^\top,e_4^\top)|\leq C H_\phi^2.
\end{equation}
Hence, the right hand side of \eqref{B_sup2} is bigger than $1/4$ whenever $H\leq H_0:=\min\{C^{-1},1\}/4$. On the other hand, to deal with the region $\{H\geq H_0\}$ we can assume without loss of generality that the translator is strictly convex (if this is not the case, a similar argument applies after splitting off a line). Then, by \cite{CHH_blowdown} we have $H\to 0$ as $|x|\to \infty$. In particular, the region $\{H\geq H_0\}$ is compact, so in this region $H_\phi^2-|A_\phi|^2_{g_\phi}$ is bounded below by some $\eps=\eps(M)>0$. This concludes the proof of the proposition.
\end{proof}

Using our subsolution, we can now prove our main result of this subsection:

\begin{theorem}[upper-lower estimate]\label{max_princ_inhom}
There exist $C=C(M)<\infty$ and $h_0=h_0(M)<\infty$ depending only on $M=\textrm{graph}(\phi)$, such that for every $h'\in [h_0,h]$ the solution $u$ of the Dirichlet problem \eqref{bdval_prob} satisfies
\begin{align}\label{comp_levels_max}
\sup_{x\in \Omega_{h'}\setminus\Omega_{h_0}}|u(x)| \leq \sup_{y\in \partial\Omega_{h'}}|u(y)|+Ch'\sup_{y\in \Omega_{h'}}\frac{|f(y)|}{H_{\phi}(y)}.
\end{align}

\end{theorem}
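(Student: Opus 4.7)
The plan is to use the global subsolution $B := \phi + \log H_\phi$ supplied by Proposition \ref{sub_sol_prop} as a two-sided barrier against $\pm u$ and invoke the weak maximum principle for the zero-order-free operator $L$.

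Setting $F := \sup_{\Omega_{h'}} |f|/H_\phi$, Proposition \ref{sub_sol_prop} gives $LB \ge \varepsilon H_\phi$ for some $\varepsilon = \varepsilon(M) > 0$, so the rescaled function $\Phi := -\tfrac{F}{\varepsilon} B$ satisfies $L\Phi \le -F H_\phi \le -|f|$ throughout $\Omega_{h'}$. I would then form the two comparison functions $V_\pm := \pm u + \Phi$ and compute $LV_\pm = \pm f + L\Phi \le |f| - |f| = 0$, so each $V_\pm$ is a supersolution of $L$. Because $L$ has no zeroth-order term, the weak maximum principle on $\Omega_{h'}$ gives $V_\pm(x) \ge \min_{\partial \Omega_{h'}} V_\pm$ for every $x \in \Omega_{h'}$. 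Rearranging in each sign and combining yields
\begin{equation*}
|u(x)| \;\le\; \frac{F}{\varepsilon}\Bigl(\max_{\partial \Omega_{h'}} B - B(x)\Bigr) + \sup_{\partial \Omega_{h'}} |u|.
\end{equation*}

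It remains to estimate $\max_{\partial \Omega_{h'}} B - B(x)$ uniformly for $x \in \Omega_{h'} \setminus \Omega_{h_0}$. Since $H_\phi \le 1$ everywhere and $\phi = h'$ on $\partial \Omega_{h'}$, one immediately has $\max_{\partial \Omega_{h'}} B \le h'$. For the lower bound on $B(x)$, I would invoke \eqref{H_two_sided}: for any $x$ with $\phi(x) \ge h_0$ one gets $H_\phi(x) \ge 1/(2\sqrt{\phi(x)})$, hence
\begin{equation*}
B(x) = \phi(x) + \log H_\phi(x) \;\ge\; \phi(x) - \tfrac{1}{2}\log\bigl(4\phi(x)\bigr) \;\ge\; 0
\end{equation*}
provided $h_0$ is chosen large enough. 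Consequently $\max_{\partial \Omega_{h'}} B - B(x) \le h'$ on $\Omega_{h'} \setminus \Omega_{h_0}$, and the claimed estimate follows with $C = 1/\varepsilon$.

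The argument is a clean barrier comparison once Proposition \ref{sub_sol_prop} is in hand, so there is no real obstacle beyond bookkeeping; the only subtlety is to fix $h_0$ large enough that the asymptotic \eqref{H_two_sided} is available on $\Omega_{h'} \setminus \Omega_{h_0}$ and keeps $B$ bounded below by $0$ there. In fact the same argument delivers the estimate on all of $\Omega_{h'}$, at the mere cost of replacing $h'$ by $h' + C_0(h_0)$ to absorb the (bounded) negative values of $B$ in the cap, which only affects the overall constant $C$.
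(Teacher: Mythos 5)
Your argument is correct and coincides with the paper's proof: both use the global subsolution $\phi+\log H_\phi$ from Proposition \ref{sub_sol_prop} as a two-sided barrier, apply the weak maximum/minimum principle to $\pm u$ shifted by the barrier (the paper phrases it via subsolutions $\tfrac{K_{h'}}{\varepsilon}(\phi+\log H_\phi)\pm u$, you via supersolutions $\pm u-\tfrac{F}{\varepsilon}(\phi+\log H_\phi)$, which is the same comparison), and then invoke the two-sided bound $0<\phi+\log H_\phi\le\phi$ on $\{\phi\ge h_0\}$ from \eqref{H_two_sided} to control the oscillation of the barrier by $h'$. This yields exactly the claimed estimate with $C=1/\varepsilon$, as in the paper.
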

\begin{proof} For ease of notation, let us abbreviate
\begin{equation}
K_{h'}:=\sup_{y\in \Omega_{h'}}\frac{|f(y)|}{H_{\phi}(y)}.
\end{equation}
By Proposition \ref{sub_sol_prop} (global subsolution) for every $x\in \Omega_{h'}$ we have
\begin{equation}
L\left[\frac{K_{h'}}{\eps}(\phi+\log H_{\phi})\pm u\right]\!(x) \geq K_{h'}H_{\phi}(x) \pm f(x)\geq 0.
\end{equation}  
Hence, for every $x\in \Omega_{h'}$ we get
\begin{equation}\label{max_prin_basic_eq}
\frac{K_{h'}}{\eps}(\phi(x)+\log H_{\phi}(x))\pm u(x) \leq \sup_{y\in \partial \Omega_{h'}}\left(\frac{K_{h'}}{\eps}(\phi+\log H_{\phi})\pm u\right)(y).
\end{equation}
Now, by the mean curvature asymptotics from \eqref{H_two_sided} for $\phi(x)\geq h_0(M)$ we have
\begin{equation}\label{bar_bound}
0< \phi(x)+\log H_{\phi}(x) \leq \phi(x).
\end{equation}
 Hence, for every $ x\in \Omega_{h'}\setminus \Omega_{h_0}$ we get
\begin{equation}
|u(x)| \leq \frac{K_{h'}}{\eps}\sup_{y\in \partial{\Omega_{h'}}}\phi(y)+\sup_{y\in \partial{\Omega_{h'}}} |u(y)|.
\end{equation} 
Since $\phi(y)=h'$ on $\partial \Omega_{h'}$, this proves the theorem.
\end{proof}

As a corollary of the proof we also obtain:

\begin{corollary}[level set estimate]\label{cor_bound_est}
For every $h'\in [h_0,h]$ we have
\begin{equation}\label{raw_boundary_est}
\sup_{ x\in \partial \Omega_{h'}}|u(x)| \leq C\max\{h-h',\log h\}\sup_{ y\in \Omega_h}\frac{|f(y)|}{H_{\phi}(y)}.
\end{equation}
\end{corollary}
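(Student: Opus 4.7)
The plan is to recycle the maximum principle argument already developed for Theorem~\ref{max_princ_inhom}, but now applied between the outer boundary $\partial\Omega_h$, where the Dirichlet condition forces $u\equiv 0$, and the intermediate level $\partial\Omega_{h'}$. Specifically, the key inequality \eqref{max_prin_basic_eq} is valid not only at height $h'$ but also at height $h$; applying it at $h$ gives, for all $x\in \Omega_h$,
\begin{equation}
\tfrac{K_h}{\eps}\bigl(\phi(x)+\log H_\phi(x)\bigr)\pm u(x) \;\leq\; \sup_{y\in\partial\Omega_h}\tfrac{K_h}{\eps}\bigl(\phi(y)+\log H_\phi(y)\bigr),
\end{equation}
where $K_h:=\sup_{\Omega_h}|f|/H_\phi$ and the boundary $\pm u$ terms drop out because $u|_{\partial\Omega_h}=0$.

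Next, I would evaluate both sides using the two-sided mean curvature asymptotics \eqref{H_two_sided}. On $\partial\Omega_h$ one has $\phi\equiv h$ and $H_\phi\leq 2\sqrt{\log h/h}\leq 1$ after enlarging $h_0$, so the right-hand side is bounded by $(K_h/\eps)\,h$. On $\partial\Omega_{h'}$ one has $\phi\equiv h'$ and $H_\phi\geq 1/(2\sqrt{h'})$, hence $-\log H_\phi(x)\leq \tfrac{1}{2}\log h'+C\leq \tfrac{1}{2}\log h+C$. Substituting back and rearranging yields, for every $x\in \partial\Omega_{h'}$,
\begin{equation}
|u(x)|\;\leq\; \tfrac{K_h}{\eps}\bigl(h-h'-\log H_\phi(x)\bigr)\;\leq\; CK_h\bigl(h-h'+\log h\bigr)\;\leq\; CK_h\max\{h-h',\log h\},
\end{equation}
which is the claimed estimate up to adjusting the constant.

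I do not anticipate any real obstacle: the subsolution $\phi+\log H_\phi$ from Proposition~\ref{sub_sol_prop} does all the heavy lifting, and the corollary is merely a second exploitation of the resulting inequality, this time extracting boundary information from the Dirichlet condition at $\partial\Omega_h$ rather than from a supremum of $|u|$. The one careful point is to ensure that \eqref{H_two_sided} applies at both $\partial\Omega_h$ and $\partial\Omega_{h'}$, which is automatic once $h_0$ (and hence $h'\geq h_0$) is chosen large enough; the additional $\log h$ contribution in the final bound originates precisely from the possible smallness of $H_\phi$ on $\partial\Omega_{h'}$, which is why the estimate cannot be purely linear in $h-h'$.
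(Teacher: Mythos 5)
Your proof is correct and follows essentially the same route as the paper: you apply the maximum-principle inequality from Proposition~\ref{sub_sol_prop} on the full domain $\Omega_h$ (so that $u$ vanishes on $\partial\Omega_h$), then use the two-sided mean curvature asymptotics \eqref{H_two_sided} to bound the difference of the barrier values between $\partial\Omega_h$ and $\partial\Omega_{h'}$ by $C\max\{h-h',\log h\}$. The paper packages this comparison into a single display, but the computation is the same.
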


\begin{proof}
Using again \eqref{H_two_sided} note that if $\phi(x)=h'$ and $\phi(y)=h$ then 
\begin{equation}
(\phi+\log H_{\phi})(y)-(\phi+\log H_{\phi})(x) \leq 2\max\{h-h',\log h\}.
\end{equation}
Since $u(y)=0$ by the Dirichlet boundary condition, the estimate \eqref{max_prin_basic_eq} thus yields the assertion.
\end{proof}

\bigskip

\subsection{The inner-outer estimate} 

The goal of this subsection is to prove an inner-outer estimate that serves as a substitute for the shrinker-foliation barrier estimates from \cite{ADS1}.

Motivated by Proposition \ref{lin_cyl_eq_prop} (renormalized cylindrical variation) let us consider the operator
\begin{align}
L_{\cyl}w:=\frac{w_{yy}}{1+v_y^2}-\left(\frac{y}{2}+\frac{2v_{y}v_{yy}}{(1+v_{y}^2)^2}\right)w_y+\left(\frac{1}{2}+\frac{1}{v^2}\right)w+e^{\tau}\mathcal{F}w-w_{\tau},
\end{align}
and motivated by Proposition \ref{lin_tip_eq_prop} (renormalized tip variation) let us consider the operator
\begin{equation}
L_{\tip}W:=\frac{W_{vv}}{1+Y_{v}^2}+\left(\frac{1}{v}-\frac{v}{2}-2\frac{Y_{vv}Y_{v}}{(1+Y_{v}^2)^2}\right)W_v+\frac{1}{2}W+e^\tau \mathcal{F}W-W_{\tau}.
\end{equation}
Assuming $Lu=f$, if $w$ and $W$ are obtained from $u$ by \eqref{wfromu} and \eqref{Wfromu}, and if $g$ and $G$ are obtained from $f$  by \eqref{eq_w.evolution_lin} and \eqref{evolve_inhom_tip}, respectively, then by the cited propositions we have the equations
\begin{equation}
L_{\cyl}w=g,\qquad\qquad L_{\tip}W=G.
\end{equation}

We begin by constructing a suitable supersolution. Set
\begin{equation}
b_1=v^{-1}-2^{-1/2}
\end{equation}
and
\begin{equation}
b_2=|\tau|^{1/2}|v_y|-\Gamma v+100|v_y|\min\left\{ \frac{1}{v},\frac{|\tau|^{1/2}}{\ell}\left(1+\frac{Y_v}{10\ell}1_{\{ v\leq 2\ell |\tau|^{-1/2}\}} \right)  \right\},
\end{equation}
where $\Gamma<\infty$ is a numerical constant, which will be chosen below.
Moreover, fix a monotone smooth function $\chi:\mathbb{R}\rightarrow \mathbb{R}_{+}$ with $\chi(v)=0$ for $v\leq {\theta}/2$ and $\chi(v)=1$ for $v\geq \theta$, and set
\begin{equation}\label{b_def}
b=(\chi\circ v)b_1+\Lambda(1-\chi\circ v)b_2,
\end{equation}
where $\Lambda=\Lambda(\theta)<\infty$ will be chosen below.
Finally, motivated by Corollary \ref{cor_trans_rule} (transformation rule) set
\begin{equation}
B(v,\tau)=-Y_v(v,\tau) b(Y(v,\tau),\tau).
\end{equation}

\begin{proposition}[supersolution]\label{basic_super_sol} 
If $\tau \leq \tau_0$, then for $y\in [\ell,Y(\theta,\tau)]$ we have
\begin{equation}\label{basic_supuer_sol_cyl_ineq}
L_{\cyl} b \leq -\frac{1}{4} b^2,
\end{equation}
and for $v\leq 2\theta$ we have 
\begin{equation}\label{basic_supuer_sol_tip_ineq}
L_{\tip}B \leq - |\tau|^{1/2}-\frac{1}{v^3}\min(1,v^2|\tau|/\ell^2).
\end{equation}
\end{proposition}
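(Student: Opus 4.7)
\emph{Strategy.} My plan is to verify both inequalities by direct, region-by-region pointwise computation. By Proposition \ref{lin_cyl_eq_prop} and the claim in its proof, the operator $L_{\cyl}$ is essentially the linearization of the renormalized translator equation \eqref{cyl_gauge_trans} satisfied by the profile function $v$ itself, and analogously $L_{\tip}$ is the linearization of the tip profile equation for $Y$. Consequently, substituting these equations for the top-order terms in $L_{\cyl}(v^{-1})$ and in $L_{\tip}$ applied to the various summands of $B$ produces explicit algebraic expressions whose signs can be read off, with all remaining contributions controlled by the derivative estimates \eqref{eq_cyl_est}, \eqref{impr_cyl_est}, \eqref{eq_tip_est} and the sharp asymptotics of Theorem \ref{thm_unique_asympt_recall}. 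The $e^\tau\mathcal{F}$ pieces are universally negligible because, by \eqref{F_def} and \eqref{F_DEF}, their coefficients involve only polynomial powers of $|\tau|$, which are crushed by the $e^\tau$ prefactor once $\tau_0$ is sufficiently negative.

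\emph{Cylindrical region.} For $v \geq \theta$ we have $b = b_1 = v^{-1} - 2^{-1/2}$. Expanding $L_{\cyl}(v^{-1})$ by the chain rule, substituting \eqref{cyl_gauge_trans} (at $\psi = 0$) for $v_{yy}/(1+v_y^2)$, and subtracting $L_{\cyl}(1/\sqrt 2) = (\tfrac12 + v^{-2})/\sqrt 2 + e^\tau\mathcal F(1/\sqrt 2)$, the $v_\tau$ and $(y/2)v_y$ contributions cancel exactly and the remaining zeroth-order terms yield the algebraic identity
\begin{equation*}
L_{\cyl}(b_1) = \frac{1}{v} - \frac{1}{2\sqrt 2} - \frac{1}{\sqrt 2\, v^2} + \mathcal{R} = -\frac{(v-\sqrt 2)^2}{2\sqrt 2\, v^2} + \mathcal{R} = -\frac{b_1^2}{\sqrt 2} + \mathcal{R},
\end{equation*}
where $\mathcal R$ collects the first-order terms $2v_y^2/(v^3(1+v_y^2))$ and $2v_y^2 v_{yy}/(v^2(1+v_y^2)^2)$, plus the exponentially suppressed $e^\tau\mathcal N[v]/v^2$ and $e^\tau\mathcal F(b_1)$ pieces. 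Using \eqref{impr_cyl_est} to bound $v_y^2 \leq C/|\tau|$ and $|v_{yy}| \leq C|\tau|^{-1/2}$, combined with \eqref{profile_growth} $\sqrt 2 - v \gtrsim y^2/|\tau|$ which gives $b_1^2 \gtrsim y^4/|\tau|^2$ for $y \geq 2$, one finds $|\mathcal R|/b_1^2 \to 0$ uniformly as $\ell \to \infty$ and $\tau_0 \to -\infty$. Since $1/\sqrt 2 > 1/4$, enlarging $\ell$ and decreasing $\tau_0$ absorbs the remainder into $-b_1^2/4$.

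\emph{Tip region and principal obstacle.} For $v \leq 2\theta$ I transform to tip gauge via $B = -Y_v\, b(Y(\cdot,\tau),\tau)$ using Corollary \ref{cor_trans_rule}. The identity $-Y_v|v_y| = 1$ (from differentiating $v(Y(v,\tau),\tau) = v$) converts $\Lambda b_2$ into
\[
\Lambda\Bigl(|\tau|^{1/2} + \Gamma v\, Y_v + 100\min\Bigl\{\tfrac{1}{v},\; \tfrac{|\tau|^{1/2}}{\ell}\bigl(1 + \tfrac{Y_v}{10\ell}\mathbf{1}_{\{v \leq 2\ell|\tau|^{-1/2}\}}\bigr)\Bigr\}\Bigr),
\]
and I evaluate $L_{\tip}$ on each summand. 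The $-|\tau|^{1/2}$ bound arises from combining the $+|\tau|^{1/2}/2$ contributed by the zeroth-order coefficient $W/2$ acting on the constant $|\tau|^{1/2}$ with the dominant term $(1/v)(\Gamma v Y_v)_v = \Gamma Y_v/v + \Gamma Y_{vv}$ from the $\Gamma v Y_v$ summand; by \eqref{eq_tip_est}, $\Gamma Y_v/v$ carries the correct sign and magnitude at least $\Gamma|\tau|^{1/2}/4$, so choosing $\Gamma$ large enough defeats the $|\tau|^{1/2}/2$ and the $|Y_{vv}|$-error. The $-v^{-3}\min(1, v^2|\tau|/\ell^2)$ bound arises from the $100\min\{\ldots\}$ summand: in the intermediate range $v \in [\ell|\tau|^{-1/2}, 2\theta]$ the minimum equals $1/v$ and $(1/v)(100/v)_v = -100/v^3$ delivers the required term, with the competing $(100/v)_{vv}/(1+Y_v^2)$ controlled by $Y_v^2 \gtrsim v^2|\tau| \geq \ell^2$; in the inner range $v \leq \ell|\tau|^{-1/2}$ the weaker target $v^{-1}|\tau|/\ell^2$ is delivered through the $Y_v/(10\ell)$ correction factor, whose $v$-derivative $Y_{vv}/(10\ell)$ combines with $(1/v)\partial_v$ to provide the requisite $|\tau|/(v\ell^2)$-scale contribution. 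I expect the main obstacle to be the bookkeeping in the cutoff transition region $v \in (\theta/2,\theta)$, where derivatives of $\chi$ generate cross-terms between $b_1$ and $\Lambda b_2$; these are absorbable by choosing the free constants in the order $\Gamma \to \Lambda \to \theta \to \tau_0$, each depending on the preceding choices.
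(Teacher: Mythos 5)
Your cylindrical computation recovers the paper's identity $L_{\cyl}b_1=-2^{-1/2}b_1^2+E$ and is on the same route, with one caveat: for the bound $|E|\leq c\,b_1^2$ to hold uniformly for all $\tau\leq\tau_0$ you need the $y$-dependent gradient estimate $|v_y|\leq C(1+|y|)/|\tau|$ from \eqref{profile_derivative_growth} (as the paper uses, via $v_y^2\leq \tfrac{\theta}{100}(\sqrt2-v)^2$); with only $|v_y|\lesssim|\tau|^{-1/2}$ from \eqref{impr_cyl_est}, the ratio $|E|/b_1^2$ scales like $|\tau|/y^4$ and diverges at $y=\ell$ as $\tau\to-\infty$, so the claim ``$|\mathcal R|/b_1^2\to0$'' is not justified by the bounds you cite.

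The tip region has a genuine gap. You reduce $L_{\tip}(\Gamma vY_v)$ to the first-order piece $(1/v)(\Gamma vY_v)_v=\Gamma Y_v/v+\Gamma Y_{vv}$, but $L_{\tip}$ also applies $W_{vv}/(1+Y_v^2)$ and $-W_\tau$ to $W=\Gamma vY_v$, producing $\Gamma(2Y_{vv}+vY_{vvv})/(1+Y_v^2)$ and $-\Gamma vY_{v\tau}$. These are not subordinate: by the only pointwise bounds \eqref{eq_tip_est} gives you, $|Y_{vv}|,|Y_{v\tau}|\lesssim|\tau|^{5/2}$, which is $|\tau|^2$ bigger than your good term $\Gamma|\tau|^{1/2}/4$, and they carry the same $\Gamma$ prefactor, so ``choosing $\Gamma$ large enough'' cannot beat them. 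What the paper actually does is differentiate the renormalized tip equation \eqref{eq_Y.evolution} to obtain the structural identity $L_{\tip}Y_v=(\tfrac{1}{v^2}+\tfrac12)Y_v+e^\tau(\cdots)$; this is precisely where the $Y_{vvv}$ in $W_{vv}/(1+Y_v^2)$ cancels against the $Y_{vvv}$ contained in $-vY_{v\tau}$ (and similarly for the other high-order pieces), and the surviving error has the form $Y_v^2Y_{vv}/(1+Y_v^2)^2$, which is controlled not by a crude bound on $Y_{vv}$ but by the principal-curvature ratio estimate \eqref{curv_ratio_lessthan1}, $|Y_v^2Y_{vv}/(1+Y_v^2)^2|\leq\gamma|Y_v|/v$ with $\gamma<1$. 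Only then does the choice $\Gamma=1000/(1-\gamma)$ yield $L_{\tip}B_2^\circ\leq-2|\tau|^{1/2}$. Without this parabolic cancellation the argument does not close, and the real crux is here rather than in the transition-region cutoff bookkeeping, which the paper dispatches quickly via $L_{\cyl}b\leq-b$ together with $\chi'\geq 0$ and $b_1\leq\Lambda b_2$.
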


\begin{proof}
For the cylindrical region, recall that by Claim \ref{claim_ren_prof} (renormalized profile function) and the translator equation we have
\begin{align}\label{cyl_gauge_trans_rec}
v_\tau=\frac{v_{yy}}{1+v_y^2}-\frac{y}{2}v_y+\frac{v}{2}-\frac{1}{v}+ e^{\tau}\cN[v].
\end{align}
This implies
\begin{equation}
L_{\cyl}v=-\frac{2v_y^2v_{yy}}{(1+v_y^2)^2}+\frac{2}{v}+e^{\tau}(\mathcal{F}v-\cN[v]).
\end{equation}
To proceed, note that for any $w>0$ one has
\begin{equation}\label{l_cyl_modified_eq}
L_{\cyl}w^{-1}=-\frac{1}{w^2}L_{\cyl}w+\frac{2}{w^3}\frac{w_y^2}{1+v_y^2}+\frac{2}{w}\left(\frac{1}{2}+\frac{1}{v^2}\right)+e^{\tau}\left(\frac{\mathcal{F}w}{w^2}+\mathcal{F}w^{-1}\right).
\end{equation} 
This yields
\begin{align}
L_{\cyl}v^{-1}= \frac{1}{v}+\frac{2v_y^2}{v^2(1+v_y^2)}\left(\frac{v_{yy}}{1+v_y^2}+\frac{1}{v}\right)+e^{\tau}\left(\mathcal{F}v^{-1}+\frac{\mathcal{N}[v]}{v^2}\right).
\end{align}
Also observe that
\begin{equation}
L_{\cyl}1=\frac{1}{2}+\frac{1}{v^2}+e^{\tau}\mathcal{F}1.
\end{equation}
Combining these two formulas we therefore obtain
\begin{equation}
L_{\cyl}\left(v^{-1}-2^{-1/2}\right)=-2^{-1/2}(v^{-1}-2^{-1/2})^2+E,
\end{equation}
where
\begin{equation}
E=\frac{2v_y^2}{v^2(1+v_y^2)}\left(\frac{v_{yy}}{1+v_y^2}+\frac{1}{v}\right)+e^{\tau}\left(\mathcal{F}v^{-1}+\frac{\mathcal{N}[v]}{v^2}-2^{-1/2}\mathcal{F}1\right).
\end{equation}
Now, for $\tau\leq\tau_0$ and $y\in[\ell,Y(\theta/2,\tau)]$, by Theorem \ref{thm_unique_asympt_recall} (sharp asymptotics) and the cylindrical estimates from \eqref{eq_cyl_est} we can safely estimate 
\begin{equation}
|E| \leq 4\frac{v_y^2}{v^3},
\end{equation}
and by the profile estimates from \eqref{profile_growth} and \eqref{profile_derivative_growth} we have
\begin{equation}
v_y^2\leq \frac{\theta}{100}\left(\sqrt{2}-v\right)^2.
\end{equation}
Combining the above estimates we infer that 
\begin{equation}\label{Lcylb1}
L_{\cyl}b_1 \leq -\frac{1}{4}b_1^2
\end{equation}
for $\tau\leq\tau_0$ and $y\in[\ell,Y(\theta/2,\tau)]$.\\

In the tip region we work with the function $B_2=-Y_v b_2$, namely
\begin{equation}
B_2= |\tau|^{1/2}-\Gamma v|Y_v|+100\min\left\{ \frac{1}{v},\frac{|\tau|^{1/2}}{\ell}\left(1-\frac{|Y_v|}{10\ell}1_{\{ v|\tau|^{1/2}\leq 2\ell \}}\right)  \right\}.
\end{equation}
We will first derive an inequality for the more basic function
\begin{equation}
B_2^\circ= |\tau|^{1/2}-\Gamma v|Y_v|.
\end{equation}
Recall that by equation \eqref{eq_Y.evolution} we have
\begin{equation}\label{Y_ev_eq_calc}
Y_\tau=\frac{Y_{vv}}{1+Y_v^2}+\left(\frac{1}{v}-\frac{v}{2}\right)Y_v +\frac{1}{2}Y+e^{\tau}\mathcal{M}[Y].
\end{equation}
Differentiating this we get
\begin{equation}
Y_{v\tau}=\frac{Y_{vvv}}{1+Y_v^2}-\frac{2Y_vY_{vv}^2}{(1+Y_v^2)^2}+\left(\frac{1}{v}-\frac{v}{2}\right)Y_{vv}-\frac{1}{v^2}Y_v+e^{\tau}\left(\mathcal{M}[Y]\right)_v.
\end{equation}
This yields
\begin{equation}\label{eq_Ltip_Yv}
L_{\tip}Y_v=\left( \frac{1}{v^2}+\frac{1}{2} \right) Y_v+e^{\tau}\left(\mathcal{F}Y_v-(\mathcal{M}[Y])_v\right).
\end{equation}
Moreover, a direct computation shows that
\begin{equation}
L_{\tip}v=\frac{1}{v}-\frac{2Y_{vv}Y_v}{(1+Y_v^2)^2}+e^{\tau}\mathcal{F}v.
\end{equation}
To proceed, observe that for any $U$ and $V$ one has the product formula
\begin{equation}
L_{\tip}(UV)=UL_{\tip}V+VL_{\tip}U+\frac{2U_vV_v}{1+Y_v^2}-\frac{1}{2}UV+e^{\tau}\left(\mathcal{F}(UV)-U\mathcal{F}V-V\mathcal{F}U\right).
\end{equation}
This yields
\begin{align}
L_{\tip}(vY_v)=\left(\frac{2}{v}+\frac{v}{2}\right) Y_v-\frac{2Y_v^2Y_{vv}}{(1+Y_v^2)^2}-e^{\tau}\left(\mathcal{F}(vY_v)-v(\mathcal{M}[Y])_v\right).
\end{align}
Also note that
\begin{equation}
L_{\tip}|\tau|^{1/2}=\frac{1}{2}\left(1-\frac{1}{|\tau|}\right)|\tau|^{1/2}+e^{\tau}\mathcal{F}|\tau|^{1/2}.
\end{equation}
Combining these two formulas we therefore obtain
\begin{equation}
L_{\tip}B_2^\circ=\Gamma \left(\left(2+\frac{v^2}{2}\right)\frac{Y_v}{v}-\frac{2Y_v^2Y_{vv}}{(1+Y_v^2)^2}\right)+\frac{1}{2}\left(1-\frac{1}{|\tau|}\right) |\tau|^{1/2}+E,
\end{equation}
where
\begin{equation}
E=e^{\tau}\left( \mathcal{F}|\tau|^{1/2}+\Gamma \left(v(\mathcal{M}[Y])_v -\mathcal{F}(vY_v)\right) \right).
\end{equation}
Next, we note that there is a numerical constant $\gamma<1$, such that for $\tau\leq \tau_0$ and $v\leq 2\theta$ we have
\begin{equation}\label{curv_ratio_lessthan1}
\left|\frac{Y_v^2Y_{vv}}{(1+Y_v^2)^2}\right| \leq \gamma\frac{|Y_v|}{v}.
\end{equation}
Indeed, in the proof of \cite[Lemma 5.19]{CHH_translators} (cylindrical estimate) it has been observed that
\begin{equation}
\left|\frac{vY_vY_{vv}}{(1+Y_v^2)^2}\right|=\frac{Y_v^2}{1+Y_v^2}\frac{\lambda_1}{\lambda_2},
\end{equation}
where $\lambda_1$ and $\lambda_2$ are the principal curvatures, and that $\lambda_1/\lambda_2\ll 1$ in the collar region. Together with the fact that $\lambda_1/\lambda_2\leq 1$ on the bowl soliton and Theorem \ref{thm_unique_asympt_recall} (sharp asymptotics) this implies \eqref{curv_ratio_lessthan1}.
Moreover, by the tip estimates from \eqref{eq_tip_est} for $\tau\leq \tau_0$ and $v\leq 2\theta$ we have
\begin{equation}
 \frac{Y_v}{v} \leq -\frac{1}{4}|\tau|^{1/2} \qquad \textrm{and}\qquad  |E|\leq\frac{1}{100}\frac{|Y_v|}{v}.
\end{equation}
We now choose $\Gamma=1000/(1-\gamma)$. Moreover, possibly after decreasing $\theta$ we can assume that
\begin{equation}\label{corr_term_small}
\Gamma v |Y_v|\leq \tfrac{1}{2}|\tau|^{1/2}.
\end{equation}
Combining the above estimates, with this choice of constants, for all $\tau\leq \tau_0$ and $v\leq 2\theta$ we get
\begin{equation}\label{Lcylb2}
L_{\tip}B_2^\circ \leq -2|\tau|^{1/2}.
\end{equation}

Furthermore, a direct computation shows that
\begin{equation}
L_{\tip}v^{-1}=\frac{1}{v^3}\frac{2}{1+Y_v^2}-\frac{1}{v^2}\left(\frac{1}{v}-\frac{v}{2}-2\frac{Y_{vv}Y_v}{(1+Y_v^2)^2}\right)+\frac{1}{2v}+e^{\tau}\mathcal{F}\frac{1}{v}.
\end{equation}
Observing also that in the collar region we have
\begin{equation}
\frac{1}{1+Y_v^2}\ll 1\qquad\mathrm{and}\qquad
\left| \frac{vY_{vv}Y_v}{(1+Y_v^2)^2}\right | \leq \frac{10}{\ell^2},
\end{equation}
this yields
\begin{equation}
L_{\tip}v^{-1}\leq -\tfrac{1}{2}v^{-3}
\end{equation}
for $v \geq \ell |\tau|^{-1/2}$. On the other hand, thanks to \eqref{eq_Ltip_Yv}, for $v \leq 2\ell |\tau|^{-1/2}$ we can estimate
\begin{equation}
L_{\tip}\left(10\ell |\tau|^{1/2}-|Y_v||\tau|^{1/2}\right)\leq -\tfrac{1}{5}|\tau|/v.
\end{equation}
Remembering also that the minimum of two supersolutions is a supersolution, we thus infer that
\begin{equation}
L_{\tip}\left(B_2-B_2^\circ\right) \leq -2v^{-3}\min(1,v^2|\tau|/\ell^2).
\end{equation}

Finally, to deal with the transition region we observe that
\begin{equation}
L_{\cyl}b=\chi L_{\cyl}b_1+\Lambda(1-\chi)L_{\cyl}b_2+(\chi'\circ v)\left(\frac{1}{v}-\frac{v}{2}\right)(b_1-\Lambda b_2)+E,
\end{equation}
where
\begin{multline}
E= \left[(\chi''\circ v)\frac{v_y^2}{1+v_y^2}-(\chi'\circ v)\left(\frac{2v_y^2v_{yy}}{(1+v_y^2)^2}+e^{\tau}\cN[v] \right)\right](b_1-\Lambda b_2)\\
+2(\chi'\circ v)\frac{v_y}{1+v_y^2}(b_1-\Lambda b_2)_y
+e^{\tau}\left(\mathcal{F}b-(\chi\circ v) \mathcal{F}b_1-\Lambda (1-(\chi\circ v))\mathcal{F}b_2\right).
\end{multline}
We will now estimate these terms in the transition region $\theta/2\leq v\leq 2\theta$ for $\tau\leq \tau_0$. First of all, by our estimates \eqref{Lcylb1} and \eqref{Lcylb2}, remembering also Corollary \ref{cor_trans_rule} (transformation rule), we have
\begin{equation}
\chi L_{\cyl}b_1+\Lambda(1-\chi)L_{\cyl}b_2 \leq -2b.
\end{equation}
Next, thanks to \eqref{corr_term_small} we can fix $\Lambda=\Lambda(\theta)<\infty$ large enough to ensure that $b_1\leq \Lambda b_2$ in the transition region. Together with $\chi'\geq 0$ this implies
\begin{equation}
(\chi'\circ v)\left(\frac{1}{v}-\frac{v}{2}\right)(b_1-\Lambda b_2)\leq 0.
\end{equation}
Furthermore, using the derivative estimates from \eqref{impr_cyl_est}
we can estimate
\begin{equation}
|E|\leq C|\tau|^{-1/2}.
\end{equation}
Summing up, for $\theta/2\leq v\leq 2\theta$ for $\tau\leq \tau_0$ we thus obtain
\begin{align} 
L_{\cyl}b\leq -b 
\end{align}
Remembering Corollary \ref{cor_trans_rule} (transformation rule) this finishes the proof of the proposition.
\end{proof}

To deal with the Dirichlet boundary, we fix a smooth convex function  $\kappa:\mathbb{R}\to\mathbb{R}_+$ such that $\kappa(\tau)=|{\tau}|/{\sqrt{\log h}}$ for $\tau\leq-\log(h)+1$ and $\kappa(\tau)=2$ for $\tau\geq -\sqrt{{\log(h)}}$.  

\begin{corollary}[weighted supersolution]\label{weighted_super_sol} 
If $\tau \leq \tau_0$, then for $y\in [\ell,Y(\theta,\tau)]$ we have
\begin{equation}\label{weighted_supuer_sol_cyl_ineq}
L_{\cyl}(\kappa|\tau|^{-\mu} b) \leq -\tfrac{1}{8}  \kappa |\tau|^{-\mu} b^2,
\end{equation}
and $ v\leq 2\theta$ we have
\begin{equation}\label{weighted_supuer_sol_tip_ineq}
L_{\tip}(\kappa|\tau|^{-\mu} B) \leq -\tfrac{1}{2}\kappa |\tau|^{-\mu}\left(|\tau|^{1/2}+v^{-3}\min(1,v^2|\tau|/\ell^2)\right).
\end{equation}
\end{corollary}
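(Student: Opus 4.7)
The natural plan is a product rule computation: setting $\eta(\tau):=\kappa(\tau)|\tau|^{-\mu}$, which depends only on $\tau$, all spatial derivatives in $L_{\cyl}$ and $L_{\tip}$ commute with $\eta$, so
\begin{equation*}
L_{\cyl}(\eta b)=\eta\, L_{\cyl}(b)-\eta_\tau b+e^\tau\bigl[\mathcal{F}(\eta b)-\eta\mathcal{F}(b)\bigr],
\end{equation*}
and analogously in the tip gauge. By Proposition \ref{basic_super_sol} (supersolution), the first term is at most $-\tfrac14\eta b^2$ in the cylindrical region and at most $-\eta(|\tau|^{1/2}+v^{-3}\min(1,v^2|\tau|/\ell^2))$ in the tip region, so the remaining task is to absorb the two correction terms into a small fraction of the right hand side.

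For the $-\eta_\tau b$ term, the first step is to check, directly from the definition of $\kappa$ and from $\partial_\tau|\tau|^{-\mu}=\mu|\tau|^{-\mu-1}$, that $|\eta_\tau|/\eta\leq C/|\tau|$ uniformly across the three regimes of $\kappa$ (the linear regime $\tau\leq -\log h+1$, the interpolation band, and the constant regime $\tau\geq -\sqrt{\log h}$). In the cylindrical region, the explicit formula $b_1=(\sqrt{2}-v)/(\sqrt{2}v)$ combined with the profile growth \eqref{profile_growth} gives $b\geq c\ell^{2}/|\tau|$ for $y\geq\ell$; therefore $|\eta_\tau b|/(\eta b^2)=|\eta_\tau|/(\eta b)\leq C'/\ell^{2}$, which is absorbed into $\tfrac18\eta b^2$ once $\ell$ is taken sufficiently large. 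The tip analog uses the lower bound $B\geq\tfrac12|\tau|^{1/2}$ guaranteed by the design constraint \eqref{corr_term_small} in the proof of Proposition \ref{basic_super_sol}, so that $|\eta_\tau B|\leq (C/|\tau|)\eta B\leq C|\tau|^{-1/2}\eta$, which is small compared to $\tfrac12\eta|\tau|^{1/2}$ for $\tau\leq\tau_0$ sufficiently negative.

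For the bracket $\mathcal{F}(\eta b)-\eta\mathcal{F}(b)$, I decompose $\mathcal{F}=\alpha\partial_\tau^2+\beta\partial_{\tau y}+\widetilde{\mathcal{F}}$; only the parts involving $\partial_\tau$ survive the subtraction, contributing terms of the form $\alpha(\eta_{\tau\tau}b+2\eta_\tau b_\tau)$, $\beta\eta_\tau b_y$, and a $\partial_\tau$-coefficient of $\widetilde{\mathcal{F}}$ times $\eta_\tau b$. The coefficients $\alpha,\beta,\ldots$ are uniformly bounded by the cylindrical estimates \eqref{eq_cyl_est} (resp.\ tip estimates \eqref{eq_tip_est}), while $|\eta_\tau|$ and $|\eta_{\tau\tau}|$ grow only polynomially in $|\tau|$; the overall $e^\tau$ prefactor therefore beats everything for $\tau_0$ sufficiently negative, and this contribution is negligible compared to the leading negative term. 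Combining the two corrections yields \eqref{weighted_supuer_sol_cyl_ineq}, and the argument for \eqref{weighted_supuer_sol_tip_ineq} is strictly parallel.

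I do not expect any conceptual obstacle here, as the corollary is essentially a bookkeeping enhancement of Proposition \ref{basic_super_sol}. The only delicate step is verifying that the constants behave uniformly across the interpolation regime of $\kappa$ and that the lower bounds on $b$ and $B$ (coming respectively from $\ell\gg 1$ and from the construction of $B_2$) genuinely dominate $|\eta_\tau|/\eta$ throughout the relevant $(y,\tau)$- and $(v,\tau)$-ranges; this is exactly the purpose of the freedom we still have in adjusting $\ell$ and $\tau_0$ at finitely many instances.
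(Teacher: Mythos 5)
Your proof is essentially correct and follows the same approach as the paper: apply the product rule, use that the weight satisfies $|\partial_\tau(\kappa|\tau|^{-\mu})|\leq C\kappa|\tau|^{-\mu}/|\tau|$ (by convexity of $\kappa$ and the elementary derivative of $|\tau|^{-\mu}$), invoke the lower bound $b\gtrsim 1/|\tau|$ on $[\ell,Y(\theta,\tau)]$ from \eqref{profile_growth}, and absorb the $e^\tau$-prefactored bracket using the fact that only $\partial_\tau$-derivatives survive the subtraction $\mathcal{F}(\eta b)-\eta\mathcal{F}(b)$. One small slip in the tip case: to dominate $|\eta_\tau B|$ by $\tfrac12\eta|\tau|^{1/2}$ you actually need the \emph{upper} bound $B\leq C|\tau|^{1/2}$ on $v\leq 2\theta$ (which does hold, from the tip estimates \eqref{eq_tip_est} together with $100/\ell\leq 1$ and $\Gamma v|Y_v|\geq 0$), not the lower bound $B\geq\tfrac12|\tau|^{1/2}$ you cite from \eqref{corr_term_small}; the lower bound plays no role in that absorption step.
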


\begin{proof} 
By convexity our weight function satisfies
\begin{equation}\label{pder_rel}
|\kappa'| \leq \frac{\kappa}{|\tau|}.
\end{equation}
Hence, for $\tau\leq \tau_0$ and $y\in[\ell,Y(\theta,\tau)]$ we infer that
\begin{equation}
L_{\cyl}(\kappa b)=\kappa L_{\cyl}b-\kappa' b +e^{\tau}(\mathcal{F}(\kappa b)-\kappa\mathcal{F}b)\leq -\frac{1}{4}\kappa b^2+\frac{\kappa}{|\tau|} b+e^{\frac{99\tau}{100}},
\end{equation}
where we used Proposition \ref{basic_super_sol} (supersolution) and the cylindrical estimates from \eqref{eq_cyl_est}. Since $b\geq 100/|\tau|$ for $y\geq\ell$
 by Theorem \ref{thm_unique_asympt_recall} (sharp asymptotics), this yields the first estimate. Arguing similarly, now using the tip estimates from \eqref{eq_tip_est}, we obtain the second estimate as well.
\end{proof}

Using our (weighted) supersolution we can now prove the main result of this subsection:

\begin{theorem}[inner-outer estimate]\label{linear_sup_sol_est}
Let $u$ be a solution of the Dirichlet problem \eqref{bdval_prob} with inhomogeneity $f$, and denote by $w$ and $W$ the associated variations and by  $g$ and $G$ the associated  inhomogeneities in cylindrical and tip gauge, respectively. Suppose that $A<\infty$ is a constant such that
\begin{equation}\label{lower_bd_basic_sup}
\sup_{\tau\in [-\log(h)+1,\tau_0]}  |\tau|^{1+\mu}|w(\ell,\tau)|+ \sup_{\tau\in [-\log (h),-\log(h)+1]}  |\tau|^{1/2+\mu}|w(\ell,\tau)| +
 \sup_{x\in \partial \Omega_{h_0}}|u(x)|\leq A,
\end{equation}
and suppose that for all $\tau\in [-\log (h),\tau_0]$ we have
\begin{equation}\label{g_growth_basic_sup}
\sup_{y \in \left[\ell,Y(\theta,\tau)\right]}  |\tau|^{\mu} (\sqrt{2}-v)^{-2}|g(y,\tau)| +
\sup_{v\leq \theta} |\tau|^{\mu}\left( |\tau|^{1/2}+v^{-3}\min(1,v^2|\tau|/\ell^2) \right)^{-1} |G(v,\tau)| \leq A.
\end{equation}
Then, for all $\tau\in [-\sqrt{\log (h)},\tau_0]$ we get
\begin{equation}
\sup_{y \in \left[\ell,Y(\theta,\tau)\right]} |\tau|^{\mu} (\sqrt{2}-v)^{-1}|w(y,\tau)| +
\sup_{v\leq \theta} |\tau|^{\mu-1/2} |W(v,\tau)| \leq CA.
\end{equation}
\end{theorem}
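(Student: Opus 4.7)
My plan is a maximum principle comparison built on the weighted supersolutions from Corollary \ref{weighted_super_sol}. Choose a large constant $C_0$ (to be fixed) and define the barrier in cylindrical and tip gauge by
\[
\Psi_{\cyl}(y,\tau) := C_0 A\,\kappa(\tau)|\tau|^{-\mu}\,b(y,\tau), \qquad \Psi_{\tip}(v,\tau) := C_0 A\,\kappa(\tau)|\tau|^{-\mu}\,B(v,\tau).
\]
By Corollary \ref{cor_trans_rule} these are two coordinate representations of a single function on the underlying geometric domain. Combining Corollary \ref{weighted_super_sol}, the equations $L_{\cyl}w=g$ and $L_{\tip}W=G$, the inhomogeneity bounds in hypothesis \eqref{g_growth_basic_sup}, the comparison $b\gtrsim(\sqrt{2}-v)$ on the cylindrical side, and the fact that $B$ matches the lower bound in the tip supersolution inequality, one obtains $L_{\cyl}(\Psi_{\cyl}\pm w)\leq 0$ and $L_{\tip}(\Psi_{\tip}\pm W)\leq 0$ for $C_0$ sufficiently large, using that $\kappa\geq 2$ throughout $[-\log h,\tau_0]$ (which follows from the definition of $\kappa$ and its convexity).

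The second step is to verify that $\Psi\pm u$ (i.e.\ $\Psi_{\cyl}\pm w$ in the cylindrical gauge and $\Psi_{\tip}\pm W$ in the tip gauge) is non-negative on the geometric boundary of the cylinder-tip portion of $\Omega_h\cap\{\phi\geq h_0\}$. This boundary consists of: (i) the Dirichlet boundary $\partial\Omega_h$, where $u=0$ by hypothesis, trivially dominated; (ii) the slice $\partial\Omega_{h_0}$ (equivalently $\tau=\tau_0$), where $\sup|u|\leq A$ by hypothesis, dominated by $\Psi$ since $\tau_0$ is a fixed constant and $b,B$ are bounded below by positive constants on that compact slice; and (iii) the inner radial interface $y=\ell$ separating the cylindrical region from the parabolic core. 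For (iii), the profile asymptotics of Theorem \ref{thm_unique_asympt_recall} give $b(\ell,\tau)\gtrsim c_\ell/|\tau|$, and combining with the two regimes of $\kappa$ (namely $\kappa\geq 2$ for $\tau\in[-\sqrt{\log h},\tau_0]$ and $\kappa(\tau)=|\tau|/\sqrt{\log h}$ for $\tau\in[-\log h,-\log h+1]$) one verifies that the two-part assumption on $|w(\ell,\tau)|$ in \eqref{lower_bd_basic_sup}, which is of order $A|\tau|^{-1-\mu}$ and $A|\tau|^{-1/2-\mu}$ respectively, is dominated by $\Psi_{\cyl}(\ell,\tau)$ in both regimes for $C_0$ large. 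The geometric tip $v=0$ is interior and requires no separate boundary condition.

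The elliptic maximum principle for $L$ then yields $|u|\leq\Psi$ throughout the region, equivalently $|w|\leq\Psi_{\cyl}$ on the cylindrical part and $|W|\leq\Psi_{\tip}$ on the tip part. Restricting to $\tau\in[-\sqrt{\log h},\tau_0]$, where $\kappa(\tau)=2$, the pointwise estimates $b_1=(\sqrt{2}-v)/(v\sqrt{2})\leq(\sqrt{2}-v)/(\theta\sqrt{2})$ on the cylindrical side (using $v\geq\theta$) and $B_2\leq C|\tau|^{1/2}$ on the tip side (using \eqref{corr_term_small} and the explicit formula for $B_2$) convert the barriers into the claimed bounds $|w(y,\tau)|\leq CA|\tau|^{-\mu}(\sqrt{2}-v)$ and $|W(v,\tau)|\leq CA|\tau|^{1/2-\mu}$. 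I expect the main technical obstacle to be the book-keeping across the cylinder-tip transition $v\in[\theta/2,2\theta]$, where $b$ is glued by the cutoff $\chi$ in \eqref{b_def}: the supersolution property, the transformation-rule consistency between $\Psi_{\cyl}$ and $\Psi_{\tip}$, and the matching of the $\kappa$-weight with the two regimes of \eqref{lower_bd_basic_sup} must all hold uniformly across this transition.
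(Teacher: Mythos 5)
Your proposal follows essentially the same route as the paper: construct the barrier $\lambda A\kappa|\tau|^{-\mu}b$ (resp.\ $\lambda A\kappa|\tau|^{-\mu}B$) from Corollary \ref{weighted_super_sol}, verify boundary positivity on $\partial\Omega_h$, on $\partial\Omega_{h_0}$, and on the interface $y=\ell$ using the two regimes of $\kappa$ and \eqref{lower_bd_basic_sup}, apply the maximum principle, and then simplify using $\kappa(\tau)=2$ for $\tau\geq-\sqrt{\log h}$.

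The one point you flag as a ``main technical obstacle'' — book-keeping across the cylinder–tip transition — is exactly what the paper resolves by lifting everything back to the graphical gauge: it defines a single graphical function $u_A$ on the unrescaled domain $D$ whose cylindrical and tip variations are the two barrier expressions, so the maximum principle is applied once, to $L(u_A\pm u)$ on $D$, rather than being patched across gauges. To make that lift legitimate the paper needs (and verifies in \eqref{mul_vu}, \eqref{mul_Yu}) that the prefactors $\tfrac{v}{2}-\tfrac{y}{2}v_y-v_\tau$ and $\tfrac{Y}{2}-\tfrac{v}{2}Y_v-Y_\tau$ in the transformation rules \eqref{wfromu} and \eqref{Wfromu} are strictly positive in the region under consideration; this is what guarantees that $u_A>0$ and that $L_{\cyl}(\Psi_{\cyl}\pm w)\leq 0$, $L_{\tip}(\Psi_{\tip}\pm W)\leq 0$ are equivalent to $L(u_A\pm u)\leq 0$. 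Your invocation of Corollary \ref{cor_trans_rule} handles consistency of the two barrier expressions but not this positivity, which is the extra ingredient that actually dissolves the transition-region concern; once you include it, your argument is complete and coincides with the paper's.
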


\begin{proof}
Recall that our equation in the respective gauges takes the form $L_{\cyl}w=g$ and $L_{\tip}W=G$. Hence, using Corollary \ref{weighted_super_sol} (weighted supersolution) and the assumptions of the theorem we infer that 
\begin{equation}
L_{\cyl}(\lambda A \kappa|\tau|^{-\mu} b \pm w) \leq 0\qquad\quad \forall (y,\tau)\in   [\ell,Y(\theta,\tau)]\times [-\log(h),\tau_0],
\end{equation}
and
\begin{equation}
L_{\tip}(\lambda A \kappa|\tau|^{-\mu} B \pm W) \leq  0\qquad\quad \forall (v,\tau)\in   [0,\theta]\times [-\log(h),\tau_0],
\end{equation}
provided the numerical factor $\lambda<\infty$ is sufficiently large. Now, consider the corresponding unrescaled domain 
\begin{equation}
D:=\left(\Omega_h-\Omega_{h_0}\right)\cap \Big\{x_1 \geq \ell\sqrt{\phi(x_1,x_2,x_3)}\Big\}.
\end{equation}
Let $u_A:D\to \mathbb{R}$ be the graphical function whose corresponding cylindrical and tip variations, obtained by the transformation rules \eqref{wfromu} and \eqref{Wfromu}, are given by $\lambda A\kappa|\tau|^{-\mu} b$ and $\lambda A\kappa|\tau|^{-\mu} B$, respectively. Rearranging \eqref{cyl_gauge_trans_rec} we see that the prefactor in \eqref{wfromu} satisfies
\begin{equation}\label{mul_vu}
\frac{{v}}{2}-\frac{y}{2}{v}_y-{v}_\tau=\frac{1}{{v}}-\frac{{v}_{yy}}{1+v_y^2}- e^{\tau}\cN[{v}] >0
\end{equation}
in the region under consideration, where we used Theorem \ref{thm_unique_asympt_recall} (sharp asymptotics) and the cylindrical estimates from \eqref{eq_cyl_est}, and similarly  we see that the prefactor in \eqref{Wfromu} satisfies
\begin{equation}\label{mul_Yu}
\frac{Y}{2}-\frac{v}{2}Y_v-Y_\tau>0
\end{equation}
in the region under consideration, where we used the tip estimates from \eqref{eq_tip_est}.
So in graphical gauge the above estimates take the form
\begin{equation}
L(u_A\pm u)\leq 0 \qquad \textrm{in}\;D. 
\end{equation}
Recall that $u=0$ on $\partial\Omega_{h}$, and note that by assumption \eqref{lower_bd_basic_sup} we have $u_A\pm u\geq 0$ on  $\partial\Omega_{h_0}$, provided we fix $\lambda=\lambda(h_0)<\infty$ sufficiently large.
Moreover, using again assumption \eqref{lower_bd_basic_sup}, but now also using Theorem \ref{thm_unique_asympt_recall} (sharp asymptotics)  and the fact that $\kappa(\tau)\geq \tfrac12 |\tau|^{1/2}$ for $\tau\in[-\log(h), -\log(h) +1]$, we see that $u_A\pm u\geq 0$ for $x_1=\ell\sqrt{\phi(x_1,x_2,x_3)}$ as well. This shows that
 \begin{equation}
u_A\pm u \geq 0 \qquad \textrm{on}\;\partial D.
\end{equation} 
Hence, applying the maximum principle for $L$ on $D$, we conclude that
\begin{equation}
u_A \pm u \geq 0\qquad \textrm{in}\; D.
\end{equation}
Remembering the transformation rules \eqref{wfromu} and \eqref{Wfromu}, and the fact that $\kappa(\tau)=2$ for $\tau \geq -\sqrt{\log(h)}$, this proves the theorem.
\end{proof}

\bigskip

\section{Energy estimates}\label{energy_est_sec}

In this section, we derive energy estimates for the linearized translator equation in various gauges. Throughout this section, we assume that $u$ is a solution of the Dirichlet problem \eqref{Dirichlet_intro} with inhomogeneity $f$. Moreover, we assume that $h\gg h_0 \gg 1$ is sufficiently large, and abbreviate $\tau_0:=-\log(h_0)$.\\

Recall that the Ornstein-Uhlenbeck operator 
\begin{equation}\label{frakLdef}
\mathfrak{L}=\partial^2_{y}-\frac{y}{2}\partial_y+1
\end{equation}
is self-adjoint operator on the Hilbert space $\fH:=L^2(\mathbb{R},e^{-y^2/4} dy)$. Decomposing $\fH$ according to the positive, neutral and negative eigenvalues of $\mathfrak{L}$, we write
\begin{equation}
\fH=\fH_+\oplus \fH_0 \oplus \fH_-.
\end{equation} 
Here, $\fH_+$ is spanned by the unstable eigenfunctions  $\psi_1=1$ and $\psi_2=y$, and $\fH_0$ is spanned by the neutral eigenfunction $\psi_0=y^2-2$. We write $\mathfrak{p}_\pm$ and $\mathfrak{p}_0$ for the orthogonal projections on $\fH_\pm$ and $\fH_0$.

\subsection{Energy estimate for the cylindrical variation}\label{cyl_en_sec}

In this subsection, we prove an energy estimate in the cylindrical region. Our estimate is related to the one appearing in \cite[Section 5.4]{CHH_translators}, with the important difference that we include some $\tau$-weights. Another new step is to control the boundary terms.\\

Recall that in addition to the Gaussian $L^2$-norm $\|-\|_{\fH}$ one also needs the Gaussian $H^1$-norm
\begin{equation}
\|p\|_{\mathcal{D}} := \left( \int (p^2 +p_y^2) e^{-y^2/4} dy \right)^{1/2},
\end{equation}
and its dual norm
\begin{equation}
\|p \|_{\mathcal{D}^\ast} := \sup_{\|q\|_{\mathcal{D}}\leq 1  } \langle p,q \rangle\, .
\end{equation}

It will be convenient to start the energy estimates at
\begin{equation}
\tau_{h_\mathrm{in}}:=-\log(h-h^{\gamma_k}),
\end{equation}
where $\gamma_k=1-\frac{1}{100k}$ and $k\geq 4$ is a given integer. This is on the one hand close enough to the Dirichlet boundary at $\tau_h=-\log(h)$ to inherit smallness from the vanishing boundary data, and on the other hand far enough away from the boundary to allow higher order estimates to kick in.

For functions $p:[\tau_{h_\mathrm{in}},\tau_0]\rightarrow \mathcal{X}$, where $\mathcal{X}=\fH,\mathcal{D}$ or $\mathcal{D}^\ast$, we set
\begin{equation}
\|p \|_{\mathcal{X},\infty}(\tau):=\sup_{\tau'\in [\tau_{h_\mathrm{in}}+1,\tau] }\left( \int_{\tau'-1}^{\tau'} \| p(\cdot,\sigma)\|^2_{\mathcal{X}} \, d\sigma \right)^{1/2},
\end{equation}
and we often simply abbreviate
\begin{equation}
\| p \|_{\mathcal{X},\infty}:=\| p\|_{\mathcal{X},\infty}(\tau_0).
\end{equation}

Recall that we denote by $w$ the cylindrical variation associated to $u$, and by $g$ the cylindrical cylindrical inhomogeneity associated to $f$.
Fixing a suitable cutoff function $\varphi_\cC:\mathbb{R}^+\to [0,1]$ such that
$\varphi_\cC(v)=0$ if $v\leq \tfrac58 \theta$ and $\varphi_\cC(v)=1$ if $v\geq  \tfrac78 \theta$,
we define their truncated versions by
\begin{equation}
w_{\cC}(y,\tau):=\varphi_{\cC}(v(y,\tau))w(y,\tau),\qquad
g_{\cC}(y,\tau):=\varphi_{\cC}(v(y,\tau))g(y,\tau). 
\end{equation}
To state our energy estimate for $w_\cC$, let us use the notation
\begin{align}
\| p \|_{C^2_{\exp}(\mathcal{C})}(\tau):=\sup_{\tau_{h_\mathrm{in}} \leq \tau'\leq \tau} \left(  e^{\frac{49\tau'}{100}}\sup_{y\in C_{\tau'} } \big(|p|+|p_y|+|p_{\tau}|+ |p_{yy}|+|p_{y\tau}|+|p_{\tau\tau}|\big)(y,\tau')\right),
\end{align}
where $C_{\tau}=\{y : v(y,\tau)\geq \tfrac58 \theta\}$, and let us simply abbreviate
\begin{equation}
\| p\|_{C^2_{\exp}(\mathcal{C})}:=\| p\|_{C^2_{\exp}(\mathcal{C})}(\tau_0)\, .
\end{equation}

\begin{proposition}[energy estimate for the cylindrical variation]\label{energy_cyl_inhom} If $\fp_ {+}(w_{\cC}(\tau_0))=0$, then
\begin{multline}
\| |\tau|^{2+\mu}(w_\cC-\fp_0w_\cC) \|_{\mathcal{D},\infty}\leq C\left( \||\tau|^{1+\mu} w_{\cC}\|_{\mathcal{D},\infty}+\||\tau|^{2+\mu} w\, 1_{\{\theta/2\leq v\leq\theta\}}\|_{\mathfrak{H},\infty}\right)\\
+C \| w\|_{C^2_{\exp}(\mathcal{C})}+C\||\tau|^{2+\mu} g_{\cC}\|_{\mathcal{D}^{\ast},\infty}+C\sup_{\Omega_h}\frac{|f|}{H_{\phi}},
\end{multline}
where $C=C(\phi)<\infty$ is independent of $h$.
\end{proposition}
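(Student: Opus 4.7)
The plan is to derive the estimate as a weighted, projected energy inequality in cylindrical gauge, by testing the equation satisfied by the truncated variation $w_\cC$ against $|\tau|^{2(2+\mu)}(w_\cC-\mathfrak{p}_0 w_\cC)$ in the Gaussian measure, and integrating in $\tau$ from $\tau_{h_\mathrm{in}}$ to $\tau_0$. The ingredients are: a commutator computation to derive an equation for $w_\cC$; spectral coercivity of $\mathfrak{L}$ off the neutral mode; the smallness of $\mathcal{E}$ and $e^\tau\mathcal{F}$ coming from the sharp asymptotics; and the Dirichlet boundary control via Corollary~\ref{cor_bound_est}.

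First, since $L_{\cyl}w=g$ and $w_\cC=(\varphi_\cC\circ v)\, w$, a direct computation gives
\begin{equation*}
L_{\cyl}w_\cC = g_\cC+[L_{\cyl},\varphi_\cC\circ v]\, w,
\end{equation*}
where the commutator is a first order operator in $y,\tau$ whose coefficients, thanks to the cylindrical estimates \eqref{eq_cyl_est} together with the $\tau$-dependence of $v$, are bounded and supported in $\{5\theta/8\le v\le 7\theta/8\}\subset\{\theta/2\le v\le\theta\}$. Its pointwise contribution involving second derivatives of $w$ is absorbed into $\|w\|_{C^2_{\exp}(\cC)}$, while its zeroth order contribution is absorbed into $\||\tau|^{2+\mu}w\, 1_{\{\theta/2\le v\le\theta\}}\|_{\fH,\infty}$. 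Decomposing $w_\cC=\mathfrak{p}_+w_\cC+\mathfrak{p}_0 w_\cC+w_\cC^\perp$ with $w_\cC^\perp\in(\fH_+\oplus\fH_0)^\perp$, the assumption $\mathfrak{p}_+(w_\cC(\tau_0))=0$ together with the positive eigenvalues on $\fH_+$ guarantees that the energy estimate works in the backward $\tau$ direction without pickup from the unstable modes.

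Testing the equation against $|\tau|^{2(2+\mu)}(w_\cC-\mathfrak{p}_0 w_\cC)$ and integrating against $e^{-y^2/4}dy\,d\tau$, the leading terms are: (i) the time derivative, which produces a full time derivative of $\tfrac12\int|\tau|^{2(2+\mu)}(w_\cC-\mathfrak{p}_0 w_\cC)^2 e^{-y^2/4}$ plus a lower order term of size $|\tau|^{-1}\|w_\cC\|_\fH^2$ folded into $\||\tau|^{1+\mu}w_\cC\|_{\cD,\infty}^2$; (ii) the $\mathfrak{L}$ term, which gives the coercive $\cD$-norm on the stable sector using $-\langle\mathfrak{L}p,p\rangle_\fH+\|p_y\|_\fH^2\ge c\|p\|_\cD^2$ for $p\in(\fH_0)^\perp$; (iii) the $\mathcal{E}$ term, whose coefficients on the support of $\varphi_\cC$ are $o(1)$ by Theorem \ref{thm_unique_asympt_recall} (sharp asymptotics) and \eqref{eq_cyl_est}, hence absorbed into the coercive term after choosing $|\tau_0|$ large; (iv) the $e^\tau\mathcal{F}$ term, trivially absorbed; (v) the inhomogeneity, handled by Cauchy--Schwarz via the $\cD^\ast$-$\cD$ duality yielding $\||\tau|^{2+\mu}g_\cC\|_{\cD^\ast,\infty}\cdot\||\tau|^{2+\mu}(w_\cC-\mathfrak{p}_0 w_\cC)\|_{\cD,\infty}$.

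Finally, integration by parts in $\tau$ generates a boundary contribution at $\tau=\tau_{h_\mathrm{in}}$. Using the Dirichlet condition $u=0$ on $\partial\Omega_h$, Corollary \ref{cor_bound_est} (level set estimate) and the transformation rule \eqref{wfromu} convert this into a bound of the form $C\sup_{\Omega_h}|f|/H_\phi$. Taking the sup of $\tau\in[\tau_{h_\mathrm{in}}+1,\tau_0]$ and absorbing the coercive term on the right into the left gives the claimed inequality. The main obstacle I expect is the bookkeeping with the $\tau$-weights: one must verify that differentiating $|\tau|^{2(2+\mu)}$ and moving the neutral projection $\mathfrak{p}_0$ past the operators $\mathcal{E}$, $e^\tau\mathcal{F}$ and the $\tau$-dependent cutoff $\varphi_\cC\circ v$ produces only errors that fit into the $|\tau|^{1+\mu}$-weighted term on the right (together with the commutator-localized terms), rather than terms of size $|\tau|^{2+\mu}$ that could not be absorbed. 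The cutoff commutator $[\partial_\tau,\varphi_\cC\circ v]$ is the subtlest piece: it is controlled via $v_\tau$-bounds from \eqref{eq_cyl_est}, which is where the difference with the ungraded energy estimate of \cite{CHH_translators} enters.
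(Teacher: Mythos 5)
Your proposal follows the same route as the paper: derive the truncated equation for $w_\cC$ with a cutoff error term, apply the standard spectral energy inequality for $\hat p = p^+ + p^-$ (dropping the $p^+(\tau_0)$ boundary term via the hypothesis $\fp_+(w_\cC(\tau_0))=0$, keeping the $p^-(\tau_{h_\mathrm{in}})$ boundary term), estimate each contribution to $q$, and convert the remaining boundary data at $\tau_{h_\mathrm{in}}$ into $\sup_{\Omega_h}|f|/H_\phi$ via Corollary~\ref{cor_bound_est}. The overall plan is correct.

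Item (iii) needs to be tightened, however. The zeroth-order coefficient of $\mathcal{E}$, namely $(2-v^2)/(2v^2)$, is \emph{not} uniformly $o(1)$ on the support of $\varphi_\cC$: on $\{v\simeq\theta/2\}$ it is of order $1/\theta^2$, and on $\{y\sim|\tau|^{1/2}\}$ it is of order $1$. The $\cD^\ast$-smallness used in the paper comes instead from the profile estimate $\sqrt{2}-v\leq Cy^2/|\tau|$ from~\eqref{profile_growth}, which gives the coefficient a bound $O\left((1+y^2)/|\tau|\right)$, combined with the weighted Sobolev duality $\|(1+|y|)f\|_{\cD^\ast}\leq C\|f\|_\fH$ to trade the quadratic growth in $y$ for the $\cD$-norm of $p$; this yields $\|\mathcal{E}p\|_{\cD^\ast}\leq C|\tau|^{-1}\|p\|_\cD$. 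Relatedly, even with this bound in hand, $\mathcal{E}(\fp_0 w_\cC)$ tested against $w_\cC-\fp_0 w_\cC$ cannot be absorbed into the coercive $\cD$-term on the left, because the neutral mode $\fp_0 w_\cC$ is not controlled there; after Cauchy--Schwarz this piece goes to the right-hand side as part of $\||\tau|^{1+\mu}w_\cC\|_{\cD,\infty}$, which is exactly why that term appears in the conclusion. With this correction the argument matches the paper's.
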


\begin{proof}
Recall first from Proposition \ref{lin_cyl_eq_prop} (renormalized cylindrical variation) that the function $w$ satisfies
\begin{equation}
w_\tau=\mathfrak{L}w
+{\mathcal{E}}w+e^{\tau}\mathcal{F}w-g,
\end{equation}
To capture the extra terms from the cutoff, similarly as in \cite[Equation (6.11)]{ADS2}, we set
\begin{equation}
\overline{\mathcal{E}}[w,\varphi_{\cC}(v)]:= (\partial_\tau-\mathfrak{L})(w \varphi_{\cC}(v) )-\varphi_{\cC}(v)(\partial_\tau-\mathfrak{L})w +\varphi_{\cC}(v) \mathcal{E}w-\mathcal{E}(w\varphi_{\cC}(v)).
\end{equation}
Then, we have
\begin{align}\label{ev_eq_trunc_w}
(\partial_\tau -\mathfrak{L}) w_\cC =\mathcal{E}w_\cC+\overline{\mathcal{E}}[w,\varphi_\cC(v)]-g_{\cC}+e^{\tau}\varphi_\cC(v){\mathcal{F}}w.
\end{align}
Hence, the function $p:=|\tau|^{2+\mu} w_\cC$ evolves by
\begin{equation}\label{trunc_eq}
(\partial_\tau -\mathfrak{L}) p=q,
\end{equation}
where
\begin{equation}
q=-(2+\mu)|\tau|^{1+\mu} w_\cC+{\mathcal{E}}(|\tau|^{2+\mu} w_\cC)+|\tau|^{2+\mu}\overline{\mathcal{E}}[w,\varphi_\cC(v)]
+|\tau|^{2+\mu} e^{\tau}\varphi_\cC(v){\mathcal{F}}w-|\tau|^{2+\mu} g_{\cC}.
\end{equation}
Now, setting $\hat{p}=p^++p^-$ with $p^{\pm}=\fp_{\pm}(p)$, we have the general energy inequality
\begin{equation}
\sup_{\tau\in[\tau_{h_\mathrm{in}},\tau_0]}\|\hat{p}(\tau)\|^2_{\fH}+\frac{1}{C}\|\hat{p}\|^2_{\mathcal{D},\infty} \leq C\left( \|{q}\|^2_{\mathcal{D}^{\ast},\infty}
 +\|p^+(\tau_0)\|^2_{\fH}+\|p^-(\tau_{h_\mathrm{in}})\|^2_{\fH}\right) ,
\end{equation}
where, in contrast to \cite[Lemma 5.8]{ADS2}, we have the additional boundary term $\|p^-(\tau_{h_\mathrm{in}})\|^2_{\fH}$.
Using also the assumption  $\fp_ {+}(w_{\cC}(\tau_0))=0$, we thus get
\begin{align}\label{to_derive_en}
\||\tau|^{2+\mu}(w_{\cC}-\fp_0w_{\cC})\|_{\mathcal{D},\infty} \leq C\left( \| {q}\|_{\mathcal{D}^\ast,\infty}+C\|p^-(\tau_{h_\mathrm{in}})\|_{\mathfrak{H}}\right).
\end{align}
To control the terms on the right hand side, first observe we have the trivial inequality
\begin{equation}
\||\tau|^{1+\mu} w_{\cC}\|_{\mathcal{D}^\ast,\infty}\leq \| |\tau|^{1+\mu} w_{\cC}\|_{\mathcal{D},\infty}.
\end{equation}
Next, arguing similarly as in \cite[Proof of Proposition 5.12]{CHH_translators} we see that
\begin{equation}\label{to_derive_en_easy}
\left\| |\tau|^{2+\mu}\overline{\mathcal{E}}[w,\varphi_\cC(v)]+|\tau|^{2+\mu} e^{\tau}\varphi_\cC(v)\mathcal{F}w\right\|_{\mathcal{D}^{\ast},\infty}  \\
 \leq C\| |\tau|^{2+\mu} w 1_{\{\theta/2\leq v\leq\theta\}}\|_{\mathfrak{H},\infty}+C\| w\|_{C^2_{\exp}(\mathcal{C})}.
\end{equation}
Hence, our main remaining task is to estimate the  $\| \,\, \|_{\mathcal{D}^\ast,\infty}$ norm of
\begin{equation}
{\mathcal{E}}p=-\frac{v_{y}^2}{1+ v_{y}^2}p_{yy}-2\frac{v_{y}v_{yy}}{(1+v_{y}^2)^2}p_y+\frac{2-v^2}{2v^2}p.
\end{equation}
To this end, recall that by the weighted Sobolev inequality (see e.g. \cite[Lemma 4.12]{ADS1}) one has
\begin{equation}
\| (1+|y|) f \|_{\mathfrak{H}}\leq C\|  f \|_{\mathcal{D}},\qquad \| (1+|y|) f \|_{\mathcal{D}^\ast}\leq C\|  f \|_{\mathfrak{H}}.
\end{equation}
Also, if $g\in\mathcal{D}$ and $h\in W^{1,\infty}$ then by the product rule
\begin{equation}
\|  hg \|_{\mathcal{D}}\leq 2  \|  h \|_{W^{1,\infty}}  \|  g \|_{\mathcal{D}},
\end{equation}
and hence by duality
\begin{equation}
\|  hf \|_{\mathcal{D}^\ast}\leq 2  \|  h \|_{W^{1,\infty}}  \|  f \|_{\mathcal{D}^\ast}.
\end{equation}
Now, using the derivative estimates from \eqref{impr_cyl_est} we see that
\begin{equation}
\left\|\frac{v_{y}^2}{1+ v_{y}^2}p_{yy}\right\|_{\mathcal{D}^\ast}+\left\|\frac{v_{y}v_{yy}}{(1+v_{y}^2)^2}p_y\right\|_{\mathcal{D}^\ast}\leq \frac{C}{|\tau|} \| p \|_{\mathcal{D}}.
\end{equation}
Moreover, thanks to the profile estimates from \eqref{profile_growth} and \eqref{profile_derivative_growth} we have
\begin{equation}
\sup_{v\geq \theta/2} \left| \frac{2-v^2}{v^2(1+y^2)}\right|+\sup_{v\geq \theta/2}\left|\partial_y\left( \frac{2-v^2}{v^2(1+y^2)}\right)\right|\leq  \frac{C}{|\tau|}.
\end{equation}
This yields
\begin{equation}\label{Ddast_third}
\left\|\frac{2-v^2}{2v^2}p\right\|_{\mathcal{D}^{\ast}} \leq C \left\|\frac{2-v^2}{2v^2(1+y^2)}p\right\|_\mathcal{D}\leq  \frac{C}{|\tau|} \| p \|_{\mathcal{D}}.
\end{equation}
Finally, by the transformation rule \eqref{wfromu}, equation \eqref{cyl_gauge_trans_rec} and the cylindrical estimates from \eqref{eq_cyl_est} we have
\begin{equation}
\sup|w_\cC(\cdot,\tau_{h_\mathrm{in}})| \leq Ce^{\tau_{h_\mathrm{in}}}\sup_{\partial\Omega_{h_{\mathrm{in}}}}|u|.
\end{equation}
Together with Corollary \ref{cor_bound_est} (level set estimate) this yields
\begin{equation}\label{to_derive_en_bd}
\|p^-(\tau_{h_\mathrm{in}})\|_{\mathfrak{H}} \leq C\sup_{\Omega_h} \frac{|f|}{H_{\phi}}.
\end{equation}
This concludes the proof of the proposition.
\end{proof}

\subsection{Energy estimate for the tip variation}
In this subsection, we prove an energy estimate in the tip region, similar to the one appearing in \cite[Section 5.5]{CHH_translators}. Here, we again need to include $\tau$-factors, and we again need to deal with the inhomogeneity and the boundary term.\\

Recall that we denote by $W$ the tip variation associated to $u$, and by $G$ the tip inhomogeneity associated to $f$.
Fixing a suitable cutoff function $\varphi_\cT(v)$ such that $\varphi_\cT(v)=1$ if $v\leq \theta$ and $\varphi_\cT(v)=0$ if $v\geq  2 \theta$, we set
\begin{equation}
W_{\mathcal{T}}(v,\tau):=\varphi_\cT(v)W(v,\tau),\qquad G_{\mathcal{T}}(v,\tau):=\varphi_\cT(v)G(v,\tau).
\end{equation}
Moreover, similarly as in  \cite{ADS2,CHH_translators}, fixing a suitable cutoff function $\zeta(v)$  such that $\zeta(v)=0$ for $v \leq \theta/4$ and $\zeta(v)=1$ for $v \geq \theta/2$,
we consider the weight function
\begin{equation}\label{def_weight_fn}
\bar{\mu}(v,\tau):=-\frac14 Y^2(\theta,\tau)+\int^\theta_v \left[ \zeta(\tilde{v}) \left(\frac{Y^2}{4}\right)_{\tilde{v}}-(1-\zeta(\tilde{v}))\frac{1+Y_{\tilde{v}}^2}{\tilde{v}} \right]\, d\tilde{v}\, ,
\end{equation}
and define
\begin{equation}\label{tip2_norm_def}
\|F(\cdot,\tau)\|_{2}:= \left[ \int^{2\theta}_0F^2(v,\tau)\, e^{\bar{\mu}(v,\tau)}dv\right]^{1/2},
\end{equation}
and
\begin{equation}
\|F\|_{2,\infty}(\tau):=\sup_{\tau_{h_\mathrm{in}}+1\leq \tau' \leq \tau}\frac{1}{|\tau'|^{1/4}}\left[\int^{\tau'}_{\tau'-1}\int^{2\theta}_0 F^2(v,\sigma)e^{\bar{\mu}(v,\sigma)}\, dvd\sigma\right]^{1/2}\, .
\end{equation}
Furthermore, we set
\begin{equation}
\|F\|_{C^2|T_{\tau}}:=  \sup_{v\leq 2\theta} \big(|F|+|F_v|+|F_{\tau}|+ |F_{vv}|+|F_{v\tau}|+|F_{\tau\tau}|\big)(v,\tau),
\end{equation}
and
\begin{align}
\| F\|_{C^2_{\exp}(\mathcal{T})}(\tau):=\sup_{\tau_{h_\mathrm{in}} \leq \tau'\leq \tau} e^{\frac{99}{100}\tau'}\|F\|_{C^2|T_{\tau'}}.
\end{align}
Finally, as usual we abbreviate
 \begin{equation}
\|F\|_{2,\infty}= \|F\|_{2,\infty}(\tau_0),\qquad \| F\|_{C^2_{\exp}(\mathcal{T})}:=\| F\|_{C^2_{\exp}(\mathcal{T})}(\tau_0)\, .
\end{equation}

\begin{proposition}[energy estimate for the tip variation]\label{prop_int_tip_inhom}
There exists a constant $C=C(\phi)<\infty$, such that
\begin{equation}
\| |\tau|^{2+\mu} W_{\mathcal{T}}\|_{2,\infty} \leq C\left(\| |\tau|^{1+\mu} W 1_{[\theta,2\theta]}\|_{2,\infty}+\|W\|_{C^2_{\exp}(\cT)}+\| |\tau|^{1+\mu} G_\cT \|_{2,\infty}+\sup_{\Omega_h} \frac{|f|}{H_{\phi}}\right).
\end{equation}
\end{proposition}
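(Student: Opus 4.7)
The plan is to mimic the strategy of Proposition \ref{energy_cyl_inhom}, but working in tip gauge with the weight $e^{\bar\mu}$ in place of the Gaussian, using the identities from \cite[Section 5.5]{CHH_translators}. First I would start from the tip equation of Proposition \ref{lin_tip_eq_prop}, multiply by the cutoff $\varphi_\cT(v)$ and by $|\tau|^{2+\mu}$, and set $P:=|\tau|^{2+\mu}W_\cT$. This yields an equation
\begin{equation*}
-P_\tau+\frac{P_{vv}}{1+Y_{v}^2}+\left(\frac{1}{v}-\frac{v}{2}-2\frac{Y_{vv}Y_{v}}{(1+Y_{v}^2)^2}\right)P_v+\frac{1}{2}P+e^\tau \mathcal{F}P = Q,
\end{equation*}
where $Q$ collects the inhomogeneity $|\tau|^{2+\mu}G_\cT$, the derivative-of-weight term $-(2+\mu)|\tau|^{1+\mu}W_\cT$, and commutator errors from $[\partial_\tau-\cdots,\varphi_\cT(v)]$ which are supported in $v\in[\theta,2\theta]$.

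Second, following the self-adjointness of the principal part with respect to $e^{\bar\mu}dv$ built into the definition \eqref{def_weight_fn}, I would multiply by $Pe^{\bar\mu}$, integrate over $[0,2\theta]\times[\tau'-1,\tau']$, and integrate by parts in $v$. The design of $\bar\mu$ ensures that the drift $\left(\tfrac{1}{v}-\tfrac{v}{2}-\tfrac{2Y_{vv}Y_v}{(1+Y_v^2)^2}\right)$ is absorbed into $\bar\mu_v$, so that one obtains coercivity of the form $\int|P_v|^2 e^{\bar\mu}dv + c\int P^2 e^{\bar\mu}dv$ up to a zero-order term from the $\tfrac{1}{2}P$ coefficient, which combines with $\partial_\tau|\tau|^{2(2+\mu)}$ to yield a controlled sign modulo lower-order corrections. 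No orthogonality hypothesis is needed because on the (asymptotic) bowl the tip operator is linearly stable in this weighted norm.

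Third, I would estimate the error terms. The commutator/cutoff contributions are supported where $\theta\leq v\leq 2\theta$, so by Cauchy--Schwarz they contribute $C\||\tau|^{1+\mu}W\,1_{[\theta,2\theta]}\|_{2,\infty}$, which matches the first term on the right hand side of the proposition (absorbing the lower-order $|\tau|^{1+\mu}W_\cT$ piece of $Q$ into this via Poincar\'e and the $|\tau'|^{-1/4}$ normalization in $\|\cdot\|_{2,\infty}$). The $|\tau|^{2+\mu}e^\tau\mathcal{F}P$ piece decays super-exponentially thanks to the tip estimates \eqref{eq_tip_est} and is thus absorbed into $\|W\|_{C^2_{\exp}(\cT)}$. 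The inhomogeneity contributes directly the term $C\||\tau|^{1+\mu}G_\cT\|_{2,\infty}$ after pairing $|\tau|^{2+\mu}G_\cT$ with $P$ and using Cauchy--Schwarz together with the Poincar\'e-type inequality already exploited in \cite[Section 5.5]{CHH_translators}.

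The step I expect to be the main obstacle is controlling the initial boundary term produced by integrating $\partial_\tau P\cdot P\, e^{\bar\mu}$ in time, namely $\|P(\cdot,\tau_{h_{\mathrm{in}}})\|_2^2$, which in contrast to the autonomous setting of \cite[Section 5.5]{CHH_translators} is nonzero because the Dirichlet boundary is at finite height $h$. For this I would use the transformation rule \eqref{Wfromu} together with the positivity bound \eqref{mul_Yu} and the tip estimates \eqref{eq_tip_est} to deduce
\begin{equation*}
\sup |W(\cdot,\tau_{h_{\mathrm{in}}})|\leq Ce^{\tau_{h_{\mathrm{in}}}}|\tau_{h_{\mathrm{in}}}|^{1/2}\sup_{\partial\Omega_{h_{\mathrm{in}}}}|u|,
\end{equation*}
and then apply Corollary \ref{cor_bound_est} (level set estimate) with $h'=h-h^{\gamma_k}$. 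Since $h-h'=h^{\gamma_k}\ll h$ while the prefactor $e^{\tau_{h_{\mathrm{in}}}}|\tau_{h_{\mathrm{in}}}|^{1/2}(\log h)$ decays like a negative power of $h$ once multiplied by $|\tau_{h_{\mathrm{in}}}|^{2+\mu}$, the boundary contribution is dominated by $C\sup_{\Omega_h}|f|/H_\phi$, which completes the estimate.
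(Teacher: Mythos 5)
Your proposal is correct and follows essentially the same approach as the paper's proof: a weighted energy estimate with the $e^{\bar\mu}$ weight, Cauchy--Schwarz for the inhomogeneity, the weighted Poincar\'e inequality from \cite[Corollary 5.22]{CHH_translators} to generate the $-\eta|\tau|\|W_\cT\|_2^2$ damping, and the level set estimate (Corollary \ref{cor_bound_est}) together with the smallness of $e^{\bar\mu}$ to absorb the initial boundary term at $\tau_{h_\mathrm{in}}$. The only cosmetic difference is that you build the $\tau$-weight into the unknown by setting $P=|\tau|^{2+\mu}W_\cT$ before doing the energy estimate, whereas the paper first derives a differential inequality for $\|W_\cT\|_2^2$ and then multiplies through by $|\tau|^{7/2+2\mu}$, which makes the interaction between the derivative-of-weight term, the $|\tau'|^{-1/4}$ normalization in $\|\cdot\|_{2,\infty}$, and the $-\eta|\tau|$ damping a bit more transparent.
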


\begin{proof}
Recall from Proposition \ref{lin_tip_eq_prop} (tip variation) that we have the evolution equation
\begin{equation}\label{evolve_inhom_tip_recall}
W_\tau=\frac{W_{vv}}{1+Y_{v}^2}+\left(\frac{1}{v}-\frac{v}{2}-2\frac{Y_{vv}Y_{v}}{(1+Y_{v}^2)^2}\right)W_v+\frac{1}{2}W+e^\tau \mathcal{F}W -G\, .
\end{equation}
Thus, similarly as in \cite[Lemma 5.23]{CHH_translators} we have the energy inquality
\begin{multline}\label{enineq}
\frac12\frac{d}{d\tau} \| W_\cT\|_2^2
\leq -\frac{1}{8}\int \frac{( W_\cT)_v^2}{1+Y_{v}^2} e^{\bar{\mu}} dv 
+\int H W_\cT^2 e^{\bar{\mu}} dv+\frac{C(\theta)}{|\tau|}\int^{2\theta}_\theta W^2 e^{\bar{\mu}} dv\\
+\left(e^\tau \|W\|_{C^2|T_{\tau}}+\|G_{\cT}\|_{2}\right)  \| W_\cT\|_2 ,
\end{multline}
where
\begin{equation}\label{tilde_G_def}
H=\frac12(1+Y_v^2)\left(\frac{1}{v}-\frac{v}{2}-\frac{{\bar{\mu}}_v}{1+Y_v^2}\right)^2+\frac12+{\bar{\mu}}_{\tau}.
\end{equation}
Indeed, when computing $\frac{1}{2}\frac{d}{d\tau} \| W_\cT\|_2^2$ the only new term is $-\int G_{\cT} W_{\cT}e^{\bar{\mu}} dv$, for which the Cauchy-Schwarz inequality immediately gives
\begin{equation}
\left|\int G_{\cT} W_{\cT}e^{\bar{\mu}} dv\right| \leq \|G_{\cT}\|_{2}\|W_{\cT}\|_{2}.
\end{equation}
Next, recall that $W_v$ vanishes for $v=0$ thanks to the $S^1$-symmetry. Thus, we can apply \cite[Corollary 5.22]{CHH_translators} (weighted Poincare inequality), which gives
\begin{equation}
\|W_\cT\|_2^2\leq \frac{C_0}{|\tau|} \int_0^{2\theta}\frac{( W_\cT)_v^2}{1+Y_{v}^2}e^{\bar{\mu}} dv\, , 
\end{equation}
where $C_0<\infty$ is a constant. Hence, similarly as in  \cite[Proof of Proposition 5.17]{CHH_translators} we infer that
\begin{equation}
\frac12\frac{d}{d\tau} \| W_\cT\|_2^2
\leq -\eta |\tau|\|W_\cT\|_2^2 +\frac{C}{|\tau|}\left(\|W1_{[\theta,2\theta]}\|_2^2 +e^{2\tau}  \|W \|_{C^2|T_\tau}^2+\|G_{\cT}\|^2_{2}\right),
\end{equation}
where $\eta=\eta(C_0)>0$. This implies
\begin{multline}\label{diff_eq_1b}
\frac{d}{d\tau}\left(|\tau|^{7/2+2\mu} \| W_\cT\|_2^2\right) \\
\leq -\eta|\tau|^{9/2+2\mu}\|W_\cT\|_2^2 +C|\tau|^{5/2+2\mu}\left(\|W1_{[\theta,2\theta]}\|_2^2+\|G_{\cT}\|^2_{2}\right)+ C\| W \|_{C^2_{\exp}(\cT)}^2(\tau)\, .
\end{multline}
To proceed, we set
\begin{align}
\mathrm{a}(\tau):=|\tau|^{7/2+2\mu} \| W_\cT\|_2^2,\qquad
  \mathrm{b}(\tau):=|\tau|^{3/2+2\mu}\left(\|W1_{[\theta,2\theta]}\|_2^2+\|G_{\cT}\|_2^2\right),
\end{align}
and consider
\begin{align}
\mathrm{A}(\tau):=\int_{\tau-1}^{\tau} \mathrm{a}(\tau')d\tau',\qquad \mathrm{B}(\tau):=\int_{\tau-1}^{\tau}  \mathrm{b}(\tau')d\tau'.
\end{align}
Then, we have
\begin{equation}\label{diff_eq_2}
\frac{d}{d\tau} \left[ e^{-\frac{\eta\tau^2}{2}}\mathrm{A}(\tau)\right] \leq C|\tau|e^{-\frac{\eta\tau^2}{2}}\left( \mathrm{B}(\tau)+\|W \|_{C^2_{\exp}(\cT)}^2(\tau)\right).
\end{equation}
Integrating, for any $\tau\in [\tau_{h_\mathrm{in}}+1, \tau_0]$ we infer that
\begin{equation}
\mathrm{A}(\tau)\leq\mathrm{A}(\tau_{h_\mathrm{in}}+1)+C\left( \sup_{\tau_{h_\mathrm{in}}+1\leq \tau'\leq \tau} B(\tau')+\|W \|_{C^2_{\exp}(\cT)}^2(\tau)\right).
\end{equation}
Finally, by the transformation rule \eqref{Wfromu} and the tip estimates from \eqref{eq_tip_est} we have
\begin{equation}
\sup|W_\cT(\cdot,\tau)| \leq C|\tau|^{1/2}e^{\tau}\sup_{\log\phi=-\tau}|u|.
\end{equation}
Also recall that by \cite[Proof of Proposition 5.21 and equation (558)]{CHH_translators} the weight ${\bar{\mu}}$ satisfies the coarse estimates
\begin{equation}
\sup_{v\leq 2\theta} |{\bar{\mu}}_\tau|\leq C |\tau|^3,\qquad \sup_{v\leq 2\theta} e^{{\bar{\mu}}}\leq e^{\frac{\tau}{4}}.
\end{equation}
Together with Corollary \ref{cor_bound_est} (level set estimate) we thus conclude that
\begin{equation}\label{to_derive_en_bd}
\mathrm{A}(\tau_{h_\mathrm{in}}+1)^{1/2}  \leq C\sup_{\Omega_h} \frac{|f|}{H_{\phi}}.
\end{equation}
This finishes the proof of the proposition.
\end{proof}

\bigskip

\subsection{Decay estimate} In this subsection, similarly to \cite[Section 5.6]{CHH_translators} (see also  \cite[Section 8]{ADS2}), we combine the estimates from the previous subsections to derive a decay estimate.

\begin{lemma}[coercivity estimate]\label{prop_coercivity_inhom}
Suppose $\fp_ {+}(w_{\cC}(\tau_0))=0$. 
Then, for some $C=C(\phi)<\infty$ we have
\begin{multline}
\| |\tau|^{2+\mu}(w_\cC-\mathfrak{p}_0 w_\cC) \|_{\cD,\infty}+\| |\tau|^{2+\mu} W_\cT \|_{2,\infty}  \\
\leq  C\left( \| |\tau|^{1+\mu} \mathfrak{p}_0 w_\cC  \|_{\mathfrak{H},\infty}+\|w\|_{C^2_{\exp}(\cC)}+\|W\|_{C^2_{\exp}(\cT)}+\| |\tau|^{2+\mu} g_{\cC}\|_{\cD^{\ast},\infty}+\| |\tau|^{1+\mu} G_\cT \|_{2,\infty}+\sup_{\Omega_h} \frac{|f|}{H_\phi} \right)\, .
\end{multline}
\end{lemma}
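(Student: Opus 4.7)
The plan is to combine the cylindrical energy estimate (Proposition \ref{energy_cyl_inhom}) and the tip energy estimate (Proposition \ref{prop_int_tip_inhom}), and then eliminate the three cross-coupling terms that appear on their right-hand sides---namely $\||\tau|^{1+\mu}w_\cC\|_{\mathcal{D},\infty}$, $\||\tau|^{2+\mu}w \,1_{\{\theta/2\leq v\leq\theta\}}\|_{\mathfrak{H},\infty}$, and $\||\tau|^{1+\mu} W\,1_{[\theta,2\theta]}\|_{2,\infty}$---via $\mathfrak{p}_0$-splitting, gauge conversion through Corollary \ref{cor_trans_rule}, and absorption into the left-hand side using that $|\tau_0|$ is large.

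For the first cross term I would split $w_\cC = \mathfrak{p}_0 w_\cC + (w_\cC-\mathfrak{p}_0 w_\cC)$. Since $\mathfrak{H}_0$ is one-dimensional, the $\mathcal{D}$ and $\mathfrak{H}$ norms are equivalent on it, yielding $\||\tau|^{1+\mu}\mathfrak{p}_0 w_\cC\|_{\mathcal{D},\infty} \leq C\||\tau|^{1+\mu}\mathfrak{p}_0 w_\cC\|_{\mathfrak{H},\infty}$, which is a term on the target right-hand side. The complementary piece obeys the elementary inequality
\[
\||\tau|^{1+\mu}(w_\cC-\mathfrak{p}_0 w_\cC)\|_{\mathcal{D},\infty} \leq |\tau_0|^{-1}\||\tau|^{2+\mu}(w_\cC-\mathfrak{p}_0 w_\cC)\|_{\mathcal{D},\infty},
\]
so it is absorbed into the left-hand side for $|\tau_0|$ large.

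The two transition terms I would handle by exploiting that on the overlap $v\in[\theta/2,2\theta]$ one has $\zeta\equiv 1$, so $\bar{\mu}(v,\tau)=-Y^2(v,\tau)/4$ and consequently $e^{-y^2/4}\,dy = e^{\bar{\mu}}|Y_v|\,dv$ under the change of variables $y=Y(v,\tau)$. Combined with the identity $W=-Y_v w$ from Corollary \ref{cor_trans_rule} and the tip estimates $\tfrac14 v|\tau|^{1/2}\leq |Y_v|\leq |\tau|^{1/2}$ from \eqref{eq_tip_est}, a direct computation gives
\[
\|w\,1_{[\theta/2,\theta]}\|_{\mathfrak{H}}^2 \leq \frac{C}{|\tau|^{1/2}}\|W_\cT\|_2^2, \qquad \|W\,1_{[\theta,2\theta]}\|_2^2 \leq C|\tau|^{1/2}\|w_\cC\,1_{[\theta,2\theta]}\|_{\mathfrak{H}}^2.
\]
Unwinding the definitions of $\|\cdot\|_{\mathfrak{H},\infty}$ and $\|\cdot\|_{2,\infty}$, the first conversion bounds $\||\tau|^{2+\mu}w\,1\|_{\mathfrak{H},\infty}$ by a constant times $\||\tau|^{2+\mu}W_\cT\|_{2,\infty}$. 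For the second, I would split $w_\cC = \mathfrak{p}_0 w_\cC + (w_\cC-\mathfrak{p}_0 w_\cC)$ once more: the non-$\mathfrak{p}_0$ piece is absorbable via a $|\tau_0|^{-1}$ factor arising from the weight difference between $|\tau|^{1+\mu}$ and $|\tau|^{2+\mu}$, whereas the $\mathfrak{p}_0$ piece $c(\tau)(y^2-2)$ is exponentially small in $\mathfrak{H}$-norm when restricted to the annulus $y\sim\sqrt 2|\tau|^{1/2}$, since the Gaussian weight $e^{-y^2/4}$ there is $e^{-|\tau|/2+O(1)}$, easily beating any polynomial in $|\tau|$.

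The main obstacle is the delicate bookkeeping of constants needed to absorb all three cross terms simultaneously: the coupling bounces smallness back and forth between the two energy estimates, and one must carefully balance the $|\tau_0|^{-1}$ factors against the $|\tau|^{1/2}$ losses from the tip-to-cylindrical conversion, using the exponential localization of the Gaussian weight to rescue the $\mathfrak{p}_0$-component. This follows essentially the same procedure as \cite[Section 5.6]{CHH_translators}, modified to accommodate the $\tau$-weights and the inhomogeneity contributions $g$ and $G$, which appear as passive source terms on the right-hand side and do not affect the absorption structure. Once all three cross-terms are absorbed, what remains is precisely the stated coercivity estimate.
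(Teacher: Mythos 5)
Your proposal is correct and takes essentially the same approach as the paper's proof: applying Propositions \ref{prop_int_tip_inhom} and \ref{energy_cyl_inhom}, converting the two transition-region terms between $w$- and $W$-gauge via Corollary \ref{cor_trans_rule} (in the paper this is packaged as the norm equivalence \eqref{eq_equiv_of_norms}, which you rederive explicitly from $\bar\mu=-Y^2/4$ and $|Y_v|\sim|\tau|^{1/2}$ on the overlap), splitting $w_\cC=\fp_0 w_\cC+(w_\cC-\fp_0 w_\cC)$, and absorbing the lower-weight pieces via $|\tau_0|^{-1}$. Your invocation of the exponential smallness of $e^{-y^2/4}$ on the annulus $v\in[\theta,2\theta]$ to control $\fp_0 w_\cC$ there is correct but unnecessary overkill: the cruder bound $\|\fp_0 w_\cC\,1_{[\theta,2\theta]}\|_{\fH}\leq\|\fp_0 w_\cC\|_{\fH}$ already yields a term on the right-hand side, and this is what the paper uses.
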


\begin{proof}
By Theorem  \ref{thm_unique_asympt_recall} (sharp asymptotics) we have
\begin{equation}
\frac{C(\theta)^{-1}}{\sqrt{|\tau|}} \leq |v_y(Y(v,\tau),\tau)| \leq   \frac{C(\theta)}{\sqrt{|\tau|}}\qquad \textrm{for}\,\, v\in[\theta,2\theta].
\end{equation} 
Together with Corollary \ref{cor_trans_rule} (transformation rule) this yields
\begin{equation}\label{w_W_compar}
\frac{C(\theta)^{-1}}{\sqrt{|\tau|}}|W(v,\tau)| \leq |w(Y(v,\tau),\tau)| \leq  \frac{C(\theta)}{\sqrt{|\tau|}} |W(v,\tau)|\qquad \textrm{for}\,\, v\in[\theta,2\theta].
\end{equation}
Hence, arguing similarly as in \cite[Proof of Lemma 8.1]{ADS2}, for $p=0,1,2$ we get
\begin{equation}\label{eq_equiv_of_norms}
C(\theta)^{-1}\| |\tau|^{p+\mu} W 1_{[\theta,2\theta]} \|_{2,\infty}\leq \| |\tau|^{p+\mu} w\, 1_{v(\cdot,\tau)\in[\theta,2\theta]} \|_{\fH,\infty}\leq  C(\theta)\| |\tau|^{p+\mu} W 1_{[\theta,2\theta]} \|_{2,\infty}\, .
\end{equation}
Applying Proposition \ref{prop_int_tip_inhom} (energy estimate in tip region) this yields
\begin{equation}\label{eq_WT-WD_compatible}
\| |\tau|^{2+\mu} W_{\mathcal{T}}\|_{2,\infty} \leq C\left(\| |\tau|^{1+\mu} w_\cC \|_{\mathcal{D},\infty}+\|W\|_{C^2_{\exp}(\cT)}+\| |\tau|^{1+\mu} G_\cT \|_{2,\infty}+\sup_{\Omega_h} \frac{|f|}{H_{\phi}}\right),
\end{equation}
Similarly, replacing $\theta$ by $\theta/2$, and applying Proposition \ref{energy_cyl_inhom} (energy estimate in the cylindrical region) we infer that
\begin{multline}\label{coerce_inter}
\| |\tau|^{2+\mu}(w_\cC-\fp_0w_\cC) \|_{\mathcal{D},\infty}\\
\leq C\left( \| |\tau|^{1+\mu} w_{\cC}\|_{\mathcal{D},\infty}+\| |\tau|^{2+\mu} W_\cT \|_{2,\infty}
+ \| w\|_{C^2_{\exp}(\mathcal{C})}+\|\tau^2 g_{\cC}\|_{\mathcal{D}^{\ast},\infty}+\sup_{\Omega_h}\frac{|f|}{H_{\phi}}\right).
\end{multline}
Observing also that
\begin{equation}
\| |\tau|^{1+\mu} w_{\cC}\|_{\cD,\infty} \leq \frac{1}{4C^2} \| |\tau|^{2+\mu} (w_\cC-\mathfrak{p}_0 w_\cC) \|_{\mathfrak{H},\infty}+ \| |\tau|^{1+\mu} \mathfrak{p}_0 w_\cC  \|_{\cD,\infty},
\end{equation}
and using absorption, this proves the lemma.
\end{proof}

\begin{theorem}[decay estimate]\label{main_decay_lin_inhom}
If $\fp_ {+}(w_{\cC}(\tau_0))=0$ and $\fp_ {0}(w_{\cC}(\tau_0))=0$, then
\begin{multline}
\| |\tau|^{1+\mu}\fp_0w_{\cC}\|_{\fH,\infty}+\| |\tau|^{2+\mu} (w_\cC-\fp_0(w_{\cC}))\|_{\cD,\infty} +\| |\tau|^{2+\mu} W_\cT \|_{2,\infty}\\
 \leq   C\left( \|w\|_{C^2_{\exp}(\cC)}+\|W\|_{C^2_{\exp}(\cT)}+\||\tau|^{2+\mu} g_{\cC}\|_{\mathcal{D}^{\ast},\infty}+\| |\tau|^{1+\mu} G_\cT \|_{2,\infty}+\sup_{\Omega_h} \frac{|f|}{H_{\phi}}\right),
\end{multline}
where $C=C(\phi)<\infty$ is independent of $h$.
\end{theorem}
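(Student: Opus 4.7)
By the coercivity estimate from Lemma \ref{prop_coercivity_inhom}, the two quantities $\||\tau|^{2+\mu}(w_\cC - \fp_0 w_\cC)\|_{\cD,\infty}$ and $\||\tau|^{2+\mu}W_\cT\|_{2,\infty}$ appearing on the left-hand side of the decay estimate are already controlled by $\||\tau|^{1+\mu}\fp_0 w_\cC\|_{\fH,\infty}$ plus the error terms listed in the statement. The remaining task is therefore to bound $\||\tau|^{1+\mu}\fp_0 w_\cC\|_{\fH,\infty}$ itself by a small multiple of $\||\tau|^{2+\mu}(w_\cC - \fp_0 w_\cC)\|_{\cD,\infty}$ plus the error terms, after which everything closes by absorption for $|\tau_0|$ sufficiently large.

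The plan is to derive a scalar ODE for the neutral coefficient. Write $\fp_0 w_\cC(y,\tau) = \alpha(\tau) \psi_0(y)$ with $\psi_0 = y^2 - 2$; by hypothesis $\alpha(\tau_0) = 0$. Pairing the truncated evolution equation \eqref{ev_eq_trunc_w} with $\psi_0$ and using self-adjointness of $\mathfrak{L}$ together with $\mathfrak{L}\psi_0 = 0$, I obtain
\begin{equation*}
\|\psi_0\|_\fH^2\,\alpha'(\tau) = \langle \mathcal{E}w_\cC + \overline{\mathcal{E}}[w,\varphi_\cC(v)] + e^\tau \varphi_\cC(v) \mathcal{F}w - g_\cC,\; \psi_0\rangle_\fH.
\end{equation*}
The sharp asymptotics from Theorem \ref{thm_unique_asympt_recall} yield $\frac{2-v^2}{2v^2}\approx \frac{y^2-2}{4|\tau|}$, so a direct computation produces the leading behaviour $\langle \mathcal{E}\psi_0,\psi_0\rangle_\fH = \frac{2}{|\tau|}\|\psi_0\|_\fH^2 + O(|\tau|^{-2})\|\psi_0\|_\fH^2$, whence the ODE takes the form
\begin{equation*}
\alpha'(\tau) = \frac{2}{|\tau|}\alpha(\tau) + R(\tau),
\end{equation*}
where $R(\tau)$ collects (i) the contribution $\langle \mathcal{E}(w_\cC - \fp_0 w_\cC),\psi_0\rangle$, controlled by $C|\tau|^{-1}\|w_\cC - \fp_0 w_\cC\|_\cD$ via the dual-norm estimate $\|\mathcal{E}p\|_{\cD^\ast}\leq C|\tau|^{-1}\|p\|_\cD$ from the proof of Proposition \ref{energy_cyl_inhom}; (ii) the collar commutator terms $\overline{\mathcal{E}}[w,\varphi_\cC(v)] + e^\tau \varphi_\cC(v) \mathcal{F}w$, which, being supported where $|y|\gtrsim |\tau|^{1/2}$, produce only exponentially small contributions of the form $e^{-c|\tau|}\|w\|_{C^2_{\exp}(\cC)} + \|w 1_{\{\theta/2\leq v\leq\theta\}}\|_\fH$ against the Gaussian weight; and (iii) $\langle g_\cC,\psi_0\rangle$, bounded by $C\|g_\cC\|_{\cD^\ast}$.

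Next, applying the integrating factor $|\tau|^{-2}$ (which reflects the expected asymptotic $|\alpha|\lesssim |\tau|^{-2}$ associated with the HIMW parameter variation mentioned in the introduction) yields $\frac{d}{d\tau}(|\tau|^2 \alpha) = |\tau|^2 R(\tau)$. Integrating from $\tau$ to $\tau_0$ using $\alpha(\tau_0)=0$ gives $|\alpha(\tau)| \leq |\tau|^{-2}\int_\tau^{\tau_0}|\sigma|^2 |R(\sigma)| d\sigma$. Splitting the time integral into unit intervals $[\tau_k,\tau_k+1]$, applying Cauchy--Schwarz on each, and summing the resulting geometric-type series yields, for $\mu\geq 0$,
\begin{equation*}
\||\tau|^{1+\mu}\fp_0 w_\cC\|_{\fH,\infty} \leq \frac{C}{|\tau_0|}\||\tau|^{2+\mu}(w_\cC - \fp_0 w_\cC)\|_{\cD,\infty} + (\textrm{error terms from the statement}),
\end{equation*}
where the boundary contribution at $\tau_{h_\mathrm{in}}$ is handled as in the proof of Proposition \ref{energy_cyl_inhom} via Corollary \ref{cor_bound_est}. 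Substituting this back into Lemma \ref{prop_coercivity_inhom} and absorbing the first term on the right into the left-hand side --- which is legitimate for $|\tau_0|$ sufficiently large --- produces the desired decay estimate. The most delicate point of the argument is the computation of the precise coefficient $2/|\tau|$ in the ODE for $\alpha$: any weaker decay of this self-interaction coefficient would fail to make the $|\tau|^{-2}$ integrating factor effective, and any spurious positive constant-order term would obstruct the final absorption entirely.
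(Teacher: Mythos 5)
Your proposal follows the same overall architecture as the paper's proof: project the truncated cylindrical equation \eqref{ev_eq_trunc_w} onto the neutral mode $\psi_0$, obtain a scalar ODE for the neutral coefficient, integrate backward from $\tau_0$ using $a(\tau_0)=0$, estimate the error on unit time intervals, and close by absorption against the coercivity estimate of Lemma \ref{prop_coercivity_inhom}. The genuine difference lies in the integrating factor: you compute the exact self-interaction coefficient $2/|\tau|$ (via $\tfrac{2-v^2}{2v^2}\approx\tfrac{y^2-2}{4|\tau|}$ and the third Hermite moment) and use the matched integrating factor $|\tau|^2$, whereas the paper uses $|\tau|^{2+\mu}$ directly (so that the factor already carries the target weight), artificially subtracts a $(2+\mu)a/|\tau|$-type term inside $F$, and cites \cite[Claim 5.27]{CHH_translators} for the resulting unit-interval estimate. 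Your version makes the key cancellation explicit rather than outsourced, which is pedagogically cleaner; the paper's version is purely algebraic and avoids computing the coefficient, at the cost of carrying the residual $-\mu a/|\tau|$ through the estimate. Three imprecisions in your write-up: (1) Your claim that the argument closes ``for $\mu\geq 0$'' is too broad with the $|\tau|^2$ factor. Tracking the dominant contribution $\tfrac{C}{|\sigma|}\|w_\cC-\fp_0 w_\cC\|_\cD$ gives $|\tau|^{1+\mu}|\alpha(\tau)|\lesssim |\tau|^{\mu-1}M\int_\tau^{\tau_0}|\sigma|^{-1-\mu}d\sigma\lesssim M|\tau|^{\mu-1}/|\tau_0|^{\mu}$, which is uniformly $\lesssim M/|\tau_0|$ only when $\mu\le 1$; for $\mu>1$ the prefactor $|\tau|^{\mu-1}$ blows up as $\tau\to-\infty$. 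This suffices for the paper's applications ($\mu\in\{0,1\}$), but the paper's $|\tau|^{2+\mu}$ factor gives a uniform $\log(|\tau|/|\tau_0|)/|\tau|\le 1/(e|\tau_0|)$ decay for all $\mu$. (2) You omit the $O(|\tau|^{-1-\delta})a$ correction to the $2/|\tau|$ coefficient coming from the error in Theorem \ref{thm_unique_asympt_recall}; this is absorbable by the same argument but should be listed in $R$. (3) Since you integrate from $\tau$ to $\tau_0$ and use $\alpha(\tau_0)=0$, there is no boundary contribution at $\tau_{h_\mathrm{in}}$ for the scalar $\alpha$ (unlike the $\|p^-(\tau_{h_\mathrm{in}})\|_\fH$ term in Proposition \ref{energy_cyl_inhom}, which concerns the stable projection); your remark about it is a red herring.
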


\begin{proof}
Setting $\psi_0:=(y^2-2)/\|y^2-2\|_{\fH}$, consider the spectral coefficient
\begin{equation}
a(\tau):=\langle w_\cC(\tau),\psi_0\rangle_{\mathfrak{H}}.
\end{equation}
Since $a(\tau_0)=0$ by assumption, we infer that
\begin{equation}\label{eq_a}
a(\tau)=-\frac{1}{|\tau|^{2+\mu}} \int_{\tau}^{\tau_0}\left(F(\sigma)+N(\sigma)\right)|\sigma|^{2+\mu} d\sigma,
\end{equation}
where similarly as in  \cite[Proof of Proposition 5.25]{CHH_translators}  we have
\begin{equation}\label{eq_error_F}
F(\tau):=\left\langle\mathcal{E}[w_\cC]-\frac{(2+\mu)a(\tau)}{8|\tau|}\psi_0^2 +   \overline{\mathcal{E}}[w,\varphi_\cC]+ e^{\tau}\varphi_\cC(v){\mathcal{F}}[w], \psi_0\right\rangle_{\mathfrak{H}},
\end{equation}
and we have the new term
\begin{equation}
N(\tau):=-\left\langle g_{\cC}(\tau),\psi_0 \right\rangle_{\fH}.
\end{equation}
Note that
\begin{equation}
\left|\int_{\tau-1}^\tau  N(\sigma) |\sigma|^{2+\mu}\, d\sigma\right| \leq C \|\tau^{2+\mu}g_{\cC}\|_{\mathcal{D}^{\ast},\infty}.
\end{equation}
Moreover, arguing similarly as in \cite[Proof of Claim 5.27]{CHH_translators}, but now using Lemma \ref{prop_coercivity_inhom} (coercivity estimate) in lieu of \cite[Lemma 5.26]{CHH_translators}, for $\tau \in [\tau_{h_\mathrm{in}}+1,\tau_0]$ we infer that
\begin{equation}
\left|\int_{\tau-1}^\tau F(\sigma) |\sigma|^{2+\mu}\, d\sigma\right| 
\leq \frac{1}{2} A + CB,
\end{equation}
where
\begin{equation}
A:= \sup_{\tau_{h_\mathrm{in}}+1 \leq \tau'\leq \tau_0}\left( \int_{\tau'-1}^{\tau'} \left(a(\sigma)|\sigma|^{1+\mu}\right)^2 \, d\sigma \right)^{1/2}\, ,
\end{equation}
and
\begin{equation}
B:=\|w\|_{C^2_{\exp}(\cC)}+\|W\|_{C^2_{\exp}(\cT)}+\| |\tau|^{2+\mu} g_{\cC}\|_{\mathcal{D}^{\ast},\infty}+\| |\tau|^{1+\mu} G_\cT \|_{2,\infty}+\sup_{\Omega_h} \frac{|f|}{H_{\phi}}.
\end{equation}
Hence, for any $\tau \in [\tau_{h_\mathrm{in}}+1,\tau_0]$  we can estimate
\begin{equation}
|\tau|^{1+\mu} |a(\tau)|\leq\frac{1}{|\tau|}\sum^{\lceil\tau_0 \rceil}_{j=\lfloor\tau\rfloor} \left| \int^j_{j-1}(F(\sigma)+N(\sigma))|\sigma|^{2+\mu} d\sigma\right|
 \leq \frac{1}{2} A + CB.
\end{equation}
This implies
\begin{equation}
A\leq  CB,
\end{equation}
and together with Lemma \ref{prop_coercivity_inhom} (coercivity estimate) establishes the assertion of the theorem.
\end{proof}

As a corollary, we get a decay estimate for entire solutions of the homogenous problem $Lu=0$:

\begin{corollary}[decay estimate for entire homogenous solutions]\label{main_decay_cor}
Let $u\in C^{k,\alpha}_{\mathrm{loc}}(\mathbb{R}^3/S^1)$ be a solution of $Lu=0$.
Suppose that $\fp_ {+}(w_{\cC}(\tau_0))=0$ and $\fp_ {0}(w_{\cC}(\tau_0))=0$, and suppose in addition that $\| w_\cC \|_{\cD,\infty}<\infty$ and $\limsup_{\tau\to-\infty}\|W_{\cT}\|_{2}<\infty$. 
Then, for some $C=C(\phi)<\infty$ we have
\begin{equation}
\| w_\cC\|_{\cD,\infty} +\| W_\cT \|_{2,\infty}
 \leq   C\left( \|w\|_{C^2_{\exp}(\cC)}+\|W\|_{C^2_{\exp}(\cT)}\right).
\end{equation}
\end{corollary}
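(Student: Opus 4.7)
The plan is to adapt the proof of Theorem \ref{main_decay_lin_inhom} (decay estimate) to the entire homogeneous setting. Since $Lu = 0$, the inhomogeneity terms $g_\cC, G_\cT$ and the pointwise bound $\sup|f|/H_\phi$ all vanish from the right-hand sides of the propositions in Section \ref{energy_est_sec}. The Dirichlet contribution at $\tau_{h_{\mathrm{in}}}$, which in Propositions \ref{energy_cyl_inhom} and \ref{prop_int_tip_inhom} was estimated via Corollary \ref{cor_bound_est} (level set estimate), is replaced by starting the energy argument at a sequence $\tau_k \to -\infty$. The assumptions $\|w_\cC\|_{\mathcal{D},\infty}<\infty$ and $\limsup_{\tau\to-\infty}\|W_\cT\|_2<\infty$ allow us to choose $\tau_k$ so that $\|w_\cC(\tau_k)\|_\fH$ and $\|W_\cT(\tau_k)\|_2$ remain bounded; the dissipation on the stable subspace of the Ornstein--Uhlenbeck operator $\mathfrak{L}$ then forgets these bounded data exponentially in $\tau-\tau_k$, so in the limit no boundary term remains.

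I would apply these modified estimates with $\mu = -2$, so that the $|\tau|^{2+\mu}$ weight becomes trivial. This yields a coercivity bound of the form
\begin{equation*}
\|w_\cC-\mathfrak{p}_0 w_\cC\|_{\mathcal{D},\infty}+\|W_\cT\|_{2,\infty} \leq C\bigl(\||\tau|^{-1}\mathfrak{p}_0 w_\cC\|_{\fH,\infty}+\|w\|_{C^2_{\exp}(\cC)}+\|W\|_{C^2_{\exp}(\cT)} + |\tau_0|^{-1}\|w_\cC\|_{\mathcal{D},\infty}\bigr),
\end{equation*}
where the final term collects contributions from the perturbation operators $\mathcal{E}, \overline{\mathcal{E}}, e^\tau\mathcal{F}$, whose coefficients are of size $O(|\tau|^{-1})$ by \eqref{eq_cyl_est}--\eqref{eq_tip_est}. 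The spectral coefficient $a(\tau) = \langle w_\cC(\tau),\psi_0\rangle_\fH/\|\psi_0\|_\fH^2$ is then controlled by integrating the ODE $a'=F$ from $\tau$ up to $\tau_0$ (where $a(\tau_0)=0$) and using the error bound \eqref{eq_error_F}, which delivers $\|\mathfrak{p}_0 w_\cC\|_{\fH,\infty} \leq C(\|w\|_{C^2_{\exp}(\cC)} + \|W\|_{C^2_{\exp}(\cT)}+|\tau_0|^{-1}\|w_\cC\|_{\mathcal{D},\infty})$. Since $\mathfrak{p}_0\fH$ is one-dimensional, $\|\mathfrak{p}_0 w_\cC\|_{\mathcal{D},\infty}\simeq \|\mathfrak{p}_0 w_\cC\|_{\fH,\infty}$. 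Assembling via the triangle inequality $\|w_\cC\|_{\mathcal{D},\infty}\leq \|w_\cC-\mathfrak{p}_0 w_\cC\|_{\mathcal{D},\infty}+\|\mathfrak{p}_0 w_\cC\|_{\mathcal{D},\infty}$ and absorbing the $|\tau_0|^{-1}\|w_\cC\|_{\mathcal{D},\infty}$ term (permissible because $|\tau_0|$ is large by convention) then yields the claimed estimate.

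The main obstacle is rigorously justifying the exponential forgetting of the initial data at $\tau_k$. This requires a spectral-gap estimate for the perturbed parabolic operator $\partial_\tau - \mathfrak{L} - \mathcal{E} - e^\tau\mathcal{F}$ restricted to the stable subspace $\fH_-$, uniform as $\tau_k \to -\infty$. The key point is that the perturbations $\mathcal{E}, \overline{\mathcal{E}}, e^\tau\mathcal{F}$ have coefficients that decay like $|\tau|^{-1}$ or $e^\tau$ by Theorem \ref{thm_unique_asympt_recall} together with \eqref{eq_cyl_est}--\eqref{eq_tip_est}, so for $|\tau_0|$ sufficiently large the spectral gap of $\mathfrak{L}$ on $\fH_-$ dominates the perturbation, and a parabolic Gronwall argument then yields the required exponential decay of the forward-in-time propagation of initial data.
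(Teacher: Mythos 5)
Your proposal follows the same strategy as the paper's (very terse) proof: drop the inhomogeneity terms since $f=0$, remove the $\tau$-weights, and invoke the finiteness hypotheses to justify the absorption steps. Your discussion of the exponential forgetting of initial data — taking a sequence $\tau_k\to-\infty$ with bounded initial energy and using the dissipation on $\fH_-$ for each fixed $\tau'$ — is a correct and useful elaboration of the paper's one-line appeal to ``the finiteness assumption''.

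The place to be more careful is the ODE argument for the neutral coefficient $a(\tau)=\langle w_\cC(\tau),\psi_0\rangle_\fH$. With your choice $\mu=-2$ the weight-derivative term $\frac{(2+\mu)a}{8|\tau|}\psi_0^2$ in \eqref{eq_error_F} vanishes, but $F$ still contains $a\,\langle\mathcal{E}\psi_0,\psi_0\rangle\sim \frac{c\,a}{|\tau|}$ (with $c\approx 2$ by the sharp asymptotics and the Hermite identity $\langle H_2^3,1\rangle=64$), so $a(\tau)=-\int_\tau^{\tau_0}F\,d\sigma$ is a Volterra equation, not an explicit formula. Direct integration produces a term of size $\sup|a|\cdot\log(|\tau|/|\tau_0|)$, which diverges; one must instead solve with the integrating factor $|\tau|^{c}$ and use $c>0$ to get $\sup|a|\leq C(c)\,B$. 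So the ``drop the $\tau$-weights'' shortcut — which both you and the paper invoke — implicitly relies on the favorable sign of the neutral mode's effective coefficient, and your sketch would be stronger if it flagged this. Relatedly, note that directly substituting $\mu=-2$ into the conclusion of Theorem~\ref{main_decay_lin_inhom} only gives $\||\tau|^{-1}\fp_0 w_\cC\|_{\fH,\infty}\leq CB$ (a linear-growth bound on $a$, not a uniform one), which is exactly why the separate ODE step is indispensable rather than a corollary of the weighted theorem.
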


\begin{proof}
This follows by inspecting the above proof. Indeed, all terms involving the inhomogeneity can be simply dropped since $f=0$ by assumption, and all the $\tau$-weights can be dropped as well (this is similar to the simpler setting from \cite{CHH_translators}, which did not have any $\tau$-weights either). Finally, thanks to the finiteness assumption the steps in the above proofs that use absorption are indeed justified.\end{proof}

\bigskip

\section{Interior estimates}\label{Holder_sec}
The purpose of this section is twofold. On the one hand, we prove estimates that lead to a Schauder theory for $L$ in appropriate H\"older spaces. On the other hand, our estimates also allow us to control the weighted parabolic $C^2$-norm of $w$ and $W$ in terms of their $L^2$-norm and the $C^{\alpha}$-norm of the inhomogeneity.\\

In the cylindrical region, denoting by $V$ the unrenormalized cylindrical profile function, we call
\begin{equation}\label{bw_def}
\bw(x,t):=-V_t(x,t)\,\, u(x,V(x,t),0)
\end{equation}
the \emph{cylindrical variation} associated to $u$, and
\begin{equation}\label{bg_def}
\bg(x,t) :=\sqrt{1+V_x^2(x,t)+V_t^2(x,t)}\,\, f(x,V(x,t),0)
\end{equation}
the \emph{cylindrical inhomogeneity} associated to $f$. To reformulate the equations in a suitable parabolic form it is useful to define
\begin{equation}\label{unren_tilde_quant}
\tilde{\bw}(x,s,t)={\bw}(x,s+t), \qquad {\tilde{\bg}}(x,s,t)={\bw}(x,s+t).
\end{equation}
As discussed in the introduction, this redundant description enables us to view $s$ as a spatial variable and $t$ as a time variable, which is the key to establish sharp Schauder estimates in our degenerating setting.

\begin{proposition}[cylindrical variation]\label{prop_cyl_var}
Suppose $Lu=f$. Then, defining $\tilde{\bw}$ and ${\tilde{\bg}}$ by \eqref{unren_tilde_quant}, we have
\begin{multline}\label{eq_V^D_evolution}
\tilde{\bw}_t= \tfrac{1+\tilde{V}_{x}^2}{1+\tilde{V}_{x}^2+\tilde{V}_{s}^2}\tilde{\bw}_{ss}+\tfrac{1+\tilde{V}_{s}^2}{1+\tilde{V}_{x}^2+\tilde{V}_{s}^2}\tilde{\bw}_{xx}-\tfrac{2\tilde{V}_{x}\tilde{V}_{s}}{1+\tilde{V}_{x}^2+\tilde{V}_{s}^2}\tilde{\bw}_{xs}\\
+ \tfrac{2\tilde{V}_{x}(\tilde{V}_{ss}-\tilde{V}_{t}-1/\tilde{V})-2\tilde{V}_{s}\tilde{V}_{xs}}{1+\tilde{V}_{x}^2+\tilde{V}_{s}^2} \tilde{\bw}_x+ \tfrac{ 2\tilde{V}_{s}(\tilde{V}_{xx}-\tilde{V}_{t}-1/\tilde{V})-2\tilde{V}_{x} \tilde{V}_{xs}}{1+\tilde{V}_{x}^2+\tilde{V}_{s}^2} \tilde{\bw}_s+\tfrac{1}{\tilde{V}^2}\tilde{\bw}-\tilde{\bg},
\end{multline}
where $\tilde{V}(x,s,t)=V(x,s+t)$.
\end{proposition}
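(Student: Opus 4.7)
The plan is to linearize equation \eqref{V_eq_der} of Claim \ref{claim_ren_prof} at $\eps=0$. Since $\phi^\eps=\phi-\eps u$, the quantity $\psi^\eps=-\Theta[\phi^\eps]$ satisfies $\psi^0=0$ and $\frac{d}{d\eps}|_{\eps=0}\psi^\eps=Lu=f$. Differentiating the defining identity $V^\eps(x_1,-\phi^\eps(x_1,x_2,0))=x_2$ at $\eps=0$ yields
\begin{equation}
\frac{d}{d\eps}\Big|_{\eps=0} V^\eps(x,t) = -V_t(x,t)\, u(x,V(x,t),0) = \bw(x,t),
\end{equation}
so the $\eps$-derivative of \eqref{V_eq_der} at $\eps=0$ produces an equation for $\bw$. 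Because $\psi^0=0$, the derivative of the square-root prefactor on the right-hand side contributes nothing and the RHS linearizes cleanly to $\sqrt{1+V_x^2+V_t^2}\,f(x,V(x,t),0)=\bg(x,t)$.

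Write the LHS of \eqref{V_eq_der} as $\mathcal A[V]-V_t-1/V$ with $\mathcal A=N/D$, where $N=(1+V_t^2)V_{xx}+(1+V_x^2)V_{tt}-2V_xV_tV_{xt}$ and $D=1+V_x^2+V_t^2$. At $\eps=0$ the translator equation forces $\mathcal A[V]=V_t+1/V$, which we substitute into the quotient-rule expression $d\mathcal A=(dN-\mathcal A\, dD)/D$, so that the undifferentiated second-order terms cancel and a direct computation produces
\begin{equation}
D\cdot d\mathcal A[V](\bw)=(1+V_t^2)\bw_{xx}+(1+V_x^2)\bw_{tt}-2V_xV_t\bw_{xt}+P_x\bw_x+P_t\bw_t,
\end{equation}
with $P_x=2V_x(V_{tt}-V_t-1/V)-2V_tV_{xt}$ and $P_t=2V_t(V_{xx}-V_t-1/V)-2V_xV_{xt}$. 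Combined with $\frac{d}{d\eps}|_{\eps=0}(-1/V^\eps)=\bw/V^2$, this yields the elliptic identity $-\bw_t+d\mathcal A[V](\bw)+\bw/V^2=\bg$.

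The last step is to transfer this identity to $\tilde{\bw}(x,s,t)=\bw(x,s+t)$, exploiting the redundancy $\tilde{\bw}_s=\tilde{\bw}_t=\bw_t$ and $\tilde V_s=\tilde V_t=V_t$. Via the dictionary $\bw_{xx}\to\tilde{\bw}_{xx}$, $\bw_{xt}\to\tilde{\bw}_{xs}$, $\bw_{tt}\to\tilde{\bw}_{ss}$, exactly one occurrence of $\bw_t$ --- the standalone $-\bw_t$ --- is left as the parabolic time derivative $\tilde{\bw}_t$, and the $\bw_t$ inside $P_t\bw_t$ is reinterpreted as the spatial first derivative $\tilde{\bw}_s$; likewise one renames $V_t\to\tilde V_s$ inside $D$ and in the prefactor of $P_t$, while keeping $V_t\to\tilde V_t$ inside the parentheses of $P_x$ and $P_t$. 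Solving the resulting relation for $\tilde{\bw}_t$ reproduces \eqref{eq_V^D_evolution}. The proof involves no deep obstacle; it is a linearization followed by an algebraic rearrangement, and the only care required is bookkeeping --- being systematic about which copy of $V_t$ or $\bw_t$ gets relabelled as $\tilde V_s,\tilde{\bw}_s$ and which as $\tilde V_t,\tilde{\bw}_t$. It is precisely this relabelling choice, inherited from the definition \eqref{unren_tilde_quant}, that manufactures the parabolic form with anisotropic scaling in $(x,s)$ versus $t$ upon which the degenerate Schauder estimates advertised in \eqref{Holder_W_away_intro} will later rely.
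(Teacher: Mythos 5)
Your proposal is correct and follows the same route as the paper: differentiate the identity $V^\eps(x_1,-\phi^\eps(x_1,x_2,0))=x_2$ to identify $\bw=\tfrac{d}{d\eps}\big|_{0}V^\eps=-V_t\,u\diamond V$, then differentiate equation \eqref{V_eq_der} at $\eps=0$. The paper's proof is three sentences and omits the computation you spell out: the quotient-rule expansion $d\mathcal{A}=(dN-\mathcal{A}\,dD)/D$, the substitution $\mathcal{A}=V_t+1/V$ (valid since $\psi^0\equiv0$), the vanishing of the prefactor term on the right because $\psi^0\equiv0$ (which also kills the chain-rule term from $V^\eps$ sitting in the argument of $\psi^\eps$), and the final relabelling dictionary $V_t\mapsto\tilde V_s$, $\bw_t\mapsto\tilde\bw_s$ except for the single standalone $-\bw_t\mapsto-\tilde\bw_t$ and the $V_t$ inside the parentheses $\mapsto\tilde V_t$. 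This is exactly the bookkeeping implicit in the authors' "the assertion follows by differentiating both sides of \eqref{V_eq_der}," and your write-up is a faithful unpacking of it.
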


\begin{proof}
As before, we work with a suitable one-parameter family $\phi^\eps$ such that
\begin{equation}
\frac{d}{d\eps}\Big|_{\eps=0}\phi^\eps=-u
\end{equation}
near the point under consideration. Then, differentiating the identity
\begin{equation}
V^\eps(x_1,-\phi^\eps(x_1,x_2,0))=x_2
\end{equation}
we infer that
\begin{equation}
\frac{d}{d\eps}|_{\eps=0}V^\eps(x,t)=-V_t(x,t)\, u(x,V(x,t),0).
\end{equation}
Hence, the assertion follows by differentiating both sides of equation \eqref{V_eq_der} and evaluating at $\eps=0$.
\end{proof}

To capture the position of the (right) tip we define a positive function $X$ by
\begin{equation}\label{X_gauge}
(X(x_2,x_3,t),x_2,x_3,-t)\in \mathrm{graph}(\phi),
\end{equation} 
Similarly as before, we set $\tilde{X}(x_2,x_3,x_4,t)=X(x_2,x_3,x_4+t)$, and work with the functions
\begin{align}\label{def_tildeWG}
\tilde{{\bW}}(x_2,x_3,x_4,t):=&-\tilde{X}_t(x_2,x_3,x_4,t)\,\, u(\tilde{X}(x_2,x_3,x_4,t),x_2,x_3),\\
\tilde{{\bG}}(x_2,x_3,x_4,t):=&\sqrt{1+|D\tilde{X}(x_2,x_3,x_4,t)|^2}\,\, f(\tilde{X}(x_2,x_3,x_4,t),x_2,x_3).\nonumber
\end{align}

\begin{proposition}[tip variation]\label{prop_tip_var}
Suppose $Lu=f$. Then, defining ${\tilde{\bW}}$ and ${\tilde{\bG}}$ by \eqref{def_tildeWG}, we have
\begin{equation}\label{X_D_evolve}
{\tilde{\bW}}_t=\left(\delta_{ij}-\frac{\tilde{X}_{i}\tilde{X}_{j}}{1+|D\tilde{X}|^2}\right){\tilde{\bW}}_{ij}+2\frac{\tilde{X}_{i}\tilde{X}_{ij}{\tilde{\bW}}_j}{1+|D\tilde{X}|^2}-2\frac{\tilde{X}_{i}\tilde{X}_{j}\tilde{X}_{ij}\tilde{X}_{k}{\tilde{\bW}}_k}{(1+|D\tilde{X}|^2)^2}-\tilde{\bG}.
\end{equation} 
\end{proposition}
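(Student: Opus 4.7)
The approach is to mirror the proof of Proposition \ref{prop_cyl_var}. Pick a one-parameter family $\phi^\eps$ with $\phi^0=\phi$ and $\frac{d}{d\eps}|_{\eps=0}\phi^\eps=-u$ near the (orbit of the) point under consideration, choosing compactly supported $u$ so that $\phi^\eps$ remains convex for small $|\eps|$, and set $\psi^\eps:=-\Theta[\phi^\eps]$. Let $X^\eps(x_2,x_3,t)$ denote the right-tip profile, defined implicitly by $\phi^\eps(X^\eps,x_2,x_3)=-t$, and let $\tilde{X}^\eps(x_2,x_3,x_4,t):=X^\eps(x_2,x_3,x_4+t)$. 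The plan has three steps: (1) identify $\frac{d}{d\eps}|_0\tilde{X}^\eps$ with $\tilde{\bW}$, (2) derive the quasilinear PDE for $\tilde{X}^\eps$ that plays the role of \eqref{V_eq_der}, and (3) linearize at $\eps=0$.

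Step (1) is a short calculation. Differentiating the defining identity in $\eps$ and $t$ yields $\phi_{x_1}(X,x_2,x_3)\frac{d}{d\eps}|_0X^\eps=u(X,x_2,x_3)$ and $\phi_{x_1}(X,x_2,x_3)X_t=-1$; combining gives $\frac{d}{d\eps}|_0X^\eps=-X_t\,u(X,x_2,x_3)$, hence $\frac{d}{d\eps}|_0\tilde{X}^\eps=\tilde{\bW}$. By the chain rule and $\psi^0\equiv 0$, the inhomogeneity satisfies $\frac{d}{d\eps}|_0[\sqrt{1+|D\tilde{X}^\eps|^2}\,\psi^\eps(\tilde{X}^\eps,x_2,x_3)]=\tilde{\bG}$.

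For step (2), parametrize $M^\eps$ near the right tip by the inverse graph $\Xi^\eps(x_2,x_3,x_4):=X^\eps(x_2,x_3,-x_4)$. From the identities $\phi^\eps_{x_1}=1/\Xi^\eps_{x_4}$ and $\phi^\eps_{x_j}=-\Xi^\eps_{x_j}/\Xi^\eps_{x_4}$ for $j=2,3$ one reads off $\sqrt{1+|D\phi^\eps|^2}=\sqrt{1+|D\Xi^\eps|^2}/\Xi^\eps_{x_4}$, and, tracking the orientation-induced sign flip between the two gauges,
\begin{equation*}
\mathrm{div}_y\!\left(\frac{D\Xi^\eps}{\sqrt{1+|D\Xi^\eps|^2}}\right)=-\frac{\Xi^\eps_{x_4}}{\sqrt{1+|D\Xi^\eps|^2}}+\psi^\eps(\Xi^\eps,x_2,x_3).
\end{equation*}
Clearing denominators and rewriting using $\Xi^\eps_{x_4}=-X^\eps_t$ (and observing that the sign flips in the mixed second derivatives $\Xi^\eps_{x_4x_j}=-X^\eps_{tx_j}$ cancel inside the quadratic form $D^2\Xi^\eps(D\Xi^\eps,D\Xi^\eps)$) gives the clean $\tilde{X}^\eps$-form
\begin{equation*}
\tilde{X}^\eps_t=\left(\delta_{ij}-\frac{\tilde{X}^\eps_i\tilde{X}^\eps_j}{1+|D\tilde{X}^\eps|^2}\right)\tilde{X}^\eps_{ij}-\sqrt{1+|D\tilde{X}^\eps|^2}\,\psi^\eps(\tilde{X}^\eps,x_2,x_3),
\end{equation*}
with $i,j\in\{x_2,x_3,x_4\}$.

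For step (3), differentiate this identity at $\eps=0$. The inhomogeneous contribution becomes $-\tilde{\bG}$ by step (1). Writing $A^\eps_{ij}:=\delta_{ij}-\tilde{X}^\eps_i\tilde{X}^\eps_j/(1+|D\tilde{X}^\eps|^2)$, Leibniz gives $\frac{d}{d\eps}|_0(A^\eps_{ij}\tilde{X}^\eps_{ij})=A_{ij}\tilde{\bW}_{ij}+(\frac{d}{d\eps}|_0A^\eps_{ij})\tilde{X}_{ij}$; a quotient-rule computation of the variation of $A^\eps_{ij}$, combined with the symmetry $\tilde{X}_{ij}=\tilde{X}_{ji}$, collapses $(\frac{d}{d\eps}|_0A^\eps_{ij})\tilde{X}_{ij}$ into exactly the two correction terms involving $\tilde{X}_i\tilde{X}_{ij}\tilde{\bW}_j/(1+|D\tilde{X}|^2)$ and $\tilde{X}_i\tilde{X}_j\tilde{X}_{ij}\tilde{X}_k\tilde{\bW}_k/(1+|D\tilde{X}|^2)^2$ appearing in \eqref{X_D_evolve}, yielding the claim.

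The main bookkeeping hurdle is step (2): correctly tracking the sign flip of the divergence operator between the graph and inverse-graph parametrizations, and verifying that the quasilinear operator reassembles itself under the coordinate swap $t\leftrightarrow -x_4$. Step (3) is then essentially a routine chain-rule exercise, crucially simplified by the translator condition $\psi^0\equiv 0$ which kills both the variation of $\sqrt{1+|D\tilde{X}^\eps|^2}$ (as a prefactor of $\psi^0$) and the spatial-evaluation contribution to $\frac{d}{d\eps}|_0\psi^\eps(\tilde{X}^\eps,x_2,x_3)$. As a trivial sanity check one can verify that the homogeneous solution $u=1$, corresponding to $\tilde{\bW}=-\tilde{X}_{x_4}$, satisfies the resulting equation by differentiating the $\eps=0$ relation $A_{ij}\tilde{X}_{ij}=\tilde{X}_{x_4}$ in $x_4$.
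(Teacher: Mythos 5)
Your strategy mirrors the paper's exactly — differentiate the defining relation to identify $\tfrac{d}{d\eps}\big|_{\eps=0}\tilde{X}^\eps=\tilde{\bW}$, write the inhomogeneous translator equation in the $\tilde{X}^\eps$-gauge, and linearize at $\eps=0$ — and your step (2), which actually derives the $\tilde{X}^\eps$-equation \eqref{X_eq} from the inverse-graph gauge $\Xi^\eps$ rather than quoting it, is correct and a useful expansion of detail. In particular the sign-bookkeeping there, including the cancellation of the $x_4$-sign flips inside $D^2\Xi^\eps(D\Xi^\eps,D\Xi^\eps)$, checks out, and the equation you reach agrees with the paper's \eqref{X_eq}.

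The gap is in step (3), where you assert — without displaying the computation — that $\bigl(\tfrac{d}{d\eps}\big|_{\eps=0}A^\eps_{ij}\bigr)\tilde{X}_{ij}$ ``collapses into exactly the two correction terms appearing in \eqref{X_D_evolve}.'' Writing out the quotient rule and contracting with the symmetric tensor $\tilde{X}_{ij}$ gives
\begin{equation*}
\left(\tfrac{d}{d\eps}\Big|_{\eps=0}A^\eps_{ij}\right)\tilde{X}_{ij}=-\frac{2\tilde{X}_i\tilde{X}_{ij}\tilde{\bW}_j}{1+|D\tilde{X}|^2}+\frac{2\tilde{X}_i\tilde{X}_j\tilde{X}_{ij}\tilde{X}_k\tilde{\bW}_k}{(1+|D\tilde{X}|^2)^2},
\end{equation*}
whose signs are \emph{opposite} to those printed in \eqref{X_D_evolve}. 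The very sanity check you propose catches this: with $u=1$ and $\tilde{\bW}=-\tilde{X}_{x_4}$, differentiating $A_{ij}\tilde{X}_{ij}=\tilde{X}_{x_4}$ in $x_4$ and negating produces $\tilde{\bW}_t=A_{ij}\tilde{\bW}_{ij}-2\tilde{X}_i\tilde{X}_{ij}\tilde{\bW}_j/(1+|D\tilde{X}|^2)+2\tilde{X}_i\tilde{X}_j\tilde{X}_{ij}\tilde{X}_k\tilde{\bW}_k/(1+|D\tilde{X}|^2)^2$, not the printed formula. For comparison, the first-order coefficients printed in the companion cylindrical Proposition \ref{prop_cyl_var} are consistent with this Leibniz expansion once simplified via the translator equation, so the tip statement appears to carry a sign typo in both lower-order terms. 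Your argument is structurally sound, but you should write out the signs your linearization actually produces and flag the discrepancy with \eqref{X_D_evolve}, rather than letting the reader infer an agreement that does not hold as stated.
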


\begin{proof}
Working with a suitable one-parameter family as above, differentiating the identity
\begin{equation}
X^{\eps}(x_2,x_3,-\phi^{\eps}(x_1,x_2,x_3))=x_1
\end{equation}
we infer that
\begin{equation}
\frac{d}{d\eps}|_{\eps=0}X^{\eps}(x_2,x_3,t)=-X_t(x_2,x_3,t)\, u(X(x_2,x_3,t),x_2,x_3).
\end{equation}
Now, in terms of $\tilde{X}^{\eps}(x_2,x_3,x_4,t):=X^\eps(x_2,x_3,x_4+t)$ the inhomogeneous translator equation reads
\begin{equation}\label{X_eq}
\tilde{X}^\eps_t-\left(\delta_{ij}-\frac{\tilde{X}^\eps_i\tilde{X}^\eps_j}{1+|D \tilde{X}^\eps|^2}\right)\tilde{X}^{\eps}_{ij}=\sqrt{1+|D \tilde{X}^{\eps}|^2}\Theta[\phi^\eps](\tilde{X}^{\eps},x_2,x_3).
\end{equation}
Hence, the assertion follows by differentiating both sides and evaluating at $\eps=0$.
\end{proof}

Finally, we collect the transformation rules
\begin{equation}\label{transwbf}
w(y,\tau)=e^{\frac{\tau}{2}}\tilde{\bw}(e^{-\frac{\tau}{2}}y,0,-e^{-\tau}),  \qquad\qquad  g(y,\tau)=e^{-\frac{\tau}{2}}\tilde{\bg}(e^{-\frac{\tau}{2}}y,0,-e^{-\tau}),
\end{equation}
and
\begin{equation}\label{trangbf}
W(v,\tau)=e^{\frac{\tau}{2}}{\tilde{\bW}}(e^{-\frac{\tau}{2}}v,0,0,-e^{-\tau}),  \qquad\qquad  G(v,\tau)=e^{-\frac{\tau}{2}}\tilde{\bG}(e^{-\frac{\tau}{2}}v,0,0,-e^{-\tau}), 
\end{equation}
which immediately follow from a similar formula holding on the variation level.\\

\subsection{Interior estimates for the cylindrical variation}\label{cyl_holder_sec}
In this subsection, we prove interior estimates for the cylindrical variation by adapting the arguments from \cite[Section 5.7]{CHH_translators} to our setting. As usual, here we have to deal in addition with the inhomogeneity and the boundary. Another novelty is that we use the mean curvature as a weight function in order to obtain a sharper Schauder estimate in the collar region.\\
 
Let us introduce our weighted parabolic H\"older norms. For space-time points $X=(x,s,t)$ and $X'=(x',s',t')$ we work with the parabolic distance
\begin{equation}
d(X,X')=\sqrt{|x-x'|^2+|s-s'|^2+|t-t'|}.
\end{equation}
Moreover, we set
\begin{equation}
H(x,s,t):=H_{\phi}(x,\tilde{V}(x,s,t),0).
\end{equation}
Now, given $\alpha\in (0,1)$, nonnegative integers $k,l$, and a region $U$ over which a function $\bbf$ is defined, we set
\begin{equation}
[\bbf]_{H;U}^{k,(l)}=\sup_{(x,s,t)\in U}\sup_{i+j+2m= k} H(x,s,t)^{1-k-l} \left| \partial_x^i \partial_s^j\partial_t^m\bbf(x,s,t)\right|\, ,
\end{equation}
and
\begin{equation}
[\bbf]_{H;U}^{k,\alpha,(l)}=\sup_{X,X'\in U}\;\sup_{i+j+2m= k}\left| H^{-1}(X)+H^{-1}(X')\right|^{k+l+\alpha-1}\frac{ |\partial_x^i \partial_s^j\partial_t^m\bbf(X)- \partial_x^i \partial_s^j\partial_t^m\bbf(X')|}{|d(X,X')|^\alpha}\, .
\end{equation}
Then, we can define weighted parabolic H\"older norms norms by
\begin{align}
 \|\bbf\|_{C^{k,\alpha,(l)}_H	(U)}=\|\bbf\|_{C^{k,(l)}_H(U)}+  [\bbf]_{H;U}^{k,\alpha,(l)}\, ,\qquad \textrm{where} \qquad \|\bbf\|_{C^{k,(l)}_H(U)}=\sum_{m=0}^k [\bbf]_{H;U}^{m,(l)}\, .
\end{align}
Finally, when the offset $l$ equals $0$ we simply drop it from the notation.\footnote{In practice, due to the scaling of the operator $L$, we will choose $l=0$ for the domain norms and $l=2$ for the image norms.}\\ 

Note that for functions $\bbf=\bbf(x,s,t)$ that actually only depend on $x$ and $s+t$ our parabolic H\"older norms are in fact (rather nonstandard) elliptic H\"older norms in disguise.\\

Our interior estimates take place naturally in parabolic cubes
\begin{equation}
P_r(x,s,t)=\{(x',s',t'): |x'-x|\leq r,|s'-s| \leq r , t-r^2\leq t'\leq t \}.
\end{equation}
Moreover, we often abbreviate
\begin{equation}\label{Q_cyl_def}
P_r(x,t):=P_r(x,0,t).
\end{equation}

\begin{proposition}[interior estimates for cylindrical variation]\label{prop:interior_estimates_cylinder}
Suppose that $V(x,t)\geq \ell\sqrt{|t|/\log|t|}$ holds at some given time $t\leq-h_0$. If $r\leq \tfrac{1}{10} H^{-1}(x,0,t)$ is such that  $P_r(x,t)\cap \{t\leq -h\}=\emptyset$ then
\begin{equation}\label{C_0_est_W}
\sup_{P_{r/2}(x,t)}|\tilde{\bw}|\leq   \frac{C}{r^{2}}\left( \int_{P_r(x,t)} \tilde{\bw}^2 \, dx'\, ds'\, dt'\right)^{\frac{1}{2}}{ +C r^{ \frac{2}{3}} \left( \int_{P_r(x,t)} \tilde{\bg}^3\, dx'\, ds'\, dt'\right)^{\frac{1}{3}}}.
\end{equation}
Moreover, if we assume $t\geq -h/e$ then setting $r:=\tfrac{1}{10} H^{-1}(x,0,t)$ we have
\begin{equation}\label{Holder_W_away}
\|\tilde{\bw}\|_{C^{k,\alpha}_H(P_{r/2}(x,t))}\leq C \left( \|\tilde{\bw}\|_{C^{0}_H(P_r(x,t))}+ \|\tilde{\bg}\|_{C^{k-2,\alpha,(2)}_H(P_r(x,t))}\right).
\end{equation}
Furthermore, if we only assume $t\geq -h_{\mathrm{in}}$ then setting $r:=|t|^{\frac{1}{2}(1-\frac{1}{100k})}$ we still get 
\begin{equation}\label{Holder_W_uptob}
\|\tilde{\bw}\|_{C^{k,\alpha}_H(P_{r/2}(x,t))}\leq C|t|^{\frac{1}{10}} \left( \|\tilde{\bw}\|_{C^{0}_H(P_r(x,t))}+ \|\tilde{\bg}\|_{C^{k-2,\alpha,(2)}_H(P_r(x,t))}\right)\, .
\end{equation}
\end{proposition}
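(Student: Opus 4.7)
My plan is to interpret \eqref{eq_V^D_evolution} as a genuine linear parabolic equation for $\tilde{\bw}$ in the spatial variables $(x,s)$ and time $t$, rescale each parabolic cube to unit size, check that the rescaled equation is uniformly parabolic with coefficients of bounded norm, and then apply the classical parabolic Moser and Schauder estimates. The three conclusions will then come from the same rescaling with different choices of $r$ and different downstream parabolic estimates.

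\textbf{Step 1: Coefficient bounds and uniform ellipticity.} In the region $\{V \geq \ell\sqrt{|t|/\log|t|}\}$, the sharp asymptotics from Theorem \ref{thm_unique_asympt_recall} together with the cylindrical estimates \eqref{eq_cyl_est} and \eqref{impr_cyl_est} and the relation $H \sim 1/V$ implied by \eqref{H_two_sided} yield uniform bounds $|\tilde{V}_x|, |\tilde{V}_s|, |\tilde{V}_t| \lesssim 1$ on any parabolic cube $P_r(x,t)$ with $r \lesssim H^{-1}$; uniform positive-definiteness of the $2\times 2$ diffusion matrix built from $\tilde{V}_x, \tilde{V}_s$; control $|\tilde{V}_{xx}|+|\tilde{V}_{ss}|+|\tilde{V}_{xs}| \lesssim H$; and $1/\tilde{V} \lesssim H$. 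Differentiating the translator equation and iterating gives analogous scale-$H^{-1}$ bounds on higher derivatives. The main obstacle in the whole proof lives here: one must verify these bounds uniformly across the entire range $V \in [\ell\sqrt{|t|/\log|t|}, \infty)$, which interpolates between the parabolic cylindrical asymptotics $v \sim \sqrt{2}$ and the collar regime in \eqref{eq_cyl_est}--\eqref{impr_cyl_est}; the asymptotics in Theorem \ref{thm_unique_asympt_recall} and the collar derivative bound \eqref{impr_cyl_est} are exactly tailored to cover this range.

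\textbf{Step 2: Proof of \eqref{C_0_est_W} via Moser iteration.} Parabolically rescale by setting $\hat{x} = r^{-1}(x-x_0)$, $\hat{s} = r^{-1}s$, $\hat{t} = r^{-2}(t-t_0)$, $\hat{\bw}(\hat{x},\hat{s},\hat{t}) = \tilde{\bw}$, $\hat{\bg}(\hat{x},\hat{s},\hat{t}) = r^2\tilde{\bg}$, and $\hat V$ analogously. By Step 1 and the assumption $r \leq \tfrac{1}{10} H^{-1}$, the rescaled equation for $\hat{\bw}$ is uniformly parabolic on $P_1(0,0)$ with bounded coefficients (the zeroth-order coefficient $r^2 \tilde V^{-2} \sim (rH)^2 \lesssim 1$). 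The parabolic dimension is $n+2 = 4$, so an $L^p$ right-hand side with $p=3 > (n+2)/2$ falls into the usual Moser/De Giorgi–Nash–Moser framework, yielding $\sup_{P_{1/2}}|\hat{\bw}| \leq C(\|\hat{\bw}\|_{L^2(P_1)} + \|\hat{\bg}\|_{L^3(P_1)})$. Undoing the rescaling gives \eqref{C_0_est_W}.

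\textbf{Step 3: Proof of \eqref{Holder_W_away} via Schauder at the natural scale.} With $r = \tfrac{1}{10} H^{-1}$, the rescaled equation has $C^\infty$ coefficients with norms bounded uniformly at each order (by Step 1 and differentiating \eqref{eq_V^D_evolution}). Classical parabolic Schauder gives
\begin{equation*}
\|\hat{\bw}\|_{C^{k,\alpha}(P_{1/2})} \leq C(k,\alpha)\big(\|\hat{\bw}\|_{C^0(P_1)} + \|\hat{\bg}\|_{C^{k-2,\alpha}(P_1)}\big),
\end{equation*}
where a standard differentiation-bootstrap recovers the higher order bounds. Undoing the rescaling and using $Hr \sim 1$ to rewrite $r^{j-1}|\partial^j\bw|$ as $H^{1-j}|\partial^j\bw|$ and $r^{j+1}|\partial^j\bg|$ as $H^{-1-j}|\partial^j\bg|$ (and similarly for the Hölder seminorms) matches precisely the weighted norms $C^{k,\alpha}_H$ and $C^{k-2,\alpha,(2)}_H$, producing \eqref{Holder_W_away}.

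\textbf{Step 4: Proof of \eqref{Holder_W_uptob} on a sub-natural scale.} For $r = |t|^{\gamma_k/2}$ with $\gamma_k = 1 - \tfrac{1}{100k}$, the range $t \geq -h_{\mathrm{in}} = -\log(-\tau_{h_{\mathrm{in}}})$ ensures $P_r(x,t) \cap \{t'\leq -h\} = \emptyset$, so Steps 1--3 apply verbatim, but $r$ is no longer exactly $H^{-1}$. From \eqref{H_two_sided} and the hypothesis $V \geq \ell\sqrt{|t|/\log|t|}$ one bounds $Hr \gtrsim (\log|t|)^{1/2} |t|^{-1/(200k)}$. Running the Schauder argument of Step 3 at this (possibly sub-natural) scale, the conversion between $r$-based and $H$-based factors produces an extra multiplicative loss of at most $(Hr)^{-k-\alpha}$ on the left, which is bounded by $C(k,\alpha) \ell^{k+\alpha}|t|^{(k+\alpha)/(200k)}(\log|t|)^{-(k+\alpha)/2} \leq |t|^{1/10}$ provided $h_0$ is chosen large enough. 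This yields \eqref{Holder_W_uptob}.
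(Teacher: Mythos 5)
Your proposal is correct and follows essentially the same route as the paper's proof: parabolically rescale \eqref{eq_V^D_evolution} to unit size, verify uniform parabolicity and bounded coefficients via the asymptotics and cylindrical estimates, then invoke the standard interior $L^\infty$ and Schauder estimates of Lieberman. The only cosmetic difference is your choice of normalization (you keep $\hat{\bw}=\tilde{\bw}$ and absorb the extra $r$ into $\hat{\bg}=r^2\tilde{\bg}$, whereas the paper rescales $\hat{\tilde{\bw}}=\tfrac1r\tilde{\bw}$, $\hat{\tilde{\bg}}=r\tilde{\bg}$); both rescale the equation identically and produce the same final exponents after unwinding. Two small slips worth noting but which do not affect the validity: in Step 3, with your normalization the rescaled $j$-th derivative carries the factor $r^{j}$, not $r^{j-1}$; the factor $r^{j-1}\sim H^{1-j}$ only appears after you multiply both sides of the rescaled Schauder inequality by $H$, which you implicitly do when matching the $C^{k,\alpha}_H$ weights. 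In Step 4, the lower bound $Hr\gtrsim(\log|t|)^{1/2}|t|^{-1/(200k)}$ is too optimistic — in the parabolic region $H^{-1}\sim\sqrt{|t|}$ so the $(\log|t|)^{1/2}$ factor is absent and one only gets $Hr\gtrsim|t|^{-1/(200k)}$, exactly the ratio the paper records as $r\leq H^{-1}\leq r|t|^{1/(100k)}$; since $(k+\alpha)/(200k)<1/10$ for $k\geq 4$, the loss is still bounded by $|t|^{1/10}$ and your conclusion stands.
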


\begin{proof}Consider the rescaling
\begin{equation}
\hat{\tilde{\bw}}(\hat x_1,\hat x_2,\hat t):= \frac{1}{r}\tilde{\bw}(x+r\hat x_1,r\hat{x_2},t+r^2\hat{t}),\qquad \hat{\tilde{\bg}}(\hat x_1,\hat x_2,\hat t):=r \tilde{\bg}(x+r\hat x_1,r\hat{x_2},t+r^2\hat{t}).
\end{equation}
Proposition \ref{prop_cyl_var} (cylindrical variation) and the cylindrical estimates from \eqref{eq_cyl_est} imply
\begin{equation}\label{VD_eq}
\tfrac{\partial}{\partial \hat t}   \hat{\tilde{\bw}} (\hat x,\hat t)=a_{ij}(\hat x,\hat t)\tfrac{\partial^2}{\partial \hat x_i \partial \hat x_j}\hat{\tilde{\bw}} (\hat x,\hat t)+b_{i}(\hat x,\hat t)\tfrac{\partial}{\partial \hat x_i}\hat{\tilde{\bw}} (\hat x,\hat t)+c(\hat x,\hat t)\hat{\tilde{\bw}} (\hat x,\hat t){-\hat{\tilde{\bg}}(\hat x,\hat t)},
\end{equation}
where, provided $\tau_0$ is sufficiently negative, the coefficients satisfy
\begin{equation}\label{coef_bounds_VD}
\|a_{ij}\|_{C^{k,\alpha}(P_1(0))}+\|b_i\|_{C^{k,\alpha}(P_1(0))}+\|c\|_{C^{k,\alpha}(P_1(0))}\leq C,\;\;\;\;\;\;\;a_{ij}\xi^i\xi^j \geq C^{-1}|\xi|^2\, .
\end{equation} 
Therefore, standard interior $L^\infty$-estimates \cite[Theorem 7.36]{Lieberman} yield
\begin{equation}
\sup_{P_{1/2}(0)}|\hat{\tilde{\bw}}|\leq C \left( \int_{P_{1}(0)}\hat{\tilde{\bw}}^2\, d\hat{x}\, d\hat{s}\, d\hat{t}\right)^{\frac{1}{2}}{+C \left( \int_{P_{1}(0)} \hat{\tilde{\bg}}^3\, d\hat{x}\, d\hat{s}\, d\hat{t}\right)^{\frac{1}{3}}}\, .
\end{equation}
Scaling back by $r$, and observing that the condition $P_r(x,t)\cap \{t\leq -h\}=\emptyset$ ensures that the equation is defined in the parabolic cubes under consideration,  this proves \eqref{C_0_est_W}.\\
Similarly, standard interior Schauder estimates \cite[Theorem 4.9]{Lieberman}  yield
\begin{equation}\label{trivial_Schauder}
\|\hat{\tilde{\bw}}\|_{C^{k,\alpha}(P_{1/2}(0))}\leq C \|\hat{\tilde{\bw}}\|_{C^{0}(P_{1}(0))} +{C \|\hat{\tilde{\bg}}\|_{C^{k-2,\alpha}(P_{1}(0))} }.
\end{equation}
If $t\geq -h/e$, then we can again choose $r=\tfrac{1}{10}H^{-1}(x,0,t)$, and scale back, which proves \eqref{Holder_W_away}.\\
Finally, if we only assume $t\geq -h_{\mathrm{in}}$, then we work with the smaller radius $r:=|t|^{\frac{1}{2}(1-\frac{1}{100k})}$ to ensure the equation is defined in the parabolic cubes under consideration. Observing also that thanks to the mean curvature asymptotics from \eqref{H_two_sided} we have
\begin{equation}\label{H_vs_r}
r\leq H^{-1}(x,0,t)\leq r|t|^{\frac{1}{100k}},
\end{equation}
Hence, scaling \eqref{trivial_Schauder} by $r$ we get the remaining estimate, which concludes the proof.
\end{proof}

\begin{corollary}[$L^\infty$-estimate for cylindrical variation]\label{cor:C0_cylinder}
Suppose $v(y,\tau)\geq \theta/2$ at some $\tau\in[\tau_{h_\mathrm{in}}, \tau_0]$. If $\tau\geq \tau_{h_\mathrm{in}} + 1$ then
\begin{align}\label{C_0_most}
|w(y,\tau)| \leq \frac{C}{|\tau|^{1+\mu}} e^{\frac{(|y|+2)^2}{8}} \| |\tau|^{1+\mu} w_\cC\|_{\fH,\infty}(\tau+1)+C  \|\tilde{\bg}\|_{C^{0,(2)}_H  \left(P_{\exp(-\tau/2)/10} (e^{-\tau/2}y,-e^{-\tau}) \right)},
\end{align}
and if $\tau\leq \tau_{h_\mathrm{in}} + 1$ then
\begin{align}\label{C_0_boundary}
|w(y,\tau)| \leq \frac{C}{|\tau|^{1/2+\mu}} e^{\frac{(|y|+2)^2}{8}} \| |\tau|^{1+\mu} w_\cC\|_{\fH,\infty}(\tau+1)+\frac{C}{|\tau|^{1/2+\mu}}\sup_{\Omega_h}\frac{|f|}{H_{\phi}}.
\end{align}
\end{corollary}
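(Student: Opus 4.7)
The plan is to pass from the rescaled cylindrical variables $(y,\tau)$ to the unrescaled parabolic variables $(x,s,t)$ via the transformation rule \eqref{transwbf}, apply the interior $L^\infty$-bound \eqref{C_0_est_W} from Proposition \ref{prop:interior_estimates_cylinder} at the image point, and carefully rescale. Concretely, given $(y,\tau)$, set $x=e^{-\tau/2}y$, $t=-e^{-\tau}$, and $r=e^{-\tau/2}/10\sim H^{-1}(x,0,t)$. Since $w(y,\tau)=e^{\tau/2}\tilde{\bw}(x,0,t)$, the estimate \eqref{C_0_est_W} yields
\[
|w(y,\tau)|\leq \frac{Ce^{\tau/2}}{r^2}\|\tilde{\bw}\|_{L^2(P_r(x,t))}+Ce^{\tau/2}r^{2/3}\|\tilde{\bg}\|_{L^3(P_r(x,t))},
\]
provided the cube stays in the domain. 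In Case 1, the cube $P_r(x,t)$ automatically avoids the Dirichlet boundary $\{t=-h\}$ because $\tau\geq \tau_{h_{\mathrm{in}}}+1$ keeps $t$ bounded away from $-h$; in Case 2 we shall shrink the cube.

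For the $L^2$-term, I would perform the substitution $u=s'+t'$ (using that $\tilde{\bw}(x',s',t')=\bw(x',u)$ depends only on $s'+t'$), followed by the change of variables $x'=e^{-\tau'/2}y'$, $u=-e^{-\tau'}$ with Jacobian $e^{-3\tau'/2}$, and use $\bw=e^{-\tau'/2}w$. The cube $P_r(x,t)$ translates to a region with $|y'-y|\leq 2$ and $|\tau'-\tau|\leq Ce^{\tau/2}$, and a direct computation of the prefactors gives
\[
\frac{e^{\tau/2}}{r^2}\|\tilde{\bw}\|_{L^2(P_r(x,t))}\leq C\left(\int_{|y'-y|\leq 2}\int_{|\tau'-\tau|\leq Ce^{\tau/2}} w(y',\tau')^2 \, dy' \, d\tau'\right)^{1/2}.
\]
Inserting the Gaussian weight $e^{-y'^2/4}$ and using $e^{y'^2/4}\leq e^{(|y|+2)^2/4}$ on $|y'-y|\leq 2$, together with the fact that $v\geq \theta/2$ on the relevant cube (Theorem \ref{thm_unique_asympt_recall}) so that $w=w_{\cC}$ there, yields the Gaussian factor in the conclusion.

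For Case 1, the $\tau'$-interval $[\tau-Ce^{\tau/2},\tau+Ce^{\tau/2}]$ lies inside $[\tau-1,\tau+1]$, on which $|\sigma|^{2+2\mu}\geq c|\tau|^{2+2\mu}$; inserting this weight and comparing to $\||\tau|^{1+\mu}w_{\cC}\|_{\fH,\infty}(\tau+1)$ produces the $|\tau|^{-(1+\mu)}$ prefactor. For the inhomogeneity, I would bound $\|\tilde{\bg}\|_{L^3(P_r)}\leq Cr^{4/3}H_{\max}\|\tilde{\bg}\|_{C^{0,(2)}_H}$ using that the $(2)$-offset weighted norm carries the weight $H^{-1}$; together with $r^2\sim e^{-\tau}$ and $H_{\max}\sim e^{\tau/2}$ (mean curvature asymptotics \eqref{H_two_sided}), the prefactors cancel to give $C\|\tilde{\bg}\|_{C^{0,(2)}_H(P_r(x,t))}$.

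For Case 2 the cube $P_r(x,t)$ may touch the Dirichlet face $\{t=-h\}$, and moreover the short integration range $[\tau_{h_{\mathrm{in}}},\tau+1]$ available to the $\fH,\infty$-norm is not comparable to an interval of length $1$. The remedy is to work with a smaller cube of radius $r'\sim\sqrt{h+t}$, which fits inside the domain; this costs an extra factor of $|\tau|^{1/2}$ when repeating Steps 1--2, yielding the weaker prefactor $|\tau|^{-(1/2+\mu)}$. The missing contribution from what was cut off near the boundary is then absorbed via Corollary \ref{cor_bound_est} (level-set estimate), which bounds $\sup_{\partial\Omega_{h'}}|u|$ by $C\max(h-h',\log h)\sup|f|/H_\phi$; transforming this bound to $(y,\tau)$ coordinates using $|w|\leq Ce^{\tau}|u|$ and the prefactor in \eqref{wfromu} produces precisely the additional $|\tau|^{-(1/2+\mu)}\sup|f|/H_\phi$ term. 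The main technical obstacle lies in Case 2: coordinating the shrunken-cube interior estimate with the boundary estimate from Corollary \ref{cor_bound_est} so that both contribute the same $|\tau|^{1/2+\mu}$ decay, and verifying that the loss of $|\tau|^{1/2}$ (rather than a larger power) is sharp, which ultimately reflects that $w$ vanishes only in an $L^2$- rather than $L^\infty$-sense as $\tau\to\tau_h$.
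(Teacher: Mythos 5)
Your Case 1 argument matches the paper's essentially line for line: transform to parabolic coordinates, apply the interior $L^\infty$-estimate \eqref{C_0_est_W}, change variables $u'=s'+t'$, $x'=e^{-\tau'/2}y'$ with Jacobian $e^{-3\tau'/2}$ to land on the $\fH,\infty$-norm after inserting the Gaussian weight $e^{-y'^2/4}$ at the cost of $e^{(|y|+2)^2/8}$, and estimate $\|\tilde{\bg}\|_{L^3(P_r)}\lesssim r^{4/3}H\|\tilde{\bg}\|_{C^{0,(2)}_H}$ so the $r$-powers and the $H\sim|t|^{-1/2}$ cancel. That is correct, and the paper's proof does the same accounting (arriving at \eqref{basic_C_0_est} with $\beta=0$).

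Your Case 2 has the right ingredients but the quantitative claims don't check out, and the logic is muddled in a way that obscures the paper's actual structure. The loss from the interior estimate with radius $r'$ is $\tfrac{|t|^{3/4}}{r'^{3/2}}$ (from $\tfrac{1}{r'^2}(|t|^{5/2}r')^{1/2}$ divided by $|t|^{1/2}$); with your choice $r'\sim\sqrt{h+t}$ this gives a factor $\bigl(\tfrac{|t|}{h+t}\bigr)^{3/4}$, which is \emph{unbounded} as $\tau\to-\log h$ and in particular is not the fixed factor $|\tau|^{1/2}$ you assert. You cannot simultaneously let the cube shrink to fit the Dirichlet face and keep a uniform loss. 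The paper resolves this by a genuine two-regime split: it first proves that for $\tau\leq -\log\bigl(h-\tfrac{h}{\log^{1/2+\mu}h}\bigr)$ the bound $|w(y,\tau)|\leq \frac{C}{|\tau|^{1/2+\mu}}\sup|f|/H_\phi$ follows \emph{directly} from the transformation rule $|w|\leq Ce^\tau\sup|u|$ together with Corollary~\ref{cor_bound_est}, with no interior estimate at all; only in the complementary regime, where $h+t\geq h/\log^{1/2+\mu}h$ is bounded below, does it apply the interior estimate with the fixed radius $r=\tfrac{1}{10}\sqrt{|t|/|\tau|^{1/2}}$ (i.e.\ $\beta=\tfrac12$). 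Your phrase ``the missing contribution from what was cut off near the boundary is then absorbed via Corollary~\ref{cor_bound_est}'' suggests a supplemental correction \emph{within the same} $(y,\tau)$, but there is no such correction to absorb — the interior estimate on the shrunken cube is a complete pointwise bound; what Corollary~\ref{cor_bound_est} really does is cover a \emph{different} near-boundary $\tau$-range on its own. You should articulate this as a dichotomy in $\tau$ (or equivalently in $h+t$) rather than a patch. Finally, the assertion that a $|\tau|^{1/2}$ loss ``is sharp'' and ``reflects $L^2$- rather than $L^\infty$-vanishing'' is not derived from anything in your argument; the actual exponent bookkeeping in the paper depends on both $\beta$ and the cutoff level $h/\log^{1/2+\mu}h$, and is more delicate than a single heuristic power.
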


\begin{proof}
Recall that by the transformation rule \eqref{wfromu}, taking also into account equation \eqref{cyl_gauge_trans_rec} and the cylindrical estimates from \eqref{eq_cyl_est}, we have
\begin{equation}
|w(y,\tau)| \leq Ce^{\tau}\sup_{\log\phi = -\tau}|u|.
\end{equation}
Together with Corollary \ref{cor_bound_est} (level set estimate) this yields
\begin{equation}\label{to_derive_en_bd}
\tau\leq -\log\left(h-\frac{h}{\log^{1/2+\mu}h}\right)\qquad \Rightarrow\qquad |w(y,\tau)|\leq \frac{C}{|\tau|^{1/2+\mu}}\sup_{\Omega_h}\frac{|f|}{H_{\phi}}.
\end{equation}
Assume from now on that $\tau\geq -\log\left(h-h/\log^{1/2+\mu}h\right)$. To treat both cases simultaneously, we use the notation $\beta=1/2$ if $\tau\leq \tau_{h_\mathrm{in}}+1$  and $\beta=0$ if $\tau> \tau_{h_\mathrm{in}}+1$. Denote by $(x,t)$ the point in the original flow corresponding to $(y,\tau)$. Choose $r=\frac{1}{10}\sqrt{|t|/|\tau|^{\beta}}$, and observe that $P_r(x,t)\cap \{t\leq -h_{\mathrm{in}}\}=\emptyset$.\\
Now, using the transformation rule \eqref{transwbf}, and assuming that $\tau_0$ is sufficiently negative, we see that
\begin{equation}
\int_{t-r^2}^{t}\int_{-r}^{r}\int_{x-r}^{x+r}\tilde{\bw}^2(x',s',t')\, dx'\, ds'\, dt'\\
 \leq C |t|^{5/2}r \int_{\max\{\tau-1,\tau_{h_\mathrm{in}}\}}^{\tau+1}\int_{y-2}^{y+2} w^2(y',\tau')\, dy'\, d\tau'. 
\end{equation}
This implies
\begin{align}
\int_{P_{r}(x,t)} \tilde{\bw}^2 \, dx'\, ds'\, dt' \leq  C \frac{|t|^3}{|\tau|^{2(1+\mu)+\beta/2}}e^{\frac{(|y|+2)^2}{4}}\| |\tau|^{1+\mu} w_{\cC}\|^2_{\fH,\infty}(\tau+1).
\end{align}
Moreover, using the fact that $H$ is comparable to $|t|^{-1/2}$ in the region of integration, we see that
\begin{align}
\int_{P_{r}(x,t)} \tilde{\bg}^3 \, dx'\, ds'\, dt'\leq C\frac{|t|^{1/2}}{|\tau|^{2\beta}} \left(\sup_{P_{r}(x,t)} \frac{|\tilde{\bg}|}{H_{\phi}}\right)^3.
\end{align}
Hence, remembering the transformation rule \eqref{transwbf}, and applying Proposition \ref{prop:interior_estimates_cylinder} (interior estimates for cylindrical variation) we conclude that
\begin{equation}\label{basic_C_0_est}
|t|^{1/2}|w(y,\tau)| \leq \frac{C}{|t|}|\tau|^{\beta}\frac{|t|^{3/2}}{|\tau|^{1+\mu+\beta/4}}e^{\frac{(y+2)^2}{8}}\| |\tau|^{1+\mu} w_{\cC}\|_{\fH,\infty}(\tau+1)+C\frac{|t|^{1/3}}{|\tau|^{\beta/3}}\frac{|t|^{1/6}}{|\tau|^{2\beta/3}}\left(\sup_{P_r(x,t)} \frac{|\tilde{\bg}|}{H_{\phi}}\right).
\end{equation}
This implies the assertion.
\end{proof}

\begin{corollary}[{$C^k$-estimate for cylindrical variation}]\label{C^2_cyl_cor}
If $\tau\in [\tau_{h_\mathrm{in}},\tau_0-1]$ and  $v(y,\tau)\geq \ell/\sqrt{|\tau|}$, then\footnote{Since $\tilde{\bw}_s=\tilde{\bw}_t$ one also gets some bounds for $\sum_{0\leq i+j\leq k}  |\partial_y^i \partial_\tau^j w|(y,\tau)$, which however come with a worse weight factor.}
\begin{multline}
\sum_{0\leq i+2j\leq k}  |\partial_y^i \partial_\tau^j w|(y,\tau)\\
\leq e^{-\frac{\tau}{8}}\left(\sup_{|\tau'-\tau|\leq e^{\frac{\tau}{100k}},  v(y',\tau')\geq \frac{\ell}{2\sqrt{|\tau|}}}| w(y',\tau')|+ \|\tilde{\bg}\|_{C_H^{k,\alpha,(2)}\left(P_{\exp\left((1-\frac{1}{100k})\frac{\tau}{2}\right)}(e^{-\tau/2}y,-e^{-\tau})\right)}\right).
\end{multline}
\end{corollary}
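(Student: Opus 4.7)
The plan is to derive the $C^k$-estimate on $w$ by transferring the interior Schauder estimate \eqref{Holder_W_uptob} from Proposition \ref{prop:interior_estimates_cylinder} for the unrenormalized cylindrical variation $\tilde\bw$, then converting the resulting bound back to $w$ via the transformation rule \eqref{transwbf}. Concretely, write $(x,t):=(e^{-\tau/2}y,-e^{-\tau})$ and set $r:=|t|^{\frac{1}{2}(1-\frac{1}{100k})}$, matching the scale from \eqref{Holder_W_uptob}. The hypotheses $\tau\leq \tau_0-1$ and $v(y,\tau)\geq \ell/\sqrt{|\tau|}$ guarantee, after possibly shrinking $\tau_0$, that the parabolic cube $P_r(x,t)$ lies inside the region where \eqref{Holder_W_uptob} applies, yielding
\begin{equation*}
\|\tilde\bw\|_{C^{k,\alpha}_H(P_{r/2}(x,t))}\leq C|t|^{1/10}\left(\|\tilde\bw\|_{C^0_H(P_r(x,t))}+\|\tilde\bg\|_{C^{k-2,\alpha,(2)}_H(P_r(x,t))}\right).
\end{equation*}

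Next I would control the $C^0_H$ term by $\sup|w|$ over a small $(y,\tau)$-neighborhood. For any $(x',s',t')\in P_r(x,t)$, setting $\tau':=-\log|s'+t'|$ and $y':=e^{\tau'/2}x'$, the transformation rule \eqref{transwbf} gives $\tilde\bw(x',s',t')=e^{-\tau'/2}w(y',\tau')$, while from $r^2/|t|=e^{\tau/(100k)}$ one obtains $|\tau'-\tau|\lesssim e^{\tau/(100k)}$. Combining these with $H\sim e^{\tau/2}$ from \eqref{H_two_sided} yields $\|\tilde\bw\|_{C^0_H(P_r(x,t))}\leq C\sup|w(y',\tau')|$, where the sup ranges over the set appearing on the right hand side of the corollary; the lower bound $v(y',\tau')\geq \ell/(2\sqrt{|\tau|})$ is inherited from continuity after possibly shrinking $\tau_0$ further.

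The remaining step is to pass from $C^{k,\alpha}_H$-control on $\tilde\bw$ to pointwise control of $\partial_y^i\partial_\tau^j w$. Differentiating $w(y,\tau)=e^{\tau/2}\tilde\bw(e^{-\tau/2}y,0,-e^{-\tau})$ via the chain rule writes $\partial_y^i\partial_\tau^j w(y,\tau)$, for $i+2j\leq k$, as a linear combination of terms of the form $c(y,\tau)(\partial_x^a\partial_s^b\partial_t^c\tilde\bw)(x,0,t)$ with $a+b+2c\leq k$ and coefficients $c(y,\tau)$ polynomial in $y$ times powers of $e^\tau$. By definition of $[\cdot]^{k,(0)}_H$, each such derivative is bounded by $CH^{a+b+2c-1}\|\tilde\bw\|_{C^k_H(P_{r/2})}=Ce^{(a+b+2c-1)\tau/2}\|\tilde\bw\|_{C^k_H(P_{r/2})}$, and a direct count shows that the $e^\tau$-powers produced by the chain rule (one factor of $e^{-\tau/2}$ per $\partial_t$, one factor of $y$ per $\partial_x$ generated) cancel these weights exactly, leaving only a polynomial prefactor $|y|^N\lesssim |\tau|^{N/2}$ (the cylindrical region bounds $|y|$ by $CY(\ell/\sqrt{|\tau|},\tau)\lesssim |\tau|^{1/2}$).

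Putting the three steps together, the combined multiplicative prefactor is $|t|^{1/10}\cdot\mathrm{poly}(|\tau|)$, which is bounded by $e^{-\tau/8}$ for $\tau_0$ sufficiently negative, yielding the corollary. The only delicate point is the chain-rule bookkeeping in the last paragraph: one must verify that each $\partial_\tau$ contributes at most a factor $e^{-\tau/2}$ together with one additional spacetime derivative of $\tilde\bw$, and that the resulting $e^\tau$-powers systematically cancel the weights appearing in $[\tilde\bw]^{k,(0)}_H$. All heavier analysis, including the degenerate Schauder estimate itself and the choice of scale $r$, is already packaged into Proposition \ref{prop:interior_estimates_cylinder}.
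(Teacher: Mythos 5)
Your proof follows exactly the paper's route: apply the degenerate Schauder estimate \eqref{Holder_W_uptob} at scale $r=|t|^{\frac{1}{2}(1-\frac{1}{100k})}$, convert $\|\tilde\bw\|_{C^0_H}$ to $\sup|w|$ via $H\sim e^{\tau/2}$ and the transformation rule, then differentiate \eqref{transwbf} (as the paper records in \eqref{tran_der_bf1}--\eqref{trans_der_bf2}) and absorb the resulting polynomial factors in $|y|\lesssim|\tau|^{1/2}$ into the final exponential prefactor. The bookkeeping you flag as "delicate" is exactly what the paper settles by displaying the derivative identities and invoking \eqref{H_vs_r}, so the argument is correct and essentially identical.
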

\begin{proof}
Setting $(x,t):=(e^{-\tau/2}y,-e^{-\tau})$ and $r:=|t|^{\frac{1}{2}(1-\frac{1}{100k})}$, observe that all points in $Q_{r}(x,t)$ correspond to rescaled points $(y',\tau')$ with $|\tau'-\tau|\leq e^{\frac{\tau}{100k}}$ and  $v(y',\tau')\geq \frac{\ell}{2\sqrt{|\tau|}}$. Hence, applying Proposition \ref{prop:interior_estimates_cylinder} (interior estimates for cylindrical variation) we get 
\begin{equation}\label{Sc_resc_app}
\|\tilde{\bw}\|_{C^{k,\alpha}_H(P_{r/2}(x,t))}\leq Ce^{-\tau/10} \left( \sup_{|\tau'-\tau|\leq e^{\frac{\tau}{100k}},  v(y',\tau')\geq \frac{\ell}{2\sqrt{|\tau|}}}| w(y',\tau')|+ \|\tilde{\bg}\|_{C^{k-2,\alpha,(2)}_H(P_r(x,t))}\right)\, .
\end{equation}
Moreover, differentiating the transformation rule \eqref{transwbf} yields\begin{align}\label{tran_der_bf1}
&  \tilde{\bw}_x=w_y , && |t|^{\frac{1}{2}}  \tilde{\bw}_{xx} =w_{yy}, &&|t|^{\frac{1}{2}}\tilde{\bw}_t= w_\tau+\frac{y}{2}w_y-\frac{w}{2},
\end{align}
and
\begin{equation}\label{trans_der_bf2}
|t|  \tilde{\bw}_{xt}= w_{y\tau}+\frac{y}{2}w_{yy},\qquad |t|^{\frac{3}{2}}\tilde{\bw}_{tt}= w_{\tau\tau}+yw_{\tau y}+\frac{y^2}{4}w_{yy}+\frac{y}{4}w_y-\frac{w}{4},
\end{equation}
and similarly for the higher derivatives.
Hence, using \eqref{H_vs_r} and $|y| \leq 2\sqrt{|\tau|}$, the result follows from \eqref{Sc_resc_app} and the definition of the norm.
\end{proof}

\bigskip

\subsection{Interior estimates for the tip variation}\label{tip_holder_sec}
In this subsection, we prove interior estimates for the tip variation by adapting the arguments from \cite[Section 5.8]{CHH_translators}, with similar modifications as in the previous subsection.

Setting
\begin{equation}
H(x_2,x_3,x_4,t)=H_{\phi}(\tilde{X}(x_2,x_3,x_4,t),x_2,x_3),
\end{equation}
we define
\begin{equation}
[\bF]_{H;U}^{k,(l)}=\sup_{X\in U}\sup_{i+j+\ell+2m= k} |H(X)|^{1-k-l} \left| \partial_{x_2}^i \partial_{x_3}^j  \partial_{x_4}^{\ell} \partial_t^m\bF(X)\right|\, ,
\end{equation}
and
\begin{equation}
[\bF]_{H;U}^{k,\alpha,(l)}=\sup_{X,X'\in U }\;\sup_{i+j+\ell+2m= k}\left| H^{-1}(X)+H^{-1}(X')\right|^{k+l+\alpha-1}\frac{ |\partial_{x_2}^i \partial_{x_3}^j  \partial_{x_4}^{\ell} \partial_t^m\bF(X)- \partial_{x_2}^i \partial_{x_3}^j  \partial_{x_4}^{\ell} \partial_t^m\bF(X')|}{|d(X,X')|^\alpha}\, ,
\end{equation}
where
\begin{equation}
d(X,X')=\sqrt{|x_2-x_2'|^2+|x_3-x_3'|^2+|x_4-x_4'|^2+|t-t'|}.
\end{equation}
Then, we can define weighted parabolic H\"older norms by
\begin{align}
 \|\bF\|_{C^{k,\alpha,(l)}_H	(U)}=\|\bF\|_{C^{k,(l)}_H(U)}+  [\bF]_{H;U}^{k,\alpha,(l)}\, \qquad \textrm{where} \qquad \|\bF\|_{C^{k,(l)}_H(U)}=\sum_{m=0}^k [\bF]_{H;U}^{m,(l)}\, .
\end{align}
As before, when $l=0$ we omit if from the notation, and observe that for functions $\bF$ that actually only depend on $x_2$, $x_3$ and $x_4+t$ our parabolic H\"older norms are elliptic H\"older norms in disguise.
 Finally, we work with parabolic cubes
\begin{equation}
Q_r(X)=\left\{X': t-r^2\leq t'\leq t,  |\langle x'-x , e_i\rangle|\leq r \;\; \text{for each} \;i=2,3,4  \right\},
\end{equation}
and abbreviate
\begin{equation}\label{Q_tip_def}
Q_r(x,t):=Q_r(x,0,0,t).
\end{equation}

\begin{proposition}[interior estimates for tip variation]\label{prop_interior_estimates_solition} Suppose $x\leq \ell\sqrt{|t|/\log|t|}$ holds at some given time $t\leq-h_0$. If $t\geq -h/e$, then setting $r=\sqrt{|t|/\log|t|}$ we have
\begin{equation}
\sup_{Q_{r/2}(x,t)}|{\tilde{\bW}}|\leq   \frac{C}{r^{\frac{5}{2}}}\left( \int_{Q_r(x,t)} {\tilde{\bW}}^2 \, dx_2'\, dx_3'\, dx_4'\, dt'\right)^{\frac{1}{2}}+Cr^{\frac{3}{4}}\left( \int_{Q_r(x,t)} \tilde{\bG}^4 \, dx_2'\, dx_3'\, dx_4'\, dt'\right)^{\frac{1}{4}}\, ,
\end{equation}
and 
\begin{equation}
\|{\tilde{\bW}}\|_{C^{k,\alpha}_H(Q_{r/2}(x,t))}\leq C\left( \|{\tilde{\bW}}\|_{C^{0}_H(Q_r(x,t))}+\|\tilde{\bG}\|_{C^{k-2,\alpha,(2)}_H(Q_r(x,t))}\right).
\end{equation} 
Furthermore, if we only assume $t\geq -h_{\mathrm{in}}$, then setting $r:=|t|^{\frac{1}{2}(1-\frac{1}{100k})}$ we still have
\begin{equation}
\|{\tilde{\bW}}\|_{C^{k,\alpha}_H(Q_{r/2}(x,t))}\leq C|t|^{1/10}\left( \|{\tilde{\bW}}\|_{C^{0}_H(Q_r(x,t))}+\|\tilde{\bG}\|_{C^{k-2,\alpha,(2)}_H(Q_r(x,t))}\right).
\end{equation} 
\end{proposition}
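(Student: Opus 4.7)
The plan is to follow exactly the blueprint of Proposition \ref{prop:interior_estimates_cylinder} (interior estimates for cylindrical variation), now using the tip evolution equation \eqref{X_D_evolve} from Proposition \ref{prop_tip_var} in place of the cylindrical one. Concretely, I would rescale
\begin{equation*}
\hat{\tilde{\bW}}(\hat x,\hat t) := r^{-1}\tilde{\bW}(x+r\hat x_2,r\hat x_3,r\hat x_4,t+r^2\hat t),\qquad \hat{\tilde{\bG}}(\hat x,\hat t) := r\,\tilde{\bG}(x+r\hat x_2,r\hat x_3,r\hat x_4,t+r^2\hat t),
\end{equation*}
with $\hat x=(\hat x_2,\hat x_3,\hat x_4)\in[-1,1]^3$, so that by Proposition \ref{prop_tip_var} the function $\hat{\tilde{\bW}}$ satisfies a linear parabolic equation
\begin{equation*}
\partial_{\hat t}\hat{\tilde{\bW}}=\hat a_{ij}\partial_{\hat x_i\hat x_j}\hat{\tilde{\bW}}+\hat b_i\partial_{\hat x_i}\hat{\tilde{\bW}}-\hat{\tilde{\bG}}
\end{equation*}
on $Q_1(0)$, with coefficients $\hat a_{ij},\hat b_i$ built from the rescaled inverse profile $\tilde X$ and its first and second derivatives.

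For the first two inequalities I would take $r:=\sqrt{|t|/\log|t|}$. The soliton-region asymptotics in item (3) of Theorem \ref{thm_unique_asympt_recall} (the $C^{100}$-convergence of $Z(\cdot,\tau)$ to the $2d$-bowl profile $Z_0$) together with the tip estimates \eqref{eq_tip_est} then give uniform $C^{k,\alpha}$-bounds on $\hat a_{ij},\hat b_i$ and a uniform ellipticity lower bound $\hat a_{ij}\xi^i\xi^j\ge C^{-1}|\xi|^2$ on $Q_1(0)$, provided $\tau_0$ is sufficiently negative. Standard interior parabolic $L^\infty$-estimates \cite[Theorem 7.36]{Lieberman} in spatial dimension $3$ then give the $L^2$--$L^4$ estimate (the $L^4$ replaces the $L^3$ of the cylindrical case because the integrability threshold $(n+2)/2$ moves from $2$ to $5/2$), and standard Schauder estimates \cite[Theorem 4.9]{Lieberman} give the $C^{k,\alpha}$-bound for $\hat{\tilde{\bW}}$. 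Scaling back and using $r\sim H^{-1}$ on $Q_r(x,t)$, which is a direct consequence of the soliton-region asymptotics at this scale, converts these unweighted rescaled bounds into the claimed $C^{k,\alpha}_H$-bounds.

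For the last inequality, the difficulty is that when $t$ is as close as $-h_{\mathrm{in}}$ to the Dirichlet boundary $\{t=-h\}$, the scale $\sqrt{|t|/\log|t|}$ is too large for $Q_r(x,t)$ to remain inside the domain where the equation is defined. Exactly as in the cylindrical proof I would instead shrink to $r:=|t|^{(1-\frac{1}{100k})/2}$; this is small enough to fit inside $\{t\ge -h\}$ but still large enough that the rescaling argument and uniform coefficient bounds above go through unchanged. The mismatch $r\le H^{-1}(X)\le r|t|^{1/100k}$ from \eqref{H_two_sided} then produces a loss of at most $|t|^{1/10}$ when converting the unweighted rescaled Schauder bound back into the $H$-weighted norm, since each seminorm picks up a factor of at most $|t|^{(k+l+\alpha-1)/100k}$, uniformly bounded by $|t|^{1/10}$.

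The main obstacle is the first step: verifying the uniform $C^{k,\alpha}$-bounds on $\hat a_{ij},\hat b_i$ and the uniform ellipticity on the rescaled cube $Q_1(0)$. This is precisely the content of the sharp asymptotics together with the tip estimates \eqref{eq_tip_est}, and once these are in place the remainder is a direct application of Lieberman's interior theory combined with the standard passage from rescaled Euclidean H\"older seminorms to the $H$-weighted parabolic H\"older seminorms on $Q_r(x,t)$.
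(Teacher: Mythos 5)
Your proposal follows essentially the same approach as the paper's proof: rescale $\tilde{\bW}$ and $\tilde{\bG}$ to unit scale so that Proposition \ref{prop_tip_var} together with the sharp asymptotics and tip estimates give a uniformly parabolic equation on $Q_1(0)$ with uniformly $C^{k,\alpha}$-bounded coefficients, apply the interior $L^\infty$- and Schauder estimates from Lieberman, and scale back (with the mismatch $r\le H^{-1}\le r|t|^{1/100k}$ accounting for the $|t|^{1/10}$ factor in the third bound). Your remarks on the $L^4$ exponent coming from the integrability threshold in spatial dimension $3$, and on the origin of the $|t|^{1/10}$ loss, make explicit what the paper leaves implicit when it writes ``scaling back,'' but the underlying argument is identical.
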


\begin{proof}
Consider the rescaling
\begin{equation}
\hat{\tilde{\bW}}(\hat x, \hat t)= \frac{1}{r}{\tilde{\bW}}(x+r \hat x,t+r^2 \hat t),\;\;\;\;\;\hat{\tilde{\bG}}(\hat{x},\hat{t})=r \tilde{\bG}(x+r \hat{x},t+r^2 \hat{t}).
\end{equation}
Then, Proposition \ref{prop_tip_var} (tip variation) and Theorem \ref{thm_unique_asympt_recall} (sharp asymptotics) imply that
\begin{equation}\label{eq_tildW}
 \hat{\tilde{\bW}}_{\hat t} =a_{ij} \hat{\tilde{\bW}}_{ij}  +b_{i} \hat{\tilde{\bW}}_i+\hat{\tilde{\bG}},
\end{equation}
where, assuming $\tau_0\ll 0$, in the space-time region under consideration the coefficients satisfy
\begin{equation}
\|a_{ij}\|_{C^{k,\alpha}(Q_1(0))}+\|b_i\|_{C^{k,\alpha}(Q_1(0))}\leq C,\qquad   a_{ij}\xi_i\xi_j \geq C^{-1}|\xi|^2.
\end{equation} 
Therefore, standard interior $L^\infty$-estimates \cite[Theorem 7.36]{Lieberman} yield
\begin{equation}
\sup_{Q_{1/2}(0)}|\hat{\tilde{\bW}}|\leq C \left( \int_{Q_{1}(0)}\hat{\tilde{\bW}}^2\, d\hat{x}\, d\hat{t}\right)^{\frac{1}{2}}{+C \left( \int_{Q_{1}(0)} \hat{\tilde{\bG}}^4\, d\hat{x}\, d\hat{t}\right)^{\frac{1}{4}}}\, ,
\end{equation}
and standard interior Schauder estimates \cite[Theorem 4.9]{Lieberman} yield
\begin{equation}
\|\hat{\tilde{\bW}}\|_{C^{k,\alpha}(Q_{1/2}(0))}\leq C \left( \|\hat{\tilde{\bW}}\|_{C^{0}(Q_{1}(0))}+\|\hat{\tilde{\bG}}\|_{C^{k-2,\alpha}(Q_{1}(0))}\right) \, .
\end{equation}
Scaling back to the original variables, this proves the proposition.
\end{proof}

\begin{corollary}[{$L^\infty$-estimate for tip variation}]\label{cor:C0_tip}
If $\tau\leq \tau_0-1$, then
\begin{equation}
\sup_{\tau' \leq \tau}e^{\frac{26}{100}\tau'}\sup_{ v\leq \frac{9}{10} \theta} |W(v,\tau')| \leq \|W_\cT\|_{2,\infty}(\tau+1)+\sup_{\Omega_h}\frac{|f|}{H_{\phi}}\, .
\end{equation}
\end{corollary}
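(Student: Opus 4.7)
The plan is to adapt the scheme of Corollary \ref{cor:C0_cylinder} to the tip gauge, replacing Proposition \ref{prop:interior_estimates_cylinder} with Proposition \ref{prop_interior_estimates_solition} and the Gaussian $L^2$-norm with the weighted $L^2$-norm defining $\|W_\cT\|_{2,\infty}$. For each $\tau'\leq\tau$ and $v\leq(9/10)\theta$ I would bound $|W(v,\tau')|$, then multiply by $e^{(26/100)\tau'}$ and take the supremum; two regimes must be treated, depending on whether $\tau'$ is close to the Dirichlet boundary $\tau_h=-\log h$.

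In the boundary regime $\tau'\in[\tau_h,\tau_{h_\mathrm{in}}+1]$, I would combine the transformation rule \eqref{Wfromu} with the tip estimates \eqref{eq_tip_est} to deduce $|W(v,\tau')|\leq C|\tau'|^{1/2}e^{\tau'}\sup_{\log\phi=-\tau'}|u|$, and then apply Corollary \ref{cor_bound_est} (level set estimate) using $h-e^{-\tau'}\leq h^{\gamma_k}$ with $\gamma_k=1-\tfrac{1}{100k}$. This yields $|W(v,\tau')|\leq C|\tau'|^{1/2}h^{\gamma_k-1}\sup_{\Omega_h}|f|/H_\phi$, and multiplication by $e^{(26/100)\tau'}\leq Ch^{-26/100}$ produces the prefactor $|\tau'|^{1/2}h^{-1/(100k)-26/100}$, which is uniformly bounded in $h$.

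In the interior regime $\tau'\geq\tau_{h_\mathrm{in}}+1$, I would set $(x,t)=(e^{-\tau'/2}v,-e^{-\tau'})$ and $r=\sqrt{|t|/\log|t|}=e^{-\tau'/2}/\sqrt{|\tau'|}$, then apply Proposition \ref{prop_interior_estimates_solition} on the cube $Q_r(x,t)$. Combined with the transformation rule \eqref{trangbf}, this yields
\[|W(v,\tau')|\leq Ce^{7\tau'/4}|\tau'|^{5/4}\,\|\tilde{\bW}\|_{L^2(Q_r)}+Ce^{\tau'/8}|\tau'|^{-3/8}\,\|\tilde{\bG}\|_{L^4(Q_r)}.\]
The crucial step is the conversion of the 4D norm $\|\tilde{\bW}\|_{L^2(Q_r)}$ into the 2D weighted norm defining $\|W_\cT\|_{2,\infty}$. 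Using the $S^1$-symmetry and the fact that $\tilde{\bW}$ depends only on $(\sqrt{x_2^2+x_3^2},x_4+t)$, the change of variables $(x_2,x_3,x_4,t)\mapsto(\upsilon,\vartheta,\sigma,t)$ followed by $(\upsilon,\sigma)\mapsto(v,\tau)$, whose Jacobian is $e^{3\tau/2}$, reduces the integral to $Cr\,e^{-3\tau'}\iint W^2\,v\,dv\,d\tau$ over a small neighborhood of $(v,\tau')$. Analysis of \eqref{def_weight_fn} using the tip estimates gives $e^{\bar\mu(v,\tau)}\gtrsim v\,e^{-|\tau|/2}/\theta$ in this range, so $\int W^2\,v\,dv\leq C\theta\,e^{|\tau|/2}\int W^2 e^{\bar\mu}\,dv$; combined with the definition of $\|W_\cT\|_{2,\infty}(\tau'+1)$ this gives $|W(v,\tau')|\leq C|\tau'|^{5/4}e^{|\tau'|/4}\|W_\cT\|_{2,\infty}(\tau'+1)$. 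The inhomogeneity term is handled via $|\tilde{\bG}|\lesssim H_\phi\sup|f|/H_\phi$ and $H_\phi\sim 1/\sqrt{|t|}$. For $v$ in the collar range $(\ell/\sqrt{|\tau'|},(9/10)\theta]$ outside the hypothesis of Proposition \ref{prop_interior_estimates_solition}, I would invoke Corollary \ref{cor_trans_rule} to pass to the cylindrical gauge and use Proposition \ref{prop:interior_estimates_cylinder} applied to $\tilde{\bw}$, recovering $W$ through $W=-Y_v w$ with $|Y_v|\sim v\sqrt{|\tau|}$; this case yields strictly better exponents than the pure tip case.

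The main obstacle is the tight bookkeeping of exponents. Multiplication by $e^{(26/100)\tau'}$ turns the leading exponential factor into $e^{\tau'(26/100-1/4)}=e^{\tau'/100}$, which decays as $\tau'\to-\infty$ and dominates the polynomial $|\tau'|^{5/4}$. The threshold $26/100$ is chosen just above the critical value $1/4$ coming from the bound $\sup_v e^{\bar\mu}\leq e^{\tau/4}$, leaving a margin of $1/100$ that must absorb the polynomial growth from the interior $L^\infty$-estimate but is otherwise as small as possible.
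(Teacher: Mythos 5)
Your proposal follows essentially the same architecture as the paper's proof: a boundary regime treated via a barrier/level-set estimate combined with the transformation rule \eqref{Wfromu} and the tip estimates \eqref{eq_tip_est}, a soliton region $v\leq\ell/\sqrt{|\tau|}$ treated via Proposition \ref{prop_interior_estimates_solition} plus a conversion of the $4$D $L^2$-norm to the weighted norm $\|W_\cT\|_{2,\infty}$, and a collar region treated by passing to cylindrical gauge through Corollary \ref{cor_trans_rule} and applying Proposition \ref{prop:interior_estimates_cylinder}. Two of your intermediate claims are off, though neither breaks the argument: the asserted density bound $e^{\bar\mu}\gtrsim v\,e^{\tau/2}/\theta$ is strictly stronger than what the paper actually relies on, namely $\inf_{v\leq 2\theta}\tfrac{1}{v}e^{\bar\mu}\geq e^{51\tau/100}$ from \cite[Lemma 5.34]{CHH_translators} (since $e^{\tau/2}>e^{51\tau/100}$ for $\tau<0$); with the cited bound your margin shrinks from $e^{\tau/100}$ to $e^{\tau/200}$, still enough. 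Likewise, the inequality $h-e^{-\tau'}\leq h^{\gamma_k}$ fails for $\tau'\in(\tau_{h_\mathrm{in}},\tau_{h_\mathrm{in}}+1]$; however Corollary \ref{cor_bound_est} with the crude bound $\max\{h-h',\log h\}\leq Ch$ suffices there because $e^{\tau'}h$ stays $O(1)$ throughout $\tau'\leq\tau_h+1$, which is the route the paper itself takes (via Theorem \ref{max_princ_inhom} applied at height $h$).
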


\begin{proof}
If $\tau\leq -\log(h/e)$, then by Theorem \ref{max_princ_inhom} (barrier estimate) and equation \eqref{Wfromu} we have
\begin{equation}
|W(y,\tau)|\leq C|\tau|^{5}\sup_{\Omega_h}\frac{|f|}{H_{\phi}}.
\end{equation}
Assume from now that $\tau\geq -\log(h/e)$. Note that
\begin{equation}\label{rel_XD_W}
\tilde{\bW}(x_2,x_3,x_4,t)=e^{-\frac{\tau}{2}}W(v,\tau)
\end{equation}
where
\begin{equation}
 \tau=-\log(-x_4-t) \qquad\textrm{ and }\qquad v=e^{\frac{\tau}{2}}\left(x_2^2+x_3^2\right)^{\frac{1}{2}}\, .
\end{equation}
Suppose first $v\leq \ell/\sqrt{|\tau|}$. Arguing as in \cite[Proof of Proposition 5.30]{CHH_translators}, we see that setting $x:=e^{-\frac{\tau}{2}}v$ and $R:=|\tau|^{-1/2}e^{-\tau/2}$ we have
\begin{equation}
\frac{1}{R}   \int_{Q_{R}(x,t)} \tilde{\bW}^2  \, dx_2'\, dx_3'\, dx_4'\, dt'\leq C|\tau|^{\frac{1}{2}}e^{-\frac{351}{100}\tau}\|W_\cT\|_{2,\infty}^2(\tau+1).
\end{equation}
Moreover, observe that by Theorem \ref{thm_unique_asympt_recall} (sharp asymptotics) in the soliton region we have
\begin{equation}
|D\tilde{X}(x_2,x_3,x_4,t)|\leq C|\tau|^{1/2}.
\end{equation}
Hence, from \eqref{def_tildeWG} we infer that
\begin{equation}
|\tilde{\bG}(x_2,x_3,x_4,t)|\leq C|\tau|^{1/2}|f(\tilde{X}(x_2,x_3,x_4,t),x_2,x_3)|
\end{equation}
at all points at the soliton region. This implies
\begin{equation}\label{G_L_sup}
\frac{1}{R}   \int_{Q_{R}(x,t)} \tilde{\bG}^4  \, dx_2'\, dx_3'\, dx_4'\, dt' \leq CR^4|\tau|^2\sup_{Q_R(x,t)}|f|^4 \leq C |\tau|^2 \sup_{\Omega_h} \left(\frac{|f|}{H_{\phi}}\right)^4, 
\end{equation}
where the last inequality follows from $C^{-1} \leq |RH_{\phi}|\leq C$ on $Q_R(x,t)$. Together with Proposition \ref{prop_interior_estimates_solition} (interior estimates in soliton region) this yields the asserted estimate in the soliton region $v\leq \ell/\sqrt{|\tau|}$.\\

Suppose now $v\in [\ell/\sqrt{|\tau|},\frac{8}{9}\theta]$. Then, taking $x=\tilde{X}(e^{-\frac{\tau}{2}}v,0,0,t)$ and setting $r=H^{-1}(x,0,t)$ as in the previous subsection we compute
\begin{align}
\int_{Q_{r}(x,t)} \tilde{\bw}^2 \, dx'\, ds'\, dt' & \leq C |t|^{3}\int_{\tau-2}^{\tau+1}\int_{y-2}^{y+2} w^2(y',\tau')\, dy'\, d\tau' \nonumber \\
& \leq C |t|^{3}\int_{\tau-2}^{\tau+1}\int_{0}^{\theta} \frac{W^2(v,\tau)}{|Y_v|}\, dv\, d\tau,
\end{align}
where in the last inequality we have used Corollary \ref{cor_trans_rule} (transformation rule) and the change of variables formula $dy'=|Y_v|dv$.
Now, recall that by the tip estimates from \eqref{eq_tip_est} we have
\begin{equation}
\sup_{v\leq 2\theta}\left|\frac{v}{Y_v}\right| \leq  \frac{4}{\sqrt{|\tau|}},
\end{equation}
and by \cite[Lemma 5.34]{CHH_translators} we have the density bound
\begin{equation}
\inf_{v\leq 2\theta} \frac{1}{v}e^{{\bar{\mu}}(v,\tau)} \geq e^{\frac{51}{100}\tau}.
\end{equation} 
Using this, we infer that
\begin{equation}
\int_{Q_{r}(x,t)} \tilde{\bw}^2 \, dx'\, ds'\, dt'  \leq C|t|^{\frac{351}{100}}\|W_{\cT}\|^2_{2,\infty}(\tau+1).
\end{equation}
Moreover, since by \eqref{bg_def} we have $|\tilde{\bg}| \leq 2|f|$ for points outside the soliton region, arguing similarly as above, we see that
\begin{equation}
\int_{Q_r(x,t)} \tilde{\bg}^3 dx'\, ds'\, dt' \leq C |t|^{1/2}\sup_{\Omega_h} \left(\frac{|f|}{H_{\phi}}\right)^3.
\end{equation}
Hence, applying Proposition \ref{prop:interior_estimates_cylinder}  (interior estimates for cylindrical variation) we conclude that
\begin{equation}
|t|^{1/2}|w(y,\tau)| \leq C\frac{\log|t|}{|t|}|t|^{351/200}\|W_{\cT}\|_{2,\infty}(\tau+1)+C|t|^{1/3}|t|^{1/6} \sup_{\Omega_h} \left(\frac{|f|}{H_{\phi}}\right),
\end{equation}
where $(y,\tau)=(Y(v,\tau),\tau)$. By Corollary \ref{cor_trans_rule} (transformation rule) this proves the assertion.
\end{proof}

\begin{corollary}[{$C^k$-estimate for tip variation}]\label{C^2_tip_cor}
If $\tau\in [\tau_{h_\mathrm{in}},\tau_0-1]$ and  $v\leq \ell/\sqrt{|\tau|}$, then
\begin{multline}
\sum_{0\leq i+2j\leq k}|\partial^i_v\partial^j_\tau W|(v,\tau)\\
\leq e^{-\frac{\tau}{8}} \left( \sup_{v'\leq\frac{\theta}{4},|\tau'-\tau|\leq e^{\frac{\tau}{100k}}} | W(v',\tau')|
+\|\tilde{\bG}\|_{C_H^{k-2,\alpha,(2)}\left(Q_{\exp\left((1-\frac{1}{100k})\frac{\tau}{2}\right)}(e^{-\tau/2}v,-e^{-\tau})\right)}\right).
\end{multline}
\end{corollary}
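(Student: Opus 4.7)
The plan is to mirror the proof of Corollary \ref{C^2_cyl_cor}: apply Proposition \ref{prop_interior_estimates_solition} (interior estimates for tip variation) at a carefully chosen scale in the unrescaled variables, then translate the resulting bound on $\tilde{\bW}$ back to a bound on $W$ via the transformation rule \eqref{trangbf} and its derivatives.

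First, I would set $(x,t):=(e^{-\tau/2}v,-e^{-\tau})$ and $r:=|t|^{\frac{1}{2}(1-\frac{1}{100k})}$. Since $v\leq \ell/\sqrt{|\tau|}$, the spatial coordinate $x=e^{-\tau/2}v$ satisfies $x\leq \ell e^{-\tau/2}/\sqrt{|\tau|}$, and for $\tau_0$ sufficiently negative one checks that $r$ dominates $x$ up to negligible terms. A short direct computation (using $\tau'=-\log|t'|$ and $v'=e^{\tau'/2}\sqrt{(x_2')^2+(x_3')^2}$) then shows that every point in $Q_r(x,t)$ corresponds to a rescaled pair $(v',\tau')$ with $v'\leq \theta/4$ and $|\tau'-\tau|\leq e^{\tau/(100k)}$. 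Applying Proposition \ref{prop_interior_estimates_solition} yields
\begin{equation*}
\|\tilde{\bW}\|_{C^{k,\alpha}_H(Q_{r/2}(x,t))}\leq C|t|^{1/10}\left(\|\tilde{\bW}\|_{C^0_H(Q_r(x,t))}+\|\tilde{\bG}\|_{C^{k-2,\alpha,(2)}_H(Q_r(x,t))}\right).
\end{equation*}
Next, differentiating \eqref{trangbf} gives identities analogous to \eqref{tran_der_bf1}--\eqref{trans_der_bf2}, namely $\tilde{\bW}_{x_2}=W_v$, $e^{-\tau/2}\tilde{\bW}_t=W_\tau+(v/2)W_v-W/2$, and so on, so that each $\partial_v^i\partial_\tau^j W$ with $i+2j\leq k$ is expressed as a linear combination of derivatives $\partial_{x_2}^{i'}\partial_t^{j'}\tilde{\bW}$ of parabolic weight $m=i'+2j'\leq k$, with coefficients bounded in terms of $v\leq\theta$ and powers of $e^{-\tau/2}$. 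Using the asymptotics $H\sim|t|^{-1/2}=e^{\tau/2}$ in the tip region together with the definition of the weighted H\"older norms, we bound $|\partial_{x_2}^{i'}\partial_t^{j'}\tilde{\bW}|\leq CH^{m-1}\|\tilde{\bW}\|_{C^{k,\alpha}_H}\leq C e^{(m-1)\tau/2}\|\tilde{\bW}\|_{C^{k,\alpha}_H}$, and similarly $\|\tilde{\bW}\|_{C^0_H(Q_r)}\leq C\sup_{(v',\tau')}|W(v',\tau')|$ because the factor $e^{-\tau/2}$ from \eqref{trangbf} and the factor $H\sim e^{\tau/2}$ in the definition of $\|\cdot\|_{C^0_H}$ cancel. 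Chasing all the exponential factors, the prefactor $|t|^{1/10}=e^{-\tau/10}$ from Proposition \ref{prop_interior_estimates_solition} is the dominant one (the remaining exponential factors are bounded by $1$), and since $e^{-\tau/10}\leq e^{-\tau/8}$ for $\tau<0$ the estimate follows.

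The main obstacle is the bookkeeping in the last step: verifying that the conversion from $\tilde{\bW}$-derivatives to $W$-derivatives via \eqref{trangbf}, combined with the $H$-weights of the $C^{k,\alpha}_H$ and $C^{k-2,\alpha,(2)}_H$ norms, produces only bounded (indeed $\leq 1$) exponential factors, so that the only genuine growth comes from the prefactor $|t|^{1/10}$ in Proposition \ref{prop_interior_estimates_solition}. This is where the choice of offset $l=0$ for domain norms and $l=2$ for image norms is important, as the mismatch is exactly balanced by the two derivatives gained under $L_\phi$. Once this is verified, the proof is essentially identical in structure to that of Corollary \ref{C^2_cyl_cor}.
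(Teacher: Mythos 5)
Your proposal is correct and mirrors the paper's proof, which simply differentiates the transformation rule \eqref{trangbf} to express $W_v$, $W_{vv}$, $W_\tau$ (and higher derivatives) in terms of $\tilde{\bW}$-derivatives and then applies Proposition~\ref{prop_interior_estimates_solition} with $r=|t|^{\frac{1}{2}(1-\frac{1}{100k})}$. One minor inaccuracy in your write-up: for $\tau\ll 0$ the spatial center $x=e^{-\tau/2}v$ actually dominates the radius $r=e^{-\tau/2}e^{\tau/(200k)}$ (not the other way around), but this does not affect your subsequent (correct) conclusion that every point of $Q_r(x,t)$ corresponds to a rescaled pair with $v'\leq\theta/4$ and $|\tau'-\tau|\leq e^{\tau/(100k)}$.
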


\begin{proof}
Differentiating the transformation rule
\eqref{transwbf} yields\begin{align}
&W_v=\tilde{\bW}_{x_2}\, , && W_{vv}=e^{-\frac{\tau}{2}}\tilde{\bW}_{x_2x_2}\, , && W_\tau =\tfrac{1}{2}e^{\frac{\tau}{2}}\tilde{\bW}-\tfrac{v}{2}\tilde{\bW}_{x_2}+ e^{-\frac{\tau}{2}}\tilde{\bW}_t.
\end{align}
Hence, applying Proposition \ref{prop_interior_estimates_solition} (interior estimates in soliton region) the result follows.
\end{proof}

\bigskip

\section{Fredholm theory}

\subsection{Norms and spaces}
Recall that via \eqref{unren_tilde_quant} and \eqref{def_tildeWG} to any $u\in C^{k,\alpha}_{\mathrm{loc}}(\Omega_h/S^1)$ we associate the functions $\tilde{\bw}$ and $\tilde{\bW}$, and to any $f\in C^{k-2,\alpha}_{\mathrm{loc}}(\Omega_h/S^1)$ we associate the functions $\tilde{\bg}$ and $\tilde{\bG}$. Let us abbreviate
\begin{equation}
\mathcal{C}_{h'}=\left\{ (x_1,t)\; \bigg{|}\; h_0\leq -t\leq h',\;V(x_1,t) \geq \ell\sqrt{\tfrac{|t|}{\log |t|}} \right\} ,
\end{equation}
and
\begin{equation}
\mathcal{S}_{h'}=\left\{ (x_2,t)\; \bigg{|}\; h_0\leq -t\leq h',\; x_2 \leq \ell\sqrt{\tfrac{|t|}{\log|t|}} \right\}.
\end{equation}

\begin{definition}[domain and target H\"older norms]\label{def_holder_dom_tar} Denoting by $\rho_\star$ and $\rho_\bullet$ weight functions to be specified below, and given any $h'\in [2h_0, h]$, we define
the domain H\"older norm by
\begin{align}\label{Hold_def}
\|u\|_{C^{k,\alpha}_{\star}(\Omega_{h'}/S^1)}:=&\|u\|_{C^{k,\alpha}(\Omega_{2h_0})}+\sup\left\{\frac{1}{\rho_\star(x,t)}\|\tilde{\bw}\|_{C_H^{k,\alpha}\left(P_{r}\left(x,t\right)\right)}\;\bigg{|}\; (x,t)\in\mathcal{C}_{h'},\;r=\tfrac{1}{10}H^{-1}(x,t)\right\}\nonumber\\
&+\sup\left\{\|\tilde{\bW}\|_{C_H^{k,\alpha}\left(Q_r\left(x_2,t\right)\right)}\;\bigg{|}\;(x_2,t)\in\mathcal{S}_{h'},r=\sqrt{\tfrac{|t|}{\log|t|}}\right\}\, ,
\end{align}
and the target H\"older norm by
\begin{align}\label{im_Hold_def}
\|f\|_{C^{k-2,\alpha}_{\bullet}(\Omega_{h'}/S^1)}:=&\|f\|_{C^{k-2,\alpha}(\Omega_{2h_0})}+\sup\left\{\frac{1}{\rho_\bullet(x,t)}\|\tilde{\bg}\|_{C_H^{k-2,\alpha,(2)}\left(P_{r}\left(x,t\right)\right)}\;\bigg{|}\; (x,t)\in\mathcal{C}_{h'},\;r=\tfrac{1}{10}H^{-1}(x,t)\right\}\nonumber\\
&+\sup\left\{\|\tilde{\bG}\|_{C_H^{k-2,\alpha,(2)}\left(Q_r\left(x_2,t\right)\right)}\;\bigg{|}\;(x_2,t)\in\mathcal{S}_{h'},r=\sqrt{\tfrac{|t|}{\log|t|}}\right\}\, ,
\end{align}
where the parabolic cubes $P_r(x,t)$ and $Q_r(x_2,t)$ are defined in \eqref{Q_cyl_def} and \eqref{Q_tip_def}.
\end{definition}

Here, we work with the domain weight function
\begin{equation}\label{domain_weight}
\rho_\star(x,t):=\left\{\begin{array}{ll}
        \frac{1}{\log|t|}\left(\sqrt{2}+\frac{10}{\log|t|}-\frac{V(x,t)}{\sqrt{|t|}}\right) & \text{if } V(x,t)\geq \theta \sqrt{|t|}\\
        \frac{1}{\log|t|}\frac{|t|}{V(x,t)^2} &  \text{if } V(x,t)< \theta \sqrt{|t|},
        \end{array}\right. 
\end{equation}
and the target weight function
\begin{equation}\label{target_weight}
\rho_\bullet(x,t):=\left\{\begin{array}{ll}
        \frac{1}{\log|t|}\Big(\sqrt{2}+\frac{10}{\log|t|}-\frac{V(x,t)}{\sqrt{|t|}}\Big)^{2}  & \text{if } V(x,t)\geq \theta \sqrt{|t|}\\
        \frac{1}{\log|t|}+\frac{1}{(\log|t|)^{3/2}}\frac{|t|^{3/2}}{V(x,t)^3} &  \text{if } V(x,t)< \theta \sqrt{|t|}.
        \end{array}\right.
\end{equation}
Our choice of weight functions is motivated by the following proposition, which in particular will be used to verify the assumptions of the inner-outer estimate.

\begin{proposition}[controlled pointwise quantities]\label{prop_who_contr_whom} There exists a constant $C=C(M,h_0)<\infty$, such that whenever $h'\in[2h_0,h]$ then for all $\tau\in [-\log h',\tau_0]$ we have the bounds
\begin{equation}\label{contr_X0}
\sup_{v(y,\tau)\geq \theta}|\tau|\left(\sqrt{2}+\frac{10}{|\tau|}-v(y,\tau)\right)^{-1}|w(y,\tau)|+\sup_{v\leq \theta}  |\tau|^{1/2} |W(v,\tau)| \leq C\|u\|_{C^{0}_{\star}(\Omega_{h'}/S^1)},
\end{equation}
and
\begin{multline}\label{contr_Y0}
\sup_{v(y,\tau)\geq \theta}|\tau|\left(\sqrt{2}+\frac{10}{|\tau|}-v(y,\tau)\right)^{-2}|g(y,\tau)|\\
+\sup_{v\leq \theta} |\tau| \left(|\tau|^{1/2}+\frac{1}{v^3}\min\left(1,v^2|\tau|/\ell^2\right)\right)^{-1}  |G(v,\tau)|  \leq C\|f\|_{C^{0}_{\bullet}(\Omega_{h'}/S^1)}.
\end{multline}
\end{proposition}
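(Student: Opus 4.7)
The proof will be a pointwise computation unwinding the $C^0_\star$ and $C^0_\bullet$ norms via the transformation rules \eqref{transwbf}, \eqref{trangbf}, and Corollary \ref{cor_trans_rule}; the only nontrivial input is a sharp pointwise control of the mean curvature on the translator, refined from \eqref{H_two_sided}. Specifically, at the translator point corresponding to $(y,\tau)$ (equivalently $(v,\tau)$), with $\tau\leq\tau_0$ sufficiently negative, I claim
\begin{equation*}
H_\phi \;\simeq\; e^{\tau/2}\min\!\left(\tfrac{1}{v},\,|\tau|^{1/2}\right).
\end{equation*}
The cylindrical/intermediate regime ($H_\phi\simeq e^{\tau/2}/v$ for $v\geq\ell|\tau|^{-1/2}$) follows by substituting \eqref{mul_vu} into $H_\phi=-V_t/\sqrt{1+V_x^2+V_t^2}$ and invoking \eqref{eq_cyl_est} together with \eqref{impr_cyl_est}. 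The soliton regime ($H_\phi\simeq|\tau|^{1/2}e^{\tau/2}$ for $v\leq\ell|\tau|^{-1/2}$) comes from the analogous identity $H_\phi=|X_t|/\sqrt{1+X_{x_2}^2+X_t^2}$: by \eqref{mul_Yu} and \eqref{eq_tip_est}, $|X_t|=e^{\tau/2}|\tfrac{Y}{2}-\tfrac{v}{2}Y_v-Y_\tau|\simeq|\tau|^{1/2}e^{\tau/2}$ while $|X_{x_2}|=|Y_v|\leq\ell$, giving the claimed behaviour.

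To prove \eqref{contr_X0}, if $v(y,\tau)\geq\theta$ then setting $(x,t):=(e^{-\tau/2}y,-e^{-\tau})\in\mathcal{C}_{h'}$, the cylindrical regime of \eqref{domain_weight} gives $\rho_\star(x,t)=(\sqrt{2}+10/|\tau|-v)/|\tau|$; unwinding the $C^{0,\alpha}_H$-norm produces $|\tilde{\bw}(x,0,t)|\leq H^{-1}(x,0,t)\rho_\star(x,t)\|u\|_{C^0_\star}$, and combining with $|w|=e^{\tau/2}|\tilde{\bw}|$ from \eqref{transwbf} and $H\simeq e^{\tau/2}$ yields $|w(y,\tau)|\leq C\rho_\star\|u\|_{C^0_\star}$. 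For the range $v\leq\theta$ I split at $v=\ell|\tau|^{-1/2}$: when $v\leq\ell|\tau|^{-1/2}$, the corresponding $(x_2,t)$ lies in $\mathcal{S}_{h'}$, the unweighted tip part of the norm gives $|\tilde{\bW}|\leq H^{-1}\|u\|_{C^0_\star}$, and $|W|=e^{\tau/2}|\tilde{\bW}|$ combined with $H\simeq|\tau|^{1/2}e^{\tau/2}$ produces $|W|\leq C|\tau|^{-1/2}\|u\|_{C^0_\star}$; when $v\in[\ell|\tau|^{-1/2},\theta]$, I use Corollary \ref{cor_trans_rule} to write $W=-Y_vw$ and invoke the $\mathcal{C}_{h'}$-norm with the tip regime $\rho_\star=1/(|\tau|v^2)$ of \eqref{domain_weight}, combined with $H\simeq e^{\tau/2}/v$ and $|Y_v|\leq v|\tau|^{1/2}$ from \eqref{eq_tip_est}, which again delivers $|W|\leq C|\tau|^{-1/2}\|u\|_{C^0_\star}$.

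The proof of \eqref{contr_Y0} is parallel: the offset $l=2$ of the target norm gives $|\tilde{\bg}|\leq H\rho_\bullet\|f\|_{C^0_\bullet}$ and $|\tilde{\bG}|\leq H\|f\|_{C^0_\bullet}$, the same case split and the weight $\rho_\bullet$ from \eqref{target_weight} yield $|g|\leq C\rho_\bullet\|f\|_{C^0_\bullet}$ for $v\geq\theta$, while Corollary \ref{cor_trans_rule} (now $G=-Y_vg$) together with the tip regime $\rho_\bullet=|\tau|^{-1}+|\tau|^{-3/2}v^{-3}$ reproduces $|G|\leq C|\tau|^{-1}(|\tau|^{1/2}+v^{-3})\|f\|_{C^0_\bullet}$ in the intermediate range $v\in[\ell|\tau|^{-1/2},\theta]$; the inner range $v\leq\ell|\tau|^{-1/2}$ is directly handled via $\mathcal{S}_{h'}$, giving $|G|\leq C|\tau|^{1/2}\|f\|_{C^0_\bullet}$, which is absorbed into the large factor $v^{-3}\min(1,v^2|\tau|/\ell^2)=|\tau|/(v\ell^2)\gtrsim|\tau|^{3/2}/\ell^3$ present on the right-hand side in that range.

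The main obstacle is the refined lower bound $H_\phi\gtrsim|\tau|^{1/2}e^{\tau/2}$ in the soliton region, which improves the universal bound \eqref{H_two_sided} by a factor of $|\tau|^{1/2}$ and is precisely what produces the $|\tau|^{1/2}$ weight appearing in the tip estimate on $W$. All remaining steps are algebraic bookkeeping with the explicit weight formulas \eqref{domain_weight} and \eqref{target_weight}, the transformation rules of Section 3, and the tip/cylindrical estimates recalled in Section 2.
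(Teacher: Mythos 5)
Your proposal is correct and takes essentially the same route as the paper: unwind the weighted H\"older norms via the transformation rules and, region by region, convert the $H$-weighted pointwise bounds on $\tilde{\bw},\tilde{\bW},\tilde{\bg},\tilde{\bG}$ into the stated bounds on $w,W,g,G$ using the sharp two-sided asymptotics $H_\phi\simeq e^{\tau/2}/v$ (cylindrical/intermediate) and $H_\phi\simeq |\tau|^{1/2}e^{\tau/2}$ (soliton), which the paper uses implicitly in its displayed inequalities. The only stylistic difference is that you isolate these mean-curvature asymptotics as an explicit lemma before doing the bookkeeping, which the paper folds directly into the four pointwise estimates; the case split, the use of Corollary \ref{cor_trans_rule} with $|Y_v|\leq v|\tau|^{1/2}$, and the final absorption of the soliton $|G|$-bound into the $v^{-3}\min(1,v^2|\tau|/\ell^2)$ weight all match the paper.
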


\begin{proof}
Using \eqref{transwbf}, whenever $v(y,\tau)\geq  \ell/\sqrt{|\tau|}$ we can estimate
\begin{equation}
|w(y,\tau)|=e^{\frac{\tau}{2}}|\tilde{\bw}|(e^{-\frac{\tau}{2}}y,0,-e^{-\tau}) \leq 2 v(y,\tau) |H\tilde{\bw}| (e^{-\frac{\tau}{2}}y,0,-e^{-\tau}),
\end{equation}
and 
\begin{equation}
|g(y,\tau)|=e^{-\frac{\tau}{2}}|\tilde{\bg}|(e^{-\frac{\tau}{2}}y,0,-e^{-\tau}) \leq \frac{2}{v(y,\tau)} \left|\frac{\tilde{\bg}}{H}\right| (e^{-\frac{\tau}{2}}y,0,-e^{-\tau}).
\end{equation}
In particular, together with Corollary \ref{cor_trans_rule} (transformation rule) and the tip estimate from \eqref{eq_tip_est}, for $ v\in[ \ell |\tau|^{-1/2},\theta]$ this yields
\begin{equation}
|W(v,\tau)|\leq 2  |\tau|^{1/2} v^2 |H\tilde{\bw}| (e^{-\frac{\tau}{2}}Y(v,\tau),0,-e^{-\tau}),
\end{equation}
and
\begin{equation}
|G(v,\tau)|\leq 2 |\tau|^{1/2} \left|\frac{\tilde{\bg}}{H}\right| (e^{-\frac{\tau}{2}}Y(v,\tau),0,-e^{-\tau}).
\end{equation}
Moreover, using \eqref{trangbf}, whenever $v\leq \ell/\sqrt{|\tau|}$ we can estimate
\begin{equation}
|W(v,\tau)|=e^{\frac{\tau}{2}}|\tilde{\bW}(e^{-\frac{\tau}{2}}v,0,0,-e^{-\tau})| \leq C |\tau|^{-1/2} \left| H \tilde{\bW} \right|(e^{-\frac{\tau}{2}}v,0,0,-e^{-\tau}),
\end{equation}
and 
\begin{equation}
|G(v,\tau)|=e^{-\frac{\tau}{2}}|\tilde{\bG}(e^{-\frac{\tau}{2}}v,0,0,-e^{-\tau})| \leq 2|\tau|^{1/2}\left|\frac{\tilde{\bG}}{H}\right|(e^{-\frac{\tau}{2}}v,0,0,-e^{-\tau}).
\end{equation}
Remembering the definitions of the norms we thus infer that \eqref{contr_X0} and \eqref{contr_Y0} hold true.
\end{proof}

Our definition is designed so that we can patch together the different Schauder estimates.

\begin{proposition}[global Schauder estimate]\label{bdd_in_holder}
For all $h'\in[6h_0,h]$ we have the weighted Schauder estimate
\begin{equation}\label{global_Schauder2}
\|u\|_{C^{k,\alpha}_{\star}(\Omega_{e^{-1}h'}/S^1)} \leq C\left( \|u\|_{C^{0}_{\star}(\Omega_{h'}/S^1)}+\|Lu\|_{C^{k-2,\alpha}_{\bullet}(\Omega_{h'}/S^1)}\right).
\end{equation}
\end{proposition}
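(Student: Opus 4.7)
The plan is to prove the global Schauder estimate by separately bounding each of the three contributions appearing in the definition of $\|u\|_{C^{k,\alpha}_{\star}(\Omega_{e^{-1}h'}/S^1)}$, namely the cap-region Hölder norm on $\Omega_{2h_0}$, the weighted supremum over the cylindrical region $\mathcal{C}_{e^{-1}h'}$, and the weighted supremum over the tip region $\mathcal{S}_{e^{-1}h'}$. Each of these will be estimated by a local Schauder estimate from Section \ref{Holder_sec}, and then the resulting right-hand sides will be dominated by the global $\star$- and $\bullet$-norms.

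For the cap contribution I will apply the standard interior elliptic Schauder estimate for $L$ on the fixed compact region $\Omega_{2h_0}$, using that $L$ is uniformly elliptic with smooth coefficients on $\Omega_{3h_0}$ independently of $h$. This bounds $\|u\|_{C^{k,\alpha}(\Omega_{2h_0})}$ by $C(h_0)\bigl(\|u\|_{C^0(\Omega_{3h_0})}+\|Lu\|_{C^{k-2,\alpha}(\Omega_{3h_0})}\bigr)$, which is immediately dominated by $\|u\|_{C^0_{\star}}+\|Lu\|_{C^{k-2,\alpha}_{\bullet}}$.

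For the cylindrical contribution, at each $(x,t)\in\mathcal{C}_{e^{-1}h'}$ with $r=\tfrac{1}{10}H^{-1}(x,t)$ I will invoke the Schauder estimate \eqref{Holder_W_away} from Proposition \ref{prop:interior_estimates_cylinder}. The factor $e^{-1}$ in the height $e^{-1}h'$ is crucial: combined with the mean curvature asymptotics \eqref{H_two_sided}, it gives $r^2\leq |t|/25$, so that $P_r(x,t)\subset\Omega_{h'}$ and the hypothesis $t\geq -h/e$ of \eqref{Holder_W_away} holds. A single reapplication of the same proposition at scale $2r$ upgrades the Schauder bound from the cube $P_{r/2}(x,t)$ to the full cube $P_r(x,t)$ used in Definition \ref{def_holder_dom_tar}. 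Dividing by $\rho_{\star}(x,t)$ and taking the supremum over $(x,t)$ produces the desired estimate, provided one knows that the weights can be moved outside the local norm. An entirely parallel argument, based on Proposition \ref{prop_interior_estimates_solition}, yields the bound on the tip contribution for $(x_2,t)\in\mathcal{S}_{e^{-1}h'}$ with $r=\sqrt{|t|/\log|t|}$.

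The main technical step, and the only substantive obstacle, is to verify two compatibility properties of the weight functions $\rho_{\star}$ and $\rho_{\bullet}$: first, that $\rho_{\star}(x',t')$ and $\rho_{\bullet}(x',t')$ are comparable to $\rho_{\star}(x,t)$ and $\rho_{\bullet}(x,t)$ uniformly across each parabolic cube $P_r(x,t)$ (or $Q_r(x_2,t)$); and second, that the ratio $\rho_{\bullet}/\rho_{\star}$ is uniformly bounded. Both follow from a direct case analysis in the outer region $V\geq\theta\sqrt{|t|}$ and in the inner region $V<\theta\sqrt{|t|}$, using that $r$ is small compared to $|t|$ (so neither $|t|$ nor the normalized profile $V/\sqrt{|t|}$ varies by more than a multiplicative constant across the cube, by the sharp asymptotics of Theorem \ref{thm_unique_asympt_recall} and the profile estimates \eqref{profile_derivative_growth}, \eqref{impr_cyl_est}, \eqref{eq_tip_est}). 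Comparing \eqref{domain_weight} and \eqref{target_weight} directly yields $\rho_{\bullet}/\rho_{\star}\leq C$ in each region, after which the weights can be absorbed into the constant and the global estimate assembled by taking suprema.
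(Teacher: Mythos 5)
Your overall architecture is right — decompose the $C^{k,\alpha}_\star$ norm into cap, cylindrical, and tip contributions, apply the interior Schauder estimates of Propositions \ref{prop:interior_estimates_cylinder} and \ref{prop_interior_estimates_solition} (and standard elliptic theory in the cap), and then control the right-hand sides by the global $\star$- and $\bullet$-norms after comparing weights — and your observation that $\rho_\bullet \leq C\rho_\star$ on $\mathcal{C}_{h'}$ is indeed one of the three things the paper checks. However, you have identified the wrong point as ``the only substantive obstacle.'' The main content of the paper's proof is two \emph{norm} compatibilities that you do not address, not weight compatibilities. Concretely: when $(x,t)\in\mathcal{C}_{e^{-1}h'}$ is near the inner edge $V\approx\ell\sqrt{|t|/\log|t|}$, the parabolic cube $P_r(x,t)$ with $r=\tfrac{1}{10}H^{-1}\sim\sqrt{|t|/\log|t|}$ extends into the tip region $\mathcal{S}_{h'}$, where the global $C^0_\star$ norm controls $\|\tilde{\bW}\|_{C^0_H(Q_{r'})}$, not $\|\tilde{\bw}\|_{C^0_H(P_{r'})}$. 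To bound $\|\tilde{\bw}\|_{C^0_H(P_r(x,t))}$ by $C\rho_\star(x,t)\|u\|_{C^0_\star}$, one must show $|\tilde{\bw}|\sim|\tilde{\bW}|$ (and $|\tilde{\bg}|\sim|\tilde{\bG}|$) in the transition region, which the paper does via the identity $\tilde{\bW}(V(x,t),0,0,t)=-Y_v\bigl(\tfrac{x}{\sqrt{|t|}},-\log|t|\bigr)\tilde{\bw}(x,0,t)$ together with the tip estimates \eqref{eq_tip_est} and $\rho_\star\sim\rho_\bullet\sim 1$ there. A parallel issue arises near height $h_0$, where $P_r(x,t)$ reaches into $\Omega_{2h_0}$ and only the elliptic norm $\|u\|_{C^{k,\alpha}(\Omega_{2h_0})}$ is available; the paper establishes the equivalence of the local elliptic $C^{k,\alpha}$ bounds for $f$ with the local parabolic $C^{k-2,\alpha,(2)}_H$ bounds for $\tilde{\bg}$ (and similarly for $u$, $w$, $W$, $G$) by differentiating the formula $\bg=\eta\, f\diamond V$ and using $\tilde{\bg}_s=\tilde{\bg}_t$. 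Without these two transitions, the right-hand sides of your local Schauder estimates cannot be dominated by the global norms, so the assembly step fails. Your point about $\rho_\star,\rho_\bullet$ not varying much over a cube is correct and implicitly needed, but it does not substitute for the norm compatibilities. Also, as a minor quibble, ``reapplying the proposition at scale $2r$'' is not literally available since $r=\tfrac{1}{10}H^{-1}$ is already the maximal allowed radius in \eqref{Holder_W_away}; the standard fix is a covering of $P_r$ by shifted cubes $P_{r/2}(x',t')$, which again requires the same weight- and norm-comparability across the cube.
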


\begin{proof}
We will first check compatibility in the transition region $\sqrt{\log|t|/|t|}V(x,t)\in [\ell/2,2\ell]$.  To this end, note that by 
Corollary \ref{cor_trans_rule} (transformation rule), remembering also \eqref{transwbf} and \eqref{trangbf}, we have 
\begin{equation}
\tilde{\bW}(V(x,t),0,0,t)=-Y_v\left(\frac{x}{\sqrt{|t|}},-\log|t|\right)\tilde{\bw}(x,0,t),
\end{equation} 
and similarly for $\tilde{\bG}$. In the transition region this implies
\begin{equation}\label{bw_comet}
C^{-1}|\tilde{\bw}| \leq |\tilde{\bW}| \leq C|\tilde{\bw}|,\qquad C^{-1}|\tilde{\bg}| \leq |\tilde{\bG}| \leq C|\tilde{\bg}|,
\end{equation}
which together with $\rho_\ast \sim \rho_\bullet\sim 1$ yields the desired compatibility in the transition region.\\

Next, to check compatibility between heights $h_0$ and $2h_0-1$, let us rewrite equation \eqref{bg_def} in the form
\begin{equation}
\bg(x,t) =\eta(x,t) f(x,V(x,t),0),
\end{equation}
where we abbreviated $\eta:=\sqrt{1+V_x^2(x,t)+V_t^2(x,t)}$. Differentiating this formula gives
\begin{equation}
\bg_x = \eta (f_{x_1}+V_x f_{x_2})+ \eta_x f,\qquad \bg_t = \eta f_{x_2} V_t + \eta_t f.
\end{equation}
Hence, local elliptic $C^{k,\alpha}$ bounds for $f$ imply local \emph{elliptic} $C^{k,\alpha}$ bounds for $\bg$, and vice versa. Moreover, remembering that $\tilde{\bg}_s=\tilde{\bg}_t$ we see that these local elliptic $C^{k,\alpha}$ bounds for $\bg$ are in turn equivalent to local parabolic $C^{k,\alpha}$ bounds for $\tilde{\bg}$. More precisely, there exist $\lambda>0$ and $C<\infty$ such that whenever $h_0\leq -t \leq 2h_0 -1$ and $V(x,t)\geq \ell \sqrt{|t|/\log|t|}$, then setting $r=\min(\tfrac{1}{10}H^{-1}(x,t),1)$ we have 
\begin{equation}
C^{-1}\|\tilde{\bg}\|_{C_H^{k-2,\alpha,(2)}\left(P_{\lambda^2 r}\left(x,t\right)\right)}\leq \|f\|_{C^{k-2,\alpha}(B_{\lambda r}(x,V(x,t),0))}\leq C \|\tilde{\bg}\|_{C_H^{k-2,\alpha,(2)}\left(P_r\left(x,t\right)\right)}.
\end{equation}
Arguing similarly for $G$, $w$ and $W$ this establishes the desired compatibility of norms.\\

Finally, observe that we have the global inequalitiy
\begin{equation}
\rho_\star^{-1} \leq C\rho_\bullet^{-1}.
\end{equation}
Thus, the assertion follows from Proposition  \ref{prop:interior_estimates_cylinder} (interior estimates for cylindrical variation) and Proposition \ref{prop_interior_estimates_solition} (interior estimates for tip variation) and standard elliptic Schauder estimates in the cap region.\end{proof}

Let us now define the norms and spaces we will be working with:

\begin{definition}[norms and spaces] Fix an integer $k\geq 4$ and a real number $\alpha\in (0,1)$. Given any sufficiently large Dirichlet height $h\leq \infty$, for  functions $u\in C^{k,\alpha}_{\mathrm{loc}}(\Omega_h/S^1)$ satisfying $u|_{\partial \Omega_h}=0$ we define\begin{equation}\label{X_def}
\|u\|_{\mathbb{X}^{k,\alpha}(\Omega_h/S^1)}:=\|u\|_{C^{k,\alpha}_{\star}(\Omega_{ \exp(\log(h)^{1/2}-1)}/S^1)}+ \|Lu\|_{C^{k-2,\alpha}_{\bullet}(\Omega_{h}/S^1)},
\end{equation}
and for functions $f \in C^{k-2,\alpha}_{\mathrm{loc}}(\Omega_h/S^1)$ we simply set
\begin{equation}
\|f\|_{\mathbb{Y}^{k-2,\alpha}(\Omega_h/S^1)}:=\|f\|_{C^{k-2,\alpha}_{\bullet}(\Omega_{h}/S^1)},
\end{equation}
where the $C^{k,\alpha}_{\ast}$-norm and the $C^{k-2,\alpha}_{\bullet}$-norm are from Definition \ref{def_holder_dom_tar} (domain and target H\"older norms).
Finally, we denote corresponding Banach spaces by $\mathbb{X}^{k,\alpha}(\Omega_h/S^1)$ and $\mathbb{Y}^{k-2,\alpha}(\Omega_h/S^1)$.
\end{definition}

Observe that by definition
$L:\mathbb{X}^{k,\alpha}(\mathbb{R}^3/S^1) \rightarrow \mathbb{Y}^{k-2,\alpha}(\mathbb{R}^3/S^1)$ is a bounded linear map. For $h<\infty$ we work with $\Omega_{ \exp(\log(h)^{1/2}-1)}$, since the inner-outer estimate needs some time to kick in.\\

To conclude this subsection, let us observe that our norms in particular control several of the integral quantities that we encountered in previous sections:

\begin{corollary}[controlled global quantities]\label{controlled_global_quantities}
For any sufficiently large $h\leq \infty$ we have
\begin{equation}\label{int_ctrl}
\sup_{\Omega_h}\frac{|f|}{H_\phi}+\|\tau^3 g_{\cC} \|_{\mathfrak{H},\infty}
+ \|\tau^2 G_\cT \|_{2,\infty}\leq C\|f\|_{\mathbb{Y}^{k-2,\alpha}(\Omega_h/S^1)}.
\end{equation}
Moreover, if $h=\infty$ then we in addition also have
\begin{equation}\label{int_ctrl2}
\| \tau^2 w_{\cC}\|_{\mathcal{D},\infty}+ \|\tau^2 W_{\cT}\|_{2,\infty} 
\leq C\|u\|_{\mathbb{X}^{k,\alpha}(\mathbb{R}^3/S^1)}.
\end{equation}
\end{corollary}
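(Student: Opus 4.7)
\medskip

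\noindent\textit{Proof plan.}
The statement is a routine translation of the pointwise norms $\|u\|_{C^{0}_{\star}}$ and $\|f\|_{C^{0}_{\bullet}}$ into the Gaussian integral norms on the left-hand side; the only nontrivial inputs are Proposition~\ref{prop_who_contr_whom} (controlled pointwise quantities) and the profile asymptotics from Theorem~\ref{thm_unique_asympt_recall}.

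For $\sup_{\Omega_h}|f|/H_\phi$, I would split into the cap $\{-t\leq 2h_0\}$, where compactness and $H_\phi\geq c>0$ reduce the estimate to $\|f\|_{C^{k-2,\alpha}(\Omega_{2h_0})}$, and its complement. In the complement, differentiating $\phi(x,V(x,t),0)=-t$ in the usual way gives $\sqrt{1+V_x^2+V_t^2}=\sqrt{1+|D\phi|^2}/|\phi_{x_2}|$, so \eqref{bg_def} yields $|f|/H_\phi=|\phi_{x_2}|\,|\tilde{\bg}|$. Since $|\phi_{x_2}|H_\phi\leq 1$ trivially and $[\tilde{\bg}]^{0,(2)}_H=\sup H^{-1}|\tilde{\bg}|$, we obtain $|f|/H_\phi\leq \rho_\bullet\|f\|_{C^{0}_\bullet}\leq C\|f\|_{\mathbb{Y}^{k-2,\alpha}}$, using the global bound $\rho_\bullet\leq C$. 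The same computation in $X$-gauge handles the tip region.

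For $\|\tau^3 g_{\cC}\|_{\fH,\infty}$, I apply Proposition~\ref{prop_who_contr_whom} to get $|g(y,\tau)|\leq C\|f\|_{\mathbb{Y}}|\tau|^{-1}(\sqrt{2}+10/|\tau|-v)^{2}$ on the support of $\varphi_\cC$. The sharp asymptotics for bounded $|y|$, combined with \eqref{profile_growth} for $|y|\geq 2$, give $\sqrt{2}+10/|\tau|-v\leq C(1+y^{2})/|\tau|$, so that $|\tau|^{3}|g_{\cC}|\leq C(1+y^{2})^{2}\|f\|_{\mathbb{Y}}$; since $\int(1+y^{2})^{4}e^{-y^{2}/4}dy<\infty$, integrating over the unit $\tau$-window yields the bound. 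For $\|\tau^2 G_{\cT}\|_{2,\infty}$ the argument is analogous after invoking the tip half of Proposition~\ref{prop_who_contr_whom}; here I use the two coarse weight bounds $\sup_{v\leq 2\theta}e^{\bar{\mu}}\leq e^{\tau/4}$ and $\inf_{v\leq 2\theta}v^{-1}e^{\bar{\mu}}\geq e^{51\tau/100}$ already exploited in Section~\ref{energy_est_sec}: the former absorbs the bounded $|\tau|^{1/2}$ piece after one divides by $|\tau|^{1/2}$, while the latter controls the singular $v^{-3}\min(1,v^{2}|\tau|/\ell^{2})$ piece near $v=0$.

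The second inequality proceeds in parallel. For $h=\infty$, Proposition~\ref{prop_who_contr_whom} gives $|w(y,\tau)|\leq C\|u\|_{C^{0}_{\star}}(1+y^{2})/|\tau|^{2}$ and $|W(v,\tau)|\leq C\|u\|_{C^{0}_{\star}}|\tau|^{-1/2}$, which directly handle the $\fH$- and $L^{2}$-contributions to $\|\tau^{2}w_{\cC}\|_{\cD,\infty}$ and $\|\tau^{2}W_{\cT}\|_{2,\infty}$. The derivative contribution $\int(w_y)^{2}e^{-y^{2}/4}dy$ is handled by noting that the $k=1$ piece of $\|\tilde{\bw}\|_{C_H^{k,\alpha}}$ inside the $C^{k,\alpha}_{\star}$ norm bounds $|w_y|=|\tilde{\bw}_x|$ pointwise by $\rho_\star\|u\|_{C^{k,\alpha}_{\star}}\leq C(1+y^{2})/|\tau|^{2}\cdot\|u\|$, which is Gaussian-integrable after multiplying by $|\tau|^{2}$. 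The restriction $h=\infty$ is essential, because for finite $h$ the norm $\|u\|_{\mathbb{X}^{k,\alpha}(\Omega_h/S^1)}$ only controls $u$ on the smaller domain $\Omega_{\exp(\sqrt{\log h}-1)}$, so the hypotheses of Proposition~\ref{prop_who_contr_whom} are not available over the full $\tau$-range appearing on the left-hand side. The only mildly delicate point in the execution is the singular tip contribution to $G_{\cT}$ at $v\to 0$, where one must verify that the specific behavior of $e^{\bar{\mu}}$ supplies exactly the regularization needed.
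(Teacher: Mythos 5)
Your plan is sound for $\sup|f|/H_\phi$, for $\|\tau^3 g_\cC\|_{\fH,\infty}$, and for the second inequality: these follow the paper's route (Proposition~\ref{prop_who_contr_whom} plus the profile growth estimate \eqref{profile_growth}, with the derivative term $w_y=\tilde{\bw}_x$ absorbed into the $C^1$ piece of the domain norm). However, the treatment of $\|\tau^2 G_\cT\|_{2,\infty}$ contains a genuine gap, and the proposed fix for the $v\to 0$ singularity is wrong in principle.

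The issue is that the bound \eqref{contr_Y0} in Proposition~\ref{prop_who_contr_whom}, read literally, only gives
\begin{equation}
|G(v,\tau)| \lesssim \frac{\|f\|}{|\tau|}\left(|\tau|^{1/2}+\frac{1}{v^3}\min\bigl(1,v^2|\tau|/\ell^2\bigr)\right) \lesssim \frac{\|f\|}{v\ell^2}\qquad\text{as } v\to 0,
\end{equation}
while the weight satisfies $e^{\bar\mu(v,\tau)}\lesssim v$ near $v=0$ (since $1+Y_{\tilde v}^2\geq 1$ forces $\int_v^{\theta/4}\frac{1+Y_{\tilde v}^2}{\tilde v}\,d\tilde v\geq \log(\theta/4v)$). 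So $G^2 e^{\bar\mu}\gtrsim v^{-1}$ and the integral over $[0,2\theta]$ diverges. Invoking the lower bound $\inf_{v\leq 2\theta}v^{-1}e^{\bar\mu}\geq e^{51\tau/100}$ cannot help: a lower bound on the weight is of the wrong sign to control a divergent integral, and no behaviour of $e^{\bar\mu}$ can compensate for the $v^{-1}$ in the stated estimate (it would need $e^{\bar\mu}=o(v)$, which contradicts the very lower bound you cite). The true obstruction is that \eqref{contr_Y0} is not sharp as $v\to 0$: it is tailored to the inner--outer barrier and is far from tight in the soliton region. The correct route is to bypass the statement and use the $v$-uniform bound that falls directly out of the \emph{proof} of Proposition~\ref{prop_who_contr_whom}: for $v\leq\ell/\sqrt{|\tau|}$ one has
\begin{equation}
|G(v,\tau)|=e^{-\tau/2}\bigl|\tilde{\bG}\bigr|\leq e^{-\tau/2}\,H\cdot\|\tilde{\bG}\|_{C^{0,(2)}_H}\leq 2|\tau|^{1/2}\|f\|_{C^0_\bullet},
\end{equation}
using $H\leq 2\sqrt{|\tau|}e^{\tau/2}$ from \eqref{H_two_sided}. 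This has no $v$-singularity, and together with the coarse upper bound $\sup_{v\leq 2\theta}e^{\bar\mu}\leq e^{\tau/4}$ the estimate for $\|\tau^2 G_\cT\|_{2,\infty}$ follows immediately. The same remark shows your bound $|W|\leq C\|u\||\tau|^{-1/2}$ is fine (that one genuinely is $v$-uniform in \eqref{contr_X0}), so $\|\tau^2 W_\cT\|_{2,\infty}$ is unaffected.
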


\begin{proof}
Remembering that the weight $e^{{\bar{\mu}}}$ in the tip region is exponentially small, the integral estimates for $\tau^2 G_\cT$ and $\tau^2W_{\cT}$ easily follow from Proposition \ref{prop_who_contr_whom} (controlled pointwise quantities).\\
Next, again by Proposition \ref{prop_who_contr_whom} (controlled pointwise quantities), taking also into account the profile growth estimate from \eqref{profile_growth}, for all $\tau\in [-\log(h),\tau_0]$ and $|y|\leq Y(\theta/2,\tau)$ we have
\begin{equation}
|w(y,\tau)|\leq C \frac{(1+|y|)^2}{|\tau|^2}\|u\|_{C^{0}_{\bullet}(\Omega_{h}/S^1)},
\end{equation}
and
\begin{equation}
|g(y,\tau)|\leq C \frac{(1+|y|)^4}{|\tau|^3}\|f\|_{C^{0}_{\bullet}(\Omega_{h}/S^1)}.
\end{equation}
Moreover, since $w_y(y,\tau)=\tilde{\bw}_x(e^{-\frac{\tau}{2}}y,0,-e^{-\tau})$, for all $(y,\tau)$ as above we also have
\begin{equation}
|w_y(y,\tau)|\leq C \frac{(1+|y|)^2}{|\tau|^2}\|u\|_{C^{1}_{\bullet}(\Omega_{h}/S^1)},
\end{equation}
Combining the above inequalities we obtain the integral estimates for $\tau^2 w_{\cC}$ and $\tau^3 g_{\cC}$.\\
Finally, since the prefactors in the definition of $\bg$ and $\bG$ are always greater than $1$, remembering again the definition of the target norm, we get the sup-bound for $|f|/H$ as well.
\end{proof}

\bigskip

\subsection{Uniform estimate for restricted solution map} 

Throughout this subsection, we assume that $h<\infty$ is large enough so that the estimates from the previous sections apply. As before, we consider the Dirichlet problem
 \begin{equation}\label{bdval_prob_rest}
    \begin{cases}
      Lu=f & \text{in  $\Omega_h$}\\
      u=0 & \text{on $\partial \Omega_h$},\\
    \end{cases}       
\end{equation}
where $f\in \mathbb{Y}^{k-2,\alpha}(\Omega_h/S^1)$. In addition, we now assume that the cylindrical variation $w$ associated to $u$ via \eqref{wfromu} satisfies the orthogonality conditions
\begin{equation}
\fp_{+}(w_{\cC}(\tau_0))=\fp_0(w_{\cC}(\tau_0))=0.
\end{equation}
In other words, we consider the restricted map $L_R:\mathbb{X}^{k,\alpha}_\perp(\Omega_h/S^1)\to \mathbb{Y}^{k-2,\alpha}_\perp(\Omega_h/S^1)$, where
\begin{equation}
\mathbb{X}^{k,\alpha}_\perp(\Omega_h/S^1):=\{ u\in \mathbb{X}^{k,\alpha}(\Omega_h/S^1) \, | \fp_{+}(w_{\cC}(\tau_0))=\fp_0(w_{\cC}(\tau_0))=0\},\,\, \mathbb{Y}^{k-2,\alpha}_\perp(\Omega_h/S^1)=\textrm{Im}(L_R).
\end{equation}
Our goal is to prove that the norm of the map $L_R^{-1}: \mathbb{Y}^{k-2,\alpha}_\perp(\Omega_h/S^1)\to\mathbb{X}^{k,\alpha}_\perp(\Omega_h/S^1)$ is bounded by a constant independent of $h$.\\

To begin with, we consider the function
\begin{equation}
D(a):=a^{\frac{1}{2}} \max\Big(\sup\left\{| w(y,-\log a)|:    v(y,-\log a)\geq \tfrac{8}{9}\theta \right\},\sup\left\{| W(v,-\log a)|:	 v \leq \tfrac{9}{10} \theta\right\}\Big)\, ,
\end{equation}
which captures the maximal variation size at height $a$, measured in unrescaled units.
 
\begin{proposition}[$C^0$-estimate]\label{lemma:C0_entire}
For every time $\tau\in [-\log(h),\tau_0-1]$ we have
\begin{equation}
\sup\left\{a^{-\frac{76}{100}}D(a) \;:\;a\in [e^{-\tau},h]\right\}\leq C\left(\|\tau w_\cC\|_{\fH,\infty}(\tau+1)+\| W_\mathcal{T}\|_{2,\infty}(\tau+1)+\|f\|_{\mathbb{Y}^{k-2,\alpha}(\Omega_h/S^1)}\right) .
\end{equation}
\end{proposition}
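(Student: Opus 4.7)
The plan is to verify the bound separately for the tip and cylindrical parts of $D(a)$, by combining the pointwise $L^\infty$-estimates of Corollaries \ref{cor:C0_cylinder} and \ref{cor:C0_tip} with the inner-outer estimate of Theorem \ref{linear_sup_sol_est} applied with $\mu=0$. For the tip part, Corollary \ref{cor:C0_tip} directly yields $\sup_{v\leq 9\theta/10}|W(v,\tau')|\leq e^{-26\tau'/100}\bigl(\|W_\cT\|_{2,\infty}(\tau+1)+\|f\|_{\mathbb{Y}^{k-2,\alpha}}\bigr)$ for every $\tau'\leq\tau$. Writing $a'=e^{-\tau'}$ and multiplying by $a'^{1/2}$ gives the desired bound on the tip part of $D(a')$; the exponent $\tfrac{76}{100}=\tfrac{1}{2}+\tfrac{26}{100}$ is calibrated exactly to match this exponential rate.

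For the cylindrical part I will invoke Theorem \ref{linear_sup_sol_est} with $\mu=0$, whose two hypotheses must first be verified. Evaluating Corollary \ref{cor:C0_cylinder} at the fixed point $y=\ell$, the Gaussian factor $e^{(\ell+2)^2/8}$ becomes a harmless constant and $\|\tilde{\bg}\|_{C^{0,(2)}_H}\leq C|\tau|^{-1}\|f\|_{\mathbb{Y}^{k-2,\alpha}}$ holds since $\rho_\bullet\sim|\tau|^{-1}$ in the parabolic region. This gives $|\tau||w(\ell,\tau)|\leq C(\|\tau w_\cC\|_{\fH,\infty}(\tau+1)+\|f\|_{\mathbb{Y}^{k-2,\alpha}})$ on $[-\log h+1,\tau_0]$ and the analogous $|\tau|^{1/2}$-estimate on the boundary interval $[-\log h,-\log h+1]$. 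The term $\sup_{\partial\Omega_{h_0}}|u|$ is controlled by the full right-hand side through the transformation rules \eqref{wfromu}, \eqref{Wfromu}, Corollaries \ref{cor:C0_cylinder} and \ref{cor:C0_tip}, and standard elliptic regularity in the fixed-size cap region (the fact that $h_0$ is a fixed constant is essential, so any prefactor $C(h_0)$ is harmless). The second hypothesis on $g$ and $G$ follows immediately from Proposition \ref{prop_who_contr_whom}. Hence $A\leq C(\|\tau w_\cC\|_{\fH,\infty}(\tau+1)+\|W_\cT\|_{2,\infty}(\tau+1)+\|f\|_{\mathbb{Y}^{k-2,\alpha}})$.

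Inspection of the proof of Theorem \ref{linear_sup_sol_est} reveals that the maximum-principle comparison in fact furnishes the supersolution bound $|w(y,\tau')|\leq \lambda A\,\kappa(\tau')\,b(y,\tau')$ throughout the full interval $\tau'\in[-\log h,\tau_0]$, where $\kappa\equiv 2$ on $[-\sqrt{\log h},\tau_0]$ but $\kappa\leq\sqrt{\log h}$ on the near-boundary range $[-\log h,-\sqrt{\log h}]$. In the region $\{v\geq 8\theta/9\}$ the supersolution $b$ is bounded by $C(\theta)$, so $|w|\leq CA\kappa(\tau')$. Combined with Corollary \ref{cor:C0_cylinder} in the parabolic region $|y|\leq\ell$ (where the Gaussian factor is again harmless), this bound holds throughout $\{v\geq 8\theta/9\}$. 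For $\tau'\in[-\sqrt{\log h},\tau_0]$ it reads $a'^{-76/100}D_{\mathrm{cyl}}(a')=e^{26\tau'/100}|w|\leq CA$, while for $\tau'\in[-\log h,-\sqrt{\log h}]$ one obtains $a'^{-76/100}D_{\mathrm{cyl}}(a')\leq CA\sqrt{\log h}\,e^{-26\sqrt{\log h}/100}$, uniformly bounded by $CA$ because the function $s\mapsto se^{-26s/100}$ is bounded on $(0,\infty)$.

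The main obstacle is precisely this near-boundary range $\tau'\in[-\log h,-\sqrt{\log h}]$, where the inner-outer estimate degenerates by a factor of $\sqrt{\log h}$; the exponent $\tfrac{26}{100}$ appearing in $a^{76/100}=a^{1/2+26/100}$ is calibrated just large enough so that the exponential decay in $a^{-26/100}$ absorbs this logarithmic loss. A secondary technicality is the $\sup_{\partial\Omega_{h_0}}|u|$ bound, which requires carefully patching the pointwise $L^\infty$-estimates for $w$ and $W$ near $\tau_0$ (via the transformation rules) with classical Schauder theory in the compact bounded-slope cap region.
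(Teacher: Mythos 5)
Your proof is essentially correct, but it takes a genuinely different and considerably heavier route than the paper's. The paper's proof is a direct application of Corollary~\ref{cor:C0_cylinder} and Corollary~\ref{cor:C0_tip} over the full ranges appearing in the definition of $D(a)$. The step you may have overlooked is that the Gaussian factor $e^{(|y|+2)^2/8}$ in Corollary~\ref{cor:C0_cylinder} is in fact harmless on the whole region $\{v\geq\tfrac89\theta\}$: by Theorem~\ref{thm_unique_asympt_recall}, the constraint $v\geq\tfrac89\theta$ forces the rescaled coordinate $z=y/\sqrt{|\tau_a|}$ to satisfy $z^2\leq 2-(\tfrac{8\theta}{9}-\eps)^2$, hence $(|y|+2)^2/8\leq |\tau_a|/4 - c\theta^2|\tau_a|+O(\sqrt{|\tau_a|})$, so that $e^{\tau_a/4}e^{(|y|+2)^2/8}$ is bounded (indeed decays). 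Combined with $\sup_{\Omega_h}|f|/H_\phi\leq C\|f\|_{\mathbb Y}$ from Corollary~\ref{controlled_global_quantities}, and the analogous direct application of Corollary~\ref{cor:C0_tip} for the $W$-part, the result follows in a few lines. You instead verify the hypotheses of the inner-outer estimate and re-examine its proof to extract the supersolution bound $|w|\leq\lambda A\kappa(\tau')b$ on the full range $\tau'\in[-\log h,\tau_0]$, then absorb the $\kappa\leq\sqrt{\log h}$ blow-up near the Dirichlet boundary by the decay of $e^{26\tau'/100}$. This works, and the pieces fit together, but it invokes Theorem~\ref{linear_sup_sol_est} (and the verification of its hypotheses, including the $\sup_{\partial\Omega_{h_0}}|u|$ bound) where the paper needs only the pointwise $L^\infty$-estimates.

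Two smaller points. First, your secondary explanation of the exponent $76/100$ --- that it is ``calibrated just large enough'' to absorb the $\sqrt{\log h}$ loss near the boundary --- is a red herring: any strictly positive exponent would kill a $\sqrt{\log h}$ factor; the exponent is in fact forced by the tip rate $e^{26\tau'/100}$ in Corollary~\ref{cor:C0_tip}, as you also correctly note. Second, the proposition is stated with the norms evaluated at $\tau+1$, i.e.\ $\|\tau w_\cC\|_{\fH,\infty}(\tau+1)$; the paper's proof produces this directly because Corollary~\ref{cor:C0_cylinder} at time $\tau_a\leq\tau$ only involves $\|\cdot\|(\tau_a+1)\leq\|\cdot\|(\tau+1)$. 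Your route through the inner-outer estimate requires the constant $A$ uniformly over $\tau'\leq\tau_0$, so as written it naturally delivers the slightly weaker conclusion with the norms at $\tau_0$ rather than $\tau+1$. This is enough for the later applications (Proposition~\ref{un_est_prop} only uses $\|\cdot\|(\tau')\leq\|\cdot\|(\tau_0)$), but strictly speaking does not prove the proposition as stated.
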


\begin{proof}
Take $a\in [e^{-\tau},h]$ and set $\tau_a=-\log a$. Proposition \ref{cor:C0_cylinder} ({$L^\infty$-estimate for cylindrical variation}) gives
\begin{equation}
e^{\frac{1}{4}\tau_a}\sup\left\{| w(y,\tau_a)|:    v(y,\tau_a)\geq \tfrac{8}{9}\theta \right\}\leq C \|\tau w_\cC\|_{\fH,\infty}(\tau_a+1)+C\sup_{\Omega_h}\frac{|f|}{H_\phi},
\end{equation}
 and Proposition \ref{cor:C0_tip} ({$L^\infty$-estimate for tip variation}) gives
\begin{equation}
e^{\frac{26}{100}\tau_a}\sup\left\{| W(y,\tau_a)|:    v(y,\tau_a)\leq  \tfrac{9}{10}\theta \right\}\leq  \| W_\mathcal{T}\|_{2,\infty}(\tau_a+1)+\sup_{\Omega_h}\frac{|f|}{H_\phi}\, .
\end{equation}
Together with Corollary \ref{controlled_global_quantities} (controlled global quantities) this implies the assertion.
\end{proof}

Next, to state the $C^2$-estimate, recall that our derivative estimates kick in at time
\begin{equation}
\tau_{h_{\textrm{in}}}=-\log(h-h^{\gamma_k}),
\end{equation}
and that we use the notation
\begin{equation}
\|w \|_{C^2|C_\tau}:=\sup_{v(y,\tau)\geq \frac{5}{8}\theta} \big(|w|+|w_y|+|w_{\tau}|+ |w_{yy}|+|w_{\tau\tau}|\big)\, ,
\end{equation}
and
\begin{equation}
\|W \|_{C^2|T_\tau}:=\sup_{v\leq 2\theta } \big(|W|+|W_v|+|W_{\tau}|+ |W_{vv}|+|W_{\tau\tau}|\big)\, .
\end{equation}

\begin{proposition}[$C^2$-estimate]\label{lemma:Schauder_entire}
For every $\tau\in [\tau_{h_\mathrm{in}},\tau_0]$ we have
\begin{multline}
\|w\|_{C^2|C_\tau} +\|W\|_{C^2|T_\tau} \\
\leq e^{\frac{3\tau}{10}} \sup \left\{ D(a):\max\{-\log(h),\tau-1\}\leq -\log a \leq \tau+\tfrac{1}{100}\right\} +e^{-\frac{\tau}{5}}\|f\|_{\mathbb{Y}^{k-2,\alpha}(\Omega_h/S^1)}.
\end{multline}
\end{proposition}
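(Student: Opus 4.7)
The strategy is to deduce the $C^{2}$-bound from the local interior estimates already established in Section~\ref{Holder_sec}, specifically Corollary~\ref{C^2_cyl_cor} and Corollary~\ref{C^2_tip_cor}, applied with $k=4$ so as to cover the pure derivatives entering $\|\cdot\|_{C^{2}|C_{\tau}}$ and $\|\cdot\|_{C^{2}|T_{\tau}}$ (in particular $w_{\tau\tau}$ and $W_{\tau\tau}$, which require $i+2j=4$). The sup terms produced by those corollaries will then be recast in terms of $D$ via Corollary~\ref{cor_trans_rule} and the tip estimates \eqref{eq_tip_est}, while the H\"older norms of the inhomogeneity will be controlled by $\|f\|_{\mathbb{Y}^{k-2,\alpha}}$ using Definition~\ref{def_holder_dom_tar}.

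\emph{Step 1: local Schauder input.} At every $(y,\tau)$ with $v(y,\tau)\geq\tfrac{5}{8}\theta$, Corollary~\ref{C^2_cyl_cor} yields
\begin{equation*}
\sum_{0\leq i+2j\leq 4}|\partial_{y}^{i}\partial_{\tau}^{j}w|(y,\tau)\leq e^{-\tau/8}\Bigl(\sup_{\Delta_{\tau}}|w|+\|\tilde{\bg}\|_{C_{H}^{4,\alpha,(2)}(P_{r_{\tau}})}\Bigr),
\end{equation*}
with $\Delta_{\tau}=\{(y',\tau'):|\tau'-\tau|\leq e^{\tau/400},\,v(y',\tau')\geq \ell/(2\sqrt{|\tau|})\}$ and $r_{\tau}=e^{(1-1/400)\tau/2}$. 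For $v\leq \ell/\sqrt{|\tau|}$, Corollary~\ref{C^2_tip_cor} gives the analogous estimate for the pure derivatives of $W$. For the collar range $\ell/\sqrt{|\tau|}\leq v\leq 2\theta$ I apply Corollary~\ref{C^2_cyl_cor} at $y=Y(v,\tau)$ and translate derivatives of $w$ into derivatives of $W$ through $W=-Y_{v}w$ (Corollary~\ref{cor_trans_rule}); the resulting chain rule produces only polynomial powers of $|\tau|$ by virtue of \eqref{eq_tip_est}.

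\emph{Step 2: sup control by $D$.} Every point of $\Delta_{\tau}$ falls inside the range captured by $D$. If $v(y',\tau')\geq\tfrac{8}{9}\theta$, then by definition $|w(y',\tau')|\leq e^{\tau'/2}D(e^{-\tau'})$. Otherwise $v(y',\tau')\leq\tfrac{9}{10}\theta$, in which case Corollary~\ref{cor_trans_rule} combined with \eqref{eq_tip_est} yields
\begin{equation*}
|w(y',\tau')|=\frac{|W(v(y',\tau'),\tau')|}{|Y_{v}(v(y',\tau'),\tau')|}\leq \frac{C\,e^{\tau'/2}D(e^{-\tau'})}{v(y',\tau')\,|\tau'|^{1/2}}\leq \frac{2C}{\ell}\,e^{\tau'/2}D(e^{-\tau'}),
\end{equation*}
where the last step uses $v(y',\tau')\geq\ell/(2\sqrt{|\tau|})$. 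Since $|\tau'-\tau|\leq e^{\tau/400}\ll 1$, one has $e^{\tau'/2}\leq 2\,e^{\tau/2}$, and therefore
\begin{equation*}
\sup_{\Delta_{\tau}}|w(y',\tau')|\leq C\,e^{\tau/2}\sup_{-\log a\in[\max(-\log h,\,\tau-1),\,\tau+1/100]}D(a).
\end{equation*}
Combined with the $e^{-\tau/8}$ prefactor from Step~1, this gives a $D$-contribution of size $C\,e^{3\tau/8}\sup D$. Since $3\tau/8<3\tau/10$ for $\tau<0$, one has $C\,e^{3\tau/8}\leq e^{3\tau/10}$ once $|\tau|$ is sufficiently large, because the ratio $e^{3\tau/10-3\tau/8}=e^{-3\tau/40}\to\infty$ as $\tau\to-\infty$ absorbs $C$. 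The tip sup is handled identically via Corollary~\ref{C^2_tip_cor} and yields the same contribution to $\|W\|_{C^{2}|T_{\tau}}$.

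\emph{Step 3: inhomogeneity control.} By Definition~\ref{def_holder_dom_tar} one has $\|\tilde{\bg}\|_{C_{H}^{4,\alpha,(2)}(P_{r_{\tau}})}\leq \rho_{\bullet}(x,t)\,\|f\|_{\mathbb{Y}^{k-2,\alpha}}$ and similarly for $\tilde{\bG}$. The explicit formula \eqref{target_weight} shows $\rho_{\bullet}\leq C$ uniformly on the ranges in question ($\rho_{\bullet}\lesssim 1/|\tau|$ in the cylindrical part and $\rho_{\bullet}\lesssim 1$ in the collar/tip part, using $v\geq \ell/(2\sqrt{|\tau|})$). Therefore
\begin{equation*}
e^{-\tau/8}\bigl(\|\tilde{\bg}\|_{C_{H}^{4,\alpha,(2)}}+\|\tilde{\bG}\|_{C_{H}^{4,\alpha,(2)}}\bigr)\leq C\,e^{-\tau/8}\|f\|_{\mathbb{Y}^{k-2,\alpha}}\leq e^{-\tau/5}\|f\|_{\mathbb{Y}^{k-2,\alpha}}
\end{equation*}
for $\tau\leq\tau_{0}$ sufficiently negative, since $-\tau/8<-\tau/5$. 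Adding the outputs of Steps~2 and~3 yields the claim.

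\emph{Main obstacle.} The only substantive difficulty is the collar patching in Steps~1--2: to control $\|W\|_{C^{2}|T_{\tau}}$ on $[\ell/\sqrt{|\tau|},2\theta]$ one must work through the cylindrical variation and keep track of the $|\tau|$-powers appearing in $|Y_{v}|\sim v|\tau|^{1/2}$, $|Y_{vv}|\lesssim|\tau|^{5/2}$, $|Y_{v\tau}|+|Y_{\tau\tau}|\lesssim|\tau|^{5/2}$ from \eqref{eq_tip_est} when applying the chain rule to $W=-Y_{v}w$. These are harmless because the target exponents $3\tau/10$ and $-\tau/5$ are deliberately chosen slightly weaker than the corollaries' natural $3\tau/8$ and $-\tau/8$, and the margin $e^{-3\tau/40}$ absorbs every polynomial in $|\tau|$. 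No new input beyond Corollary~\ref{C^2_cyl_cor}, Corollary~\ref{C^2_tip_cor}, the transformation rule Corollary~\ref{cor_trans_rule}, and the estimates \eqref{eq_tip_est} is required.
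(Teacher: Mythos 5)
Your proof is correct and follows essentially the same route as the paper: apply Corollary~\ref{C^2_cyl_cor} in the cylindrical region, Corollary~\ref{C^2_tip_cor} in the soliton region, patch through the collar via $W=-Y_v\,w$ with the tip estimates~\eqref{eq_tip_est} controlling the extra factors, convert the resulting sup of $|w|$ into $D$ via the transformation rule, bound the inhomogeneity by $\|f\|_{\mathbb{Y}^{k-2,\alpha}}$ using $\rho_\bullet\leq C$, and absorb all residual constants and polynomial $|\tau|$-factors into the slack between the natural exponents $3\tau/8$, $-\tau/8$ and the stated $3\tau/10$, $-\tau/5$. The only cosmetic difference is in the collar sup bound: the paper uses $|w|=|v_y|\,|W|\leq|W|$ directly (since $|v_y|\leq 1$ away from the soliton region), whereas you go through $|Y_v|\geq\tfrac14 v|\tau|^{1/2}$ and $v\geq\ell/(2\sqrt{|\tau|})$ to get $|w|\lesssim \ell^{-1}|W|$; and the paper bundles the intermediate step as $e^{-\tau/6}(\sup a^{-1/2}D(a)+\|f\|_{\mathbb{Y}})$ rather than carrying $e^{-\tau/8}$ to the end — both differences are immaterial since the final exponents leave enough room.
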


\begin{proof}
To begin with, observe that by definition of $D$ we have
\begin{equation}\label{sup_small_w}
\sup\left\{| w(y,-\log a)|:    v(y,-\log a)\geq \ell/\sqrt{|\tau|} \right\} \leq a^{-1/2}D(a).
\end{equation}
Indeed, in the nontrivial case $ v\in [\ell/\sqrt{|\tau|},\tfrac{8}{9}\theta]$ this follows from the transformation rule $w=-v_y W$ as $|v_y| \leq 1 $ away from the soliton region. Note further that for every $(y,\tau)$ in the cylindrical or collar region, then as $r:=\tfrac{1}{10}H^{-1}(ye^{-\tau/2},-e^{-\tau}) \geq \exp(-\gamma_k\tau/2)$  we have the obvious bound
\begin{equation}
\|\tilde{\bg}\|_{C_H^{0,\alpha,(2)}\left(Q_{\exp(-\gamma_k\tau/2)}(e^{-\tau/2}y,-e^{-\tau})\right)} \leq \|f\|_{\mathbb{Y}^{k-2,\alpha}(\Omega_h/S^1)}.
\end{equation}
Thus, combined with Corollary \ref{C^2_cyl_cor} ($C^k$-estimate for cylindrical variation) we get 
\begin{multline}\label{w_est_D}
\sup_{v(y,\tau)\geq {\ell}/{\sqrt{|\tau|}}}\left(|w(y,\tau)|+|w_y(y,\tau)|+|w_{yy}(y,\tau)|+|w_{\tau}(y,\tau)|+|w_{\tau\tau}(y,\tau)|\right)\\
\leq e^{-\frac{\tau}{6}}\left(\sup \left\{ a^{-1/2}D(a):\max\{-\log(h),\tau-1\}\leq -\log a \leq \tau+\tfrac{1}{100}\right\}+ \|f\|_{\mathbb{Y}^{k-2,\alpha}(\Omega_h/S^1)}\right).
\end{multline}
Next, to obtain derivative estimate for $W$ in the collar region, observe that differentiating the relation $W(v,\tau)=-Y_v(v,\tau)w(Y(v),\tau)$ we see that $W$ derivatives are expressed as a combination of $w$ derivatives, with $Y$ derivatives as coefficients. Thus, using \eqref{w_est_D} and the tip estimates from \eqref{eq_tip_est} gives the desired estimates for $|W|+|W_v|+|W_{\tau}|+ |W_{vv}|+|W_{\tau\tau}|$ at points with $\ell/\sqrt{\tau}\leq v\leq 2\theta$. Finally, Corollary \ref{C^2_tip_cor} ($C^k$-estimate for tip variation) gives the desired estimate in the soliton region.
\end{proof}

\begin{lemma}[upper-lower bounds]\label{lemma:congruency_level_set_lin}
There exists a constant $C<\infty$ such that  for every  $a\in \left[ h_0,h\right]$ we have
\begin{equation}
\sup_{a'\in [a/e^2,a]}D(a') \leq 12\,(\log a)^{\frac{1}{2}}D(a)+C( a\log a)^{\frac{1}{2}}\|f\|_{\mathbb{Y}^{k-2,\alpha}(\Omega_h/S^1)}.
\end{equation}
\end{lemma}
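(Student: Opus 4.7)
The strategy is to apply Theorem~\ref{max_princ_inhom} (upper-lower estimate) at the level $h'=a$ and then transfer the resulting $u$-bound into a bound on $D$ via the defining transformation rules $\bw=-V_t\,u$ and $\bW=-X_t\,u$, combined with the scaling identities $a^{1/2}|w|=|\bw|$ and $a^{1/2}|W|=|\bW|$ at corresponding points (obtained from the definitions of $w$ and $W$ in terms of $\bw,\bW$). The upper-lower estimate together with $\Omega_{a'}\subset\Omega_a$ for every $a'\in[a/e^2,a]$ gives
\begin{equation*}
\sup_{\partial\Omega_{a'}}|u|\;\leq\;\sup_{\Omega_a\setminus\Omega_{h_0}}|u|\;\leq\;\sup_{\partial\Omega_a}|u|+Ca\sup_{\Omega_a}\frac{|f|}{H_\phi}.
\end{equation*}
The remaining work is to pass between $D(\cdot)$ and $\sup_{\partial\Omega_\cdot}|u|$ in both directions.

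The key technical step is a pair of two-sided bounds on $|V_t|$ and $|X_t|$ restricted to the level set $\partial\Omega_a$. In the cylindrical region $v\geq\tfrac{8}{9}\theta$, equation~\eqref{mul_vu} combined with the cylindrical estimates~\eqref{eq_cyl_est} yields $|V_t|\sim a^{-1/2}$. In the tip region $v\leq\tfrac{9}{10}\theta$, the inverse-function identity $X_t=-V_t/v_y$, together with $|v_y|=|Y_v|^{-1}$ and the tip estimate $|Y_v/v|\sim|\tau|^{1/2}$ from~\eqref{eq_tip_est}, yields the asymmetric bounds
\begin{equation*}
C^{-1}a^{-1/2}\;\leq\;|X_t|\;\leq\; Ca^{-1/2}\sqrt{\log a}\,,
\end{equation*}
with the lower bound saturated near the very tip (where $|Y_v|\to 0$) and the upper bound saturated at the outer boundary of the tip region (where $|v_y|\sim|\tau|^{-1/2}$). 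Translating via the scaling identities, these bounds produce the two transfer inequalities
\begin{equation*}
\sup_{\partial\Omega_a}|u|\;\leq\; Ca^{1/2}D(a)\,,\qquad D(a')\;\leq\; C\frac{\sqrt{\log a'}}{(a')^{1/2}}\sup_{\partial\Omega_{a'}}|u|\,.
\end{equation*}

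Chaining the three displayed inequalities with $(a')^{1/2}\geq a^{1/2}/e$, and applying Corollary~\ref{controlled_global_quantities} to absorb $\sup|f|/H_\phi$ into $\|f\|_{\mathbb{Y}^{k-2,\alpha}}$, produces the asserted bound; the explicit constant $12$ will emerge from careful bookkeeping of the constants in the two transfer inequalities and the $e$-factor arising from the range $a'\in[a/e^2,a]$. The main obstacle lies in the $|X_t|$-computation: the $\sqrt{\log a}$ gap between the lower and upper bound on $|X_t|$ is precisely the source of the $(\log a)^{1/2}$-weight in the statement, reflecting the divergence of the Jacobian $|Y_v|\sim v|\tau|^{1/2}$ relating the tip and cylindrical gauges at the outer boundary of the collar.
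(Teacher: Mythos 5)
Your overall strategy is the same as the paper's: establish two transfer inequalities between $D(a)$ and $\sup_{\partial\Omega_a}|u|$, then apply Theorem~\ref{max_princ_inhom} and Corollary~\ref{controlled_global_quantities} and chain. The paper gets the transfer inequalities directly from the prefactors $\tfrac{v}{2}-\tfrac{y}{2}v_y-v_\tau$ and $\tfrac{Y}{2}-\tfrac{v}{2}Y_v-Y_\tau$ in the transformation rules~\eqref{wfromu} and~\eqref{Wfromu}, using~\eqref{mul_vu},~\eqref{mul_Yu} and the cylindrical and tip estimates; your route via $|V_t|$ and $|X_t|$ is equivalent. The chaining at the end, with $(a')^{1/2}\geq a^{1/2}/e$ and $\log a'\leq\log a$, is also identical.

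There is, however, a factual error in your justification of the tip-region transfer inequality, though it does not break the argument. You claim the asymmetric bounds $C^{-1}a^{-1/2}\leq|X_t|\leq Ca^{-1/2}\sqrt{\log a}$ with the lower bound saturated near the very tip where $|Y_v|\to 0$. This is not correct: the prefactor in~\eqref{Wfromu} equals $a^{1/2}|X_t|$ and is uniformly comparable to $\sqrt{\log a}$ across the whole tip region, including at $v=0$, so in fact $|X_t|\sim a^{-1/2}\sqrt{\log a}$ two-sidedly. Concretely, $X_t=-V_t/V_x=-V_t\,Y_v$, and in the collar $|V_t|\approx a^{-1/2}/v$ (from~\eqref{eq_cyl_est}, not $|V_t|\sim a^{-1/2}$ as you implicitly use), while $|Y_v|\sim v\,|\tau|^{1/2}$; the $v$-factors cancel, giving $|X_t|\sim a^{-1/2}|\tau|^{1/2}$ uniformly. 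At the very tip, $Y_v\to 0$ but $V_t\to\infty$ at a compensating rate, and indeed $X_t$ there equals the speed of the tip, which is $\approx\sqrt{\log a/(2a)}$. Fortunately your weaker lower bound $|X_t|\gtrsim a^{-1/2}$ is still true and suffices for $\sup_{\partial\Omega_a}|u|\leq Ca^{1/2}D(a)$, so the transfer inequalities and the final estimate go through exactly as in the paper. The genuine source of the $(\log a)^{1/2}$ weight is thus the mismatch between the cylindrical prefactor $|V_t|\sim a^{-1/2}$ (for $v\sim 1$) and the tip prefactor $|X_t|\sim a^{-1/2}\sqrt{\log a}$, not an internal asymmetry of $|X_t|$ within the tip region.
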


\begin{proof} We will first relate $D(a)$ with $\sup_{\partial \Omega_a}|u|$. To this end, recall that by \eqref{mul_vu} and the cylindrical estimates from \eqref{eq_cyl_est}, away from the tip region we have
\begin{equation}
v_\tau+\frac{y}{2}v_y-\frac{v}{2}=-\frac{1}{v} +  o(1).
\end{equation}
Hence, by the transformation rule \eqref{wfromu}, for $v\geq \frac{8\theta}{9}$ and $a\geq e^{-\tau_{0}}$ we get
\begin{align}\label{w_part_connection}
&\tfrac{1}{2}|u(\sqrt{a}y,\sqrt{a}v(y,-\log a),0)| \leq  a | w(y,-\log a)| \leq \tfrac{2}{\theta } |u(\sqrt{a}y,\sqrt{a}v(y,-\log a),0)|. 
\end{align}
Similarly, using the transformation rule \eqref{Wfromu}, and estimating the prefactor as in \eqref{mul_Yu}, for $v\leq \tfrac{9}{10}\theta$ we get
\begin{equation}\label{W_part_connection}
\tfrac{1}{2}\sqrt{\log a}\, |u(\sqrt{a}Y(v,\tau),\sqrt{a}v,0)|\leq a |W(v,-\log a)| \leq 2 \sqrt{\log a}\, |u(\sqrt{a}Y(v,\tau),\sqrt{a}v,0)|. 
\end{equation}
Together with the definition of $D$, provided $\tau_0$ is sufficiently negative, we thus infer that
\begin{equation}
\frac{1}{2\sqrt{a}}\sup_{\partial \Omega_a} |u| \leq D(a) \leq \frac{2\sqrt{\log a}}{\sqrt{a}}\sup_{\partial \Omega_a} |u|.\\
\end{equation}
On the other hand, by the global barrier from Theorem \ref{max_princ_inhom} (upper-lower estimate) that for every $a'<a$ we have
\begin{equation}
\sup_{\Omega_{a'}} |u| \leq \sup_{\Omega_{a}} |u|+Ca \|f\|_{\mathbb{Y}^{k-2,\alpha}(\Omega_h/S^1)}.
\end{equation}
Thus, for any $a'\in [a/e^2,a]$ we conclude that
\begin{multline}
D(a')  \leq \frac{2\sqrt{\log a'}}{\sqrt{a'}}\left(\sup_{\partial \Omega_{a}} |u|+Ca\|f\|_{\mathbb{Y}^{k-2,\alpha}(\Omega_h/S^1)}\right) \\
 \leq 12\,(\log a)^{\frac{1}{2}}D(a)+C( a\log a)^{\frac{1}{2}}\|f\|_{\mathbb{Y}^{k-2,\alpha}(\Omega_h/S^1)}.
\end{multline}
This proves the lemma.
\end{proof}

Combining the above three results, as well as results from prior sections, we now obtain:

\begin{proposition}[uniform integral estimates]\label{un_est_prop}
There exists a constant $C<\infty$ such that
\begin{equation}
\|\tau^2\fp_0(w_{\cC})\|_{\fH,\infty}+\|\tau^3 (w_\cC-\fp_0(w_{\cC}))\|_{\cD,\infty}+\| \tau^3 W_\cT \|_{2,\infty} \leq   C\|f\|_{\mathbb{Y}^{k-2,\alpha}(\Omega_h/S^1)}.
\end{equation}
\end{proposition}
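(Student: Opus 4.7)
The plan is to apply the decay estimate (Theorem \ref{main_decay_lin_inhom}) with $\mu=1$ as the core of the argument, bound each term on its right-hand side in terms of $\|f\|_{\mathbb{Y}^{k-2,\alpha}(\Omega_h/S^1)}$ plus a small multiple of the left-hand side, and then absorb. Since the orthogonality conditions $\fp_{+}(w_\cC(\tau_0))=\fp_0(w_\cC(\tau_0))=0$ hold by assumption of the restricted setting, that theorem gives
\begin{multline*}
\|\tau^2 \fp_0 w_\cC\|_{\fH,\infty} + \|\tau^3(w_\cC - \fp_0 w_\cC)\|_{\cD,\infty} + \|\tau^3 W_\cT\|_{2,\infty}\\
\leq C\bigl(\|w\|_{C^2_{\exp}(\cC)} + \|W\|_{C^2_{\exp}(\cT)} + \|\tau^3 g_\cC\|_{\cD^\ast,\infty} + \|\tau^2 G_\cT\|_{2,\infty} + \sup_{\Omega_h}|f|/H_\phi\bigr).
\end{multline*}
The three inhomogeneity terms on the right are immediately bounded by $C\|f\|_{\mathbb{Y}^{k-2,\alpha}(\Omega_h/S^1)}$ via Corollary \ref{controlled_global_quantities}, where for the first one uses the elementary inclusion $\|\cdot\|_{\cD^\ast}\leq\|\cdot\|_{\fH}$ to reduce to the $\fH$-bound supplied there.

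To control the two $C^2_{\exp}$-terms, set $B:=\|\tau w_\cC\|_{\fH,\infty}+\|W_\cT\|_{2,\infty}+\|f\|_{\mathbb{Y}^{k-2,\alpha}(\Omega_h/S^1)}$ and chain Proposition \ref{lemma:Schauder_entire} with Proposition \ref{lemma:C0_entire}. At each $\tau'\in[\tau_{h_\mathrm{in}},\tau_0]$ the former gives $\|w\|_{C^2|C_{\tau'}}+\|W\|_{C^2|T_{\tau'}}\leq e^{3\tau'/10}\sup_{-\log a\in[\tau'-1,\tau'+1/100]} D(a)+e^{-\tau'/5}\|f\|_{\mathbb{Y}^{k-2,\alpha}(\Omega_h/S^1)}$, while the latter gives $D(a)\leq Ca^{76/100}B$, so the first right-hand-side term is at most $Ce^{-46\tau'/100}B$. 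Multiplying by the $C^2_{\exp}$-weight $e^{49\tau'/100}$ produces resulting exponents $3\tau'/100$ and $29\tau'/100$, both with positive coefficient; taking the supremum over $\tau'\leq\tau_0<0$ then yields
\begin{equation*}
\|w\|_{C^2_{\exp}(\cC)}+\|W\|_{C^2_{\exp}(\cT)}\leq Ce^{3\tau_0/100}B.
\end{equation*}
The mixed derivative $|w_{y\tau}|$ entering $C^2_{\exp}(\cC)$ but not explicitly the $C^2|C_{\tau'}$-seminorm is handled by the very same rescaled Schauder estimate (Proposition \ref{prop:interior_estimates_cylinder}) that underlies the proof of Proposition \ref{lemma:Schauder_entire}.

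Combining, the left-hand side is bounded by $Ce^{3\tau_0/100}B+C\|f\|_{\mathbb{Y}^{k-2,\alpha}(\Omega_h/S^1)}$. Since $|\tau'|\geq|\tau_0|$ on the integration window defining the $\fH,\infty$-, $\cD,\infty$-, and $2,\infty$-norms, elementary weight comparisons give $\|\tau w_\cC\|_{\fH,\infty}\leq |\tau_0|^{-1}\|\tau^2\fp_0 w_\cC\|_{\fH,\infty}+|\tau_0|^{-2}\|\tau^3(w_\cC-\fp_0 w_\cC)\|_{\cD,\infty}$ and $\|W_\cT\|_{2,\infty}\leq |\tau_0|^{-3}\|\tau^3 W_\cT\|_{2,\infty}$, so choosing $h_0$, hence $|\tau_0|=\log h_0$, large enough that $Ce^{3\tau_0/100}|\tau_0|^{-1}\leq\tfrac{1}{2}$ lets us absorb the $B$-contributions into the left-hand side and conclude. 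The hard part is precisely this circularity — the decay estimate buys the target $\tau$-weights only modulo interior $C^2_{\exp}$-bounds, which themselves re-expose weaker integral norms — and it is resolved by exploiting that the $49/100$ exponent built into the $C^2_{\exp}$-weight strictly exceeds the $46/100$ loss arising in the $C^0\to C^2$ chain, producing the small prefactor $Ce^{3\tau_0/100}|\tau_0|^{-1}$ needed for absorption.
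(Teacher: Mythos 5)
Your proposal follows the paper's own route almost exactly: apply the decay estimate (Theorem \ref{main_decay_lin_inhom}) with $\mu=1$, control the three inhomogeneity terms via Corollary \ref{controlled_global_quantities} (using $\|\cdot\|_{\cD^\ast}\leq\|\cdot\|_{\fH}$), close the $C^2_{\exp}$-loop via the $C^2$- and $C^0$-estimates, and then absorb by exploiting the gap between the $49/100$ exponent in the $C^2_{\exp}$-weight and the $C^0\!\to\!C^2$ loss, after choosing $h_0$ (hence $|\tau_0|$) large. The absorption bookkeeping and the observation that $|\tau'|\geq|\tau_0|$ produces the extra $|\tau_0|^{-1}$ factor match the paper's implicit final step, and you usefully flag the $|w_{y\tau}|$ bookkeeping that the paper's $\|\cdot\|_{C^2|C_\tau}$ notation quietly drops.

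There is one small gap. You go from Proposition \ref{lemma:Schauder_entire} directly to Proposition \ref{lemma:C0_entire}, i.e. you estimate
\[
\sup\bigl\{D(a) \;:\; -\log a\in[\tau'-1,\tau'+\tfrac{1}{100}]\bigr\}\ \leq\ C\,(e^{-\tau'+1})^{76/100}\,B
\]
by applying the $C^0$-estimate to every $a$ in that window. But Proposition \ref{lemma:C0_entire} only provides $D(a)\leq Ca^{76/100}B$ for $a\in[e^{-\tau},h]$ with $\tau\leq\tau_0-1$, i.e. for $a\geq e\,h_0$ in the best case. When $\tau'$ lies within about $1.01$ of $\tau_0$, part of your window has $a<e\,h_0$ and falls outside the range where the $C^0$-estimate has been proved. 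The paper avoids this by first inserting Lemma \ref{lemma:congruency_level_set_lin} (upper-lower bounds), which pushes the supremum of $D$ over the window to the single largest value $D(e^{-\tau'+1})$ (at the cost of a harmless $(\log)^{1/2}$-factor) and applies for all $a\in[h_0,h]$; the $C^0$-estimate is then only needed at that one endpoint, which lies in its admissible range since $-\log(e^{-\tau'+1})=\tau'-1\leq\tau_0-1$. Your chain can be repaired either by reinstating that Lemma, or by noting that the remaining $a\in[h_0,e\,h_0)$ belong to a fixed compact band where the cap-region estimates give $D(a)\leq C(h_0)\|f\|_{\mathbb{Y}^{k-2,\alpha}(\Omega_h/S^1)}$ directly; either fix costs nothing in the final exponent and leaves your absorption argument intact. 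Incidentally, this also accounts for the small mismatch ($-46/100$ in your computation versus $-47/100$ in the paper): the extra $1/100$ is exactly the $(\log)^{1/2}$-penalty from the Lemma you omitted.
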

\begin{proof}
For ease of notation, let us set $D(a)=0$ for $a\geq h$. Recalling that by definition,
\begin{equation}
\|w\|_{C^2_{\exp}(\mathcal{C})} =\sup_{\tau\in [\tau_{h_\mathrm{in}},\tau_0]}e^{\frac{49}{100}\tau}\|w\|_{C^2|C_\tau}\,,\qquad \qquad\| W\|_{C^2_{\exp}(\mathcal{T})}=\sup_{\tau\in [\tau_{h_\mathrm{in}},\tau_0]}e^{\frac{99}{100}\tau}\|W\|_{C^2|T_\tau}\, ,
\end{equation}
we can choose a time $\tau'\in [\tau_{h_\mathrm{in}},\tau_0]$ such that  
\begin{equation}\label{almost_max_choice}
e^{\frac{99}{100}\tau'}\|W\|_{C^2|T_\tau'}+e^{\frac{49}{100}\tau'}\|w\|_{C^2|C_\tau'}\geq \tfrac{1}{2}\left(\|w\|_{C^2_{\exp}(\mathcal{C})}+ \| W\|_{C^2_{\exp}(\mathcal{T})}\right).
\end{equation}
Applying Proposition \ref{lemma:Schauder_entire} ($C^2$-estimate) and Lemma \ref{lemma:congruency_level_set_lin} (upper-lower bounds) we can estimate
\begin{align}\label{exp_norm_bound_1}
\|w\|_{C^{2}|C_{\tau'}} +\|W\|_{C^{2}|T_{\tau'}} \leq e^{\frac{29\tau'}{100}} D(e^{-\tau'+1})+e^{-\frac{21\tau'}{100}}\|f\|_{\mathbb{Y}^{k-2,\alpha}(\Omega_h/S^1)}.
\end{align}
Together with Proposition \ref{lemma:C0_entire} ($C^0$-estimate) this implies
\begin{equation}
\|w\|_{C^{2}|C_{\tau'}} +\|W\|_{C^{2}|T_{\tau'}} \leq e^{-\frac{47\tau'}{100}}\left(\|\tau w_{\cC}\|_{\fH,\infty}+\|W_{\cT}\|_{2,\infty}+\|f\|_{\mathbb{Y}^{k-2,\alpha}(\Omega_h/S^1)}\right).
\end{equation}
By our choice of $\tau'$ from \eqref{almost_max_choice}, we therefore have shown that
\begin{equation}\label{wexp_bd}
 \|w\|_{C^2_{\exp}(\cC)}+\|W\|_{C^2_{\exp}(\cT)} \leq e^{\frac{\tau'}{50}}\left(\|\tau w_\cC\|_{\fH,\infty}+\| W_\mathcal{T}\|_{2,\infty}+\|f\|_{\mathbb{Y}^{k-2,\alpha}(\Omega_h/S^1)}\right).
\end{equation}
On the other hand, remembering Corollary \ref{controlled_global_quantities} (controlled global quantities), Theorem \ref{main_decay_lin_inhom} (decay estimate) reads
\begin{multline}\label{decay_recall}
\|\tau^2\fp_0(w_{\cC})\|_{\fH,\infty}+\|\tau^3 (w_\cC-\fp_0(w_{\cC}))\|_{\cD,\infty} +\| \tau^3 W_\cT \|_{2,\infty} \\
\leq   C\left( \|w\|_{C^2_{\exp}(\cC)}+\|W\|_{C^2_{\exp}(\cT)}+\|f\|_{\mathbb{Y}^{k-2,\alpha}(\Omega_h/S^1)}\right).
\end{multline}
Combining the above two estimates the assertion follows.
 \end{proof}

\begin{theorem}[uniform estimate for restricted solution map]\label{un_est_Xh}
There exists a constant $C=C(\phi)<\infty$ with the following significance. For any sufficiently large $h<\infty$ and any solution $u$ of the Dirichlet problem \eqref{bdval_prob_rest}, such that  the associated function $w$ satisfies the orthogonality condition $\fp_{+}(w_{\cC})=\fp_0(w_{\cC})=0$, we have
\begin{equation}
\|u\|_{\mathbb{X}^{k,\alpha}(\Omega_h/S^1)} \leq C\|f\|_{\mathbb{Y}^{k-2,\alpha}(\Omega_h/S^1)}.
\end{equation}
\end{theorem}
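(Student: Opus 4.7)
My plan is to combine the uniform integral estimates from Proposition \ref{un_est_prop} with the interior $L^\infty$ estimates to verify the hypotheses of Theorem \ref{linear_sup_sol_est} (inner-outer estimate) with $\mu=1$, and then to bootstrap via the global Schauder estimate. Since $\|Lu\|_{C^{k-2,\alpha}_\bullet(\Omega_h)}=\|f\|_{\mathbb{Y}^{k-2,\alpha}}$ appears trivially in the definition of $\mathbb{X}^{k,\alpha}$, the task reduces to bounding $\|u\|_{C^{k,\alpha}_\star(\Omega_{\exp(\log(h)^{1/2}-1)}/S^1)}$ by $C\|f\|_{\mathbb{Y}^{k-2,\alpha}(\Omega_h/S^1)}$.

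The first step is to extract pointwise data from integral bounds. Proposition \ref{un_est_prop} gives $\||\tau|^2\fp_0 w_\cC\|_{\fH,\infty}+\||\tau|^3(w_\cC-\fp_0 w_\cC)\|_{\mathcal{D},\infty}+\||\tau|^3 W_\cT\|_{2,\infty}\leq C\|f\|_{\mathbb{Y}}$; since $|\tau|\geq |\tau_0|$ and $\mathcal{D}\hookrightarrow\fH$, this yields $\||\tau|^2 w_\cC\|_{\fH,\infty}\leq C\|f\|_{\mathbb{Y}}$. Applying Corollary \ref{cor:C0_cylinder} with $\mu=1$ at the single point $y=\ell$ then yields $|\tau|^{2}|w(\ell,\tau)|\leq C\|f\|_{\mathbb{Y}}$ for $\tau\in[-\log(h)+1,\tau_0]$ via \eqref{C_0_most} and $|\tau|^{3/2}|w(\ell,\tau)|\leq C\|f\|_{\mathbb{Y}}$ for $\tau\in[-\log(h),-\log(h)+1]$ via \eqref{C_0_boundary}. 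To control the remaining hypothesis term $\sup_{\partial\Omega_{h_0}}|u|$, I will apply Proposition \ref{lemma:C0_entire} at $\tau=\tau_0-1$ to get $D(eh_0)\leq C\|f\|_{\mathbb{Y}}$; unpacking via \eqref{w_part_connection}--\eqref{W_part_connection} bounds $\sup_{\partial\Omega_{eh_0}}|u|$, and then Theorem \ref{max_princ_inhom} (upper-lower estimate) at the height $h'=eh_0$ propagates the bound across $\Omega_{eh_0}\setminus\Omega_{h_0}$ and in particular to $\partial\Omega_{h_0}$.

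With these three ingredients in hand the $u$-side hypothesis \eqref{lower_bd_basic_sup} of Theorem \ref{linear_sup_sol_est} is satisfied with $A=C\|f\|_{\mathbb{Y}}$. The companion inhomogeneity hypothesis \eqref{g_growth_basic_sup} (same $\mu$ and $A$) is immediate from Proposition \ref{prop_who_contr_whom} (controlled pointwise quantities), whose weights $\rho_\bullet$ are designed precisely so that $A|\tau|^{-1}$ bounds the corresponding supremum. Theorem \ref{linear_sup_sol_est} then yields, for all $\tau\in[-\sqrt{\log h},\tau_0]$,
\begin{equation*}
\sup_{y\in[\ell,Y(\theta,\tau)]}|\tau|(\sqrt{2}-v)^{-1}|w(y,\tau)|+\sup_{v\leq\theta}|\tau|^{1/2}|W(v,\tau)|\leq C\|f\|_{\mathbb{Y}}.
\end{equation*}
Translating back through \eqref{transwbf}, \eqref{trangbf} and Corollary \ref{cor_trans_rule}, these estimates exactly match the weight $\rho_\star$: in the intermediate region $\rho_\star^{-1}|H\tilde{\bw}|\sim|\tau|(\sqrt{2}-v+10/|\tau|)^{-1}|w|$, while in the soliton region $|H\tilde{\bW}|\sim|\tau|^{1/2}|W|$, both bounded by $C\|f\|_{\mathbb{Y}}$. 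The cap contribution $\|u\|_{C^{k,\alpha}(\Omega_{2h_0})}$ is then controlled by standard interior elliptic theory for the uniformly elliptic operator $L$, using the already-established boundary bound.

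Finally, the global Schauder estimate (Proposition \ref{bdd_in_holder}) applied at $h'=e^{\sqrt{\log h}}$ upgrades the $C^0_\star$ control to
\begin{equation*}
\|u\|_{C^{k,\alpha}_\star(\Omega_{e^{-1}h'}/S^1)}\leq C\bigl(\|u\|_{C^0_\star(\Omega_{h'}/S^1)}+\|f\|_{C^{k-2,\alpha}_\bullet(\Omega_{h'}/S^1)}\bigr)\leq C\|f\|_{\mathbb{Y}},
\end{equation*}
which together with the trivial equality $\|Lu\|_{C^{k-2,\alpha}_\bullet(\Omega_h)}=\|f\|_{\mathbb{Y}}$ gives the theorem. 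I expect the main technical obstacle to be the careful bookkeeping of constants at each weight-region transition---especially at the collar $v\sim\ell/\sqrt{|\tau|}$ where the cylindrical and tip gauges overlap via $W=-Y_v w$---so that the final constant $C$ is manifestly independent of the Dirichlet height $h$.
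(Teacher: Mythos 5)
Your proposal follows the same architecture as the paper's proof: use the uniform integral estimates (Proposition \ref{un_est_prop}) together with the $L^\infty$-estimates (Corollary \ref{cor:C0_cylinder}) to verify the hypotheses \eqref{lower_bd_basic_sup} and \eqref{g_growth_basic_sup} of the inner-outer estimate with $\mu=1$ and $A=C\|f\|_{\mathbb{Y}^{k-2,\alpha}}$, translate the resulting pointwise control on $w$ and $W$ back into a bound on the $\rho_\star$-weighted $C^0_\star$-norm, and close with the global Schauder estimate (Proposition \ref{bdd_in_holder}). The one place you take a mild detour is the boundary term $\sup_{\partial\Omega_{h_0}}|u|$: the paper reaches $\sup_{\Omega_{2h_0}}|u|\leq C\|f\|$ directly from Corollaries \ref{cor:C0_cylinder}--\ref{cor:C0_tip} plus the global subsolution of Proposition \ref{sub_sol_prop}, whereas you route through the level-set quantity $D(eh_0)$ of Proposition \ref{lemma:C0_entire}, unpack via \eqref{w_part_connection}--\eqref{W_part_connection}, and then apply Theorem \ref{max_princ_inhom}; since Proposition \ref{lemma:C0_entire} and Theorem \ref{max_princ_inhom} are themselves proved from the very ingredients the paper uses, the two routes are essentially equivalent. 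One small caveat: Theorem \ref{max_princ_inhom} only controls $u$ on the annulus $\Omega_{eh_0}\setminus\Omega_{h_0}$, so to get the full cap bound $\|u\|_{C^{k,\alpha}(\Omega_{2h_0})}$ you still need the maximum principle (via Proposition \ref{sub_sol_prop}) in the interior $\Omega_{h_0}$ before invoking interior elliptic Schauder estimates --- your phrase ``standard interior elliptic theory'' glosses over that step, but it is standard and the argument is sound.
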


\begin{proof}
By Corollary \ref{controlled_global_quantities} (controlled global quantities) and Proposition \ref{un_est_prop} (uniform integral estimates) we know that
\begin{equation}\label{beginnig_bd_norm_ctrl}
\sup_{\Omega_h}\frac{|f|}{H_\phi}+\|\tau^2 w_{\cC}\|_{\fH,\infty} +\| \tau^3 W_\cT \|_{2,\infty} \leq   C\|f\|_{\mathbb{Y}^{k-2,\alpha}(\Omega_h/S^1)}.
\end{equation}
Hence, Corollary \ref{cor:C0_cylinder}  ($L^{\infty}$-estimates for cylindrical region) and Corollary \ref{cor:C0_tip} ($L^{\infty}$-estimate for tip variation) yield
\begin{equation}\label{max_lower_bd}
\sup_{ \partial \Omega_{2h_0}}|u| \leq C\|f\|_{\mathbb{Y}^{k-2,\alpha}(\Omega_h/S^1)},
\end{equation}
which by Proposition \ref{sub_sol_prop} (global subsolution) implies
\begin{equation}\label{max_lower_bd}
\sup_{ \Omega_{2h_0}}|u| \leq C\|f\|_{\mathbb{Y}^{k-2,\alpha}(\Omega_h/S^1)}.
\end{equation}
More crucially, Corollary \ref{cor:C0_cylinder}  ($L^{\infty}$-estimates for cylindrical region) also yields
\begin{equation}
\sup_{|y|\leq \ell}\left(\sup_{\tau\in [-\log(h)+1,\tau_0]} |\tau|^2|w(y,\tau)|+\sup_{\tau\in [-\log(h),-\log(h)+1]} |\tau|^{3/2}|w(y,\tau)|\right) \leq C\|f\|_{\mathbb{Y}^{k-2,\alpha}(\Omega_h/S^1)}.
\end{equation}
Hence, remembering also Proposition \ref{prop_who_contr_whom} (controlled pointwise quantities), we see that the assumptions of Theorem \ref{linear_sup_sol_est} (inner-outer estimate) are verified with $A=C\|f\|_{\mathbb{Y}^{k-2,\alpha}(\Omega_h/S^1)}$, and thus for all $\tau\in [-\sqrt{\log(h)},\tau_0]$ we get
\begin{equation}\label{w_closed_bd}
\sup_{v(y,\tau)\geq \theta}|\tau|(\sqrt{2}-v)^{-1}|w(y,\tau)|  + \sup_{v\leq \theta} |\tau|^{1/2} |W(v,\tau)| \leq C\|f\|_{\mathbb{Y}^{k-2,\alpha}(\Omega_h/S^1)}.
\end{equation}
Moreover, by a similarly argument as in the proof of Proposition \ref{prop_who_contr_whom} (controlled pointwise quantities), whenever $v(y,\tau)\geq \ell/{\sqrt{|\tau|}}$ we have
\begin{equation}
|H{\tilde{\bw}}|(e^{-\tau/2}y,0,-e^{-\tau})\leq 2v(y,\tau)^{-1} |w(y,\tau)|,
\end{equation}
and whenever $v(y,\tau)\leq \ell/{\sqrt{|\tau|}}$ we have
\begin{equation}
| H\tilde{\bW} |(e^{-\tau/2}v,0,0,-e^{-\tau})\leq C|\tau|^{1/2}|W(v,\tau)|.
\end{equation}
Combining the above inequalities, remembering the definition of the domain norm, and using also the fact that $ |w|\leq C v^{-1} |\tau|^{-1/2}|W|$ in the collar region, we thus infer that
\begin{equation}
\|u\|_{C^0_\star(\Omega_{\exp(\log(h)^{1/2})}/S^1)} \leq C\|f\|_{\mathbb{Y}^{k-2,\alpha}(\Omega_h/S^1)}.
\end{equation}
Finally, applying Proposition \ref{bdd_in_holder} (global Schauder estimate) we conclude that
\begin{equation}
\|u\|_{C^{k,\alpha}_{\star}(\Omega_{\exp((\log h)^{1/2}-1)})}\leq C\|f\|_{\mathbb{Y}^{k-2,\alpha}(\Omega_h/S^1)}.
\end{equation}
This finishes the proof of the theorem.
\end{proof}

\bigskip

\subsection{Fredholm property conclusion} Using our uniform estimates we can now establish the Fredholm property:

\begin{theorem}[Fredholm property]\label{Fredholm_theorem}
The map $L:\mathbb{X}^{k,\alpha}(\mathbb{R}^3/S^1)\rightarrow \mathbb{Y}^{k-2,\alpha}(\mathbb{R}^3/S^1)$ is Fredholm.
\end{theorem}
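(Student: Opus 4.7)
The plan is to deduce the Fredholm property from Theorem \ref{un_est_Xh} by an exhaustion argument together with Peetre's Lemma.

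The key step is to upgrade Theorem \ref{un_est_Xh} to the entire-space bound
\begin{equation}\label{eq:plan_glob}
\|u\|_{\mathbb{X}^{k,\alpha}(\mathbb{R}^3/S^1)}\le C\Bigl(\|Lu\|_{\mathbb{Y}^{k-2,\alpha}(\mathbb{R}^3/S^1)}+\|\fp_+w_\cC(\cdot,\tau_0)\|_{\fH}+\|\fp_0w_\cC(\cdot,\tau_0)\|_{\fH}\Bigr),
\end{equation}
valid for every $u\in\mathbb{X}^{k,\alpha}(\mathbb{R}^3/S^1)$. To prove this I fix three correctors $\eta_1,\eta_2,\eta_3\in\mathbb{X}^{k,\alpha}(\mathbb{R}^3/S^1)$ whose images under the three-dimensional evaluation map $\Pi u:=(\fp_+w_\cC(\cdot,\tau_0),\fp_0w_\cC(\cdot,\tau_0))\in\fH_+\oplus\fH_0\cong\mathbb{R}^3$ form a basis; natural candidates are the vertical shift $u\equiv 1$, the horizontal shift $u=\phi_{x_1}$ (both in $\ker L$), and one further explicit variation. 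Set $f=Lu$. For each large $h$ I solve the Dirichlet problem $Lu_h=f$ on $\Omega_h$, cut off the correctors to $\eta_i^{(h)}\in\mathbb{X}(\Omega_h/S^1)$ vanishing on $\partial\Omega_h$ (with $\Pi\eta_i^{(h)}\to\Pi\eta_i$), and choose coefficients $c_{h,i}\in\mathbb{R}$ so that $\Pi(u_h-\sum_{i}c_{h,i}\eta_i^{(h)})=0$. Theorem \ref{un_est_Xh} then gives a uniform-in-$h$ bound on $u_h-\sum_i c_{h,i}\eta_i^{(h)}$ in the $\mathbb{X}^{k,\alpha}(\Omega_h/S^1)$-norm. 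Since $u|_{\partial\Omega_h}\to 0$ with the quantitative decay encoded in the $\mathbb{X}^{k,\alpha}$-norm, the upper-lower estimate (Theorem \ref{max_princ_inhom}) shows $u_h\to u$ in $C^{k,\alpha}_{\mathrm{loc}}$, and the uniform invertibility of $\bigl(\Pi\eta_j^{(h)}\bigr)_{ij}$ gives $|c_{h,i}|\le C\|\Pi u\|_{\mathbb{R}^3}$. Passing to the limit through Arzelà--Ascoli (based on the interior Schauder estimates of Section \ref{Holder_sec}) yields \eqref{eq:plan_glob}.

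Since $\Pi$ is bounded and of finite rank, hence compact, estimate \eqref{eq:plan_glob} is of the Peetre form $\|u\|_{\mathbb{X}}\le C(\|Lu\|_{\mathbb{Y}}+\|\Pi u\|_{\mathbb{R}^3})$, which immediately gives that $L$ has closed range and finite-dimensional kernel of dimension at most three. For the cokernel I run an entirely analogous exhaustion: starting from an arbitrary $f\in\mathbb{Y}^{k-2,\alpha}(\mathbb{R}^3/S^1)$, I solve $Lu_h=f$ on $\Omega_h$, subtract $\sum_i c_{h,i}\eta_i^{(h)}$ to enforce $\Pi(u_h-\sum_{i}c_{h,i}\eta_i^{(h)})=0$, invoke Theorem \ref{un_est_Xh}, and pass to the limit to produce $u_\infty\in\mathbb{X}^{k,\alpha}(\mathbb{R}^3/S^1)$ satisfying $Lu_\infty=f-\sum_i c_\infty^{(i)}L\eta_i$. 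Thus every $f\in\mathbb{Y}^{k-2,\alpha}(\mathbb{R}^3/S^1)$ lies in the image of $L$ modulo the three-dimensional subspace $\operatorname{span}(L\eta_1,L\eta_2,L\eta_3)$, so $\operatorname{codim}\operatorname{Im}(L)\le 3$, completing the Fredholm argument.

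The principal technical difficulty lies in the global a priori estimate \eqref{eq:plan_glob}, specifically in engineering the correctors $\eta_i^{(h)}$ so that the $3\times 3$ matrix $\bigl(\Pi\eta_j^{(h)}\bigr)_{ij}$ is \emph{uniformly} invertible in $h$. Uniform invertibility reduces, via continuity of $\Pi$ under $C^{k,\alpha}_{\mathrm{loc}}$-convergence, to the linear independence of the $\Pi\eta_i$ on $\mathbb{R}^3$, but the cutoff procedure in the degenerating Gaussian-weighted spaces must be handled with care so that $\|L\eta_i^{(h)}\|_{\mathbb{Y}^{k-2,\alpha}(\Omega_h/S^1)}$ stays bounded as $h\to\infty$. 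Once this is in place, the rest of the argument is a clean application of standard functional-analytic machinery to the PDE estimates established in Sections \ref{sec_barrier_estimates}--\ref{Holder_sec}.
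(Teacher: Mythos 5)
The central gap is the claim that $u|_{\partial\Omega_h}\to 0$ ``with the quantitative decay encoded in the $\mathbb{X}^{k,\alpha}$-norm'', from which you deduce $u_h\to u$. This is false: elements of $\mathbb{X}^{k,\alpha}(\mathbb{R}^3/S^1)$ need not decay at spatial infinity and in fact typically grow. Unwinding Definition~\ref{Hold_def_intro}, a finite $C^{0}_\star$-norm bounds $\tilde{\bw}\sim V_t\,u$ in a norm carrying the weight $H\sim |t|^{-1/2}$ and $\rho_\star\sim(\log|t|)^{-2}$ at the top of the level set, so only $|u|\lesssim |t|/(\log|t|)^{2}$ is controlled on $\partial\Omega_h$; the explicit variation $u=x_1^2-\tfrac12 x_2^2-\tfrac12 x_3^2+o(|x|^2)$ quoted in the introduction is a concrete example of growth. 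Consequently the upper-lower estimate, applied to the homogeneous difference $u-u_h$ (which equals $u$ on $\partial\Omega_h$), yields nothing: you only obtain $\sup_{\Omega_h}|u-u_h|\le\sup_{\partial\Omega_h}|u|$, which blows up. Without this convergence, what your exhaustion actually produces after Arzel\`a--Ascoli is \emph{some} $v$ with $Lv=f$, $\Pi v=0$ and $\|v\|\le C\|f\|$, and to upgrade that to a bound on the given $u$ you must already know that $\ker L\cap\ker\Pi=\{0\}$. That triviality is exactly the missing ingredient: the paper establishes it directly by applying Corollary~\ref{main_decay_cor} (decay estimate for entire homogeneous solutions) together with Proposition~\ref{un_est_prop}, with no exhaustion at all. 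Your argument as written is therefore circular at the kernel step.

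The cokernel half of your proposal is closer to the paper, but considerably more elaborate than needed. You fix cut-off correctors $\eta_i^{(h)}$ and must then (a) verify their existence with uniformly invertible $\Pi$-matrix, (b) control $\|L\eta_i^{(h)}\|_{\mathbb{Y}^{k-2,\alpha}(\Omega_h)}$ through the cut-off, and (c) bound $|c_{h,i}|$ a priori so the corrected inhomogeneity stays bounded. The paper bypasses all of this abstractly: since $\mathcal{O}(u_{f,j})=0$ imposes three linear conditions on $f$, the set $\mathbb{V}_j$ has codimension at most $3$, so any four-dimensional $\mathbb{W}\subseteq\mathbb{Y}^{k-2,\alpha}$ meets $\mathbb{V}_j$ in a nonzero $f_j$ to which Theorem~\ref{un_est_Xh} applies directly with no correctors at all. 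I would recommend restructuring your proof along the paper's lines: treat the kernel by the $h=\infty$ decay estimate (Corollary~\ref{main_decay_cor}), and replace the explicit-corrector machinery by the codimension-counting argument for the cokernel.
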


\begin{proof}
Since $L:\mathbb{X}^{k,\alpha}(\mathbb{R}^3/S^1)\rightarrow \mathbb{Y}^{k-2,\alpha}(\mathbb{R}^3/S^1)$ is a bounded linear map by definition of our Banach spaces, it is enough to show that the kernel and cokernel are finite-dimensional.

We will show first that the cokernel of $L$ has dimension at most three. For that, it suffices to show that if $\mathbb{W}\subseteq \mathbb{Y}^{k-2,\alpha}(\mathbb{R}^3/S^1)$ is any four dimensional subspace then $ \mathbb{W} \cap \mathrm{Range}(L)\neq \{0\}$. To this end, consider the obstruction 
\begin{equation}
\mathcal{O}(u):=(\fp_{+}\left(w_{\cC}),\fp_0(w_{\cC})\right),
\end{equation}
where $w$ is associated to $u$ via \eqref{wfromu}.
Now, for every integer $j\gg 1$ and every $f\in \mathbb{Y}^{k-2,\alpha}(\mathbb{R}^3/S^1)$, denote by $u_{f,j}\in \mathbb{X}(\Omega_j/S^1)$ the unique solution of the boundary value problem    
\begin{equation}\label{bdval_prob_recall}
    \begin{cases}
      Lu_{f,j}=f|_{\Omega_j} & \text{on  $\Omega_j$}\\
      u_{f,j}=0 & \text{on $\partial \Omega_j$}.\\
    \end{cases}       
\end{equation}
Consider
\begin{equation}
\mathbb{V}_j:=\left\{f\in \mathbb{Y}^{k-2,\alpha}(\mathbb{R}^3/S^1)\;|\; \mathcal{O}(u_{f,j})=0\right\},
\end{equation}
and note that since $\mathcal{O}(u_{f,j})=0$ is given by 3 linear equations, this is a subspace of codimension 3. Thus, there exists some $f_j\in \mathbb{W}\cap \mathbb{V}_j$ with $\|f_j\|_{\mathbb{Y}^{k-2,\alpha}(\mathbb{R}^3/S^1)} =1$. Setting $u_j:=u_{f_j,j}$, we have $\mathcal{O}(u_j)=0$, and thus Theorem \ref{un_est_Xh} (uniform estimate for solution map) gives the uniform estimate
\begin{equation}\label{un_est_recall}
\|u_j\|_{\mathbb{X}^{k,\alpha}(\Omega_j/S^1)} \leq C.
\end{equation}
Since $\mathbb{W}$ is a finite dimensional, there exists some $f\in \mathbb{W}$ with $\|f\|_{\mathbb{Y}^{k-2,\alpha}(\mathbb{R}^3/S^1)}=1$ such that after passing to a subsequence we have $f_j\xrightarrow{\mathbb{Y}^{k-2,\alpha}(\mathbb{R}^3/S^1)} f$. We will show that $f\in \mathrm{Range}(L)$.
Indeed, given any $R<\infty$, for all $j$ sufficiently large we have
\begin{equation}\label{comp_finite}
\|u_j\|_{C^{k,\alpha}(B_R)} \leq C(R)\|u_j\|_{\mathbb{X}^{k,\alpha}(\Omega_j/S^1)} \leq C(R),
\end{equation} 
so we can find a subsequence that converges in $C^{k}_{\mathrm{loc}}(\mathbb{R}^3/S^1)$ to a limit $u\in C^{k,\alpha}_{\mathrm{loc}}(\mathbb{R}^3/S^1)$, and we have
\begin{equation}
Lu=f.
\end{equation}
Since our norm is defined as supremum over compactly supported quantities, and all of these quantities are lower semicontinous under the convergence, we have
\begin{equation}
\|u\|_{\mathbb{X}^{k,\alpha}(\mathbb{R}^3/S^1)} \leq \liminf_{j\to \infty}\|u_j\|_{\mathbb{X}^{k,\alpha}(\Omega_j/S^1)}<\infty,
\end{equation}
hence $u\in\mathbb{X}^{k,\alpha}(\mathbb{R}^3/S^1)$.

For the proof of finite dimensional kernel observe that it suffices to show that $\mathrm{Ker}(L)\cap \mathrm{Ker}(\mathcal{O})=0$, as $\mathrm{Ker}(\mathcal{O})$ intersects nontrivially any $4$-dimensional subspace of $\mathbb{X}^{k,\alpha}(\mathbb{R}^3/S^1)$. 
Now, if $u\in \mathrm{Ker}(L)\cap \mathrm{Ker}(\mathcal{O})$, then thanks to Corollary \ref{controlled_global_quantities} (controlled global quantities) the assumptions of Corollary \ref{main_decay_cor} (decay estimate for entire homogenous solutions) are satisfied.
 Therefore, the conclusion of Proposition \ref{un_est_prop} (uniform integral estimates) holds for this $u$ with $f=0$ and $h=\infty$. Hence, we conclude that $u=0$.
\end{proof}

\bigskip

\section{Nonlinear theory}

\subsection{Quadratic error estimate}

In this subsection, to conveniently show analyticity, we consider the complexification of the spaces and maps from the previous sections.
Moreover, we do not assume that the point $\phi_0$ around which we expand is a translator, rather we only assume $\phi_0=\phi_\ast+u_0$, where $\phi_\ast$ is a translator and $u_0$ has small $\mathbb{X}^{k+2,\alpha}$-norm.
We now consider the quadratic quantity
\begin{equation}
Q_{\phi_0}[u]:= \Theta[\phi_0+u]-\Theta[\phi_0]-L_{\phi_0} u,
\end{equation}
where $\Theta$ is the graphical translator operator as defined in \eqref{theta_def}, and where
\begin{equation}
L_{\phi_0} u=\mathrm{div}(a_{\phi_0}  Du)+ b_{\phi_0}\cdot Du,
\end{equation}
with
\begin{equation}
a_{\phi_0}=\frac{\delta}{\sqrt{1+|D {\phi_0}|^2}}  -\frac{D{\phi_0}\otimes D{\phi_0}}{{(1+|D{\phi_0}|^2)^{3/2}}} , \qquad b_{\phi_0}=\frac{D {\phi_0}}{(1+|D {\phi_0}|^2)^{3/2}}.
\end{equation}

\begin{proposition}[quadratic quantity in graphical gauge]\label{lemma_quad_graph} We have
\begin{align}
Q_{\phi_0}[u]=&\mathrm{div}\left(\frac{D({\phi_0}-u)(Du)^2-2Du(D{\phi_0}\cdot Du)+D{\phi_0} K_{\phi_0}[u] (2D{\phi_0}\cdot Du - (Du)^2) (D{\phi_0}\cdot Du)}{J_{\phi_0}[u]}\right)\nonumber\\
&\qquad\qquad -\frac{(Du)^2+K_{\phi_0}[u] (2D{\phi_0}\cdot Du - (Du)^2)(D{\phi_0}\cdot Du)}{J_{\phi_0}[u]},
\end{align}
where $J_{\phi_0}[u]$ and $K_{\phi_0}[u]$ are specified below.
\end{proposition}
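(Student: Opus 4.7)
The plan is to verify the claimed identity by direct algebraic computation. Writing $\Theta[\phi]=\mathrm{div}(F(D\phi))-G(D\phi)$ with $F(p)=p/\sqrt{1+|p|^2}$ and $G(p)=1/\sqrt{1+|p|^2}$, I decompose
\begin{equation*}
Q_{\phi_0}[u]=\mathrm{div}(Q_F)-Q_G,
\end{equation*}
where $Q_F$ and $Q_G$ denote the corresponding quadratic Taylor remainders of $F$ and $G$ at $D\phi_0$ applied to $Du$. Setting $A:=\sqrt{1+|D\phi_0|^2}$ and $B:=\sqrt{1+|D(\phi_0+u)|^2}$, the task reduces to rewriting each of $Q_F$ and $Q_G$ as a single rational expression in $D\phi_0$, $Du$, $A$, $B$, and then reading off the coefficients.

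The central observation is the product identity $F(p)=p\,G(p)$. Combining it with the product rule for $DF$ and a direct expansion yields
\begin{equation*}
Q_F \;=\; D\phi_0 \cdot Q_G \;+\; Du\left(\tfrac{1}{B}-\tfrac{1}{A}\right).
\end{equation*}
The crucial rationalization is then
\begin{equation*}
\tfrac{1}{B}-\tfrac{1}{A} \;=\; \tfrac{A^2-B^2}{AB(A+B)} \;=\; -\,\tfrac{2D\phi_0\cdot Du+|Du|^2}{AB(A+B)},
\end{equation*}
so that, placing everything over the common denominator $J_{\phi_0}[u]$ built from $A$, $B$, $A+B$, the $Du$-part of $Q_F$ contributes exactly $-Du(|Du|^2+2D\phi_0\cdot Du)$ to the divergence numerator; this accounts for the term $D(\phi_0-u)|Du|^2-2Du(D\phi_0\cdot Du)$ in the claimed formula once the $D\phi_0\cdot Q_G$ piece has also been expanded.

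The remaining step is to rewrite $Q_G=(1/B-1/A)+(D\phi_0\cdot Du)/A^3$ in the form $[|Du|^2+K_{\phi_0}[u](2D\phi_0\cdot Du-|Du|^2)(D\phi_0\cdot Du)]/J_{\phi_0}[u]$. Bringing $Q_G$ over a common denominator and applying the auxiliary factorization $AB+B^2-2A^2=(B-A)(B+2A)$ (together with $B^2-A^2=(B-A)(B+A)$) groups the numerator into a pure $|Du|^2$-term plus a term proportional to $(D\phi_0\cdot Du)(2D\phi_0\cdot Du - |Du|^2)$; the resulting denominator is $J_{\phi_0}[u]$ and the proportionality factor is $K_{\phi_0}[u]$. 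Substituting back into $Q_F=D\phi_0\cdot Q_G+Du(1/B-1/A)$ then produces the divergence numerator in the stated form $D(\phi_0-u)|Du|^2-2Du(D\phi_0\cdot Du)+D\phi_0\,K_{\phi_0}[u](2D\phi_0\cdot Du-|Du|^2)(D\phi_0\cdot Du)$, while the non-divergence part is $-Q_G=-[|Du|^2+K_{\phi_0}[u](2D\phi_0\cdot Du-|Du|^2)(D\phi_0\cdot Du)]/J_{\phi_0}[u]$. The main obstacle is not conceptual but purely bookkeeping: identifying the precise combination of $A$, $B$, and $A+B$ that brings the rationalization of $Q_G$ into the target form with denominator $J_{\phi_0}[u]$ common to both the divergence and scalar pieces, which requires patient algebra but no new idea.
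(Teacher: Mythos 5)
Your overall strategy — write $\Theta=\mathrm{div}(F(D\phi))-G(D\phi)$ with $F(p)=p/\sqrt{1+|p|^2}$, $G(p)=1/\sqrt{1+|p|^2}$, exploit $F(p)=p\,G(p)$ to get $Q_F=D\phi_0\,Q_G+Du\bigl(G(D(\phi_0+u))-G(D\phi_0)\bigr)$, and then rationalize — is essentially the same route the paper takes (the paper verifies the two rationalization identities $\tfrac{1}{B}-\tfrac{1}{A}=\tfrac{A^2-B^2}{AB(A+B)}$ and $\tfrac{2}{J}-\tfrac{1}{A^3}=\tfrac{K(A^2-B^2)}{J}$ and then says "direct computation"). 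Your product identity $F=pG$ is a genuinely nice organizing observation that the paper leaves tacit, and your factorization $AB+B^2-2A^2=(B-A)(B+2A)$ plays the same role as the paper's second identity.

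However, the sign bookkeeping in your sketch does not go through as claimed. You set $B:=\sqrt{1+|D(\phi_0+u)|^2}$, which gives $A^2-B^2=-\bigl(2D\phi_0\cdot Du+|Du|^2\bigr)$; the paper's $J_{\phi_0}[u]$, $K_{\phi_0}[u]$, and the factor $(2D\phi_0\cdot Du-(Du)^2)$ are all built from $\sqrt{1+|D(\phi_0-u)|^2}$, for which $A^2-\tilde B^2=2D\phi_0\cdot Du-(Du)^2$. With your convention the common denominator you obtain is \emph{not} the paper's $J_{\phi_0}[u]$, and the quadratic factor in the $K$-term comes out as $2D\phi_0\cdot Du+|Du|^2$, not $2D\phi_0\cdot Du-(Du)^2$. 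More concretely, the $|Du|^2$-coefficient of $Q_G$ is negative (check $D\phi_0=0$: $Q_G=1/B-1=-|Du|^2/[B(1+B)]$), so your claim that $Q_G$ rewrites with numerator $|Du|^2+K(\cdots)$ has the wrong sign; propagating this through $D\phi_0\cdot Q_G$ gives a divergence numerator of the form $-D(\phi_0+u)|Du|^2-2Du(D\phi_0\cdot Du)+\cdots$, not $D(\phi_0-u)(Du)^2-2Du(D\phi_0\cdot Du)+\cdots$. You need to carry one consistent convention (and reconcile it with whichever convention the paper is using for $\phi_0\pm u$) before the final identity matches; as it stands the "this accounts for the term..." step asserts a cancellation that the algebra does not deliver.
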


\begin{proof}
Note that
\begin{equation}
\frac{1}{\sqrt{1+(D({\phi_0}-u))^2}}-\frac{1}{\sqrt{1+(D{\phi_0})^2}}=\frac{2D{\phi_0}\cdot Du - (Du)^2}{J_{\phi_0}[u]},
\end{equation}
where
\begin{equation}
J_{\phi_0}[u]:=\sqrt{1+(D{\phi_0})^2}\sqrt{1+(D ({\phi_0}-u))^2}\left(\sqrt{1+(D{\phi_0})^2}+\sqrt{1+(D ({\phi_0}-u))^2} \right).
\end{equation}
Moreover, note that
\begin{equation}
\frac{2}{J_{\phi_0}[u]}-\frac{1}{(1+(D{\phi_0})^2)^{3/2}}=\frac{K_{\phi_0}[u] (2D{\phi_0}\cdot Du - (Du)^2)}{J_{\phi_0}[u]},
\end{equation}
where
\begin{equation}
K_{\phi_0}[u]:=\frac{2\sqrt{1+(D{\phi_0})^2}+\sqrt{1+(D ({\phi_0}-u))^2}}{(1+(D{\phi_0})^2)^{3/2}}.
\end{equation}
Using these algebraic identities, the assertion follows from a direct computation.
\end{proof}

\begin{corollary}[quadratic error for graphical variation]\label{quad_err_graph}
 There exist constants $\eps>0$ and $C<\infty$, such that for all $\phi_0=\phi_\ast+u_0$ with $ \| u_0 \|_{\mathbb{X}^{k+2,\alpha}(\mathbb{R}^3/S^1,\mathbb{C})}\leq \eps$ we have
\begin{align}
\| Q_{\phi_0}[ u ]\|_{C^{k-2,\alpha}(\Omega_{2h_0},\mathbb{C})}\leq C \| u \|_{C^{k,\alpha}(\Omega_{2h_0},\mathbb{C})}^2,
\end{align}
provided that $\| u \|_{C^{k,\alpha}(\Omega_{2h_0},\mathbb{C})}\leq \eps$. 
\end{corollary}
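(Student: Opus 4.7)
The plan is to work directly with the explicit algebraic formula for $Q_{\phi_0}[u]$ provided by Proposition \ref{lemma_quad_graph}, and to exploit the fact that on the \emph{compact} cap region $\Omega_{2h_0}$ there is no slope degeneracy to fight: the operator is uniformly elliptic and every quantity in sight is controlled by standard tools.

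\textbf{Step 1 (uniform bound on the base point).} On $\Omega_{2h_0}$ the translator $\phi_\ast$ is smooth with bounded $C^{k+2,\alpha}(\Omega_{2h_0})$ norm, by standard elliptic regularity for equation \eqref{tran_eq_prelim}. Since the $\mathbb{X}^{k+2,\alpha}(\mathbb{R}^3/S^1,\mathbb{C})$ norm controls the $C^{k+2,\alpha}(\Omega_{2h_0},\mathbb{C})$ norm by construction \eqref{Hold_def_intro}, the assumption $\|u_0\|_{\mathbb{X}^{k+2,\alpha}}\leq\eps$ yields $\|\phi_0\|_{C^{k+2,\alpha}(\Omega_{2h_0},\mathbb{C})}\leq M$ for some constant $M=M(\phi_\ast)$ (fix $\eps\leq 1$).

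\textbf{Step 2 (denominators under control).} With $\|u\|_{C^{k,\alpha}(\Omega_{2h_0},\mathbb{C})}\leq\eps$ small, the complex-valued quantities $|D\phi_0|^2$ and $|D(\phi_0-u)|^2$ remain in a small neighborhood of a bounded real set, so the analytic functions $z\mapsto(1+z)^{1/2}$ and $z\mapsto(1+z)^{-1/2}$ can be composed with them without hitting a branch cut. Consequently
\begin{equation}
|J_{\phi_0}[u]|\geq \tfrac12, \qquad |K_{\phi_0}[u]|\leq C,
\end{equation}
and both $1/J_{\phi_0}[u]$ and $K_{\phi_0}[u]$ have $C^{k-1,\alpha}(\Omega_{2h_0},\mathbb{C})$ norm bounded by a constant depending only on $M$. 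This relies on the fact that composition with an analytic function preserves $C^{k-1,\alpha}$ with controlled norm, provided the argument stays in a compact subset of the analyticity domain — a standard Faà di Bruno / Banach algebra computation that we may shrink $\eps$ to ensure.

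\textbf{Step 3 (Banach algebra estimate).} Recall that $C^{k-2,\alpha}(\Omega_{2h_0},\mathbb{C})$ is a Banach algebra (since $k\geq 4$). Proposition \ref{lemma_quad_graph} writes $Q_{\phi_0}[u]=\mathrm{div}(\mathbf{F})+R$, where $\mathbf{F}$ and $R$ are products of
\begin{equation}
(Du)^2,\; Du(D\phi_0\cdot Du),\; (D\phi_0\cdot Du)^2, \ldots
\end{equation}
with analytic expressions in $D\phi_0$ and $Du$ divided by $J_{\phi_0}[u]$. Each such bilinear-in-$Du$ piece has $C^{k-1,\alpha}(\Omega_{2h_0},\mathbb{C})$ norm bounded by $C\|u\|_{C^{k,\alpha}(\Omega_{2h_0},\mathbb{C})}^2$ by the Banach algebra property together with Step 2. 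Taking the divergence loses exactly one derivative, depositing $\mathrm{div}(\mathbf{F})$ in $C^{k-2,\alpha}(\Omega_{2h_0},\mathbb{C})$ with the required quadratic bound; the remainder $R$ is already in $C^{k-2,\alpha}$ with a better quadratic bound. Summing gives the claim.

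\textbf{Main obstacle.} There is no real obstacle — the whole point of this corollary is that away from the degeneration (i.e.\ on the compact cap $\Omega_{2h_0}$) the nonlinearity behaves like an ordinary smooth quasilinear second order operator. The only point requiring some care is justifying the complex composition estimate in Step 2 uniformly in $\eps$, which is handled by fixing a polydisk around the real range of $(D\phi_\ast,0)$ on which $1/\sqrt{1+z^2}$ is analytic and bounded, and then shrinking $\eps$ so that $(D\phi_0,Du)$ stays inside that polydisk.
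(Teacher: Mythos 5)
Your proposal is correct and follows essentially the same approach as the paper: the paper's proof is a one-sentence appeal to Proposition \ref{lemma_quad_graph} together with the uniform boundedness of $\phi_0$ and its derivatives on the compact cap region $\Omega_{2h_0}$, which you have filled out with the standard H\"older--Banach-algebra and analytic-composition estimates (noting only that $\mathbf{F}$ also carries cubic and quartic terms in $Du$, which are still absorbed into the quadratic bound since $\|u\|_{C^{k,\alpha}}\leq\eps\leq 1$).
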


\begin{proof}
Since $\phi_0$ and its derivatives are uniformly bounded in the cap region, this immediately follows from Lemma \ref{lemma_quad_graph} (quadratic quantity in graphical gauge).
\end{proof}

We will next transform our quadratic quantity to cylindrical gauge. To set things up, we introduce the notation
\begin{equation}
(u\diamond V)(x,t):= u(x,V(x,t),0).
\end{equation}
Let $\bw_0$ and $\bw_0+\bw$ be the cylindrical variations associated to $u_0$ and $u_0+u$, respectively.  Applying the formula \eqref{bw_def} twice and taking the difference of the resulting equations we infer that
\begin{equation}
\bw=-V_t\,\, u \diamond V,
\end{equation}
where $V$ denotes the cylindrical profile function of the translator $\phi_\ast$. We would like to compute
\begin{equation}
Q_{\phi_0}[u]\diamond V= \Theta[\phi_0+u]\diamond V-\Theta[\phi_0]\diamond V-L_{\phi_0} u\diamond V.
\end{equation}
To this end, denote by $V^u$ the (local) cylindrical profile function of $\phi_0+u$ defined implicitly via
\begin{equation}
(\phi_0+u)\diamond V^u= -  t,
\end{equation}
in the region where it exists. In particular, $V^0$ denotes the profile function of $\phi_0$, which is generally different from the profile function $V$ of $\phi_\ast$.
Observe that by equation \eqref{V_eq_der} we have
\begin{multline}\label{Theta_phiu}
\Theta[\phi_0+u]\diamond V^u=\\
\frac{-1}{\sqrt{1+(V^u_x)^2+(V^u_t)^2}}\bigg(
\frac{(1+(V^u_t)^2)V^u_{xx}+(1+(V^u_x)^2)V^u_{tt}-2V^u_xV^u_tV^u_{xt}}{1+(V^u_x)^2+(V^u_t)^2} -\frac{1}{V^u} - V^u_t\bigg).
\end{multline}
Now, to relate $V^u$ and $V^0$, note that setting $\bw^u:=\bw_0+\bw$  as a consequence of the definitions we have
\begin{equation}
(\phi_0+u)\diamond V= - t + \frac{\bw^u}{V_t},
\end{equation}
hence
\begin{equation}\label{v_vs_vu}
V(x,t)=V^u\left(x, t - \frac{\bw^u}{V_t}(x,t)\right).
\end{equation}
Let us also introduce the notation
\begin{equation}
\bw_{;i}=\bw_i -\frac{V_{it}}{V_t}\bw,\qquad \bw_{;ij}=\bw_{ij} -\frac{V_{ijt}}{V_t}\bw, \qquad \textrm{where } i,j\in\{x,t\},
\end{equation}
and
\begin{equation}
I[\bw]=1-\frac{\bw_{;t}}{V_t}.
\end{equation}

\begin{lemma}[derivatives of cylindrical profile]\label{lemma_der_cyl_prof}
The first derivative of $V^u$ can be expressed as
\begin{align}\label{V_1_t_eq}
V_t^u = V_t^0+\frac{\bw_{;t}}{I[\bw_0]^2}   +  \frac{\bw_{;t}^2}{I[\bw_0]^2I[\bw_0+\bw]V_t },
\end{align}
and
\begin{align}
V_x^u = V_x^0+\frac{\bw_{;x}}{I[\bw_0]}+\frac{\bw_{0;x}\bw_{;t}}{I[\bw_0]^2V_t}+\frac{\bw_{;x}\bw_{;t}}{I[\bw_0]I[\bw_0+\bw]V_t}+\frac{\bw_{0;x}\bw_{;t}^2}{I[\bw_0]^2I[\bw_0+\bw]V_t^2},
\end{align}
and the second derivatives of $V^u$ can be expressed as 
\begin{align}
    V^u_{tt}=V^0_{tt}+\frac{\bw_{;tt}}{I[\bw_0]^3}+\frac{3}{I[\bw_0]^4}\left(\frac{\bw_{0;tt}}{V_t}-\frac{2V_{tt}\bw_{0;t}}{V_t^2}\right)\bw_{;t}+q_{tt}[\bw],
\end{align}
and
\begin{align}
    V^u_{tx}=V^0_{tx}+\frac{\bw_{;xt}}{I[\bw_0]^2}+ \frac{\bw_{0;x}\bw_{;tt}}{I[\bw_0]^3V_t}+ \frac{1}{I[\bw_0]^3} \left(\frac{\bw_{0;tt}}{V_t}-\frac{2V_{tt}\bw_{0;t}}{V_t^2}\right)\bw_{;x}+c[\bw_0]\bw_{;t}+q_{tx}[\bw],
\end{align}
and
\begin{align}
   V_{xx}^u=V^0_{xx}+\frac{\bw_{;xx}}{I[\bw_0]} +\frac{2\bw_{0;x}}{I[\bw_0]^2V_t}\bw_{;xt} +\frac{\bw_{0;x}}{I[\bw_0]^3V_t}\bw_{;tt}+a[\bw_0]\bw_{;x}+b[\bw_0]\bw_{;t}+q_{xx}[\bw],
\end{align}
where $a[\bw_0]$, $b[\bw_0]$ and $c[\bw_0]$ as well as $q_{tt}[\bw]$,  $q_{tx}[\bw]$ and  $q_{xx}[\bw]$ are specified below. Here, the profile function $V^u$ and its derivatives are evaluated at $\big(x, t - \tfrac{\bw^u}{V_t}(x,t) \big)$, and likewise $V^0$ and its derivatives are evaluated at $\big(x, t - \tfrac{\bw_0}{V_t}(x,t) \big)$.
\end{lemma}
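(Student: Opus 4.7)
The plan is to derive closed-form expressions for each derivative of $V^u$ directly from the defining relation \eqref{v_vs_vu} by implicit differentiation, and then expand the result in powers of $\bw$ to isolate the displayed linear-in-$\bw$ terms. Setting $\eta(x,t) := t - \bw^u(x,t)/V_t(x,t)$, we rewrite \eqref{v_vs_vu} as $V(x,t) = V^u(x, \eta(x,t))$. A routine computation, using $(\bw^u/V_t)_i = \bw^u_{;i}/V_t$ for $i\in\{x,t\}$, gives $\eta_t = I[\bw^u]$ and $\eta_x = -\bw^u_{;x}/V_t$. Differentiating $V(x,t) = V^u(x,\eta(x,t))$ in $t$ and $x$ and solving the resulting two scalar equations yields the closed forms
\begin{equation*}
V^u_t = \frac{V_t}{I[\bw^u]}, \qquad V^u_x = V_x + \frac{\bw^u_{;x}}{I[\bw^u]},
\end{equation*}
evaluated at $(x,\eta(x,t))$; the same formulas with $\bw=0$ give $V^0_t$ and $V^0_x$ at $(x, t - \bw_0/V_t)$ and serve as the reference point for the expansion.

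Writing $\bw^u = \bw_0 + \bw$ and using the elementary identity $1/I[\bw^u] - 1/I[\bw_0] = \bw_{;t}/(V_t\, I[\bw_0]\, I[\bw^u])$, the difference becomes $V^u_t - V^0_t = \bw_{;t}/(I[\bw_0] I[\bw^u])$, which after the further expansion $I[\bw_0]/I[\bw^u] = 1 + \bw_{;t}/(V_t\, I[\bw^u])$ assumes precisely the form claimed in the lemma. The same mechanism gives the $V^u_x$ formula once we split $\bw^u_{;x}/I[\bw^u] - \bw_{0;x}/I[\bw_0] = \bw_{;x}/I[\bw^u] + \bw_{0;x}\bigl(1/I[\bw^u] - 1/I[\bw_0]\bigr)$ and expand each factor. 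For the second derivatives, I will differentiate the closed forms of the previous step once more in $t$ or $x$. For instance, differentiating $V^u_t \cdot I[\bw^u] = V_t$ in $t$ yields
\begin{equation*}
V^u_{tt} \cdot I[\bw^u] = \frac{V_{tt}}{I[\bw^u]} - \frac{V_t\, (I[\bw^u])_t}{I[\bw^u]^2},
\end{equation*}
and the computation $(\bw^u_{;t}/V_t)_t = \bw^u_{;tt}/V_t - 2 V_{tt}\bw^u_{;t}/V_t^2$ rewrites $(I[\bw^u])_t$ in sectional-derivative notation. Subtracting the $\bw=0$ version, the contributions that are linear in $\bw$ reproduce exactly $\bw_{;tt}/I[\bw_0]^3$ together with $3\bw_{;t}/I[\bw_0]^4\,(\bw_{0;tt}/V_t - 2V_{tt}\bw_{0;t}/V_t^2)$, and the remainder is collected into $q_{tt}[\bw]$. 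The analogous differentiations of $V^u_t$ in $x$ and of $V^u_x$ in $x$ produce the formulas for $V^u_{tx}$ and $V^u_{xx}$, with $a[\bw_0]$, $b[\bw_0]$, $c[\bw_0]$ identified as the coefficients of $\bw_{;x}$ and $\bw_{;t}$ in the $\bw=0$ Taylor expansion.

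The main obstacle is purely bookkeeping: the derivatives of the quotients $V_t/I[\bw^u]$ and $\bw^u_{;x}/I[\bw^u]$ produce a profusion of cross-terms, and these must be sorted into the displayed linear-in-$\bw$ pieces plus genuinely quadratic remainders. That the remainders $q_{tt}[\bw]$, $q_{tx}[\bw]$, $q_{xx}[\bw]$ are well defined follows by construction: every surviving term contains at least two factors drawn from $\{\bw, \bw_{;x}, \bw_{;t}, \bw_{;xx}, \bw_{;tx}, \bw_{;tt}\}$, which makes the expansion genuinely a first-order Taylor expansion in $\bw$ around $\bw = 0$. No analytic input beyond implicit differentiation and algebraic manipulation is required.
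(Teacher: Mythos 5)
Your proposal is correct and follows essentially the same route as the paper's proof: implicit differentiation of \eqref{v_vs_vu} to obtain the closed forms $V^u_t = V_t/I[\bw^u]$ and $V^u_x = V_x + \bw^u_{;x}/I[\bw^u]$, the algebraic identity $1/I[\bw^u]-1/I[\bw_0] = \bw_{;t}/(V_t\,I[\bw_0]\,I[\bw^u])$ to isolate the $\bw$-dependence, then a second differentiation and the computation $\big(\bw^u_{;t}/V_t\big)_t=\bw^u_{;tt}/V_t-2V_{tt}\bw^u_{;t}/V_t^2$ for the second-order formulas. The only cosmetic difference is your explicit introduction of the auxiliary variable $\eta(x,t)=t-\bw^u/V_t$, which the paper leaves implicit by differentiating \eqref{v_vs_vu} directly.
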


\begin{proof}
Differentiating \eqref{v_vs_vu} we get
\begin{equation}
V^u_t = \frac{V_t}{I[\bw^u]},\qquad V^u_x =  V_x+ \frac{\bw^u_{;x}}{I[\bw^u]}.
\end{equation}
It follows that
\begin{align}
V_t^u -V_t^0=\frac{\bw_{;t}}{I[\bw_0]I[\bw_0+\bw]}, \qquad  V_x^u - V_x^0=\frac{\bw_{;x}}{I[\bw_0+\bw]}+\frac{\bw_{0;x}\bw_{;t}}{I[\bw_0]I[\bw_0+\bw]V_t}.
\end{align}
Observing also that we have the algebraic identity
\begin{align}
 \frac{1}{I[\bw_0+\bw]}  = \frac{1}{I[\bw_0]}   +  \frac{\bw_{;t}}{I[\bw_0]I[\bw_0+\bw]V_t},
\end{align}
this yields the claimed formulas for the first derivatives. Differentiating again we obtain
\begin{equation}
V_{tt}^u=\frac{V_{tt}}{I[\bw^u]^2}+\frac{\bw^u_{;tt}-2\frac{V_{tt}}{V_t}\bw^u_{;t}}{I[\bw^u]^3}  ,
\end{equation}
and
\begin{equation}
V_{xt}^u=\frac{V_{tx}}{I[\bw^u]} + \frac{\bw^u_{;xt}-\frac{V_{tx}}{V_t} \bw^u_{;t}}{I[\bw^u]^2}+\frac{\frac{1}{V_t}\bw^u_{;x}(\bw^u_{;tt}-2\frac{V_{tt}}{V_t}\bw^u_{;t})}{I[\bw^u]^3},
\end{equation}
and
\begin{align}
    V_{xx}^u=V_{xx}+ \frac{\bw^u_{;xx}}{I[\bw^u]}+\frac{\frac{1}{V_t}\bw^u_{;x}(2\bw^u_{;xt}-2\frac{V_{xt}}{V_t}\bw^u_{;t}-\frac{V_{tt}}{V_t}\bw^u_{;x})}{I[\bw^u]^2}+\frac{\frac{1}{V_t^2}(\bw^u_{;x})^2(\bw^u_{;tt}-\frac{2V_{tt}}{V_t}\bw^u_{;t})}{I[\bw^u]^3}.
\end{align}
This implies the claimed formulas for the second derivatives with\footnote{We decided to write down these formulas for concreteness, but only their structure not their precise form is important.}
\begin{equation}\label{abwdef}
a[\bw_0]=\frac{2}{I[\bw_0]^2}\left( \frac{\bw_{0;xt}}{V_t}-\frac{V_{tt}\bw_{0;x}+V_{xt}\bw_{0;t}}{V_t^2} \right)+\frac{2}{I[\bw_0]^3}\frac{\bw_{0;x}}{V_t}\left(\frac{\bw_{0;tt}}{V_t}-\frac{2V_{tt}\bw_{0;t}}{V_t^2} \right),
\end{equation}
and
\begin{multline}\label{bbwdef}
b[\bw_0]=\frac{1}{I[\bw_0]^2}\left(\frac{\bw_{0;xx}}{V_t}-\frac{2V_{xt}\bw_{0;x}}{V_t^2} \right) 
+\frac{4}{I[\bw_0]^3}\frac{\bw_{0;x}}{V_t}\left( \frac{\bw_{0;xt}}{V_t}-\frac{V_{tt}\bw_{0;x}+V_{xt}\bw_{0;t}}{V_t^2} \right)\\
+\frac{3}{I[\bw_0]^4}\frac{\bw_{0;x}^2}{V_t^2}\left(\frac{\bw_{0;tt}}{V_t}-\frac{2V_{tt}\bw_{0;t}}{V_t^2} \right),
\end{multline}
and
\begin{equation}\label{cbwdef}
c[\bw_0]=\frac{2}{I[\bw_0]^3}\left( \frac{\bw_{0;xt}}{V_t}-\frac{V_{tt}\bw_{0;x}+V_{xt}\bw_{0;t}}{V_t^2} \right)+\frac{3}{I[\bw_0]^4}\frac{\bw_{0;x}}{V_t}\left(\frac{\bw_{0;tt}}{V_t}-\frac{2V_{tt}\bw_{0;t}}{V_t^2} \right),
\end{equation}
as well as
\begin{align}
q_{tt}[\bw]=&\,\frac{\frac{3}{V_t}\bw_{;t}(\bw_{;tt}-2\frac{V_{tt}}{V_t}\bw_{;t})}{I[\bw_0]^4}+\frac{3\frac{V_{tt}}{V_t^2}\bw_{;t}^2}{I[\bw_0]^2I[\bw^u]^2}\nonumber\\
&+\frac{\frac{3}{V_t^2}\bw_{;t}^2(\bw^u_{;tt}-2\frac{V_{tt}}{V_t}\bw^u_{;t})(I[\bw^u]+I[\bw_0])}{I[\bw_0]^4I[\bw^u]^2}+\frac{\frac{1}{V_t^3}\bw_{;t}^3(\bw^u_{;tt}-2\frac{V_{tt}}{V_t}\bw^u_{;t})}{I[\bw_0]^3I[\bw^u]^3},
\end{align}
and 
\begin{align}\label{V_1_tx_eq}
q_{tx}[\bw]=&\, \frac{\frac{1}{V_t^2} \bw_{;t}^2(\bw^u_{;xt}-\frac{V_{tx}}{V_t} \bw^u_{;t})}{I[\bw_0]^2I[\bw^u]^2}+\frac{\frac{1}{V_t}\bw_{;x}(\bw_{;tt}-2\frac{V_{tt}}{V_t}\bw_{;t})}{I[\bw_0]^3}+ \frac{\frac{2}{V_t} \bw_{;t}(\bw_{;xt}-\frac{V_{tx}}{V_t} \bw_{;t})}{I[\bw_0]^3}+\frac{\frac{V_{tx}}{V_t^2}\bw_{;t}^2}{I[\bw_0]^2I[\bw^u]}\nonumber\\
&+\frac{\frac{3}{V_t^2}\bw_{0;x}\bw_{;t}(\bw_{;tt}-2\frac{V_{tt}}{V_t}\bw_{;t})+\frac{3}{V_t^2}\bw_{;x}\bw_{;t}(\bw^u_{;tt}-2\frac{V_{tt}}{V_t}\bw^u_{;t})}{I[\bw_0]^4}+ \frac{\frac{2}{V_t^2} \bw_{;t}^2(\bw^u_{;xt}-\frac{V_{tx}}{V_t} \bw^u_{;t})}{I[\bw_0]^3I[\bw^u]}\\
&+\frac{\frac{3}{V_t^3}\bw_{;t}^2\bw^u_{;x}(\bw^u_{;tt}-2\frac{V_{tt}}{V_t}\bw^u_{;t})}{I[\bw_0]^4I[\bw^u]}+\frac{\frac{3}{V_t^3}\bw_{;t}^2\bw^u_{;x}(\bw^u_{;tt}-2\frac{V_{tt}}{V_t}\bw^u_{;t})}{I[\bw_0]^3I[\bw^u]^2}+\frac{\frac{1}{V_t^4}\bw_{;t}^3\bw^u_{;x}(\bw^u_{;tt}-2\frac{V_{tt}}{V_t}\bw^u_{;t})}{I[\bw_0]^3I[\bw^u]^3},\nonumber
\end{align}
and
\begin{align}
  q_{xx}[\bw]=&\,\frac{\frac{1}{V_t}\bw_{;t}  \bw_{;xx}}{I[\bw_0]^2} + \frac{\frac{1}{V_t^2}\bw_{;t}^2  \bw^u_{;xx}}{I[\bw_0]^2I[\bw^u]}+\frac{\frac{1}{V_t}\bw_{;x}(2\bw_{;xt}-2\frac{V_{xt}}{V_t}\bw_{;t}-\frac{V_{tt}}{V_t}\bw_{;x})}{I[\bw_0]^2}\nonumber\\
    &+\frac{\frac{2}{V_t^2}\bw_{;t}\bw_{;x}(2\bw_{0;xt}-2\frac{V_{xt}}{V_t}\bw_{0;t}-\frac{V_{tt}}{V_t}\bw_{0;x})+\frac{2}{V_t^2}\bw_{;t}\bw_{0;x}(2\bw_{;xt}-2\frac{V_{xt}}{V_t}\bw_{;t}-\frac{V_{tt}}{V_t}\bw_{;x})}{I[\bw_0]^3}\nonumber\\
    &+\frac{\frac{2}{V_t^2}\bw_{;t}\bw_{;x}(2\bw_{;xt}-2\frac{V_{xt}}{V_t}\bw_{;t}-\frac{V_{tt}}{V_t}\bw_{;x})}{I[\bw_0]^3}+\frac{\frac{1}{V_t^3}\bw_{;t}^2\bw^u_{;x}(2\bw^u_{;xt}-2\frac{V_{xt}}{V_t}\bw^u_{;t}-\frac{V_{tt}}{V_t}\bw^u_{;x})(I[\bw_0]+2I[\bw^u])}{I[\bw_0]^3I[\bw^u]^2}\nonumber\\
    &+\frac{\frac{1}{V_t^2}\bw_{;x}^2(\bw^u_{;tt}-2\tfrac{V_{tt}}{V_t}\bw^u_{;t})+\frac{2}{V_t^2}\bw_{0;x}\bw_{;x}(\bw_{;tt}-2\tfrac{V_{tt}}{V_t}\bw_{;t})}{I[\bw_0]^3}+\frac{\frac{3}{V_t^3}\bw_{;t}\bw_{;x}(2\bw_{0;x}+\bw_{;x})(\bw^u_{;tt}-2\frac{V_{tt}}{V_t}\bw^u_{;t})}{I[\bw_0]^4}\\
    &+\frac{\frac{3}{V_t^3}\bw_{0;x}^2\bw_{;t}(\bw_{;tt}-2\frac{V_{tt}}{V_t}\bw_{;t})}{I[\bw_0]^4}+\frac{\frac{3}{V_t^4}\bw_{;t}^2(\bw^u_{;x})^2(\bw^u_{;tt}-2\frac{V_{tt}}{V_t}\bw^u_{;t})}{I[\bw_0]^4I[\bw^u]}+\frac{\frac{3}{V_t^4}\bw_{;t}^2(\bw^u_{;x})^2(\bw^u_{;tt}-2\frac{V_{tt}}{V_t}\bw^u_{;t})}{I[\bw_0]^3I[\bw^u]^2}\nonumber\\
    &+\frac{\frac{1}{V_t^5}\bw_{;t}^3(\bw^u_{;x})^2(\bw^u_{;tt}-2\frac{V_{tt}}{V_t}\bw^u_{;t})}{I[\bw_0]^3I[\bw^u]^3}.\nonumber
\end{align}
This proves the lemma.
\end{proof}

\begin{proposition}[quadratic quantity in cylindrical gauge]\label{prop_quad_cyl} We have
\begin{multline}
Q_{\phi_0}[u]\diamond V=-l_1[\bw]\left(\frac{l_P[\bw]}{\eta_0^2}+P_0l_2[\bw]-l_t[\bw]\right)+\eta_u N[\phi_0-u]q_1[\bw]\\
-(\eta_0^{-1}+l_1[\bw])\left(\frac{q_P[\bw]}{\eta_u^2}+(P_0+l_P[\bw])q_2[\bw]+l_P[\bw] l_2[\bw]-\frac{\bw_{;t}^2}{I[\bw_0]^2I[\bw^u]V_t}\right),
\end{multline}
where the various quantities appearing in the statement are specified below.
\end{proposition}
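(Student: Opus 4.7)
The plan is to start from the identity \eqref{Theta_phiu}, which expresses $\Theta[\phi_0+u]\diamond V^u$ as a rational combination of $V^u$ and its first and second derivatives (evaluated at the shifted point $(x,t-\bw^u/V_t)$), together with the analogous identity for $\Theta[\phi_0]\diamond V^0$ obtained by setting $\bw=0$. Using Lemma~\ref{lemma_der_cyl_prof} (derivatives of cylindrical profile), every entry $V^u_{*}$ can be written as $V^0_{*}$ plus a term linear in $(\bw_{;t},\bw_{;x},\bw_{;tt},\bw_{;xt},\bw_{;xx})$ with coefficients built from $\bw_0$ and $V$, plus a purely quadratic remainder in $\bw$ (the terms collected in $q_{tt},q_{tx},q_{xx}$ and the higher powers of $\bw_{;t}/V_t$ in the first derivatives).

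Next, I would introduce shorthand for the two compound ``building blocks'' of the translator operator: the normal length factor
\begin{equation*}
\eta := \frac{1}{\sqrt{1+V_x^2+V_t^2}},
\end{equation*}
and the bracketed principal term
\begin{equation*}
P := \frac{(1+V_t^2)V_{xx}+(1+V_x^2)V_{tt}-2V_xV_tV_{xt}}{1+V_x^2+V_t^2}-\frac{1}{V}-V_t ,
\end{equation*}
so that $\Theta\diamond V = -\eta P$. Writing $\eta_u,P_u$ for these quantities evaluated on the perturbed profile and $\eta_0,P_0$ for the base ones, I would expand
\begin{equation*}
\eta_u = \eta_0^{-1}\big(\eta_0^{-1}+l_1[\bw]\big)^{-1}\quad\text{and}\quad P_u = P_0+l_P[\bw]+q_P[\bw],
\end{equation*}
where $l_1,l_P$ are the pieces \emph{linear} in $\bw$ (and its $;$-derivatives) produced by Lemma~\ref{lemma_der_cyl_prof}, while $q_1,q_2,q_P$ denote the quadratic remainders from Taylor-expanding $1/\sqrt{\cdot}$, $1/V$, and the rational expression for $P$ respectively. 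The coefficients $l_2[\bw],l_t[\bw]$ will collect the linear-in-$\bw$ contributions coming from $-1/V^u$ and $-V^u_t$ in $P_u$, while the final term $\bw_{;t}^2/(I[\bw_0]^2 I[\bw^u]V_t)$ is the quadratic correction built into $V^u_t$ itself via \eqref{V_1_t_eq}.

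After performing these substitutions, I would compute $-\eta_u P_u+\eta_0 P_0$ by straight algebra, using that $\eta_u^{-1}=\eta_0^{-1}+l_1[\bw]+(\text{quadratic})$ and distributing. By definition of the linearization $L_{\phi_0}u\diamond V$ is exactly the sum of terms linear in $\bw$ appearing in this expansion, so those terms cancel in $Q_{\phi_0}[u]\diamond V$. What remains is a sum of products ``linear$\times$linear'' plus ``$\times$quadratic'', which naturally groups into the three blocks of the claimed formula: the product $-l_1[\bw]\bigl(l_P[\bw]/\eta_0^2 + P_0 l_2[\bw] - l_t[\bw]\bigr)$ accounts for the cross-product between the linear part of $\eta_u$ and the linear part of $P_u$; the term $\eta_u N[\phi_0-u]q_1[\bw]$ is the intrinsic quadratic error in the Taylor expansion of the $1/\sqrt{\cdot}$ factor (written with the remainder form $N[\phi_0-u]$); and the last block absorbs the quadratic remainders $q_P$, $q_2$, and the $\bw_{;t}^2$-correction against the full $\eta_u^{-1}=\eta_0^{-1}+l_1[\bw]$.

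The main obstacle is purely bookkeeping: there are many rational terms and one must verify that, once $l_1,l_P,l_2,l_t$ are defined as the correct linear parts and $q_1,q_2,q_P$ as the matching quadratic remainders, no further ``mixed'' terms escape the three groupings in the statement. I would proceed one factor at a time (first $\eta_u$, then $1/V^u$, then the curvature ratio, then $V^u_t$), and at each step explicitly name the linear and quadratic parts so that the final identity becomes a tautological rearrangement. Because the proposition only asserts the \emph{structure} of $Q_{\phi_0}[u]\diamond V$ (with the precise definitions of the $l$'s and $q$'s deferred to the proof), once the decompositions are written down the identity is a line of algebra.
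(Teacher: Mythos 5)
Your overall strategy matches the paper's proof: rewrite $Q_{\phi_0}[u]\diamond V$ as $N[\phi_0+u]-N[\phi_0]-L_{\phi_0}u\diamond V$ using \eqref{Theta_phiu} and \eqref{v_vs_vu}, feed in the expansion $V^u_\alpha=V^0_\alpha+l_\alpha[\bw]+q_\alpha[\bw]$ from Lemma~\ref{lemma_der_cyl_prof}, and collect the quadratic remainder after the linear terms cancel against the linearization. That is exactly what the paper does, so the plan is sound at that level. However, two concrete points in your write-up are off.

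First, your normal-length factor is defined with a reciprocal ($\eta:=1/\sqrt{1+V_x^2+V_t^2}$), but the paper's $\eta_u$ is $\sqrt{1+(V_x^u)^2+(V_t^u)^2}$, and the formula in the proposition $(\eta_0^{-1}+l_1[\bw])$ is only compatible with the expansion $1/\eta_u = 1/\eta_0 + l_1[\bw] + q_1[\bw]$ in the paper's convention. Your proposed expansion $\eta_u=\eta_0^{-1}(\eta_0^{-1}+l_1[\bw])^{-1}$ does not reduce to this (nor to the corresponding statement under your convention, which would simply be $\eta_u=\eta_0+l_1+q_1$), so as written it is an incorrect identity. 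Second, you assert that $l_2[\bw]$ collects the linear contribution of $-1/V^u$. This is not what happens: by the very definition \eqref{v_vs_vu}, $V^u$ evaluated at the shifted argument $\big(x,t-\tfrac{\bw^u}{V_t}\big)$ is \emph{exactly} $V(x,t)$, and likewise $V^0$ at its shifted argument equals $V(x,t)$. Thus the $-1/V^u$ term is identically $-1/V$, contributes nothing to the expansion in $\bw$, and is simply cancelled between $N[\phi_0+u]$ and $N[\phi_0]$. In the paper, $l_2[\bw]$ is instead the linear part of $1/\eta_u^2-1/\eta_0^2$, which multiplies the curvature numerator $P_0$ to produce the $P_0\,l_2[\bw]$ term. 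Also note that the paper's $P_u$ is only the numerator $(1+(V^u_t)^2)V^u_{xx}+(1+(V^u_x)^2)V^u_{tt}-2V^u_xV^u_tV^u_{xt}$ (not the full bracket you defined), which is what makes the factorization $P_u/\eta_u^2-1/V^u-V^u_t$ and the resulting grouping of $l$'s and $q$'s come out as stated. With those two items corrected, the remainder of your bookkeeping plan goes through.
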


\begin{proof}For ease of notation, let us abbreviate
\begin{equation}
   P_u= (1+(V^u_t)^2)V^u_{xx}+(1+(V^u_x)^2)V^u_{tt}-2V^u_xV^u_tV^u_{xt},
\end{equation}
and
\begin{equation}
    \eta_u=\sqrt{1+(V_x^u)^2+(V_t^u)^2}.
\end{equation}
Using \eqref{v_vs_vu} we can express our quadratic quantity in the form
\begin{equation}\label{Theta_phiu_rest}
Q_{\phi_0}[u]\diamond V = N[\phi_0+u]-N[\phi_0]-L_{\phi_0} u\diamond V,
\end{equation}
where
\begin{equation}
N[\phi_0+u]=\frac{-1}{\eta_u}\bigg(
\frac{P_u}{\eta_u^2}-\frac{1}{V^u}-V^u_t\bigg),
\end{equation}
with the usual convention that $V^u$ and its derivatives are evaluated at $\big(x, t - \tfrac{\bw^u}{V_t}(x,t) \big)$.
To conveniently expand our quadratic quantity, let us write the statement of Lemma \ref{lemma_der_cyl_prof} (derivatives of cylindrical profile) in the schematic form
\begin{equation}
   V_{\alpha}^u=V^0_{\alpha}+l_{\alpha}[\bw]+q_{\alpha}[\bw],\qquad\qquad \alpha\in\{x,t,xx,xt,tt\}.
\end{equation}
Then, a direct computation shows that
\begin{equation}
P_u=P_0+l_P[\bw] +q_P[\bw],
\end{equation}
where
\begin{multline}
l_P[\bw] =(1+(V_t^0)^2)l_{xx}[\bw]+(1+(V_x^0)^2)l_{tt}[\bw]-2V_x^0V_t^0l_{tx}[\bw]\\
    +2(V^0_{xx}V^0_t-V^0_{tx}V^0_x) l_t[\bw]  +2(V^0_{tt}V^0_x-V^0_{tx}V^0_t) l_x[\bw],
\end{multline}
and
\begin{align}
    q_{P}[\bw]=&\,(1+(V_t^u)^2)q_{xx}[\bw]+(1+(V_x^u)^2)q_{tt}[\bw]-2V_x^uV_t^uq_{tx}[\bw]\nonumber\\
        &+2V_t^0l_t[\bw]l_{xx}[\bw]+2V_x^0l_x[\bw] l_{tt}[\bw]-2\left(V_x^0l_t[\bw] +V_t^0l_x[\bw]\right)l_{tx}[\bw]\nonumber\\
    &+(V_{xx}^0+l_{xx}[\bw])\left(\frac{I[\bw_0]^2}{I[\bw^u]^2}+\frac{2V_t^0I[\bw_0]^2}{V_t I[\bw^u]}\right) l_t[\bw]^2-2(V_{tx}^0+l_{tx}[\bw])\frac{I[\bw]^2}{I[\bw^u]^2}l_t[\bw]l_x[\bw]\\
    &+(V_{tt}^0+l_{tt}[\bw])\left(\frac{I[\bw_0]^2}{I[\bw^u]^2}l_x[\bw]^2+2\frac{V_x^0I[\bw_0]^2}{V_t I[\bw^u]}l_t[\bw]l_x[\bw]\right).\nonumber
\end{align}
Similarly, for $k=1,2$ we infer that
\begin{equation}
\frac{1}{\eta_u^{k}}=\frac{1}{\eta_0^{k}}+l_{k}[\bw] +q_{k}[\bw],
\end{equation}
where
\begin{equation}
    l_{k}[\bw]=\frac{-k}{\eta_0^{k+2}}\left(V_x^0l_x[\bw]+V_t^0 l_t[\bw]\right),
\end{equation}
and
\begin{multline}
q_1[\bw]= -\frac{1}{2\eta_0^3}(l_x[\bw]^2+2V_x^uq_x[\bw]- q_x[\bw]^2+l_t[\bw]^2+2V_t^uq_t[\bw]- q_t[\bw]^2)\\
+\frac{(2\eta_0+\eta_u)\left[2V_x^0(l_x[\bw]+q_x[\bw])+(l_x[\bw]+q_x[\bw])^2+2V_t^0(l_t[\bw]+q_t[\bw])+(l_t[\bw]+q_t[\bw])^2\right]^2}{2\eta_0^3\eta_u(\eta_0+\eta_u)^2},
\end{multline}
and
\begin{align}
q_2[\bw]=l_1[\bw]^2+\frac{2}{\eta_u}q_1[\bw]-q_1[\bw]^2.
\end{align}
Finally, observe that
\begin{equation}
    \frac{P_u}{\eta_u^2}=\frac{P_0}{\eta_0^2}+\frac{l_P[\bw]}{\eta_0^2}+P_0l_2[\bw]+\frac{q_P[\bw]}{\eta_u^2}+(P_0+l_P[\bw])q_2[\bw]+l_P[\bw]l_2[\bw].
\end{equation}
Combining the above formulas the assertion follows.
\end{proof}

Also, as before we set
\begin{equation}
\tilde{\bw}(x,s,t)=\bw(x,s+t),\qquad \tilde{V}(x,s,t)=V(x,s+t).
\end{equation}
Moreover, we define
\begin{equation}
Q_{\phi_0}[ \tilde{\bw} ](x,s,t):=\sqrt{1+\tilde{V}_x^2(x,s,t)+\tilde{V}_s^2(x,s,t)} \,\, Q_{\phi_0}[u](x,\tilde{V}(x,s,t),0).
\end{equation}
Note that $Q_{\phi_0}[ \tilde{\bf{w}} ](x,s,t)=Q_{\phi_0}[ {\bf{w}} ](x,s+t)$, provided we set $Q_{\phi_0}[ {\bf{w}} ]:=\sqrt{1+{V}_x^2+{V}_t^2} \,\, Q_{\phi_0}[u]\diamond V$.

\begin{corollary}[quadratic error for cylindrical variation]\label{quad_err_prop}
There exist constants $\eps>0$ and $C<\infty$, such that for all $\phi_0=\phi_\ast+u_0$ with $ \| u_0 \|_{\mathbb{X}^{k+2,\alpha}(\mathbb{R}^3/S^1,\mathbb{C})}\leq \eps$ and all $u$ with $ \| u \|_{\mathbb{X}^{k+2,\alpha}(\mathbb{R}^3/S^1,\mathbb{C})}\leq \eps$,  
for any $x$ and $t\leq -h_0$, setting
 $r=\tfrac{1}{10}H(x,t)^{-1}$, whenever $V(x,t)\geq \ell \sqrt{|t|/\log|t|}$ we have
\begin{align}\label{cor_ts1}
\| Q_{\phi_0}[ \tilde{\bf{w}} ] \|_{C_H^{k-2,\alpha,(2)}\left(P_{r}\left(x,t\right),\mathbb{C}\right)} \leq C\|\tilde{\bw}\|_{C_H^{k+2,\alpha}\left(P_{r}\left(x,t\right),\mathbb{C}\right)}^2 .
\end{align}
\end{corollary}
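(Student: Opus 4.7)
The proof will proceed by inspecting the explicit expansion of $Q_{\phi_0}[u]\diamond V$ obtained in Proposition \ref{prop_quad_cyl}. The idea rests on three ingredients: (i) the structural fact that every summand in that expansion is at least quadratic in $\bw$ and its derivatives up to second order; (ii) uniform control, in the weighted H\"older norms $C^{k+2,\alpha,(l)}_H$, on the coefficients appearing in the expansion, which are built out of $\tilde V$ (via the translator $\phi_\ast$) and the small perturbation $\bw_0$; and (iii) the product rule $\|fg\|_{C^{k,\alpha,(l_1+l_2)}_H}\leq C\|f\|_{C^{k,\alpha,(l_1)}_H}\|g\|_{C^{k,\alpha,(l_2)}_H}$ which for these norms follows from a direct Leibniz computation and the triangle inequality for H\"older quotients.

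The plan is as follows. First, I would absorb the prefactor $\sqrt{1+\tilde V_x^2+\tilde V_s^2}$ and the composition with $\tilde V$: by Theorem \ref{thm_unique_asympt_recall} and the cylindrical estimates \eqref{eq_cyl_est}, all derivatives of $\tilde V$ up to order $k+2+\alpha$ are uniformly bounded on $P_r(x,t)$ in the relevant weighted norm, so composition and multiplication by this prefactor cost only a uniform constant. Then I would group the terms in Proposition \ref{prop_quad_cyl} into (A) products of two linear expressions $l_\alpha[\bw]\cdot l_\beta[\bw]$ (such as $l_1[\bw]\cdot l_P[\bw]/\eta_0^2$, $l_1[\bw]\cdot P_0 l_2[\bw]$, $l_1[\bw]\cdot l_t[\bw]$, $l_P[\bw]\cdot l_2[\bw]$); (B) products of a bounded coefficient with a quadratic or higher expression $q_\alpha[\bw]$ (coming from $\eta_u N[\phi_0-u]q_1[\bw]$, $q_P[\bw]/\eta_u^2$, $(P_0+l_P[\bw])q_2[\bw]$, and the pure quadratic $\bw_{;t}^2/(I[\bw_0]^2 I[\bw^u]V_t)$); and (C) higher-order cross products $l_1[\bw]\cdot (\text{B-term})$ which are at least cubic. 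For (A), the product rule immediately yields $\|\cdot\|_{C^{k-2,\alpha,(2)}_H}\leq C\|\tilde\bw\|_{C^{k-1,\alpha}_H}^2$, with the coefficients $1/\eta_0^2$, $P_0$, etc.\ contributing only uniform constants in the correct shifted norms, as follows from the sharp asymptotics and the ellipticity of $\eta_0$. For (B), the smallness assumption $\|u_0\|_{\mathbb{X}^{k+2,\alpha}}\leq\eps$ ensures $|I[\bw_0]-1|+|I[\bw^u]-1|\ll 1$ and keeps the denominators bounded; moreover, since $\phi_\ast$ is a translator so that $\Theta[\phi_\ast]=0$, one has $\|N[\phi_0-u]\|_{C^{k-2,\alpha,(2)}_H}\leq C\eps$ from the $\mathbb{X}^{k+2,\alpha}$-smallness of $u_0$ and $u$. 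For (C), one extra factor of $\tilde\bw$ provides an extra $\eps$, which is harmless after choosing $\eps$ small.

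The main obstacle I anticipate is the bookkeeping of the explicit quartic and quintic terms that appear in $q_{tt}[\bw], q_{tx}[\bw], q_{xx}[\bw]$ from Lemma \ref{lemma_der_cyl_prof}: these come with denominators involving powers of $V_t$ and factors $I[\bw_0]^{-k}I[\bw^u]^{-\ell}$. To keep the denominators controlled I would exploit $|V_t|\sim \tfrac{1}{2V}$ from equation \eqref{cyl_gauge_trans_rec} together with the hypothesis $V(x,t)\geq \ell\sqrt{|t|/\log|t|}$, which guarantees $|V_t|^{-1}$ is bounded uniformly in the region of interest; and to reduce cubic, quartic, quintic terms to a quadratic bound I would use the smallness $\|\tilde\bw\|_{C^{k+2,\alpha}_H(P_r(x,t))}\leq C\eps$, so each additional factor of $\tilde\bw$ or $\partial\tilde\bw$ buys a factor $\eps$ and can be absorbed into the constant $C$. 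Putting all pieces together yields the desired estimate.
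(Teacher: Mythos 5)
Your overall plan (expand via Proposition \ref{prop_quad_cyl}, group the terms by homogeneity, estimate each factor in a shifted weighted H\"older norm, and absorb higher powers via the smallness assumption) is the same strategy as the paper's. However, two of the quantitative claims that carry the argument are false, and the false statements sit exactly where the paper does its decisive cancellation.

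First, your stated product rule $\|fg\|_{C^{k,\alpha,(l_1+l_2)}_H}\leq C\|f\|_{C^{k,\alpha,(l_1)}_H}\|g\|_{C^{k,\alpha,(l_2)}_H}$ is incorrect. Since the weight in $[\cdot]^{0,(l)}_H$ is $H^{1-l}$, one has $H^{1-(l_1+l_2)}|fg|=H^{-1}\bigl(H^{1-l_1}|f|\bigr)\bigl(H^{1-l_2}|g|\bigr)$, so the correct ``clean'' version has offset $l_1+l_2-1$, i.e.
\begin{equation}
\|fg\|_{C^{k,\alpha,(l)}_H(P_r(x,t))}\leq C\, r^{1+l-l_1-l_2}\,\|f\|_{C^{k,\alpha,(l_1)}_H(P_r(x,t))}\,\|g\|_{C^{k,\alpha,(l_2)}_H(P_r(x,t))},
\end{equation}
which is precisely the form the paper records. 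Second, your claim that $|V_t|\sim \tfrac{1}{2V}$ together with $V\geq\ell\sqrt{|t|/\log|t|}$ makes $|V_t|^{-1}$ uniformly bounded is backwards: the lower bound on $V$ gives a \emph{lower} bound on $|V_t|^{-1}\sim 2V$, so $|V_t|^{-1}$ is large, of order $\sqrt{|t|}\sim H^{-1}\sim r$, not bounded. In the paper's treatment of a representative term such as $\tilde V_t^{-5}\tilde{\bw}_{;t}^3\tilde{\bw}_{;x}^2\tilde{\bw}_{;tt}$, the factor $\tilde V_t^{-5}\sim r^5$ is exactly compensated by the factor $r^{-5}$ coming from the correct product rule when landing in the $(2)$-norm; your version loses both the positive and the negative $r$ powers, so the cancellation you implicitly rely on never happens. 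Fixing the product rule and replacing the erroneous boundedness claim by the comparison $\tilde V_t\sim H\sim r^{-1}$ (with the ensuing explicit cancellation of $r$ powers term by term) would bring your argument in line with the paper's proof.
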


\begin{proof}
To illustrate how the corollary follows, let us consider a typical term, specifically
\begin{equation}
Q_{\phi_0}[ \tilde{\bf{w}} ] \sim \frac{1}{\tilde{V}_t^5}  \tilde{\bw}_{;t}^3\tilde{\bw}_{;x}^2 \tilde{\bw}_{;tt} + \textrm{many other terms.}
\end{equation}
Since $(x,t)$ lies in the cylindrical region, we have $\tilde{V}_t \sim H \sim r^{-1}$, hence
\begin{equation}
\left\| \frac{1}{\tilde{V}_t^5}  \tilde{\bw}_{;t}^3\tilde{\bw}_{;x}^2 \tilde{\bw}_{;tt}  \right\|_{C^{k-2,\alpha,(2)}_H(P_r(x,t))} \leq C r^5 \left\| \tilde{\bw}_{;t}^3\tilde{\bw}_{;x}^2 \tilde{\bw}_{;tt}  \right\|_{C^{k-2,\alpha,(2)}_H(P_r(x,t))} .
\end{equation}
To proceed, we need the product rule for the weighted H\"older norms, namely
\begin{equation}
\| fg\|_{C^{k-2,\alpha,(l)}_H(P_r(x,t))} \leq C r^{1+l-l_1-l_2} \| f\|_{C^{k-2,\alpha,(l_1)}_H(P_r(x,t))} \| g\|_{C^{k-2,\alpha,(l_2)}_H(P_r(x,t))}.
\end{equation}
This yields
\begin{equation}
\left\| \tilde{\bw}_{;t}^3\tilde{\bw}_{;x}^2 \tilde{\bw}_{;tt}  \right\|_{C^{k-2,\alpha,(2)}_H(P_r(x,t))} \leq Cr^{-5}  \left\| \tilde{\bw}_{;t}   \right\|_{C^{k-2,\alpha,(2)}_H(P_r(x,t))}^3   \left\|  \tilde{\bw}_{;x} \right\|_{C^{k-2,\alpha,(1)}_H(P_r(x,t))}^2  \left\|  \tilde{\bw}_{;tt} \right\|_{C^{k-2,\alpha,(4)}_H(P_r(x,t))} .
\end{equation}
Also note that
\begin{equation}
 \left\| \tilde{\bw}_{;t}   \right\|_{C^{k-2,\alpha,(2)}_H(P_r(x,t))} \leq  \left\| \tilde{\bw}   \right\|_{C^{k,\alpha}_H(P_r(x,t))},\qquad \left\| \tilde{\bw}_{;x}   \right\|_{C^{k-2,\alpha,(1)}_H(P_r(x,t))} \leq  \left\| \tilde{\bw}   \right\|_{C^{k-1,\alpha}_H(P_r(x,t))},
\end{equation}
and
\begin{equation}
 \left\| \tilde{\bw}_{;tt}   \right\|_{C^{k-2,\alpha,(4)}_H(P_r(x,t))} \leq  \left\| \tilde{\bw}   \right\|_{C^{k+2,\alpha}_H(P_r(x,t))}.
\end{equation}
Combining the above, we infer that
\begin{equation}
\left\| \frac{1}{\tilde{V}_t^5}  \tilde{\bw}_{;t}^3\tilde{\bw}_{;x}^2 \tilde{\bw}_{;tt}  \right\|_{C^{k-2,\alpha,(2)}_H(P_r(x,t))}  \leq C  \left\| \tilde{\bw} \right\|_{C^{k+2,\alpha}_H(P_r(x,t))}^6 .
\end{equation}
Arguing similarly for all the other terms and their derivatives, this yields \eqref{cor_ts1}.
\end{proof}

Finally, in the soliton region we consider the quadratic quantity
\begin{equation}
Q_{\phi_0}[ \tilde{\bW} ](x_2,x_3,x_4,t):=\sqrt{1+|D\tilde{X}(x_2,x_3,x_4,t)|^2} \,\, Q_{\phi_0}[u](\tilde{X}(x_2,x_3,x_4,t),x_2,x_3).
\end{equation}

\begin{corollary}[quadratic error for tip variation]\label{quad_err_prop_tip}
There exist constants $\eps>0$ and $C<\infty$, such that for all $\phi_0=\phi_\ast+u_0$ with $ \| u_0 \|_{\mathbb{X}^{k+2,\alpha}(\mathbb{R}^3/S^1,\mathbb{C})}\leq \eps$ and all $u$ with $ \| u \|_{\mathbb{X}^{k+2,\alpha}(\mathbb{R}^3/S^1,\mathbb{C})}\leq \eps$, whenever $|x|\leq \ell \sqrt{|t|/\log|t|}$ at some $t\leq -h_0$, then setting $r=\sqrt{|t|/\log|t|}$ we have 
\begin{align}
\| Q_{\phi_0}[ \tilde{\bf{W}} ]\|_{C^{k-2,\alpha,(2)}_H(Q_r(x,t),\mathbb{C})} \leq C\| \tilde{\bf{W}} \|_{C^{k+2,\alpha}_H(Q_r(x,t),\mathbb{C})}^2 .
\end{align}
\end{corollary}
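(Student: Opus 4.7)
The plan is to mirror the proof of Corollary \ref{quad_err_prop} (quadratic error for cylindrical variation), with the cylindrical profile $V$ replaced by the tip profile $X$ and the single horizontal variable $x$ replaced by the pair $(x_2,x_3)$. First I would derive the tip analog of Lemma \ref{lemma_der_cyl_prof} (derivatives of cylindrical profile). Setting $\bW^u=\bW_0+\bW$, the very definition of the tip profile gives the implicit identity
\begin{equation*}
\tilde{X}(x_2,x_3,x_4,t)=\tilde{X}^u\!\left(x_2,x_3,\,x_4-\tfrac{\bW^u}{\tilde{X}_t},\,t\right),
\end{equation*}
and differentiating this in each of $x_2,x_3,x_4,t$ yields, for every multiindex $\alpha$ of order $\leq 2$, an expansion
\begin{equation*}
\tilde{X}^u_\alpha = \tilde{X}^0_\alpha + l_\alpha[\tilde{\bW}] + q_\alpha[\tilde{\bW}],
\end{equation*}
where $l_\alpha[\tilde{\bW}]$ is linear and $q_\alpha[\tilde{\bW}]$ is at least quadratic in the derivatives of $\tilde{\bW}$, with coefficients that are rational expressions in the derivatives of $\tilde{X}$ and $\tilde{\bW}_0$ (involving denominators of the form $I[\tilde{\bW}_0]=1-\tilde{\bW}_{0;t}/\tilde{X}_t$).

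Next I would substitute these expansions into the tip operator equation \eqref{X_eq} and its $\eps$-derivative. Subtracting the unperturbed equation and the linearization reproduces, in tip gauge, a formula for $Q_{\phi_0}[\tilde{\bW}]$ completely analogous to Proposition \ref{prop_quad_cyl} (quadratic quantity in cylindrical gauge): it expresses $Q_{\phi_0}[\tilde{\bW}]$ as a finite sum of rational functions of $\tilde{X}$-derivatives whose numerators are at least quadratic in the $\tilde{\bW}$-derivatives.

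With such a formula in hand, the estimate is concluded by applying the weighted H\"older product rule
\begin{equation*}
\|fg\|_{C^{k-2,\alpha,(l)}_H(Q_r)}\leq Cr^{1+l-l_1-l_2}\|f\|_{C^{k-2,\alpha,(l_1)}_H(Q_r)}\|g\|_{C^{k-2,\alpha,(l_2)}_H(Q_r)}
\end{equation*}
term by term, just as in the proof of Corollary \ref{quad_err_prop}. In the soliton region Theorem \ref{thm_unique_asympt_recall} (sharp asymptotics) together with the tip estimates \eqref{eq_tip_est} control all the relevant derivatives of $\tilde{X}$ and show that $I[\tilde{\bW}_0]$ is bounded away from zero once $\|\bW_0\|$ is small, so the rational coefficient factors contribute only a universal constant times appropriate powers of $r$.

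The hard part will be the bookkeeping of the anisotropic scaling. Unlike the cylindrical setting, where $\tilde{V}_t\sim H\sim r^{-1}$ allows a clean cancellation of powers of $r$, in the soliton region one has $|D\tilde{X}|\sim|\tau|^{1/2}$, so factors of $(1+|D\tilde{X}|^2)^{-1}$ produce $|\tau|^{-1}$ losses while the prefactor $\sqrt{1+|D\tilde{X}|^2}$ produces a $|\tau|^{1/2}$ gain. One must verify that, after collecting the $r$- and $|\tau|$-exponents from every rational coefficient together with those supplied by the product rule, each term of the expansion of $Q_{\phi_0}[\tilde{\bW}]$ is controlled by $\|\tilde{\bW}\|^2_{C^{k+2,\alpha}_H(Q_r)}$ with constants independent of $(x,t)$. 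The matching of these exponents is the only nontrivial check; once it is done for the leading-order $l_\alpha$-terms it propagates routinely to all higher-order error terms $q_\alpha$ by exactly the same mechanism as in the proof of Corollary \ref{quad_err_prop}.
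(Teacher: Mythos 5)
Your proposal matches the paper's intended argument exactly: the paper's proof of this corollary is a one-line remark that it ``follows from a similar (but less delicate) argument as for the cylindrical variation,'' and you have correctly identified the steps (tip analog of Lemma~\ref{lemma_der_cyl_prof} via the implicit shift $x_4\mapsto x_4-\bW^u/\tilde X_t$, substitution into \eqref{X_eq}, and the weighted H\"older product rule on $Q_r(x,t)$). The one place your emphasis differs from the paper's is that you flag the anisotropic exponent bookkeeping as ``the hard part,'' whereas the paper considers the tip case \emph{less} delicate than the cylindrical one --- largely because \eqref{X_D_evolve} has no singular $\tfrac{1}{\tilde V^2}$ zeroth-order term and, after rescaling by $r$, the operator is uniformly parabolic with bounded coefficients on $Q_r(x,t)$ (cf.\ Proposition~\ref{prop_interior_estimates_solition}), so the $r$- and $|\tau|$-powers match up automatically once one notes that $H^{-1}\sim r$ uniformly over the soliton region.
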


\begin{proof}
This follows from a similar (but less delicate) argument as for the cylindrical variation.
\end{proof}

Combining the above estimates we now obtain:

\begin{theorem}[global quadratic error estimate]\label{quad_err_put_together}
There exist constants $\eps>0$ and $C<\infty$, such that for all $\phi_0=\phi_\ast+u_0$ with $ \| u_0 \|_{\mathbb{X}^{k+2,\alpha}(\mathbb{R}^3/S^1,\mathbb{C})}\leq \eps$, we have
\begin{align}
\| Q_{\phi_0}[ u ]\|_{\mathbb{Y}^{k-2,\alpha}(\mathbb{R}^3/S^1,\mathbb{C})}\leq C \| u \|_{\mathbb{X}^{k+2,\alpha}(\mathbb{R}^3/S^1,\mathbb{C})}^2,
\end{align}
provided that $\| u \|_{\mathbb{X}^{k+2,\alpha}(\mathbb{R}^3/S^1,\mathbb{C})}\leq \eps$. In particular, the map
\begin{equation}
B_{\mathbb{X}^{k+2,\alpha}(\mathbb{R}^3/S^1)}(0,\eps)\to \mathbb{Y}^{k,\alpha}(\mathbb{R}^3/S^1),\qquad u\mapsto \Theta[\phi+u]
\end{equation}
is analytic, and its derivative is given by $L_{\phi+u}$.
\end{theorem}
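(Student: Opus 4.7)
The plan is to establish the quadratic bound by decomposing the target norm $\|\cdot\|_{\mathbb{Y}^{k-2,\alpha}}$ into its three regional pieces --- cap, cylindrical, tip --- and estimating each piece using the corresponding gauge-adapted quadratic-error corollary. In the cap region $\Omega_{2h_0}$, Corollary \ref{quad_err_graph} immediately yields $\|Q_{\phi_0}[u]\|_{C^{k-2,\alpha}(\Omega_{2h_0})}\leq C\|u\|_{\mathbb{X}^{k+2,\alpha}}^2$. In the tip region, both the domain contribution $\|\tilde{\bW}\|$ and the target contribution $\|Q_{\phi_0}[\tilde{\bW}]\|$ enter the respective norms unweighted, so Corollary \ref{quad_err_prop_tip} closes that region on the nose. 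This reduces the estimate to the cylindrical region and then the analyticity upgrade.

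For the cylindrical region, I would start from Corollary \ref{quad_err_prop} and rewrite
\[
\rho_\bullet^{-1}\|Q_{\phi_0}[\tilde{\bw}]\|_{C_H^{k-2,\alpha,(2)}(P_r)}\leq C\,\frac{\rho_\star^{2}}{\rho_\bullet}\Big(\rho_\star^{-1}\|\tilde{\bw}\|_{C_H^{k+2,\alpha}(P_r)}\Big)^{2}.
\]
The key sub-step is the pointwise bound $\rho_\star^{2}/\rho_\bullet\leq C$ on $\mathcal{C}$. Inspecting \eqref{domain_weight} and \eqref{target_weight} directly, this ratio equals $1/\log|t|$ in the branch $V\geq\theta\sqrt{|t|}$, while in the branch $V<\theta\sqrt{|t|}$ within $\mathcal{C}$ the defining lower bound $V\geq\ell\sqrt{|t|/\log|t|}$ yields $\rho_\star^2/\rho_\bullet\lesssim 1/\ell$ against the dominant $V^{-3}$ piece of $\rho_\bullet$, and a similar bound $\lesssim 1/(\theta^4\log|t|)$ near $V\sim\theta\sqrt{|t|}$ where the $1/\log|t|$ piece of $\rho_\bullet$ takes over. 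This compatibility is the geometric reason the specific exponents in \eqref{domain_weight} and \eqref{target_weight} were chosen. Combining the three regional bounds yields the quadratic estimate.

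The analyticity claim follows from the structural observation that $\Theta[\phi+u]$ is a rational function of the pointwise data $(u, Du, D^2 u)$ with denominator $(1+|D(\phi+u)|^2)^{3/2}$. On the complexification the denominator stays in the principal branch of the square root at every space-time point once $\|u\|_{\mathbb{X}^{k+2,\alpha}}$ is small enough, so $\Theta[\phi_0+u]$ admits a pointwise convergent Taylor expansion $\Theta[\phi_0+u]=\sum_{k\geq 0}P_k[\phi_0](u,\ldots,u)$, where $P_k[\phi_0]$ is a symmetric $k$-linear form whose coefficients are rational in the data for $\phi_0$. The same weighted-H\"older product-rule bookkeeping used to prove the quadratic bound applies verbatim to each $P_k$, yielding $\|P_k[\phi_0](u,\ldots,u)\|_{\mathbb{Y}^{k-2,\alpha}}\leq C_0^{k}\|u\|_{\mathbb{X}^{k+2,\alpha}}^{k}$ for a uniform $C_0<\infty$. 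Geometric convergence on $\|u\|_{\mathbb{X}^{k+2,\alpha}}<(2C_0)^{-1}$ gives analyticity, while the derivative identification $D\Theta|_{\phi+u}=L_{\phi+u}$ simply reads off as $P_1[\phi+u]=L_{\phi+u}$ by the definition of the linearization.

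The main obstacle is the bookkeeping for the cylindrical quadratic quantity: Proposition \ref{prop_quad_cyl} expresses $Q_{\phi_0}[\tilde{\bw}]$ as a sum of many fractions, with factors of $\tilde{V}_t\sim H\sim r^{-1}$ in denominators and up to six factors of $\tilde{\bw}$ and its derivatives in numerators. Getting the power counting right --- so that the offset $(2)$ in the target H\"older norm exactly absorbs the excess factors of $H$ --- is essentially what dictated the choice of offset $l=2$ for the image norms in the first place, and must be combined with the product rule $\|fg\|_{C_H^{k-2,\alpha,(l)}}\leq Cr^{1+l-l_1-l_2}\|f\|_{C_H^{k-2,\alpha,(l_1)}}\|g\|_{C_H^{k-2,\alpha,(l_2)}}$ applied term by term. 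For the analyticity upgrade the same bookkeeping must then be uniform in the Taylor order $k$, which is tractable because every $P_k$ inherits the homogeneity-two-in-$H$ structure of $Q_{\phi_0}$, with only combinatorial proliferation of terms.
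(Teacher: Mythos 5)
Your treatment of the quantitative quadratic estimate is essentially the paper's own argument: you decompose the $\mathbb{Y}^{k-2,\alpha}$-norm into cap, cylindrical, and tip pieces, invoke Corollary~\ref{quad_err_graph}, Corollary~\ref{quad_err_prop}, and Corollary~\ref{quad_err_prop_tip} respectively, and observe that the only nontrivial point is the weight compatibility in the cylindrical region. The paper's entire proof is the one-line observation $\rho_\bullet\geq\rho_\star^2$ plus a citation of those three corollaries; you additionally verify that inequality directly from \eqref{domain_weight} and \eqref{target_weight} in both branches, which is a useful addition the paper leaves implicit, and your computation of $\rho_\star^2/\rho_\bullet\lesssim 1/\log|t|$ in the cylindrical branch and $\lesssim 1/\ell$ in the collar branch is correct.

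Where you genuinely diverge from the paper is the analyticity upgrade. The paper obtains analyticity for free from the structure of the statement: since the quadratic bound is proved on the \emph{complexification} and uniformly over all base points $\phi_0=\phi_\ast+u_0$ in a small ball, it delivers complex Fréchet differentiability of $u\mapsto\Theta[\phi+u]$ at every point of that ball, and a locally bounded, complex Fréchet differentiable map between complex Banach spaces is automatically analytic; no power-series expansion is needed. Your proposal instead expands $\Theta[\phi_0+u]=\sum_k P_k[\phi_0](u,\dots,u)$ and tries to bound $\|P_k\|\leq C_0^k$ directly by redoing the weighted-Hölder product-rule bookkeeping at every Taylor order. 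That route is valid in principle, but it is strictly more work: you must track that the combinatorial growth of terms in $P_k$ (which you flag yourself) is at most geometric, and you must invoke polarization to pass from diagonal bounds $\|P_k(u,\dots,u)\|$ to operator-norm bounds on the symmetric multilinear forms. These are fillable gaps, but the paper's abstraction (uniform quadratic bound over base points plus the standard holomorphy criterion in complex Banach spaces) avoids them entirely. It is worth understanding that this is exactly why the theorem is phrased with a variable base point $\phi_0$ rather than just $\phi_\ast$.
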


\begin{proof}
Note that our weight functions satisfy
\begin{equation}
\rho_\bullet\geq\rho_\star^2.
\end{equation}
Hence, 
remembering the definitions of the norms, the theorem follows by combining
Corollary \ref{quad_err_graph} (quadratic error for graphical variation),
Corollary \ref{quad_err_prop} (quadratic error for cylindrical variation)
and Corollary \ref{quad_err_prop_tip} (quadratic error for tip variation).
\end{proof}

\bigskip

\subsection{Lyapunov-Schmidt reduction}
In this final subsection, we work with the graded Frechet spaces
\begin{equation}
\mathbb{X}=\bigcap_{k\geq 4}\mathbb{X}^{k,\alpha}(\mathbb{R}^3/S^1),\qquad \mathbb{Y}=\bigcap_{k\geq 4}\mathbb{Y}^{k-2,\alpha}(\mathbb{R}^3/S^1).
\end{equation}
To conveniently deal with normalizations, let us also define the somewhat smaller space
\begin{equation}
\mathbb{X}_0:= \{ u \in \mathbb{X} \, | \, u(0)=0, Du(0)=0 \}.
\end{equation}
Recall also that we denote by $\mathcal{S}$ the space of all nontrivial noncollapsed translators in $\mathbb{R}^4$ normalized as usual, in other words
\begin{multline}
\mathcal{S} = \big\{ \phi \in C^\infty(\mathbb{R}^3/S^1)  \, |  \, \Theta[\phi]=0, \phi(0)=0, D\phi(0) = 0, \\
  \textrm{$\phi$ is strictly convex and not SO$_3$-symmetric} \big\},
\end{multline}
which is equipped with the smooth topology. Let us fix some $\phi_\ast\in \mathcal{S}$.

\begin{lemma}[compatibility]\label{lemma_compatible}
If $u\in \mathbb{X}_0$ is such that $\Theta[\phi_\ast+u]=0$, then $\phi_\ast+u\in \mathcal{S}$. Conversely, there exists an open neighborhood $\mathcal{I}\subset \mathcal{S}$ of $\phi_\ast$, such that $\iota: \mathcal{I}\to \mathbb{X}_0$, $\phi\mapsto \phi-\phi_\ast$ is well-defined and continuous.
\end{lemma}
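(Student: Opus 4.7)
The plan is to verify each defining property of $\mathcal{S}$ for $\phi_\ast + u$ in turn. Smoothness follows from standard elliptic regularity for the quasilinear translator equation. The normalizations $(\phi_\ast+u)(0) = 0$ and $D(\phi_\ast+u)(0) = 0$ are immediate from $\phi_\ast \in \mathcal{S}$ and $u \in \mathbb{X}_0$, and the $S^1$-symmetry is inherited since both summands are $S^1$-symmetric. The substantive points are strict convexity and the failure of $SO_3$-symmetry. The key input here is Proposition \ref{prop_who_contr_whom} (controlled pointwise quantities), which shows that $u \in \mathbb{X}$ forces the cylindrical and tip variations $w, W$ associated to $u$ to satisfy the pointwise decay $|w| \leq C \rho_\star$ and $|W| \leq C |\tau|^{-1/2}$. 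Combined with Theorem \ref{thm_unique_asympt_recall} (sharp asymptotics) applied to $\phi_\ast$, the level sets of $\phi_\ast + u$ inherit the sharp asymptotic profile of $\phi_\ast$, approaching round shrinker cylinders with axis direction $e_1$. Global convexity then follows from the noncollapsed characterization of \cite{BLL,HaslhoferKleiner_meanconvex}; by Lemma 2.1 of the preliminaries (via \cite{Haslhofer_bowl}) the split case $\mathbb{R} \times \mathrm{Bowl}_2$ is incompatible with this asymptotic profile, yielding strict convexity, and the axis direction $e_1$ rules out $SO_3$-symmetry.

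\textbf{Converse direction.} Fix a small smooth neighborhood $\mathcal{I} \subset \mathcal{S}$ of $\phi_\ast$, chosen so that (a) the parameters $\tau_0, \theta, \ell$ in Theorem \ref{thm_unique_asympt_recall} can be taken uniform for all $\phi \in \mathcal{I}$ (see the obstacle below), and (b) no $\phi \in \mathcal{I}$ is $SO_3$-symmetric (an open condition). Given $\phi \in \mathcal{I}$, set $u := \phi - \phi_\ast$; the normalizations at the origin give $u \in \mathbb{X}_0$ provided $u \in \mathbb{X}^{k,\alpha}$ for every $k \geq 4$. The $C^{k,\alpha}(\Omega_{2h_0})$ estimate is immediate from smooth convergence on this compact cap. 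For the weighted cylindrical and tip components of $\|u\|_{C^{k,\alpha}_\star}$, the plan is to apply Theorem \ref{thm_unique_asympt_recall} simultaneously to $\phi$ and $\phi_\ast$: the difference $v^\phi - v^{\phi_\ast}$ of renormalized profile functions is $O(|\tau|^{-1})$ in the cylindrical region (matching $\rho_\star$), and the intermediate and tip expansions $\sqrt{2-z^2}$ and $Z_0$ give the corresponding matching in the collar and tip. For the target norm, the identity $L_{\phi_\ast} u = -Q_{\phi_\ast}[u]$ (from $\Theta[\phi] = 0 = \Theta[\phi_\ast]$) together with Theorem \ref{quad_err_put_together} controls $\|L_{\phi_\ast} u\|_{\mathbb{Y}^{k-2,\alpha}}$ by $\|u\|_{\mathbb{X}^{k+2,\alpha}}^2$. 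Higher regularity bootstraps via Proposition \ref{bdd_in_holder} (global Schauder estimate). Continuity of $\iota$ follows from the continuous dependence of the resulting bounds on $\phi$ in the smooth topology.

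\textbf{Main obstacle.} The principal difficulty is that the constants $\tau_0(M,\eps)$ in Theorem \ref{thm_unique_asympt_recall} a priori depend on the individual translator $M$, whereas both arguments above need the sharp asymptotics to hold with uniform parameters over $\mathcal{I}$. This is resolved by a compactness and contradiction argument: if a sequence $\phi_n \to \phi_\ast$ in $\mathcal{S}$ smoothly admitted no uniform $\tau_0$, a diagonal extraction would produce a noncollapsed translator limit for which the sharp asymptotics fail at the claimed scale, contradicting Theorem \ref{thm_unique_asympt_recall} applied to the limit. Once this uniformity is in hand, both directions of the lemma assemble from the previously established pointwise, Schauder, and quadratic-error estimates.
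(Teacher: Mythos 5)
Your forward direction follows the same broad strategy as the paper: use Proposition~\ref{prop_who_contr_whom} (controlled pointwise quantities) to constrain the asymptotic geometry of $\mathrm{graph}(\phi_\ast+u)$, and then invoke the mean-convex noncollapsed theory. The paper makes the key geometric step explicit — the decay of $w,W$ rules out a multiplicity-two plane tangent flow at $-\infty$ for the eternal flow $M_t=\mathrm{graph}(\phi_\ast+u+t)$, whence convexity follows from \cite{White_nature,HaslhoferKleiner_meanconvex} — whereas you say it in looser terms, but the substance matches.

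The converse direction contains a genuine gap. You propose to apply the sharp asymptotics of Theorem~\ref{thm_unique_asympt_recall} simultaneously to $\phi$ and $\phi_\ast$ and conclude that $v^\phi-v^{\phi_\ast}=O(|\tau|^{-1})$ ``matches $\rho_\star$.'' But it does not: in the parabolic region, where $v$ is within $O(|\tau|^{-1})$ of $\sqrt{2}$, the weight function \eqref{domain_weight} satisfies
\begin{equation}
\rho_\star \sim \frac{1}{|\tau|}\Big(\sqrt{2}+\tfrac{10}{|\tau|}-v\Big) \sim \frac{1}{|\tau|^2},
\end{equation}
so one needs $O(|\tau|^{-2})$ control on the difference of renormalized profile functions there, one full order beyond what the first-order asymptotics can give (their error term is already $\eps/|\tau|$, the same size as the leading coefficient). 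Your compactness argument for a uniform $\tau_0$ does not fix this — it addresses uniformity of the first-order error, not its order of magnitude. This is exactly why the paper's proof of the converse invokes Theorem~\ref{thm_asympt_expansion_oval_bowls} (second order asymptotics, from Appendix~A), which provides the $(y^2-2)\cdot(-\tfrac{1}{2\sqrt{2}|\tau|}+\tfrac{\sqrt{2}\log|\tau|}{|\tau|^2}+\tfrac{A(M)}{|\tau|^2})$ expansion with a coefficient $A(M)$ depending \emph{continuously} on $M$; combined with the inner-outer estimate (Theorem~\ref{inner_outer_intro}) to propagate smallness from the parabolic to the intermediate and tip regions, this yields $\|\phi_1-\phi_2\|_{C^{k+10,\alpha}_\star}\leq\eps$ for smoothly close $\phi_1,\phi_2\in\mathcal{S}$, which then feeds into Theorem~\ref{quad_err_put_together} as you sketch. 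Without the second-order expansion and its continuous dependence, your argument cannot close.
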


\begin{proof}
If $u\in \mathbb{X}_0$ is such that $\Theta[\phi_\ast+u]=0$, then $M_t=\mathrm{graph}(\phi_\ast+u+t)$ is an eternal mean-convex flow that sweeps out all space, so in particular it makes sense to consider a tangent flow at $-\infty$. By Proposition \ref{prop_who_contr_whom} (controlled pointwise quantities) no such tangent flow at $-\infty$ can be a multiplicity-two plane. This implies convexity thanks to the general theory from \cite{White_nature,HaslhoferKleiner_meanconvex}. Observing also that $\phi_\ast+u$ clearly neither is $\mathrm{SO}_3$-symmetric nor splits off a line, this shows that $\phi_\ast+u \in\mathcal{S}$.

Conversely, thanks to Theorem \ref{thm_asympt_expansion_oval_bowls} (second order asymptotics), taking also into account Theorem \ref{inner_outer_intro} (inner-outer estimate), for any $\eps>0$ if $\phi_1,\phi_2\in\mathcal{S}$ are close enough in the smooth topology, then
\begin{equation}
\| \phi_1-\phi_2 \|_{C^{k+10,\alpha}_{\star}(\mathbb{R}^3/S^1)}\leq \eps.
\end{equation}
Hence, applying Theorem \ref{quad_err_put_together} (global quadratic error estimate) we infer that
\begin{align}
\| L_{\phi_1}[ \phi_1-\phi_2 ]\|_{C^{k-2,\alpha}_{\bullet}(\mathbb{R}^3/S^1)}\leq C \eps^2,
\end{align}
and
\begin{align}
\| L_{\phi_1}[ \phi_1-\phi_2 ] -L_{\phi_\ast}[ \phi_1-\phi_2 ] \|_{\mathbb{Y}^{k-2,\alpha}(\mathbb{R}^3/S^1)}\leq 2 \|  \phi_1-\phi_2  \|_{\mathbb{X}^{k+10,\alpha}(\mathbb{R}^3/S^1)},
\end{align}
provided that $\phi_1,\phi_2\in\mathcal{S}$ are close enough to $\phi_\ast$ in the smooth topology.
Combining the above facts, the assertion follows.
\end{proof}

Recall that the tip curvature map is defined by
\begin{equation}
\kappa:\mathcal{S}\to \mathbb{R},\quad \phi\mapsto \tfrac{1}{2} (\partial^2_{x_1}\phi) (0).
\end{equation}

\begin{theorem}[analyticity]\label{theorem_analyticity}
The space $\mathcal{S}$ is a finite-dimensional analytic variety over which $\kappa:\mathcal{S}\to\mathbb{R}$ is an analytic function.
\end{theorem}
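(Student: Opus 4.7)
The plan is to carry out a standard Lyapunov--Schmidt reduction at an arbitrary base point $\phi_\ast\in\mathcal{S}$, using Theorem \ref{Fredholm_theorem_intro} (Fredholm property) and Theorem \ref{quad_err_put_together_intro} (nonlinear mapping properties) as the two key inputs, and then using Lemma \ref{lemma_compatible} (compatibility) to translate the resulting local description back to the geometric space $\mathcal{S}$. Throughout, we work with the complexified Banach spaces so that the complex-analytic implicit function theorem applies.

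\textbf{Fredholm setup.} Fix $\phi_\ast\in\mathcal{S}$, and for a fixed large $k$ consider the complexified bounded linear operator
\begin{equation}
L_{\phi_\ast}:\mathbb{X}^{k+2,\alpha}(\mathbb{R}^3/S^1,\mathbb{C})\cap \mathbb{X}_0 \longrightarrow \mathbb{Y}^{k-2,\alpha}(\mathbb{R}^3/S^1,\mathbb{C}),
\end{equation}
which is Fredholm by Theorem \ref{Fredholm_theorem_intro} (possibly after restricting to the codimension-$2$ subspace enforcing $u(0)=0$ and $Du(0)=0$, which is itself Fredholm-compatible). Let $K:=\ker L_{\phi_\ast}\cap \mathbb{X}_0$ and let $Q\subset \mathbb{Y}^{k-2,\alpha}$ be a finite-dimensional complement to $\mathrm{Range}(L_{\phi_\ast})$. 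By elliptic regularity applied to the homogeneous equation $L_{\phi_\ast}u=0$, the space $K$ consists of smooth functions and is independent of $k$; similarly $Q$ can be chosen inside $C^\infty$ once and for all, so it too is $k$-independent. Fix a closed complement $V$ with $\mathbb{X}_0=K\oplus V$ and let $\Pi:\mathbb{Y}^{k-2,\alpha}\to \mathrm{Range}(L_{\phi_\ast})$ be the continuous projection along $Q$.

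\textbf{Lyapunov--Schmidt reduction.} Consider the analytic map
\begin{equation}
F:B_{\mathbb{X}^{k+2,\alpha}\cap\mathbb{X}_0}(0,\eps)\to \mathbb{Y}^{k-2,\alpha}(\mathbb{R}^3/S^1,\mathbb{C}),\qquad F(u):=\Theta[\phi_\ast+u],
\end{equation}
provided by Theorem \ref{quad_err_put_together_intro}, whose derivative at $0$ is $L_{\phi_\ast}$. Writing $u=\kappa+v$ with $\kappa\in K$ and $v\in V\cap \mathbb{X}^{k+2,\alpha}$, the operator $L_{\phi_\ast}|_V:V\cap\mathbb{X}^{k+2,\alpha}\to \mathrm{Range}(L_{\phi_\ast})$ is a topological isomorphism by the open mapping theorem. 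Therefore the \emph{reduced} equation $\Pi\circ F(\kappa+v)=0$ has, by the complex-analytic implicit function theorem, a unique analytic solution $v=v(\kappa)$ with $v(0)=0$, defined on a small ball $B_{K}(0,\eta)$. Plugging this back, the full equation $F(\kappa+v(\kappa))=0$ is equivalent to the finite-dimensional analytic equation
\begin{equation}
\Phi(\kappa):=(I-\Pi)\,F(\kappa+v(\kappa))=0,\qquad \Phi:B_K(0,\eta)\to Q.
\end{equation}
Hence the local zero set $\{u\in\mathbb{X}_0:\|u\|<\eps,\,F(u)=0\}$ is, via the graph map $\kappa\mapsto \kappa+v(\kappa)$, analytically equivalent to $\Phi^{-1}(0)\subset K$, a finite-dimensional analytic variety. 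By Lemma \ref{lemma_compatible} the embedding $\iota:\mathcal{I}\hookrightarrow\mathbb{X}_0$ of a smooth neighborhood of $\phi_\ast$ in $\mathcal{S}$ is continuous and has image precisely this zero set (real points), so $\mathcal{S}$ is locally homeomorphic to the real analytic variety $\Phi^{-1}(0)\cap K_{\mathbb{R}}$.

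\textbf{Analyticity of $\kappa$ and the main obstacle.} The tip curvature map is the restriction of the bounded linear functional $u\mapsto \tfrac12(\partial^2_{x_1}u)(0)$ on $\mathbb{X}^{k+2,\alpha}\cap \mathbb{X}_0$, hence its pullback to $B_K(0,\eta)$ via $\kappa\mapsto \kappa+v(\kappa)$ is a composition of a linear map with an analytic map, hence analytic; this gives analyticity of $\kappa$ on $\mathcal{S}$. The main technical obstacle to watch out for is the two-derivative loss in Theorem \ref{quad_err_put_together_intro}, which is why the reduction must be carried out at one fixed high regularity level $k+2$ rather than in a single Banach space; smoothness of the solutions is then recovered \emph{a posteriori} from elliptic regularity applied to the translator equation $\Theta[\phi_\ast+u]=0$, ensuring that the finite-dimensional parametrization obtained at level $k+2$ genuinely parametrizes smooth translators and that $K$, $Q$, and the reduction are $k$-independent. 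A secondary point is that the normalizations defining $\mathcal{S}$ (vanishing of $\phi$ and $D\phi$ at the origin, $\mathrm{SO}_2$-symmetry in $x_2x_3$, absence of $\mathrm{SO}_3$-symmetry and strict convexity) must be compatible with the Banach setting; the first are built into $\mathbb{X}_0$, the $\mathrm{SO}_2$-symmetry is built into the function spaces $C^{k,\alpha}(\Omega_h/S^1)$, and the last two are open conditions (preserved under small perturbations in $\mathbb{X}$ by Proposition \ref{prop_who_contr_whom}).
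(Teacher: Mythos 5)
Your overall strategy matches the paper's: Fredholm theory, Lyapunov--Schmidt reduction, complexification to get away with one derivative in the implicit function theorem, and Lemma \ref{lemma_compatible} (compatibility) to pass between the local analytic variety and the geometric space $\mathcal{S}$. The analyticity of $\kappa$ as the pullback of a bounded linear functional under the graph map is also correctly identified. However, there is a genuine gap in the step where you invoke ``the complex-analytic implicit function theorem.''

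You correctly flag the two-derivative loss coming from Theorem \ref{quad_err_put_together_intro} (nonlinear mapping properties), but your proposed fix --- ``carry out the reduction at one fixed high regularity level $k+2$'' --- does not resolve it. Working at fixed $k+2$ \emph{is} working in a single Banach space, and the linearization $\Pi\circ L_{\phi_\ast}|_V:V\cap\mathbb{X}^{k+2,\alpha}\to\mathbb{Y}_2^{k-2,\alpha}$ is not a topological isomorphism there: the Fredholm estimate (Theorem \ref{un_est_Xh}) inverts $L_{\phi_\ast}$ with a gain of exactly two derivatives, so $L_{\phi_\ast}^{-1}$ is bounded $\mathbb{Y}^{k-2,\alpha}\to\mathbb{X}^{k,\alpha}$ but not $\mathbb{Y}^{k-2,\alpha}\to\mathbb{X}^{k+2,\alpha}$, whereas the quadratic error is only controlled via the $\mathbb{X}^{k+2,\alpha}$-norm. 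Equivalently, the range of the linearization is $\mathbb{Y}_2^{k,\alpha}$, a dense proper subspace of the codomain $\mathbb{Y}_2^{k-2,\alpha}$, so it is neither surjective nor has bounded inverse, and the Banach-space implicit function theorem does not apply. Elliptic bootstrap only kicks in once you already have a solution branch $v(\kappa)$; it cannot supply the missing invertibility needed to construct that branch in the first place. The paper resolves this by working in the graded Fr\'echet spaces $\mathbb{X}=\bigcap_k\mathbb{X}^{k,\alpha}$, $\mathbb{Y}=\bigcap_k\mathbb{Y}^{k-2,\alpha}$ and invoking \emph{Ekeland's implicit function theorem}, a Nash--Moser-type result adapted to analytic maps with loss of derivatives on scales of Banach spaces; this is the crucial technical ingredient your proposal omits. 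Your recognition that the kernel and cokernel are $k$-independent by elliptic regularity is exactly what makes the Fr\'echet-space setup coherent, so the missing piece is really just the correct substitute for the implicit function theorem.
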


\begin{proof}
Fix $\phi_\ast\in \mathcal{S}$, and define $\mathbb{X}_0^{k,\alpha}$ and $\mathbb{Y}^{k-2,\alpha}$ with respect to $\phi_\ast$. Thanks to Theorem \ref{Fredholm_theorem} (Fredholm property) the map $L=L_{\phi_\ast}^{k,\alpha}:\mathbb{X}_0^{k,\alpha}\to \mathbb{Y}^{k-2,\alpha}$ is Fredholm. By elliptic regularity, the kernel and the cokernel of $L$ are independent of $k$. Setting $\mathbb{Y}_2^{k-2,\alpha}=L(\mathbb{X}_0^{k,\alpha})$, we can thus decompose
\begin{equation}
\mathbb{X}_0^{k,\alpha}=\mathbb{X}_1\oplus \mathbb{X}_2^{k,\alpha},\qquad \mathbb{Y}_0^{k-2,\alpha}=\mathbb{Y}_1\oplus \mathbb{Y}_2^{k-2,\alpha},
\end{equation}
where $\mathbb{X}_1$ and $\mathbb{Y}_1$ are finite dimensional, and where $L|_{\mathbb{X}_2^{k,\alpha}}$ is an isomorphism from $\mathbb{X}_2^{k,\alpha}$ to $\mathbb{Y}_2^{k-2,\alpha}$. Setting $\mathbb{Y}_2:=\bigcap_{k\geq 4}\mathbb{Y}_2^{k-2,\alpha}$, let us fix a projection map $\Pi:\mathbb{Y}\to \mathbb{Y}_2$. Now, thanks to Theorem \ref{quad_err_put_together} (global quadratic error estimate), considering the map
\begin{equation}
B_{\mathbb{X}_0}(0,\eps)\to \mathbb{Y}_2,\quad u\mapsto \Pi\Theta[\phi_\ast+u],
\end{equation}
we can apply Ekeland's implicit function theorem \cite{Ekeland}, which gives us an open neighbourhood $\mathbb{U}=\mathbb{U}_1\times\mathbb{U}_2$ of the origin and an analytic function $f:\mathbb{U}_1\to\mathbb{U}_2$ such that for $(u_1,u_2)\in \mathbb{U}$ we have 
\begin{equation}
\Pi\Theta[\phi_\ast+(u_1,u_2)]=0\quad\Leftrightarrow\quad u_2=f(u_1).
\end{equation}
Here, it is most convenient to apply the implicit function theorem after temporarily passing to the complexifications, since then one only needs one derivative (also note that the notion of being analytic is unambiguous since the domain of $f$ is finite-dimensional). 
Together with Lemma \ref{lemma_compatible} (compatibility) it follows that possibly after decreasing $\mathbb{U}$ there is an open neighborhood $\mathcal{I}\subset \mathcal{S}$ of $\phi_\ast$, such that
\begin{equation}
\iota(\mathcal{I})\cap \mathbb{U}=\big\{ (u_1,f(u_1)) \, : \, u_1\in\mathbb{U}_1,\, (1-\Pi)\Theta[\phi_\ast+(u_1,u_2)]=0 \big\}.
\end{equation}
Hence, $\mathcal{I}$ is a finite dimensional analytic variety over which $\kappa\circ \iota$ is analytic. %Together with the fact that $\mathcal{S}$ is homeomorphic to an open interval \cite{CHH_translators}.
This implies the assertion.
\end{proof}

\bigskip

\appendix

\section{Second order asymptotics for translators}

In this appendix, we derive some second order asymptotics in the parabolic region. As usual, we consider nontrivial noncollapsed translators $M=\mathrm{graph}(\phi)\subset\mathbb{R}^4$, where $\phi\in\mathcal{S}$. Recall from \cite[Proposition 5.3]{CHH_translators} that the renormalized profile function $u(y,\tau)=v(y,\tau)-\sqrt{2}$ evolves by
\begin{equation}\label{eq:profile_evol}
u_\tau=\mathfrak{L}u-\frac{u^2}{2(\sqrt{2}+u)}-\frac{u_y^2u_{yy}}{1+u_y^2}+O(e^{\tau/2}).
\end{equation}
Thanks to the $\mathbb{Z}_2$-symmetry, the function $u(\cdot,\tau)$ is a linear combination of the even Hermite polynomials $H_{2k}(y)$. Here, we work with the probabilist's normalization, so in particular the first three ones are
\begin{align}
H_0(y)=1, \qquad H_2(y)=y^2-2, \qquad H_4(y)=y^4-12y^2+12.
\end{align}
Also, observe that in our space $L^2(\mathbb{R},  (4\pi)^{-1/2} e^{-y^2/4}\, dy)$ we have $\| H_0\|=1$, $\| H_2\|^2=8$ and $\| H_4\|^2=384$.\\

More precisely, fixing a small constant $\delta>0$, we work with the truncated profile function
\begin{align}
\hat u (y,\tau)=u(y,\tau)\eta(|y|/|\tau|^{\delta}),
\end{align}
and $\eta$ is a cut-off function such that $\eta(r)\equiv 1$ for $r\leq 1 $ and $\eta(r)\equiv 0$ for $r\geq 2$. Recall that by
\eqref{profile_growth} and \eqref{profile_derivative_growth}, on the support of $\hat{u}$ we have
\begin{align}\label{eq:profile_bound_support}
 |u| \leq  \frac{C(1+y^2)}{|\tau|}, \qquad |u_y|+|u_{yy}| \leq \frac{C(1+|y|)}{|\tau|}. 
\end{align}
Therefore, the evolution of our truncated profile function takes the form
\begin{equation}\label{eq:profile_eq_cutoff}
\hat u_\tau=\mathfrak{L}\hat u -2^{-\frac{3}{2}}\hat u^2+O(|\tau|^{-3+6\delta})+o(1)1_{\{|y| \geq |\tau|^{\delta}\}}.
\end{equation}
Also recall that by \cite{DH_no_rotation}, possibly after decreasing $\delta$, for $\tau\ll 0$ we have
\begin{align}\label{eq:minor_projection_improved}
 \|\hat u\|=|\tau|^{-1}+O(|\tau|^{-1-10\delta}),\qquad
 \left\| \hat{u} -P_0\hat{u}\right\|\leq C|\tau|^{-1-10\delta},
\end{align}
where $P_0$ denotes the projection to the neutral space spanned by $H_2$.\\

Now, we will first derive asymptotics for the first three spectral coefficients in the expansion
\begin{equation}
\hat u(y,\tau)=\sum_{k=0}^\infty a_{2k}(\tau)H_{2k}(y).
\end{equation}

\begin{proposition}[spectral coefficients]\label{lem:a0_improved} For $\tau\ll 0$ we have
\begin{equation}
a_0(\tau)=\frac{1}{2\sqrt{2}|\tau|^2}+O(|\tau|^{-2-4\delta}),\qquad
a_2(\tau)=-\frac{1}{2\sqrt{2}|\tau|}+O(|\tau|^{-1-4\delta}),
\end{equation}
and
\begin{equation}
a_4(\tau)=-\frac{1}{16\sqrt{2}|\tau|^2}+O(|\tau|^{-2-4\delta}).
\end{equation}
\end{proposition}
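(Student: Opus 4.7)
The plan is to derive and integrate first-order ODEs for the spectral coefficients $a_{2k}(\tau):=\langle\hat u(\cdot,\tau),H_{2k}\rangle/\|H_{2k}\|^{2}$ of the truncated profile. Testing \eqref{eq:profile_eq_cutoff} against $H_{2k}$ and using $\mathfrak{L}H_{2k}=(1-k)H_{2k}$, one obtains
\begin{equation*}
\dot a_{2k}=(1-k)\,a_{2k}-\frac{2^{-3/2}}{\|H_{2k}\|^{2}}\langle\hat u^{2},H_{2k}\rangle+E_{2k}(\tau),
\end{equation*}
where $E_{2k}$ absorbs the $O(|\tau|^{-3+6\delta})$ remainder in \eqref{eq:profile_eq_cutoff} and the projection of $o(1)\mathbf{1}_{\{|y|\geq|\tau|^{\delta}\}}$, the latter being exponentially small thanks to the Gaussian tail bound. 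Writing $\hat u=a_{2}H_{2}+r$ with $\|r\|\leq C|\tau|^{-1-10\delta}$ by \eqref{eq:minor_projection_improved}, the asserted expansion of $a_{2}$ is immediate: the Pythagorean identity $\|\hat u\|^{2}=8a_{2}^{2}+\|r\|^{2}$ combined with $\|\hat u\|=|\tau|^{-1}+O(|\tau|^{-1-10\delta})$ gives $|a_{2}|=\tfrac{1}{2\sqrt 2|\tau|}+O(|\tau|^{-1-10\delta})$, and Theorem \ref{thm_unique_asympt_recall} fixes the sign through $v(y,\tau)-\sqrt 2=-\tfrac{1}{2\sqrt 2|\tau|}H_{2}(y)+o(|\tau|^{-1})$ on compact sets.

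To extract the remaining coefficients I would use the algebraic identity
\begin{equation*}
H_{2}^{2}=H_{4}+8H_{2}+8H_{0},
\end{equation*}
so that $\hat u^{2}=a_{2}^{2}(H_{4}+8H_{2}+8H_{0})+2a_{2}H_{2}r+r^{2}$. The cross term $2a_{2}\langle H_{2}r,H_{0}\rangle=2a_{2}\langle r,H_{2}\rangle$ vanishes by orthogonality of $r$ to $H_{2}$; the $H_{4}$-cross term is $O(|a_{2}|\|r\|)=O(|\tau|^{-2-10\delta})$ by Cauchy--Schwarz after the polynomial weight is absorbed; and $|\langle r^{2},H_{2k}\rangle|=O(\|r\|^{2})=O(|\tau|^{-2-20\delta})$. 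Substituting $a_{2}^{2}=\tfrac{1}{8\tau^{2}}+O(|\tau|^{-2-10\delta})$, the ODEs for $k=0$ and $k=2$ take the form
\begin{equation*}
\dot a_{0}-a_{0}=-2\sqrt 2\,a_{2}^{2}+O(|\tau|^{-2-10\delta}),\qquad \dot a_{4}+a_{4}=-\tfrac{1}{2\sqrt 2}\,a_{2}^{2}+O(|\tau|^{-2-10\delta}).
\end{equation*}

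Each ODE is then integrated via the appropriate exponential integrating factor. For $a_{4}$, the uniform $L^{2}$-bound on $\hat u$ selects the unique bounded branch
\begin{equation*}
a_{4}(\tau)=-\tfrac{1}{2\sqrt 2}\int_{-\infty}^{\tau}e^{\sigma-\tau}a_{2}^{2}(\sigma)\,d\sigma+\mathrm{err};
\end{equation*}
substituting $u=\sigma-\tau$ and expanding $(u+\tau)^{-2}=\tau^{-2}+O(u/\tau^{3})$ yields $a_{4}(\tau)=-\tfrac{1}{2\sqrt 2}a_{2}^{2}(\tau)+O(|\tau|^{-3})=-\tfrac{1}{16\sqrt 2|\tau|^{2}}+O(|\tau|^{-2-4\delta})$. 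For $a_{0}$, integrating $(e^{-\tau}a_{0})'=e^{-\tau}(-2\sqrt 2\,a_{2}^{2}+\mathrm{err})$ forward from $\tau$ to a fixed $\tau_{0}$, and observing that the homogeneous mode $ce^{\tau}$ is $O(e^{\tau})$ and hence negligible, the analogous Laplace-type expansion (the integral is now concentrated near $\sigma=\tau$ due to the growing factor $e^{-\sigma}$) gives $a_{0}(\tau)=2\sqrt 2\,a_{2}^{2}(\tau)+O(|\tau|^{-2-4\delta})=\tfrac{1}{2\sqrt 2|\tau|^{2}}+O(|\tau|^{-2-4\delta})$.

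The main obstacle is the careful tracking of errors through each of these steps: after inversion of the ODE the $O(|\tau|^{-3+6\delta})$ remainder from \eqref{eq:profile_eq_cutoff} produces a contribution of the same order, which is dominated by $|\tau|^{-2-4\delta}$ only when $\delta<\tfrac{1}{10}$, consistent with the smallness of $\delta$ already built into the setting. For $a_{4}$, one must invoke the uniform $L^{2}$-bound on $\hat u$ to eliminate the unstable homogeneous mode $ce^{-\tau}$. The exact cancellation of the $H_{0}$-cross term via $r\perp H_{2}$ is convenient but not essential; the bound $|a_{2}|\|r\|=O(|\tau|^{-2-10\delta})$ alone would suffice.
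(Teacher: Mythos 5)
Your proposal is correct and follows essentially the same route as the paper: project \eqref{eq:profile_eq_cutoff} onto $H_{2k}$, use the eigenvalues $1,0,-1$ of $\mathfrak{L}$ on $H_0,H_2,H_4$, reduce the nonlinear term via $H_2^2=H_4+8H_2+8$ and \eqref{eq:minor_projection_improved}, and then integrate the resulting scalar ODEs (for $a_0$ forward with the decaying homogeneous mode $e^{\tau}$, for $a_4$ backward discarding the unstable mode $e^{-\tau}$ via boundedness of $a_4$). The one cosmetic difference is that you derive the $a_2$ asymptotic directly from $\|\hat u\|^2=8a_2^2+\|r\|^2$ together with the sign from Theorem~\ref{thm_unique_asympt_recall}, whereas the paper simply cites \cite{DH_no_rotation} for that term; both are fine. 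One small imprecision: the bound $|\langle r^2,H_{2k}\rangle|=O(\|r\|^2)$ is not immediate, since $H_{2k}$ is an unbounded weight; one needs either the pointwise bound \eqref{eq:profile_bound_support} on $\hat u$ (hence on $r$) or a weighted Sobolev inequality to convert $\|H_{2k}r\|$ into $O(|\tau|^{-1})$, which then gives $\langle r^2,H_{2k}\rangle=O(|\tau|^{-2-10\delta})$ rather than $O(\|r\|^2)$. This does not affect your stated conclusions, since all you need is $O(|\tau|^{-2-4\delta})$.
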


\begin{proof}
To begin with, using \eqref{eq:profile_eq_cutoff} and \eqref{eq:minor_projection_improved} we see that
\begin{equation}
a_0'=\langle 1, \hat  u_\tau\rangle =a_0-2^{-\frac{3}{2}}\|\hat u\|^2+O(|\tau|^{-3+6\delta})=a_0-2^{-\frac{3}{2}}|\tau|^{-2}+O(|\tau|^{-2-4\delta}).
\end{equation}
This yields the expansion for $a_0$. Also, the expansion for $a_2$ has already been derived in \cite{DH_no_rotation}.

Now, to derive the formula for $a_4$, using again \eqref{eq:profile_eq_cutoff} we first compute
\begin{equation}
a_4'=\|H_4\|^{-2}\langle H_4, \hat u_\tau \rangle=-a_4-2^{-\frac{3}{2}}\|H_4\|^{-2}\langle H_4, \hat u^2 \rangle
+O(|\tau|^{-3+6\delta}).
\end{equation}
To proceed, observe that thanks to \eqref{eq:minor_projection_improved} we have
\begin{equation}
\langle H_4,\hat u^2\rangle=a_2^2\langle H_4,H_2^2\rangle+O(|\tau|^{-2-4\delta}).
\end{equation}
Together with $H_2^2=H_4+8H_2+8$ and the expansion for $a_2$ this implies
\begin{equation}
a_4'=-a_4-2^{-9/2}|\tau|^{-2}+O(|\tau|^{-2-4\delta}).
\end{equation}
This yields the expansion for $a_4$, and thus concludes the proof of the proposition.
\end{proof}

Next, we consider the remainder
\begin{equation}
\hat P \hat u =\hat{u}-a_0H_0-a_2H_2-a_4H_4.
\end{equation}

\begin{lemma}[remainder]\label{lem:hatP_error}
For $\tau \ll 0$ we have
\begin{equation}
 \|\hat P\hat u\|=O(|\tau|^{-3+6\delta}).
\end{equation}
\end{lemma}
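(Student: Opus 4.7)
The plan is to apply $\hat P$ to \eqref{eq:profile_eq_cutoff} and close an energy inequality for $R:=\hat P\hat u$. Since $\hat P$ commutes with $\mathfrak{L}$ and the indicator piece has super-polynomial Gaussian decay, we obtain
\begin{equation*}
R_\tau=\mathfrak{L} R-2^{-3/2}\hat P(\hat u^2)+F,\qquad \|F\|_{\mathfrak H}\leq C|\tau|^{-3+6\delta}.
\end{equation*}
Pairing with $R$ and using $\langle R,\mathfrak{L}R\rangle\leq-2\|R\|^2$ (the eigenvalues of $\mathfrak{L}$ on $\mathrm{Im}(\hat P)=\mathrm{span}\{H_{2k}:k\geq 3\}$ are $\leq-2$) gives
\begin{equation*}
\tfrac12\tfrac{d}{d\tau}\|R\|^2\leq-2\|R\|^2+2^{-3/2}|\langle R,\hat P(\hat u^2)\rangle|+\|R\|\,\|F\|.
\end{equation*}

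The key nonlinear estimate is $\|\hat P(\hat u^2)\|_{\mathfrak H}\leq C|\tau|^{-3}+C|\tau|^{-1+2\delta}\|R\|$. Write $\hat u=q+R$ with $q:=a_0H_0+a_2H_2+a_4H_4$, so that $\hat u^2=q^2+R(2q+R)$. Applying the Hermite product formulas (notably $H_2H_4=H_6+8H_4+12H_2$ and $H_4^2=H_8+16H_6+72H_4+96H_2+24$) and noting that $\hat P$ annihilates $\mathrm{span}\{H_0,H_2,H_4\}$, I compute
\begin{equation*}
\hat P(q^2)=(2a_2a_4+16a_4^2)H_6+a_4^2H_8.
\end{equation*}
Combined with the coefficient expansions from Proposition \ref{lem:a0_improved}, this yields $\|\hat P(q^2)\|_{\mathfrak H}\leq C|\tau|^{-3}$.

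For the cross piece $R(2q+R)=\hat u^2-q^2$ I exploit \eqref{eq:profile_bound_support}: on $\mathrm{supp}(\hat u)\subset\{|y|\leq 2|\tau|^\delta\}$ both $|\hat u|$ and $|q|$ are bounded by $C|\tau|^{-1+2\delta}$, and therefore
\begin{equation*}
\int_{\mathrm{supp}(\hat u)}(2qR+R^2)^2\,e^{-y^2/4}\,dy\leq C|\tau|^{-2+4\delta}\|R\|^2.
\end{equation*}
Outside $\mathrm{supp}(\hat u)$, the identity $\hat u=0$ forces $R=-q$ and hence $R(2q+R)=-q^2$, a polynomial whose Gaussian norm on $\{|y|>2|\tau|^\delta\}$ is super-polynomially small. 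This gives $\|\hat P(\hat u^2-q^2)\|_{\mathfrak H}\leq C|\tau|^{-1+2\delta}\|R\|+O(e^{-c|\tau|^{2\delta}})$.

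Substituting, absorbing $C|\tau|^{-1+2\delta}\|R\|^2\leq \|R\|^2$ for $\tau\ll 0$, and applying Young's inequality $C|\tau|^{-3+6\delta}\|R\|\leq \tfrac12\|R\|^2+C|\tau|^{-6+12\delta}$ yields
\begin{equation*}
\tfrac{d}{d\tau}\|R\|^2\leq-\|R\|^2+C|\tau|^{-6+12\delta}.
\end{equation*}
Duhamel integration from $\tau_1\to-\infty$, using the a priori bound $\|R(\tau_1)\|\leq\|\hat u(\tau_1)\|=O(|\tau_1|^{-1})$ (killed by the weight $e^{-(\tau-\tau_1)}$), gives $\|R(\tau)\|^2\leq C|\tau|^{-6+12\delta}$, which is the claim. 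The main obstacle is handling the non-compact support of the spectral projection $R=\hat P\hat u$: the cross term $\hat P(\hat u^2-q^2)$ must be analyzed separately inside $\{|y|\leq 2|\tau|^\delta\}$ (where one uses the pointwise smallness of $\hat u$) and outside (where one uses the off-support identity $R=-q$ together with Gaussian tail decay).
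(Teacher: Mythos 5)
Your proof is correct and reaches the stated bound, but it handles the key nonlinear term by a genuinely different route than the paper. The paper writes $2\langle\mathfrak{L}R,R\rangle=\langle\mathfrak{L}R,R\rangle+\bigl(\|R\|^2-\|R_y\|^2\bigr)$, retains the $-\|R_y\|^2$ term in the dissipation, and then uses the weighted Poincar\'e inequality to bound $|\langle R^2,\hat u\rangle|\leq C|\tau|^{-1}(\|R\|^2+\|R_y\|^2)$, absorbing the derivative term. You instead use the spectral gap $\langle R,\mathfrak{L}R\rangle\leq-2\|R\|^2$ directly (discarding $\|R_y\|^2$), and bound $\|\hat P(\hat u^2)\|$ via a decomposition $\hat u^2=q^2+R(2q+R)$, with $q^2$ handled by Hermite algebra and $R(2q+R)$ handled via $L^\infty$ control of $q$ and $\hat u$ on $\mathrm{supp}(\hat u)\subset\{|y|\leq2|\tau|^\delta\}$, plus the nice off-support observation that $R=-q$ pointwise there, yielding a super-polynomially small tail. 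Your route avoids the weighted Poincar\'e inequality entirely (and trades the $\|R_y\|^2$ dissipation for a slightly worse coefficient $|\tau|^{-1+2\delta}$ rather than $|\tau|^{-1}$ in front of $\|R\|^2$, which is equally harmless). Both close the same differential inequality and yield the same decay.

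One numerical caveat you should be aware of: the Hermite product formulas you quote, $H_2H_4=H_6+8H_4+12H_2$ and $H_4^2=H_8+16H_6+72H_4+96H_2+24$, are the formulas for the standard probabilist polynomials $He_n$ associated to the weight $e^{-y^2/2}$, not for the paper's $H_n(y)=2^{n/2}He_n(y/\sqrt2)$ associated to $e^{-y^2/4}$. In the paper's normalization one has $H_2H_4=H_6+16H_4+48H_2$ and $H_4^2=H_8+32H_6+288H_4+768H_2+384$, so $\hat P(q^2)=a_4^2H_8+(2a_2a_4+32a_4^2)H_6$. This does not affect the present lemma, since all you need is that the $H_6$-coefficient is $O(a_2a_4)=O(|\tau|^{-3})$ and the $H_8$-coefficient is $O(a_4^2)=O(|\tau|^{-4})$, but it would matter if you tried to reproduce the sharp constants appearing in Proposition~\ref{lem:a2_error1}.
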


\begin{proof}
To begin with, note that we have the Gaussian tail estimate
\begin{equation}
\|(\hat P\hat u)1_{\{|y| \geq |\tau|^{\delta}\}}\| \leq C|\tau|^{-10}.
\end{equation}
Using the identity
\begin{equation}
2\langle \mathfrak{L}f,f\rangle=\langle Lf,f\rangle+\|f\|^2-\|f_y\|^2,
\end{equation}
we can thus derive from \eqref{eq:profile_eq_cutoff} that
\begin{equation}\label{remainder_evol_ineq}
\tfrac{d}{d\tau}\|\hat P\hat u\|^2 \leq -\|\hat P\hat u\|^2-\|(\hat P\hat u)_y\|^2-2^{-\frac{1}{2}}\langle \hat P\hat u,\hat u^2\rangle+C|\tau|^{-3+6\delta}\|\hat P\hat u\|+C|\tau|^{-10}.
\end{equation}
To proceed, we expand
\begin{equation}
 \langle \hat P\hat u,\hat u^2\rangle=  \langle (\hat P\hat u)^2,\hat u \rangle+ \langle (\hat P\hat u),(a_0+a_2H_2+a_4H_4)\hat u \rangle.
\end{equation}
Using the pointwise bound for $\hat{u}$ from \eqref{eq:profile_bound_support} and the weighted Poincare inequality we can estimate
\begin{equation}
|\langle (\hat P\hat u)^2,\hat u \rangle|\leq C|\tau|^{-1} \left( \| \hat P\hat u\|^2+\| (\hat P\hat u)_y\|^2 \right).
\end{equation}
Next, using also Proposition \ref{lem:a0_improved} (spectral coefficients) we see that
\begin{align}
|\langle  \hat P\hat u,(a_0+a_4H_4)\hat u\rangle|\leq C|\tau|^{-3}\|\hat P \hat u\|.
\end{align}
Moreover, using $H_2^2=H_4+8H_2+8$, we can expand
\begin{equation}
\langle \hat P\hat u,a_2H_2\hat u\rangle=a_2a_4\langle \hat P\hat u,H_2H_4\rangle+a_2\langle \hat P\hat u,H_2\hat P \hat u\rangle,
\end{equation}
and arguing as before we can estimate
\begin{equation}
|a_2a_4\langle \hat P\hat u,H_2H_4\rangle| \leq C|\tau|^{-3}\|\hat P \hat u\|,\qquad 
|a_2\langle \hat P\hat u,H_2\hat P \hat u\rangle|\leq
C|\tau|^{-1} \left( \| \hat P\hat u\|^2+\| (\hat P\hat u)_y\|^2 \right).
\end{equation}
Combining these estimates shows that
\begin{equation}\label{lemma4_eq}
|\langle \hat P\hat u,\hat u^2\rangle| \leq  C|\tau|^{-1} \left( \| \hat P\hat u\|^2+\| (\hat P\hat u)_y\|^2 \right)+C|\tau|^{-3}\|\hat P \hat u\|.
\end{equation}
Plugging this into \eqref{remainder_evol_ineq} we conclude that
\begin{equation}
\tfrac{d}{d\tau}\|\hat P\hat u\|^2 \leq -\tfrac{1}{2}\|\hat P\hat u\|^2+C|\tau|^{-3+6\delta}\|\hat P\hat u\|.
\end{equation}
This implies the assertion.
\end{proof}

\bigskip

Now, we refine equation \eqref{eq:profile_eq_cutoff} by Taylor expansion of \eqref{eq:profile_evol}. Taking into account \eqref{eq:profile_bound_support} this yields
\begin{equation}\label{further_expanded_ev_eq}
\hat u_\tau=\mathfrak{L}\hat u- \frac{\hat u^2}{2\sqrt{2}}+\frac{\hat u^3}{4}-\hat u_y^2\hat u_{yy}+O(|\tau|^{-4+8\delta})+o(1)1_{\{|y| \geq |\tau|^{\delta}\}}.
\end{equation}
To obtain refined asymptotics for $a_2$ we need the following result.

\begin{proposition}[reaction terms]\label{lem:a2_error1}
For $\tau\ll 0$ we have
\begin{equation}\label{eq_react_term1}
\langle H_2,\hat u^2\rangle=64a_2^2+10|\tau|^{-3}+O(|\tau|^{-3-4\delta}),
\end{equation}
and
\begin{equation}\label{eq_react_term2}
\langle H_2, \tfrac{1}{4} \hat u^3-\hat u_y^2\hat u_{yy}\rangle=-11\cdot 2^{-\frac{1}{2}}|\tau|^{-3}+O(|\tau|^{-3-4\delta}).
\end{equation}
\end{proposition}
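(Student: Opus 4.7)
My plan is to expand $\hat u$ around its dominant spectral content and evaluate the two inner products term by term using the Hermite multiplication formula together with the asymptotics from Proposition \ref{lem:a0_improved} and the $L^2$-decay $\|\hat P\hat u\|=O(|\tau|^{-3+6\delta})$ from Lemma \ref{lem:hatP_error}. Write
\[
\hat u = a_0 + a_2 H_2 + a_4 H_4 + \hat P\hat u,
\]
where $\hat P\hat u\perp \mathrm{span}\{H_0,H_2,H_4\}$. The relevant Hermite products (from $H_m H_n=\sum_k 2^k\binom{m}{k}\binom{n}{k}k!\,H_{m+n-2k}$) are
\[
H_2^2=H_4+8H_2+8,\qquad H_2H_4=H_6+16H_4+48H_2,\qquad H_2^3=H_6+24H_4+120H_2+64,
\]
which give $\langle H_2,H_2^2\rangle=64$, $\langle H_2,H_2H_4\rangle=384$, $\langle H_2,H_2^3\rangle=960$. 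All cross-terms that couple the polynomial part of $\hat u$ to $\hat P\hat u$ will be estimated by Cauchy--Schwarz, using that on $\mathrm{supp}(\hat u)\subset\{|y|\leq 2|\tau|^\delta\}$ the relevant polynomial prefactors grow at most like $|\tau|^{O(\delta)}$.

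For the first identity, expanding $\hat u^2=(a_0+a_2H_2+a_4H_4)^2+2(a_0+a_2H_2+a_4H_4)\hat P\hat u+(\hat P\hat u)^2$ and pairing with $H_2$, the polynomial part contributes $16a_0a_2+64a_2^2+768a_2a_4+6144a_4^2$. Substituting $a_0a_2=-\tfrac{1}{8}|\tau|^{-3}+O(|\tau|^{-3-4\delta})$ and $a_2a_4=\tfrac{1}{64}|\tau|^{-3}+O(|\tau|^{-3-4\delta})$ yields $64a_2^2+(-2+12)|\tau|^{-3}+O(|\tau|^{-3-4\delta})$ (the $a_4^2$ piece is $O(|\tau|^{-4})$). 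The cross terms satisfy $|\langle H_2(a_0+a_2H_2+a_4H_4),\hat P\hat u\rangle|\leq C|\tau|^{-1}\|\hat P\hat u\|=O(|\tau|^{-4+6\delta})$, and $|\langle H_2,(\hat P\hat u)^2\rangle|\leq \sup_{\mathrm{supp}}|H_2|\cdot\|\hat P\hat u\|^2=O(|\tau|^{-6+14\delta})$; both fit in the error for $\delta$ small.

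For the second identity I treat the cubic term and the derivative term separately. Expanding $\hat u^3=(a_2H_2)^3+\ldots$ and pairing with $H_2$ gives a leading $a_2^3\langle H_2,H_2^3\rangle=960a_2^3$; all remaining contributions contain at least one factor among $a_0,a_4,\hat P\hat u$ and are reduced below $|\tau|^{-3-4\delta}$ by the same Cauchy--Schwarz bookkeeping (using the pointwise bound $|\hat u|\leq C(1+y^2)|\tau|^{-1}$ on $\mathrm{supp}(\hat u)$). For the derivative term, the key trick is to use $\hat u_y^2\hat u_{yy}=\tfrac{1}{3}(\hat u_y^3)_y$ and integrate by parts against the Gaussian weight to get
\[
\langle H_2,\hat u_y^2\hat u_{yy}\rangle=\tfrac{1}{6}\langle y(y^2-6),\hat u_y^3\rangle.
\]
Inserting $\hat u_y=2a_2 y+a_4 H_4'+(\hat P\hat u)_y$ and expanding, the leading piece $(2a_2y)^3$ gives $\tfrac{4}{3}a_2^3(E[Y^6]-6E[Y^4])=64a_2^3$. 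Cross terms involving $a_4 H_4'$ produce contributions of order $a_2^2a_4=O(|\tau|^{-4})$ via orthogonality of $\hat P\hat u$ to $H_4$ (after collecting linear-in-$r$ terms), while terms with $(\hat P\hat u)_y$ can be transferred back onto the polynomial factor by a further integration by parts, so that only the $L^2$-bound on $\hat P\hat u$ is needed. Combining gives $\tfrac{1}{4}\cdot 960a_2^3-64a_2^3=176a_2^3=-11\cdot 2^{-1/2}|\tau|^{-3}+O(|\tau|^{-3-4\delta})$. The main obstacle is precisely this last IBP step: the naive bound on $|2a_2 r_y^2|$ is only $O(|\tau|^{-3})$ pointwise, so one must move all derivatives off $\hat P\hat u$ and onto the (Gaussian-weighted) polynomial factors before applying Cauchy--Schwarz, thereby reducing everything to the $L^2$-decay of $\hat P\hat u$ itself rather than of its derivatives.
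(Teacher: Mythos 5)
Your proof is correct and reaches the same numerical answers; the treatment of \eqref{eq_react_term1} and of the cubic term in \eqref{eq_react_term2} mirrors the paper's spectral expansion essentially word for word. Where you genuinely diverge is in the derivative term $\langle H_2,\hat u_y^2\hat u_{yy}\rangle$: you first rewrite it as $\tfrac13\langle H_2,(\hat u_y^3)_y\rangle$ and integrate by parts against the Gaussian weight, obtaining $\tfrac16\langle y(y^2-6),\hat u_y^3\rangle$, so that the whole derivative term becomes a functional of $\hat u_y$ alone. The paper instead expands $\hat u_y^2\hat u_{yy}$ directly (substituting $\hat u_y\approx 2a_2 y$, $\hat u_{yy}\approx 2a_2$, and using $\langle H_2,y^2\rangle=8$), which gives the same $64a_2^3$; your reformulation is a clean way to eliminate $\hat u_{yy}$ from the bookkeeping and reduces the number of cross-terms. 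One point of imprecision worth flagging: your closing claim that a further integration by parts reduces all remaining error terms ``to the $L^2$-decay of $\hat P\hat u$ itself rather than of its derivatives'' is not quite accurate for the pieces that are quadratic or cubic in $(\hat P\hat u)_y$. A single IBP applied to, say, $a_2\langle y^2(y^2-6),(\hat P\hat u)_y^2\rangle$ produces terms like $\langle p(y)\,(\hat P\hat u)_y,\hat P\hat u\rangle$ and $\langle p(y)\,(\hat P\hat u)_{yy},\hat P\hat u\rangle$, and one must then pair the Cauchy--Schwarz bound $\|\hat P\hat u\|=O(|\tau|^{-3+6\delta})$ with the \emph{pointwise} bounds $|(\hat P\hat u)_y|,|(\hat P\hat u)_{yy}|\leq C(1+|y|)/|\tau|$ from \eqref{eq:profile_bound_support} (applied to the polynomial pieces subtracted from $\hat u_y,\hat u_{yy}$) to land on $O(|\tau|^{-5+6\delta})$. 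So the $L^2$ bound on $\hat P\hat u$ alone does not suffice, but the needed pointwise derivative bounds are available, and your estimate goes through once they are invoked.
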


\begin{proof}
To begin with, thanks to \eqref{eq:profile_bound_support} and Proposition \ref{lem:hatP_error} (remainder) we have
\begin{equation}
|\langle H_2\hat u,\hat P\hat u \rangle| \leq C|\tau|^{-4+6\delta}.
\end{equation}
Next, using $H_2^2=H_4+8H_2+8$, $\|H_2\|^2=8$ and $\|H_4\|^2=384$ we infer that
\begin{equation}
\langle H_2\hat u,a_0H_0\rangle=8a_0a_2,\qquad  \langle H\hat u,a_2H_2\rangle=384a_2a_4+64a_2^2 +8a_0a_2.
\end{equation}
Moreover, taking also into account \eqref{eq:minor_projection_improved} we see that
\begin{equation}
\langle H_2\hat u , a_4H_4\rangle= 384a_2a_4+O(|\tau|^{-3-10\delta}).
\end{equation}
Combining these formulas and Proposition \ref{lem:a0_improved} (spectral coefficients) yields \eqref{eq_react_term1}.

Similarly, using \eqref{eq:minor_projection_improved}, and the identities $\langle H_2,H_2^3\rangle = 960$ and $\| H_2\|^2=8$ we see that
\begin{equation}\label{eq:a2_sharp_error1}
\langle H_2 , \hat u^3\rangle=960a_2^3+O(|\tau|^{-3-10\delta}), \qquad \langle H_2, \hat{u}_y^2  \hat{u}_{yy} \rangle = 64 a_2^3+O(|\tau|^{-3-10\delta}).
\end{equation}
Combining these formulas and Proposition \ref{lem:a0_improved} (spectral coefficients) yields \eqref{eq_react_term2}.
\end{proof}

\begin{theorem}[second order asymptotics]\label{thm_asympt_expansion_oval_bowls}
There exist $\delta>0$ and $A(M)\in \mathbb{R}$, depending continuously on $M$, such that in each compact set $\{ |y| \leq R\}$ for sufficiently negative $\tau$ we have
\begin{equation}
 u = \left(-\frac{1}{2\sqrt{2}|\tau|}+\frac{\sqrt{2}\log |\tau|}{|\tau|^2}+\frac{A}{|\tau|^2}\right)(y^2-2)-\frac{y^4-12y^2+4}{16\sqrt{2}|\tau|^2}+O(|\tau|^{-2-\delta}).
\end{equation}
\end{theorem}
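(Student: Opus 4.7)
The plan is to refine the $a_2$ asymptotic to sufficient accuracy to extract both the $\sqrt{2}\log|\tau|/|\tau|^2$ correction and the constant $A$, and then combine this with the leading expansions of $a_0$ and $a_4$ from Proposition \ref{lem:a0_improved} and a pointwise bound on the spectral remainder $\hat P \hat u$.

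First I would derive a sharper ODE for $a_2$. Testing the refined evolution equation \eqref{further_expanded_ev_eq} against $H_2/\|H_2\|^2$, using $\mathfrak{L} H_2 = 0$ together with the reaction-term identities of Proposition \ref{lem:a2_error1}, yields
\begin{equation}
 a_2'(\tau) \;=\; -2\sqrt{2}\, a_2(\tau)^2 \;-\; \sqrt{2}\, |\tau|^{-3} \;+\; O(|\tau|^{-4+8\delta}).
\end{equation}
Writing $a_2 = -(2\sqrt{2}|\tau|)^{-1} + b(\tau)$ with the a priori bound $b = O(|\tau|^{-1-4\delta})$ inherited from \eqref{eq:minor_projection_improved}, the expansion $-2\sqrt{2}a_2^2 = -\tfrac{1}{2\sqrt{2}}|\tau|^{-2} + \tfrac{2b}{|\tau|} - 2\sqrt{2}b^2$ cancels the leading $|\tau|^{-2}$ term and gives the Riccati-type equation
\begin{equation}
 b'(\tau) - \tfrac{2}{|\tau|}\, b(\tau) \;=\; -\sqrt{2}\,|\tau|^{-3} \;-\; 2\sqrt{2}\, b(\tau)^2 \;+\; O(|\tau|^{-4+8\delta}).
\end{equation}
The homogeneous operator $\partial_\tau - 2/|\tau|$ has integrating factor $|\tau|^2$, and since $(\sqrt{2}\log|\tau|)' = -\sqrt{2}/|\tau|$ this rewrites as
\begin{equation}
 \bigl(|\tau|^2 b - \sqrt{2}\log|\tau|\bigr)' \;=\; -2\sqrt{2}\,|\tau|^2 b^2 \;+\; O(|\tau|^{-2+8\delta}).
\end{equation}

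Since a priori $|\tau|^2 b^2 = O(|\tau|^{-8\delta})$, the right side is not yet integrable at $-\infty$, so I would run a short bootstrap: each integration upgrades the decay rate of $b$, going $b = O(|\tau|^{-1-4\delta}) \rightsquigarrow O(|\tau|^{-1-8\delta}) \rightsquigarrow \cdots \rightsquigarrow O(|\tau|^{-\alpha})$ for some $\alpha > 3/2$ after finitely many iterations, at which point $|\tau|^2 b^2$ becomes integrable. Then $|\tau|^2 b(\tau) - \sqrt{2}\log|\tau|$ converges as $\tau \to -\infty$ to a real constant $A = A(M)$ with error $O(|\tau|^{-\delta'})$ for some $\delta' > 0$; continuity of $A$ in $M$ follows from the continuous dependence of the ODE data and of the integration-tail on $M$. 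This yields
\begin{equation}
 a_2(\tau) \;=\; -\tfrac{1}{2\sqrt{2}|\tau|} + \tfrac{\sqrt{2}\log|\tau|}{|\tau|^2} + \tfrac{A}{|\tau|^2} + O(|\tau|^{-2-\delta'}).
\end{equation}

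To assemble the final formula on a fixed compact set $\{|y|\leq R\}$, I write $u = \hat u = a_0 H_0 + a_2 H_2 + a_4 H_4 + \hat P \hat u$. Plugging the leading values from Proposition \ref{lem:a0_improved} and simplifying gives
\begin{equation}
 a_0 H_0 + a_4 H_4 \;=\; \tfrac{1}{2\sqrt{2}|\tau|^2} - \tfrac{y^4-12y^2+12}{16\sqrt{2}|\tau|^2} + O(|\tau|^{-2-4\delta}) \;=\; -\tfrac{y^4-12y^2+4}{16\sqrt{2}|\tau|^2} + O(|\tau|^{-2-4\delta})
\end{equation}
uniformly on $\{|y|\leq R\}$, which combined with the refined $a_2$ expansion and multiplication by $H_2 = y^2 - 2$ produces all the explicit terms in the claim. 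Finally, I would upgrade the $L^2$-estimate $\|\hat P\hat u\| = O(|\tau|^{-3+6\delta})$ of Lemma \ref{lem:hatP_error} to a pointwise bound on $\{|y|\leq R\}$ via standard parabolic Schauder estimates applied to the evolution equation satisfied by $\hat P\hat u$ (whose coefficients are smooth and uniformly bounded on that region), obtaining $\|\hat P\hat u\|_{L^\infty(|y|\leq R)} = O(|\tau|^{-3+7\delta})$, which is absorbed into $O(|\tau|^{-2-\delta})$ upon choosing $\delta$ small. The main obstacle is the Riccati bootstrap: the initial decay rate of $b$ is only barely above $|\tau|^{-1}$, so several iterations are required before the quadratic term becomes integrable against the $|\tau|^2$ weight; all remaining steps reduce to bookkeeping with the explicit Hermite identities and the reaction-term computations already provided.
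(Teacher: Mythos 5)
Your proposal is correct and follows the same spectral-decomposition strategy as the paper, with the same inputs (Proposition~\ref{lem:a0_improved}, Lemma~\ref{lem:hatP_error}, Proposition~\ref{lem:a2_error1}); the only genuine divergence is in how the ODE for $a_2$ is solved. You keep $b = a_2 + (2\sqrt{2}|\tau|)^{-1}$ and confront the Riccati term $-2\sqrt{2}b^2$ directly by bootstrapping the decay rate of $b$ until $|\tau|^2 b^2$ becomes integrable. The paper instead includes the $\sqrt{2}\log|\tau|/|\tau|^2$ correction in the ansatz for $b$ from the start, then uses the a priori bound $b=O(|\tau|^{-1-4\delta})$ to linearize the quadratic term, reducing matters to a single integration of a linear ODE $\hat b' = p\hat b + q$ with $p,q$ integrable. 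Your approach is more iterative but avoids having to guess the log correction in advance of the integration (you extract it from the integrating factor instead), and it requires roughly $\log_2(1/\delta)$ bootstrap steps; the paper's approach is one-shot. Both are valid. One small imprecision: you write the error in the $a_2'$ equation as $O(|\tau|^{-4+8\delta})$, but the dominant error actually comes from Proposition~\ref{lem:a2_error1} and is $O(|\tau|^{-3-4\delta})$, which for small $\delta$ is larger; after multiplying by the integrating factor $|\tau|^2$ this still gives an integrable source term $O(|\tau|^{-1-4\delta})$, so the argument is unaffected, but it does slightly degrade the resulting exponent $\delta'$ in the final error. Finally, your explicit upgrade of the $L^2$ remainder bound of Lemma~\ref{lem:hatP_error} to a pointwise bound on compact sets via interior Schauder estimates fills in a step the paper leaves implicit, which is a welcome addition.
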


\begin{proof} 
Using the evolution equation \eqref{further_expanded_ev_eq} and Proposition \ref{lem:a2_error1} (reaction terms) we see that
\begin{equation}
 a_2'=2^{-3}\langle H_2,\hat u_\tau\rangle=-2\sqrt{2}\, a_2^2-\sqrt{2}\,|\tau|^{-3}+O(|\tau|^{-3-4\delta}).
\end{equation}
To analyze this ODE we consider the function
\begin{equation}
b(\tau)=a_2(\tau)+\frac{1}{2\sqrt{2}\,|\tau|}-\frac{\sqrt{2}\log |\tau|}{|\tau|^2}.
\end{equation}
Note that
\begin{equation}
b'=\left(\frac{2}{|\tau|}-\frac{8\log |\tau|}{|\tau|^2}-2\sqrt{2}b\right)b+O(|\tau|^{-3-4\delta}).
\end{equation}
Hence, remembering Proposition \ref{lem:a0_improved} (spectral coefficients), we have
\begin{equation}
b'=(2+O(|\tau|^{-4\delta}))|\tau|^{-1}b+O(|\tau|^{-3-4\delta}).
\end{equation} 
Therefore, $\hat b(\tau)=|\tau|^2b(\tau)$ satisfies
\begin{equation}
\hat b'=p \hat b +q,
\end{equation}
for some $|p(\tau)|,|q(\tau)| \leq C|\tau|^{-1-4\delta}$. Solving this ODE, we infer that $A=\lim_{\tau\to -\infty}\hat{b}(\tau)$ exists and that
\begin{equation}
\hat b= A+O(|\tau|^{-4\delta}).
\end{equation}
Note also that $A$ depends continuously on $M$. We have thus shown that for $A=A(M)$ we have
\begin{equation}
a_2(\tau)=-\frac{1}{2\sqrt{2}|\tau|}+\frac{\sqrt{2}\log |\tau|}{|\tau|^2}+\frac{A}{|\tau|^2}+O(|\tau|^{-2-4\delta}).
\end{equation}
Together with Proposition \ref{lem:a0_improved} (spectral coefficients) and Proposition \ref{lem:hatP_error} (remainder) this implies the assertion.
\end{proof}

\bigskip

\bibliography{LTE}
\bibliographystyle{alpha}

\vspace{5mm}

{\sc Kyeongsu Choi, School of Mathematics, Korea Institute for Advanced Study, 85 Hoegiro, Dongdaemun-gu, Seoul, 02455, South Korea}\\

{\sc Robert Haslhofer, Department of Mathematics, University of Toronto,  40 St George Street, Toronto, ON M5S 2E4, Canada}\\

{\sc Or Hershkovits, Department of Mathematics, University of Maryland, 4176 Campus Dr, College Park, MD 20742, USA and Institute of Mathematics, Hebrew University of Jerusalem, Jerusalem, 91904, Israel}\\

\end{document}